\numberwithin{equation}{section}
\newdimen\vintkern\vintkern12pt
\def\vint{-\kern-\vintkern\int}
\newtheorem{thm}{Theorem}[section]
    \newtheorem{lem}[thm]{Lemma}
\newtheorem{cor}[thm]{Corollary}
\newtheorem{prop}[thm]{Proposition}
 \theoremstyle{definition}
\theoremstyle{remark}
\newtheorem{rem}{Remark}[section]
\newcommand{\tref}[1]{Theorem~\ref{#1}}
\newcommand{\cref}[1]{Corollary~\ref{#1}}
\newcommand{\pref}[1]{Proposition~\ref{#1}}
\newcommand{\lref}[1]{Lemma~\ref{#1}}
\newcommand{\R}{\mathbb{R}}
\newcommand{\N}{\mathbb{N}}
\newcommand{\Area}{\operatorname{Area}}
\newcommand{\ap}{\operatorname{ap}}
\newcommand{\Id}{\mathrm{Id}}
\newcommand{\md}{\operatorname{md}}
\newcommand{\apmd}{\ap\md}
\newcommand{\trace}{\operatorname{tr}}
\begin{document}
\pagebreak


\title{Isoperimetric  characterization of upper   curvature bounds}

\author{Alexander Lytchak}

\address
  {Mathematisches Institut\\ Universit\"at K\"oln\\ Weyertal 86 -- 90\\ 50931 K\"oln, Germany}
\email{alytchak@math.uni-koeln.de}

\author{Stefan Wenger}

\address
  {Department of Mathematics\\ University of Fribourg\\ Chemin du Mus\'ee 23\\ 1700 Fribourg, Switzerland}
\email{stefan.wenger@unifr.ch}

\date{\today}


\begin{abstract}
 We  prove that a proper geodesic metric space has non-positive curvature in the sense of Alexandrov if and only if  it satisfies the Euclidean isoperimetric inequality for
 curves. Our result  extends to non-geodesic spaces and non-zero curvature bounds.
 \end{abstract}

\maketitle

\renewcommand{\theequation}{\arabic{section}.\arabic{equation}}
\pagenumbering{arabic}

\section{Introduction}
\subsection{Main result}
 We say that a metric space $X$ satisfies the Euclidean isoperimetric inequality for curves if any closed Lipschitz curve $\gamma:S^1 \to X$  bounds  a Lipschitz map of  the unit  disc $v: \bar D\to X$
whose parametrized Hausdorff area  is at most $\frac 1 {4\pi} \ell ^2 _X(\gamma ) $. Here, $\ell _X (\gamma )$ denotes the length of $\gamma$ in $X$.
We refer to the first two sections below for   the notion of parametrized Hausdorff area and other  basic notions of metric geometry involved in the   following main theorem of the present paper.

\begin{thm}  \label{thmfirst}
Let $X$ be a proper metric space in which any pair of points is connected by a curve  of finite length. Let $X^i$ denote the set $X$ with the induced length metric. The space $X^i$ is  ${\rm CAT}(0)$ if and only if  $X$ satisfies the Euclidean isoperimetric inequality for curves.
\end{thm}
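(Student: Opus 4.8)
The plan is to prove the two implications separately; the substantial direction is that the Euclidean isoperimetric inequality for curves forces $X^i$ to be $\mathrm{CAT}(0)$. I use throughout two classical facts: since $X$ is proper and any two points are joined by a rectifiable curve, $X^i$ is again a \emph{proper geodesic} space; and the length of a curve is computed identically with respect to $d_X$ and $d_{X^i}$, so that, reparametrized by arclength, a rectifiable curve in $X$ is $1$-Lipschitz into $X^i$, while the identity $\iota\colon X^i\to X$ is $1$-Lipschitz. The easy implication is then quick: if $X^i$ is $\mathrm{CAT}(0)$ and $\gamma\colon S^1\to X$ is a closed Lipschitz curve, then, viewed in $X^i$, Reshetnyak's majorization theorem provides a convex region $C\subset\R^2$ with $\ell(\partial C)\le\ell_X(\gamma)$ and a $1$-Lipschitz map $f\colon C\to X^i$ restricting on $\partial C$ to a monotone arclength parametrization of $\gamma$. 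The planar isoperimetric inequality gives $\hm^2(C)\le\frac1{4\pi}\ell_X(\gamma)^2$, and composing $\iota\circ f$ with a bi-Lipschitz homeomorphism $\bar D\to C$ yields a Lipschitz disc in $X$ that bounds $\gamma$ and whose parametrized Hausdorff area is at most $\hm^2(C)$, since post-composition with a $1$-Lipschitz map does not increase parametrized Hausdorff area.

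For the main implication, assume $X$ satisfies the Euclidean isoperimetric inequality for curves. Since $X^i$ is a proper geodesic space, it is enough to verify the $\mathrm{CAT}(0)$ comparison inequality for an arbitrary geodesic triangle $\Delta=\Delta(p,q,r)$ in $X^i$. Its boundary is a closed rectifiable curve $c$ in $X$; as the Euclidean inequality is in particular a quadratic isoperimetric inequality, the Plateau problem for $c$ is solvable in the proper space $X$, giving an area-minimizing disc $u\colon\bar D\to X$ whose boundary is a weakly monotone parametrization of $c$. Passing to its intrinsic minimal disc produces a compact geodesic metric space $Z$ homeomorphic to $\bar D$ together with a canonical surjection $P\colon Z\to X$ that is $1$-Lipschitz, preserves the length of every curve and the area of every subdisc, and restricts on $\partial Z$ to a monotone parametrization of $c$; in particular $\partial Z$ is subdivided by three points $z_1,z_2,z_3$ (sent to $p,q,r$) into three arcs whose $Z$-lengths equal the three side lengths of $\Delta$.

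The heart of the argument is to show that $Z$ is a $\mathrm{CAT}(0)$ space. First, $Z$ inherits the Euclidean isoperimetric inequality with the \emph{sharp} constant $\frac1{4\pi}$: a Jordan domain $\Omega\subset Z$ bounds a subdisc whose $Z$-area equals the $X$-area of $P$ on it (areas are preserved), this subdisc is area-minimizing in $X$ for its own boundary (restrictions of an area minimizer are area minimizing), and hence its area is at most $\frac1{4\pi}\ell_X(P\circ\partial\Omega)^2=\frac1{4\pi}\ell_Z(\partial\Omega)^2$; the case of general closed curves follows by decomposition into Jordan loops. Second, $Z$ carries the fine structure of an intrinsic minimal disc: it is $2$-rectifiable of finite area, its blow-ups at almost every point are the Euclidean plane, and more generally its infinitesimal geometry is that of Euclidean cones in the interior and Euclidean sectors along $\partial Z$. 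The sharp isoperimetric inequality passes to these blow-ups, and a Euclidean cone of total angle $<2\pi$, respectively a sector of angle $<\pi$, already violates it; hence every interior blow-up has total angle $\ge2\pi$ and every boundary blow-up angle $\ge\pi$ — the infinitesimal non-positive curvature condition. A two-dimensional curvature-integration argument of Gauss--Bonnet type, with the geodesic structure of $\partial Z$ under control, then upgrades this to the global conclusion that $Z$ is $\mathrm{CAT}(0)$. Making this last passage rigorous within the metric category — from ``all blow-ups non-positively curved'' to ``$Z$ is globally $\mathrm{CAT}(0)$'' — is where I expect the main obstacle to lie; it is essentially a statement about metric discs, independent of the ambient space $X$.

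Granting that $Z$ is $\mathrm{CAT}(0)$, the proof concludes quickly. Since $P$ preserves lengths of curves, it is $1$-Lipschitz also as a map $Z\to X^i$, so $d_{X^i}(P(x),P(y))\le d_Z(x,y)$ for all $x,y\in Z$. Applied to $z_1,z_2,z_3$, and using $d_{X^i}(p,q)=\ell_Z(\text{arc }z_1z_2)\ge d_Z(z_1,z_2)\ge d_{X^i}(p,q)$ together with its cyclic analogues, this forces the three boundary arcs to be geodesics of $Z$; they therefore form a geodesic triangle in the $\mathrm{CAT}(0)$ space $Z$ with the same side lengths as $\Delta$, hence with the same Euclidean comparison triangle $\bar\Delta$. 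For points $x,y$ on this triangle, $\mathrm{CAT}(0)$ comparison in $Z$ gives $d_Z(x,y)\le|\bar x\,\bar y|_{\bar\Delta}$; since $P$ maps this triangle arclength-preservingly onto $\partial\Delta$, composing with the $1$-Lipschitz map $P\colon Z\to X^i$ yields $d_{X^i}(P(x),P(y))\le|\bar x\,\bar y|_{\bar\Delta}$, which is precisely the $\mathrm{CAT}(0)$ comparison inequality for $\Delta$. As $\Delta$ was arbitrary, $X^i$ is $\mathrm{CAT}(0)$.
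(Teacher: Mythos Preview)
Your global strategy matches the paper's: the ``only if'' direction via Reshetnyak majorization is correct and is exactly what the paper does; for the ``if'' direction you solve the Plateau problem for (Jordan) triangles, pass to the intrinsic minimal disc $Z$, argue that $Z$ is $\mathrm{CAT}(0)$, and then majorize $\Delta$ by $Z$ via the $1$-Lipschitz, arclength-preserving map $\bar u\colon Z\to X^i$. That last majorization step is fine and is precisely \lref{curvmaj}. (Two small points: you need the triangle to be a \emph{Jordan} curve before invoking Plateau --- the paper reduces to this via \lref{Jordanexist}; and $X^i$ is complete and geodesic but need not be proper, though you never actually use properness of $X^i$.)

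The genuine gap is your argument that $Z$ is $\mathrm{CAT}(0)$, and here your route diverges from the paper. You assert that $Z$ has Euclidean-cone blow-ups in the interior and Euclidean-sector blow-ups on $\partial Z$, read off an angle bound from the sharp isoperimetric inequality on blow-ups, and then invoke a ``Gauss--Bonnet type'' step. None of these three moves is justified at this level of generality. First, nothing established so far tells you that $Z$ has well-defined conical tangent spaces at \emph{every} point; the paper instead proves property (ET) for $X$ (Section~\ref{secexclude}), uses it to get that the Plateau solution $u$ is \emph{conformal}, extracts a conformal factor $f\in L^2(D)$, and shows via the Beckenbach--Rad\'o argument that $f$ is log-subharmonic (\pref{prop:subharmonic-rep}). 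Only then does Reshetnyak's analytic theory of surfaces (\tref{thm:Resh}) yield that $Z\setminus\partial Z$ is \emph{locally} $\mathrm{CAT}(0)$. Second, even after the interior is locally $\mathrm{CAT}(0)$, the step from there to ``$Z$ is $\mathrm{CAT}(0)$'' is not a Gauss--Bonnet formality: it is \tref{thm4}, whose proof is the entire Part~II of the paper. The obstruction is exactly the boundary behaviour --- think of a Koch-snowflake Jordan domain whose boundary is artificially given finite length: the interior is locally $\mathrm{CAT}(0)$ but the space is not $\mathrm{CAT}(0)$, so one must \emph{use} the isoperimetric inequality to rule out such shortening of $\partial Z$. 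Your boundary-sector angle bound does not see this phenomenon at all. You correctly flag this passage as the main obstacle; what is missing is any mechanism (conformal factor, log-subharmonicity, and the delicate boundary analysis of Sections~\ref{sec:simple}--\ref{sec:final}) that actually bridges it.
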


This result provides an analytic access to upper curvature bounds and can be used to recognize upper curvature bounds without being able to identify geodesics or angles.  Such situations often appear in metric constructions, cf. \cite{Alexander}.
\tref{thmfirst} admits a natural generalization to non-zero curvature bounds, see  \tref{thmmain} below.

The ``only if part" of our theorem is folklore and follows as    an easy consequence of Reshetnyak's majorization theorem,
\cite{Resh-majorization}.
 With a  different definition of area the ``only if part" already appears in \cite{Aleksandrov} at the very origin of the theory of spaces with upper curvature bounds.

 Results closest to the much subtler ``if part" of our theorem have been proven
in \cite{BR}  and  \cite{Resh-isop} in the case of surfaces. E. Beckenbach and T. Rado proved in \cite{BR} our \tref{thmfirst} for smooth $2$-dimensional Riemannian manifolds, finding a connection between log-subharmonicity, isoperimetric inequalities and curvature bounds.
  In \cite{Resh-isop} the result of
\cite{BR} was extended  to some singular    surfaces  and non-zero curvature bounds.

\subsection{Main  idea} Essentially, the strategy of   our proof  of  the "if part" is to reduce the problem  in a general metric space  to the situation considered in \cite{Resh-isop}.
Recall  the following  simple consequence of the Gauss equation in Riemannian geometry:  a minimal surface has curvature no larger  than the ambient space. We reverse this idea and find a curvature bound for the total space by proving that all minimal discs have the corresponding curvature bound:

\begin{thm} \label{thmminimal}
Let $X$ be a  proper  metric space  which satisfies the Euclidean isoperimetric inequality for curves.
 Let $\Gamma$ be a Jordan curve of finite length  in $X$ and let $u:\bar D\to X$ be a solution of the Plateau problem in the space $X$ for the boundary curve $\Gamma$.
Then the  intrinsic minimal disc $Z$ associated with $u$ is a ${\rm CAT}(0)$ space.
\end{thm}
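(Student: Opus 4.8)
The plan is to reduce the assertion to the two‑dimensional case, where it is the classical theorem of Beckenbach--Rado \cite{BR} in the smooth setting and of Reshetnyak \cite{Resh-isop} in the singular setting. Throughout I shall use the following structural properties of the intrinsic minimal disc $Z$, which belong to the theory of minimal discs developed in the sections below: $Z$ is a compact geodesic metric space homeomorphic to $\bar D$ with $\hm^2(Z)=\Area(u)<\infty$; there is a canonical monotone surjection $\bar u\colon\bar D\to Z$ together with a $1$-Lipschitz map $P\colon Z\to X$ satisfying $u=P\circ\bar u$, with $P|_{\partial Z}$ a homeomorphism onto $\Gamma$; and for every Jordan domain $\Omega$ in the interior of $Z$ the set $\tilde\Omega:=\bar u^{-1}(\overline\Omega)$ is a closed topological disc contained in the open disc $D$, with $\bar u(\partial\tilde\Omega)=\partial\Omega$ and
\[
\Area\big(u|_{\tilde\Omega}\big)=\hm^2(\overline\Omega)=\hm^2(\Omega) .
\]
I take these facts as given and concentrate on the curvature bound.

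The heart of the argument is the sharp isoperimetric inequality inside $Z$: for every rectifiable Jordan curve $c$ in the interior of $Z$ bounding a Jordan domain $\Omega$ one has $\hm^2(\Omega)\le\frac1{4\pi}\ell_Z(c)^2$. To prove it I would set $\tilde\Omega=\bar u^{-1}(\overline\Omega)$ and $\Gamma':=P\circ\bar u|_{\partial\tilde\Omega}$, a closed rectifiable curve in $X$ with $\ell_X(\Gamma')\le\ell_Z(c)$ because $P$ is $1$-Lipschitz and $\bar u|_{\partial\tilde\Omega}$ parametrizes $c$. The map $u|_{\tilde\Omega}$, reparametrized over $\bar D$, is area‑minimizing among all Sobolev discs in $X$ with boundary trace $\Gamma'$: a competitor of smaller area would glue with $u|_{\bar D\setminus\tilde\Omega}$ along the Jordan curve $\partial\tilde\Omega$ to an admissible disc with boundary $\Gamma$ and total area strictly below $\Area(u)$, contradicting the minimality of $u$. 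On the other hand, the Euclidean isoperimetric inequality in $X$ applied to $\Gamma'$ produces a Lipschitz disc filling $\Gamma'$ whose parametrized Hausdorff area is at most $\frac1{4\pi}\ell_X(\Gamma')^2$; this disc is an admissible competitor, so together with the area identity recalled above,
\[
\hm^2(\Omega)=\Area\big(u|_{\tilde\Omega}\big)\le\frac1{4\pi}\,\ell_X(\Gamma')^2\le\frac1{4\pi}\,\ell_Z(c)^2 ,
\]
which is the claim. (For a non‑rectifiable Jordan curve the inequality is vacuous, and the same reparametrization of $\bar u|_{\tilde\Omega}$ shows, if desired, that every rectifiable closed curve in $Z$ bounds a disc in $Z$ of area at most $\frac1{4\pi}$ times its squared length.)

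To finish, $Z$ is a geodesic metric disc of finite Hausdorff $2$-measure satisfying, by the previous step, the Euclidean isoperimetric inequality for all Jordan domains in its interior; hence $Z$ is a metric surface of non‑positive curvature by Beckenbach--Rado \cite{BR} and Reshetnyak \cite{Resh-isop}, and since $Z$ is simply connected this local bound globalizes to the ${\rm CAT}(0)$ inequality. One cannot short‑cut this last step by appealing to \tref{thmfirst}, which is itself to be deduced from the present theorem.

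I expect the main work to lie not in the estimate above but in the structural inputs it relies on: that monotonicity of $\bar u$ really makes $\tilde\Omega=\bar u^{-1}(\overline\Omega)$ a topological disc lying in the open disc $D$; that $u|_{\tilde\Omega}$ lies in the correct Sobolev class and that the gluing along the a priori irregular Jordan curve $\partial\tilde\Omega$ (matching of boundary traces) is legitimate; that the canonical parametrization $\bar u$ preserves the area of $u$ on such subdomains; and, for the reduction to \cite{Resh-isop}, that $Z$ belongs to the class of metric surfaces for which the two‑dimensional rigidity holds — all of which rest on the regularity theory of minimal discs. Granting these, the argument is short.
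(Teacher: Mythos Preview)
Your derivation of the sharp isoperimetric inequality inside $Z$ is correct and matches the paper (it is recorded as item~(5) of \tref{partI.Z}, proved via \tref{partI.u}); note only that your $P$ and $\bar u$ are swapped relative to the paper's conventions.

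The genuine gap is the sentence ``hence $Z$ is a metric surface of non-positive curvature by Beckenbach--Rado \cite{BR} and Reshetnyak \cite{Resh-isop}''. Neither of these results says that a general geodesic metric disc satisfying the Euclidean isoperimetric inequality for Jordan domains has non-positive curvature. Beckenbach--Rado is about smooth Riemannian surfaces; Reshetnyak's theorem is about metrics on a planar domain given by a \emph{conformal factor}, and asserts that the integral inequality \eqref{eq:isop-f} forces that factor to be log-subharmonic, whence the resulting metric has non-positive curvature. A priori $Z$ is not known to be of this form, and the implication ``isoperimetric inequality for Jordan domains $\Rightarrow$ non-positive curvature'' for a bare metric disc is exactly the content of \tref{thm4}, whose proof occupies the entire second half of the paper.

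The paper's route is therefore different from what you outline. One first uses property~(ET) to know that $u$ is genuinely conformal, extracts the conformal factor $f$, shows via \cite{BR} and \cite{EKMS} that $f$ is log-subharmonic, and then invokes Reshetnyak's analytic theory to equip $D$ with the metric $d_f$, obtaining a space $Y_0$ of non-positive curvature. The delicate point---and the one you flagged but underestimated---is that $Y_0$ and $Z\setminus\partial Z$ are only known to agree along \emph{almost all} curves, not all curves; contracting a single segment in $D$ does not change $f$ but does change $Z$. Proving that the natural $1$-Lipschitz map $I\colon Y_0\to Z\setminus\partial Z$ is a local isometry requires \tref{thm4} (applied in a cutting-and-pasting argument, see \secref{sec:mainred}), and \tref{thm4} itself is the hardest theorem in the paper, handling in particular the possibility that the boundary curve is wildly non-rectifiable. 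Your last paragraph correctly identifies that the reduction to \cite{Resh-isop} needs $Z$ to lie in the right class; what is missing is the recognition that establishing this is not a regularity lemma but the principal contribution of the paper.
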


We refer to \cite{LW}, \cite{LW-intrinsic} and Section \ref{secplateau} below for
the notion of a solution of the Plateau problem  and the associated  intrinsic  minimal disc.
By definition of the intrinsic  minimal disc $Z$, the solution of the Plateau problem  $u$ in \tref{thmminimal} factorizes as $u=\bar u\circ P$ for a surjective map $P:\bar D\to Z$ and a $1$-Lipschitz map $\bar u:Z\to X$.
Moreover, $\bar u$ sends
the boundary circle $\partial Z$ of $Z$ in an arclength preserving way onto $\Gamma$.

 It is not difficult  to see that  \tref{thmminimal} implies \tref{thmfirst}. Assume for simplicity that the proper space $X$
  with the Euclidean isoperimetirc inequality for curves is a length space. The existence of a solution $u$ of the Plateau problem
for any rectifiable Jordan curve $\Gamma$  in $X$ is  proved in \cite{LW}, generalizing \cite{Mor48} to the setting of metric spaces.
In order to prove that $X$ is ${\rm CAT} (0)$, one needs to prove that any Jordan triangle in $X$ is thin, cf.  Section \ref{sec:curvature} below.
However, \tref{thmminimal} implies that $\Gamma$ is thin in the intrinsic minimal   disc  $Z$.  Using the map $\bar u$
(which majorizes $\Gamma$ in the sense of \cite{Resh-majorization} and \cite[Section 9.8]{AKP})  this easily implies that $\Gamma$ is thin in $X$ as well.

\subsection{Main steps}
The proof of \tref{thmminimal} involves several steps. First, a special case of the Blaschke-Santalo inequality implies that, among normed planes, only the Euclidean plane satisfies the Euclidean isoperimetric inequality, \cite{Tho96}. The quasi-convexity of the Hausdorff area proved in \cite{BI} and a natural blow-up argument imply that $X$ has only ``Euclidean tangent spaces'',  at least as far as infinitesimal properties of Sobolev maps with values in $X$ are concerned.  This is the property (ET) introduced in \cite{LW}, which greatly simplifies the description of Sobolev maps and solutions of  the Plateau problem.

In particular, the solution of the Plateau problem  $u$ in \tref{thmminimal} is a conformal map, see Section \ref{secSob}. Thus, there exists a non-negative Borel function $f\in L^2(D)$, \emph{the conformal factor of $u$}, such that for
 \emph{almost all curves} $\gamma$ in $D$ the length of the image of $\gamma$ under $u$ is controlled by $f$:
 \begin{equation} \label{eq:conform}
 \ell _X(u\circ \gamma ) =\int  _{\gamma} f.
 \end{equation}
 The next step goes back to  \cite{BR} and shows that the isoperimetric inequality forces $f$ to be log-subharmonic.

The subsequent  step,  contained in  \cite{Resh-isop},  relates log-subharmonicity of conformal factors to non-positive curvature in the sense of Alexandrov. More precisely, the length metric defined on $D$ by setting   the length of \emph{every} rectifiable curve $\gamma \subset D$ to be  $\int _{\gamma} f$  is locally ${\rm CAT}(0)$. The  metric space $Y$ defined in this way is  intimately  related to the intrinsic minimal disc $Z$.  The only difference is that \eqref{eq:conform} holds in $Y$ for  \emph{all} and in $Z$ for \emph{almost all}
rectifiable curves $\gamma$.  In particular, we have a $1$-Lipschitz map $I:Y\to Z$ which preserves the length of almost all curves.

 The final, rather subtle step  is  devoted to the proof that the spaces  $Z$ and (the completion of) $Y$   are identical.  While the analytically defined conformal factor $f$ controls the lengths of \emph{almost all} curves in $Z$, it cannot control the lengths of all curves: contracting  one interval in $D$  to a point does not change the conformal factor. In particular, $f$ does not a priori control the most important boundary curve. Applying some cutting and pasting tricks we reduce the final step  to the question whether the length of the boundary curve ``is controlled by the conformal factor''.
 Using  general structural results about the intrinsic minimal discs   obtained in \cite{LW-intrinsic},  the final step reduces to the following:

\begin{thm} \label{thm4}
Let $Z$ be a geodesic metric space homeomorphic to the closed disc $\bar D$. Denote by $\partial Z$ the boundary circle and     assume that  $Z\setminus \partial Z$ is locally $\rm{CAT} (0)$.  Then the following are equivalent.
  \begin{enumerate}
  \item $Z$ is $\rm{CAT}(0)$.
  \item $Z\setminus \partial Z$ with the metric induced from $Z$ is a length space.
  \item For
 any Jordan curve $\eta \subset Z$
the open disc $J_{\eta}$ enclosed by $\eta$ in $Z$  satisfies $\mathcal H^2 (J_{\eta} ) \leq \frac 1 {4\pi} \cdot \ell ^2 _Z(\eta) $.
\end{enumerate}
\end{thm}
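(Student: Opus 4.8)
The plan is to establish the cycle of implications $(1)\Rightarrow(2)\Rightarrow(3)\Rightarrow(1)$, or a convenient variant, exploiting that the only subtle direction is recovering a global $\mathrm{CAT}(0)$ bound from hypotheses that only constrain the \emph{open} disc $Z\setminus\partial Z$ together with a side condition.

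\smallskip
\emph{The easy implications.} First I would dispose of $(1)\Rightarrow(2)$ and $(1)\Rightarrow(3)$. If $Z$ is $\mathrm{CAT}(0)$, it is in particular geodesic and convex-like on every Jordan subdomain; the majorization/Reshetnyak argument alluded to in the introduction (the ``only if part'' of \tref{thmfirst}, which is folklore via Reshetnyak's majorization theorem) gives the Euclidean isoperimetric inequality for every Jordan curve $\eta\subset Z$, hence $(3)$. For $(1)\Rightarrow(2)$: in a $\mathrm{CAT}(0)$ disc, removing the boundary circle leaves a convex open set in the sense that geodesics between interior points stay in the interior (a geodesic in $Z$ joining two interior points cannot touch $\partial Z$, since $\partial Z$ is the topological boundary of the disc and a $\mathrm{CAT}(0)$ geodesic that touched $\partial Z$ could be shortcut), so the induced metric on $Z\setminus\partial Z$ agrees with the restricted metric and is a length metric. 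These are short.

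\smallskip
\emph{The core implication $(2)\Rightarrow(1)$.} This is where the work lies, and it is essentially a gluing/extension statement: one knows $Z\setminus\partial Z$ is locally $\mathrm{CAT}(0)$, $Z$ is a geodesic disc, and $(2)$ says the interior metric is intrinsic; one must conclude $Z$ is globally $\mathrm{CAT}(0)$. I would proceed in two stages. \textbf{Stage A: localize at the boundary.} Since $Z\setminus\partial Z$ is locally $\mathrm{CAT}(0)$ and $Z$ is a topological disc, by the globalization theorem (Cartan--Hadamard for locally $\mathrm{CAT}(0)$ spaces, using that $Z$ is simply connected) it suffices to prove that $Z$ is \emph{locally} $\mathrm{CAT}(0)$ at every boundary point $p\in\partial Z$, i.e. that some neighborhood of $p$ in $Z$ is $\mathrm{CAT}(0)$. \textbf{Stage B: analyze a half-disc neighborhood.} Near $p$, choose a small Jordan domain $U\subset Z$ homeomorphic to a half-disc, with $p$ on the straight part of the boundary, such that $U\cap\partial Z$ is an arc $\alpha$ and $U\cap(Z\setminus\partial Z)$ is an open $\mathrm{CAT}(0)$-type region. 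Using condition $(2)$ — the interior is a length space up to the boundary — I would show the intrinsic metric on $\overline U$ is a limit of the $\mathrm{CAT}(0)$ metrics on slightly shrunk copies $\overline{U_\varepsilon}\subset Z\setminus\partial Z$, so that $\overline U$ is $\mathrm{CAT}(0)$ as a Gromov--Hausdorff limit of $\mathrm{CAT}(0)$ spaces, provided the shrunk copies are themselves convex (which is arranged by taking $U_\varepsilon$ to be, say, sublevel sets of the distance to $\alpha$ intersected with a fixed small ball). The key point forcing convexity and the $\mathrm{CAT}(0)$ comparison to survive the limit is exactly condition $(2)$: without it, a collar could be ``pinched'' and the limiting comparison would fail.

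\smallskip
\emph{Closing the cycle with $(3)\Rightarrow(2)$ (or $(3)\Rightarrow(1)$).} Finally I would show $(3)$ forces $(2)$: if the interior metric on $Z\setminus\partial Z$ were \emph{not} a length space, there would be a pair of interior points $x,y$ whose distance is strictly less than the infimum of lengths of curves joining them inside $Z\setminus\partial Z$; equivalently, every near-geodesic from $x$ to $y$ in $Z$ must hug $\partial Z$. One then builds a thin Jordan curve $\eta$ trapping a region near the pinch whose enclosed area is too large relative to $\ell_Z(\eta)$ — intuitively, a near-degenerate ``lens'' along $\partial Z$ violates the Euclidean isoperimetric inequality because its area does not shrink quadratically in its perimeter. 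Making this quantitative — choosing $\eta$ and estimating $\mathcal H^2(J_\eta)$ from below and $\ell_Z(\eta)$ from above — is the most delicate part of this implication; here I would lean on the structural results for intrinsic minimal discs from \cite{LW-intrinsic} to control the local geometry of $J_\eta$. The \textbf{main obstacle} overall is Stage B of $(2)\Rightarrow(1)$: passing from local $\mathrm{CAT}(0)$-ness of the open disc to local $\mathrm{CAT}(0)$-ness up to the boundary, which requires genuinely using that the interior metric extends as a length metric and not merely as a geodesic metric, together with a careful choice of exhausting convex subdomains so that the $\mathrm{CAT}(0)$ comparison inequalities are preserved under the Gromov--Hausdorff limit.
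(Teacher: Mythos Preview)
Your plan misidentifies where the difficulty lies and contains a genuine error in the ``easy'' direction.

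First, your argument for $(1)\Rightarrow(2)$ is wrong: in a $\mathrm{CAT}(0)$ disc, the geodesic between two interior points \emph{can} touch $\partial Z$. A simple example is an $L$-shaped region in $\R^2$ with its intrinsic metric (which is $\mathrm{CAT}(0)$): the geodesic between points in different arms passes through the re-entrant corner on the boundary. The paper's proof of $(1)\Rightarrow(2)$ occupies a full page and explicitly handles the case $\gamma(t)\in\partial Z$, using the approximation of hinges by flat cones (\lref{almostcone+}) to reroute the geodesic into the interior with controlled excess length. Your shortcut does not work.

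Second, you have the difficulty of $(2)\Rightarrow(1)$ and $(3)\Rightarrow(1)$ inverted. The implication $(2)\Rightarrow(1)$ is in fact a one-line consequence of a general fact (\pref{bisgen}): if $Z\setminus\partial Z$ with the induced metric is a length space, then $Z$ is its completion, and the completion of a non-positively curved open disc is $\mathrm{CAT}(0)$ (proved by exhausting by Jordan-polygon domains and passing to an ultralimit --- essentially your Stage~B idea, but immediate once stated correctly). There is no need to localize at boundary points.

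Third, and most importantly, the implication $(3)\Rightarrow(1)$ is the genuinely hard part and occupies half of the paper (Sections~\ref{sec:simple}--\ref{sec:final}). Your sketch for $(3)\Rightarrow(2)$ --- ``build a thin Jordan curve trapping a region near the pinch'' --- does not engage with the real obstruction. The paper first reduces to the case where the ``bad'' part of $\partial Z$ is a single geodesic arc $c$, then compares $Z$ with the completion $Y$ of the interior length metric (which \emph{is} $\mathrm{CAT}(0)$), and must show the canonical $1$-Lipschitz map $I:Y\to Z$ is an isometry. This requires separate, delicate arguments for rectifiable and non-rectifiable parts of $I^{-1}(c)$, constructing near-optimal isoperimetric Jordan curves in $Y$ whose images in $Z$ would violate (3) if $I$ shortened the boundary. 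The structural results from \cite{LW-intrinsic} you invoke are used upstream to \emph{reduce} to \tref{thm4}, not inside its proof.
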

%

 Throughout the text, we denote by $\mathcal H^2$ the $2$-dimensional Hausdorff measure.  Thus,  condition (3) in \tref{thm4} is the geometric (unparametrized)  version of the Euclidean isoperimetric inequality.
This theorem does not sound very surprising. The implication from (1) to (3) is an easy  consequence of Reshetnyak's  majorization theorem. The equivalence of (2) and (1) is not very  difficult either.
In contrast, the proof of the main implication from (3) to (1)   is rather long and technical and comprises  one half  of this paper.    One might have the following example in mind in order to  grasp the problem one faces when trying to prove this implication. Start with a complicated Jordan curve $\Gamma$ in $\R^2$, for example Koch's snowflake.  Define $Z$ as the closure of the  Jordan domain  of $\Gamma$ but let the boundary curve $\Gamma\subset Z$ have some finite length without changing the lengths outside of $\Gamma$.  (To make the picture more complicated, change the metric inside the Jordan domain by a smooth conformal factor in such a way that the curvature is everywhere non-positive and tends to $-\infty$ in the neighborhood of $\Gamma$).  The arising space $Z$ is not ${\rm CAT}(0)$. Therefore,
the proof of \tref{thm4} must detect in this  space $Z$ Jordan curves which violate the Euclidean  isoperimetric inequality. Since $Z\setminus \partial Z$ is locally $\rm{CAT} (0)$, parts of these curves must be contained in the boundary $\Gamma$,   where the geometry is particularly complicated.

\subsection{Generalization to non-zero curvature bounds} \tref{thmfirst} generalizes to other curvature bounds. The extension  is achieved along the same route and involves only  minor difficulties of notational and technical nature. In order to formulate the statement we introduce the notion of a \emph{Dehn function}. Let $X$ be a metric space.
 Let $\delta:(0,\infty) \to [0,\infty]$ be a function. We say that $X$ satisfies the $\delta$-isoperimetric inequality,
 if for any $r>0$, any  Lipschitz curve $\gamma :S^1\to X$
of length $\leq r$ bounds a Lipschitz disc $u: \bar D\to X$ of parametrized Hausdorff area $\leq \delta (r)$.
The Dehn function $\delta _X$  of $X$ (with respect to Lipschitz discs)  is the infimum of all functions
$\delta :(0,\infty) \to [0,\infty]$ for which $X$ satisfies the $\delta $-isoperimetric inequality.

  For any real number $\kappa $  we consider the simply connected
space form $M_{\kappa } ^2$ of curvature $\kappa$ and denote by $R_{\kappa} \in (0, \infty]$ twice the diameter of $M_{\kappa} ^2$.  Let $\delta_{\kappa}$
 be  the Dehn function  of $M_{\kappa} ^2$.
 Now  we can state the generalization of \tref{thmfirst}   to non-zero curvature bounds.

\begin{thm}  \label{thmmain}
Let $X$ be a proper metric space in which any pair of points is connected by a curve  of finite length. Let $X^i$ be  the set $X$ with the induced length metric. The space
$X^i$ is  ${\rm CAT}(\kappa )$  if and only if the Dehn function $\delta _X$ of $X$ satisfies $\delta _X \leq \delta _{\kappa } $
on the interval $(0,R_{\kappa})$.
\end{thm}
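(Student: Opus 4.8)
The plan is to deduce \tref{thmmain} from \tref{thmfirst} together with the $\kappa=0$ results, using the standard comparison-geometry reduction together with a rescaling/limiting argument, rather than redoing the whole machinery for each $\kappa$. First I would record the elementary fact that the Dehn condition is local and scale-sensitive: the hypothesis $\delta_X\le\delta_\kappa$ on $(0,R_\kappa)$ passes to the induced length space $X^i$, since a short Lipschitz curve in $X^i$ has the same length as in $X$ and any filling disc in $X$ is a filling disc in $X^i$ with the same parametrized Hausdorff area (the area is computed from the metric differential, which only sees infinitesimal distances, hence is unchanged under passing to the induced length metric). So we may assume from the start that $X=X^i$ is a proper geodesic space, and we must show: $X$ is ${\rm CAT}(\kappa)$ iff $\delta_X\le\delta_\kappa$ on $(0,R_\kappa)$.

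For the ``only if'' direction, suppose $X$ is ${\rm CAT}(\kappa)$. Given a Lipschitz curve $\gamma$ of length $r<R_\kappa$, Reshetnyak's majorization theorem produces a majorizing map from a convex domain in $M_\kappa^2$ bounded by a curve of length $\le r$; composing with a Lipschitz filling of that curve in $M_\kappa^2$ of area $\le\delta_\kappa(r)$ (which exists by definition of $\delta_\kappa$) and using that the majorizing map is $1$-Lipschitz, hence does not increase parametrized Hausdorff area, yields a filling of $\gamma$ in $X$ of area $\le\delta_\kappa(r)$. Taking the infimum over $\gamma$ gives $\delta_X\le\delta_\kappa$ on $(0,R_\kappa)$. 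The technical points here — that $1$-Lipschitz maps are area-nonincreasing for parametrized Hausdorff area, and that the boundary of a Reshetnyak majorizer can be filled inside $M_\kappa^2$ with the extremal area — are both routine given the first two sections of the paper and the definition of $\delta_\kappa$.

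The ``if'' direction is the substantive one, and I would run it by rescaling to the $\kappa=0$ case in the small and then invoking a standard globalization theorem (Cartan--Hadamard / the local-to-global theorem for ${\rm CAT}(\kappa)$). Fix $x\in X$ and a small ball $B=B(x,\rho)$ with $\rho$ much smaller than $R_\kappa$ and smaller than the injectivity-type scale one needs; rescale $B$ by a large factor $\lambda\to\infty$. Under rescaling by $\lambda$, a curve of length $r$ becomes a curve of length $\lambda r$ and a disc of area $A$ becomes a disc of area $\lambda^2 A$, so the hypothesis $\delta_X(r)\le\delta_\kappa(r)$ for $r<R_\kappa$ translates, on the rescaled space, into $\delta(\lambda r)\le\lambda^2\delta_\kappa(r)$ for $\lambda r<\lambda R_\kappa$; since $\delta_\kappa(r)=\frac1{4\pi}r^2+O(r^4)$ as $r\to0$ (the second-order expansion of the space-form Dehn function, with $\kappa$ entering only at fourth order), for any fixed curve length $t$ the rescaled bound tends to $\frac1{4\pi}t^2$ as $\lambda\to\infty$. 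Thus any pointed Gromov--Hausdorff limit of the rescalings — equivalently, because $X$ is proper and we can instead argue directly on small balls — satisfies the Euclidean isoperimetric inequality for curves of all lengths, so by \tref{thmfirst} it is ${\rm CAT}(0)$; transporting this back, $X$ is locally ${\rm CAT}(0)$... but this only gives curvature $\le0$, which is weaker than $\le\kappa$ when $\kappa<0$ and, more seriously, says nothing toward $\le\kappa$ when $\kappa>0$. So the blow-up argument alone is insufficient and must be replaced, or supplemented, by an argument that works at the fixed scale of $M_\kappa^2$.

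Hence the real plan for ``if'' is to mimic, at the scale of $R_\kappa$, the proof of \tref{thmfirst} rather than reduce to it: for a rectifiable Jordan curve $\Gamma$ in $X$ of length $<R_\kappa$, solve the Plateau problem, pass to the intrinsic minimal disc $Z$, and show the isoperimetric hypothesis forces $Z$ to be ${\rm CAT}(\kappa)$ — the analogue of \tref{thmminimal}. The steps transfer with the ``minor difficulties of notational and technical nature'' promised in the introduction: property (ET) and conformality of the solution are insensitive to the value of $\kappa$ and follow exactly as in the $\kappa=0$ case (they come from the Blaschke--Santaló input on normed planes and quasiconvexity of area, both scale-free); the Beckenbach--Rado step now shows the conformal factor $f$ satisfies the differential inequality $\Delta\log f\ge-\kappa f^2$ (in the distributional sense) — this is where the curvature bound enters, and it is the correct generalization of log-subharmonicity — and Reshetnyak's theorem from \cite{Resh-isop} for variable curvature bounds then gives that the length metric $\int_\gamma f$ is locally ${\rm CAT}(\kappa)$; finally the identification $Z=\overline Y$ via the cutting-and-pasting of \tref{thm4} is purely topological/metric and does not involve $\kappa$ at all, except that condition (3) there must be replaced by the ${\rm CAT}(\kappa)$-isoperimetric inequality $\mathcal H^2(J_\eta)\le\delta_\kappa(\ell_Z(\eta))$ for $\ell_Z(\eta)<R_\kappa$, whose proof follows the same long technical route with $\frac1{4\pi}\ell^2$ replaced throughout by $\delta_\kappa(\ell)$ and small-diameter normalizations inserted so that everything stays inside the injectivity scale of $M_\kappa^2$. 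Granting the analogue of \tref{thmminimal}, one concludes exactly as in the sketch after \tref{thmminimal}: every Jordan triangle of perimeter $<R_\kappa$ in $X$ is majorized by $\bar u$ from a ${\rm CAT}(\kappa)$ disc, hence satisfies $\kappa$-comparison, so $X^i$ is locally ${\rm CAT}(\kappa)$ with comparison on balls of radius $<R_\kappa/2$; the local-to-global theorem for ${\rm CAT}(\kappa)$ spaces (which holds since $X$ is proper, hence complete, and geodesic) upgrades this to $X^i$ being ${\rm CAT}(\kappa)$. The main obstacle, as in the original, is the variable-curvature analogue of \tref{thm4}(3)$\Rightarrow$(1): controlling the boundary curve, whose length is not seen by the conformal factor, and doing so while carrying the quantitatively weaker and geometrically curved comparison function $\delta_\kappa$ — in particular verifying that $\delta_\kappa$ behaves well enough under the cut-and-paste subdivisions (superadditivity-type estimates for $\delta_\kappa$ on $(0,R_\kappa)$) for the induction in that proof to close.
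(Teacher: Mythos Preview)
Your eventual approach—after correctly abandoning the blow-up reduction—is essentially the paper's own: establish property (ET) from the asymptotic $\delta_\kappa(r)\sim\frac{1}{4\pi}r^2$ as $r\to 0$, replace the Beckenbach--Rad\'o step by Reshetnyak's generalization \cite{Resh-isop} to obtain curvature $\le\kappa$ for the metric $d_f$, and rerun \tref{thm4} with $\delta_\kappa$ in place of $\frac{1}{4\pi}\ell^2$. Your worry about ``superadditivity-type estimates for $\delta_\kappa$'' in the Part~II arguments is resolved more simply than you suggest: the paper just restricts to sufficiently small scales where $\delta_\kappa$ is uniformly close to $\delta_0$, so the numerical inequalities go through unchanged.

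There is one genuine point you gloss over. In the $\kappa=0$ proof, the theorem of Cartan--Hadamard is used (in \lref{lem:Jordomloc} and \pref{bisgen}) to pass from local curvature $\le 0$ to global ${\rm CAT}(0)$ on simply connected pieces. For $\kappa>0$ simple connectedness alone does \emph{not} suffice to globalize: one must additionally rule out closed local geodesics of length $<R_\kappa$. The paper inserts a short lemma for this: if such a closed geodesic $\Gamma$ existed in the disc $Z$, its Jordan domain would have area at least $\frac{1}{2\pi}\ell^2(\Gamma)$ by Ivanov's hemisphere filling-minimality \cite{Ivfinsler}, which strictly exceeds $\delta_\kappa(\ell(\Gamma))$ for $\ell(\Gamma)<R_\kappa$ and contradicts the isoperimetric hypothesis. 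You invoke ``the local-to-global theorem for ${\rm CAT}(\kappa)$'' only at the level of $X^i$, where it is actually not needed—once $X^i$ is geodesic, verifying $\kappa$-comparison for all Jordan triangles of perimeter $<R_\kappa$ \emph{is} the ${\rm CAT}(\kappa)$ condition. The place where globalization is genuinely required is inside the proof of the $\kappa$-analogue of \tref{thm4}, and there it needs this extra isoperimetric ingredient, not merely completeness and simple connectedness.
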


\subsection{Structure of paper and final comments}
The paper consists of two parts and one appendix. The first part, which relies heavily on the existence and regularity theory of solutions of the  Plateau problem, reduces \tref{thmfirst} and \tref{thmminimal}  to  \tref{thm4}. We closely follow the plan sketched above. The second part is devoted to the proof of \tref{thm4}.  It consists of purely $2$-dimensional metric geometry.
  The structure of this part is explained in Section \ref{Intro2}.   In the appendix we explain the  minor additional difficulties arising in the proof of \tref{thmmain}  and sketch
 the solutions of these problems.

\begin{rem}
Once \tref{thmfirst} has been proven, the statements of \tref{thmminimal} and \tref{thm4} can be strengthened, see \cite{Petrunin-old}, \cite{Petrunin} and \cite{LW-parameter}.
\end{rem}

\begin{rem}
It would be  interesting to extend  \tref{thmfirst}  to the non-proper and to the coarse settings, similarly to  the results in \cite{Wengerhyperbolic}.
\end{rem}

\begin{rem} \label{rmk-ht}
In convex and metric geometry  there are many natural ways to measure area of $2$-rectifiable sets  and Lipschitz discs besides  the Hausdorff area.
The most famous among such definitions of area are  Gromov's mass$^*$ and the Holmes-Thompson definition of area. We refer to \cite{Tho96} and \cite{LW-intrinsic} for lengthy discussions on definitions of area.
As explained above, the first step of the proof of our main theorem   uses an inequality from convex geometry to exclude all
non-Euclidean tangent planes. The argument applies to all quasi-convex definitions of area $\mu$ with the following property: among all normed planes only the  Euclidean plane satisfies the Euclidean isoperimetric inequality with respect to $\mu$.  For the Holmes-Thompson definition of area $\mu ^{ht}$
the Euclidean isoperimetric inequality holds sharply for all normed planes. Thus, \tref{thmfirst} is valid for any quasi-convex definition of area $\mu$ which satisfies $\mu \geq \mu^{ht}$ on all normed planes with equality only on the Euclidean plane.  In particular, \tref{thmfirst} remains  valid for Gromov's mass$^{\ast}$  and for Ivanov's  "inscribed Riemannian"   definition of area.

The validity of the Euclidean isoperimetric inequality for curves with respect to the Holmes-Thompson definition of area  might  be related to other forms of
 convexity beyond ${\rm CAT}(0)$.
\end{rem}

\begin{rem}
If the constant $\frac 1 {4\pi}$ in the formulation of the Euclidean isoperimetric inequality is replaced by any smaller
constant, then the space $X^i$  in \tref{thmfirst} turns out to be a tree,  \cite{LW-asymptotic}, \cite{Wengerhyperbolic}.
\end{rem}

\begin{rem}
 As a consequence of  \tref{thmfirst},  a  proper geodesic metric  space with Euclidean isoperimetric inequality for curves must be contractible.
It would be interesting to know whether any topological conclusions can be drawn, if the constant $\frac 1 {4\pi}$ is replaced by a slightly larger constant
$\frac 1 {4\pi} <C <\frac 1 {2\pi}$.
\end{rem}

\subsection{Acknowledgements} The authors would like to thank  Werner Ballmann, Anton Petrunin and Stephan Stadler for helpful conversations.
 Research relevant to the present paper has been carried out since 2009 at the following institutions: University of Illinois at Chicago, MSRI at Berkeley, University of M\"unster, University of Cologne,  University of Fribourg. The authors wish to thank these institutions. The first author was supported in part by SFB 878 {\it Groups, Geometry and Analysis} and by a Heisenberg grant from the DFG. The second author was partially supported by NSF Grant DMS-0956374, NSF CAREER Grant DMS-1056263, and Swiss National Science Foundation SNF Grants 153599 and 165848.

\vspace{2cm}

\centerline{\bf{Part I. Structure of minimal discs.}}

\vspace{0.5cm}

\section{Basics on metric spaces} \label{secpre2}
\subsection{Notation}
The Euclidean norm of a vector $v\in\R^2$ is denoted by $|v|$.  We denote the open unit disc in $\R^2$ by $D$.  Connected open subsets of $\R^2$ will be called domains.
A metric space is called proper if its closed bounded subsets are compact. We will denote distances in a metric space $X$ by $d$ or $d_X$.
Let $X=(X,d)$ be a metric space. The open ball  in $X$ of radius $r$ and center $x_0\in X$ is denoted by
$$B(x_0,r)=B_X(x_0,r) = \{x\in X: d(x_0, x)<r\}. $$
%

A Jordan curve in $X$ is a subset $\Gamma\subset X$ which is homeomorphic to $S^1$. Given a Jordan curve $\Gamma\subset X$, a continuous map $\gamma \colon S^1\to X$ is called a weakly monotone parametrization of $\Gamma$ if $\gamma $ is a uniform limit of homeomorphisms $\gamma _i\colon S^1\to\Gamma$.
For $m\geq 0$, the $m$-dimensional Hausdorff measure on $X$ is denoted by $\mathcal H^m=\mathcal H^m_X$. The normalizing constant is chosen in such a way that on Euclidean $\R^m$ the Hausdorff measure $\mathcal H^m$ equals the Lebesgue measure $\mathcal L^m$.

 If no confusion is possible we will identify parametrized curves and their unparametrized images and denote them by the same symbol.
The length of a curve $\gamma $ in a metric space $X$ will be denoted by $\ell_X(\gamma )$ or simply by $\ell(\gamma)$.
 A continuous curve of finite length is called \emph{rectifiable}.
A (local) \emph{geodesic} in a space $X$ is a (locally) isometric map  from an interval to $X$.
   A space $X$ is called \emph{a geodesic space} if any pair of points in $X$ is connected by a geodesic.
A space $X$ is a \emph{length space}
if for all $x,y\in X$ the distance $d(x,y)$ equals $\inf \{\ell_X (\gamma ) \}$, where
$\gamma$ runs over the set of all curves connecting $x$ and $y$.


\subsection{Length metric associated with a map} \label{associated}
We refer the reader to \cite{BBI01}, \cite{Pet-intrinsic},  \cite{LW-intrinsic} for  discussions of the following construction and related topics.
Let $X',X$ be metric spaces. Let $u:X'\to X$ be a continuous map. Assume that for any $y_1,y_2 \in X'$ there exists a continuous curve
$\gamma :I\to X'$ connecting $y_1$ and $y_2$ such that the curve $u\circ \gamma$ has finite length.  Then we let $d_u (y_1,y_2) \in [0,\infty)$
 be the infimum of lengths of all such curves $u\circ \gamma$. The so defined function $d_u:X'\times X'\to [0,\infty )$ is a pseudo-distance
on the set $X'$. The corresponding metric space $Z_u$, which arises from $X'$ by identifying pairs of points with $d_u$-distance $0$, is a length
space.  We will call it the length \emph{metric space associated with the map $u$}.

By construction, the space $Z_u$ associated with  the map $u$ comes with a canonical, possibly non-continuous, surjective projection $P:X'\to Z_u$ and
a $1$-Lipschitz map $\bar u:Z_u\to X$ such that $u=\bar u\circ P$.

The most prominent example of this construction is given as follows. Let $X$ be a metric space in which any pair of points is connected by a curve of finite length.
Then the \emph{length space $X^i$ associated to $X$} is the special case $X^i=Z_u$ of the above construction for the identity map $u=\Id:X\to X$.
If $X$ is proper then, due  to the theorem of Arzela-Ascoli, any pair of points in $X$ is connected by a curve of shortest length. Therefore  the space $X^i$ is a geodesic space.
The completeness of $X$ implies that $X^i$ is complete as well. The $1$-Lipschitz map $\bar u :X^i\to X$ from above  is the identity in this case. The map $P =\bar u ^{-1}:X\to X^i$  need not
 be continuous, but it sends curves of finite length in $X$ to continuous curves of the same length in $X^i$.

\subsection{Polygons and triangles}  A polygon in a metric space $X$ is a closed curve $\gamma:[a,b] \to X$ such that
for some $a=t_1\leq ....\leq t_n =b$ all  restrictions $\gamma :[t_i,t_{i+1}] \to X$   are geodesics.  A Jordan curve $\Gamma$  which
 can be parametrized as a polygon  will be called a
 \emph{Jordan polygon}.   If $n= 3$
we obtain the notions of a \emph{triangle} and  \emph{Jordan triangle}, respectively.

\subsection{Parametrized area of Lipschitz maps} \label{subsecarealip}
 Let $K$ be a Borel subset of $\R^2$  and let $u:K\to X$
 be a Lipschitz map into some metric space $X$.  The set $K$ can be  decomposed as a disjoint union $K=\cup _{i=1} ^{\infty}  K_i \cup A$ in such a way that
 all $K_i$ are compact and $A$ has measure $0$, and such that $u:K_i \to X$ is
either injective or $\mathcal H^2 (u(K_i))=0$, see \cite{Kir94}.   The parametrized area of $u$,  which generalizes the classical parametrized area of smooth maps is given by (see  \cite[Subsection 2.4]{LW-intrinsic}):
   $$\Area(u):= \sum _{i=1} ^{\infty}  \mathcal H^2
(u(K_i))= \int _{u(K)} N (x)  \; d \mathcal  H^2  (x) \, ,$$
 where $N(x)$ is the cardinality of $u^{-1} (x)$.  Alternatively, the parametrized area can be computed by a metric transformation  formula  cf. \cite[p.3]{LW}.

For any biLipschitz homeomorphism $F:K_0\to K \subset \R^2$ the para\-metrized areas of $u:K\to X$ and $u_0=u\circ F:K_0\to X$ coincide.

\section{Upper curvature bounds} \label{sec:curvature}

\subsection{Definition}
For a triangle $\Gamma$ in a metric space $X$ we consider the (unique up to Euclidean motions)  comparison triangle  $\Gamma _0 \subset \R^2$  with the same side-lengths as $\Gamma$. The triangle $\Gamma$ is  called \emph{thick}  (more precisely $0$-thick) if there are points on $\Gamma_0$
 which have smaller distance than the corresponding points on $\Gamma $, cf. \cite{ballmann}.  Otherwise the triangle is called \emph{thin} (or \emph{${\rm CAT}(0 )$-triangle} in the terminology  of \cite{ballmann}).

A complete geodesic  metric space $X$ is ${\rm CAT}(0)$  if  there are no thick triangles in
$X$.  The following observation allows the restriction to Jordan triangles:

\begin{lem} \label{Jordanexist}
Let $X$ be a complete geodesic metric space.
  If $X$ is not ${\rm CAT}(0 )$ then there exists a thick Jordan triangle in $X$.
\end{lem}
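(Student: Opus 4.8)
The plan is to start from a thick triangle $\Gamma$ in $X$ — which exists by definition if $X$ is not ${\rm CAT}(0)$ — and modify it into a thick \emph{Jordan} triangle. Write $\Gamma$ as the concatenation of three geodesics $[x_1,x_2]$, $[x_2,x_3]$, $[x_3,x_1]$ with $\Gamma$ thick, meaning some pair of points on $\Gamma$ is strictly farther apart in the comparison triangle $\Gamma_0\subset\R^2$ than in $X$. The issue is that such a triangle may have self-intersections: two of its sides may overlap along subsegments, or a side may pass through a vertex, or the three sides may cross each other. I would like to cut away the "bad" parts and extract an embedded triangle that is still thick.

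The key steps, in order, are as follows. \emph{Step 1: locate a witnessing pair of points and a subtriangle.} Fix points $p, q \in \Gamma$ witnessing thickness, i.e.\ the corresponding comparison points $\bar p, \bar q$ satisfy $d_{\R^2}(\bar p,\bar q) > d_X(p,q)$. After relabeling, $p$ and $q$ lie on at most two of the three sides. \emph{Step 2: replace a side by a geodesic shortcut to simplify intersections.} The standard trick (cf.\ the argument in Bridson--Haefliger or \cite{ballmann} that reduces ${\rm CAT}(0)$ to Jordan triangles) is this: if the geodesic triangle is degenerate in the sense that, say, two sides share an initial segment, one truncates at the first point where they diverge; if a side passes through the opposite vertex, one splits into two smaller triangles, at least one of which inherits thickness by a comparison-angle / Alexandrov-lemma argument (gluing comparison triangles: if none of the pieces is thick, the whole is thin). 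Iterating, one may assume the three open sides are pairwise disjoint except possibly at the three vertices, and that no side contains a vertex in its interior. \emph{Step 3: handle remaining transversal self-intersections.} If two sides still cross at an interior point $z$, then $z$ cuts $\Gamma$ into a bigon and a smaller triangle (or two smaller triangles); in a geodesic space the bigon forces the two sub-geodesics from $z$ to a common endpoint to coincide on a neighborhood of $z$, contradicting transversality, or else one of the sub-triangles is again thick by the gluing lemma and has strictly smaller perimeter (or one fewer intersection). Since the perimeter is bounded below away from $0$ whenever the triangle is thick — a thin degenerate triangle is never thick — and the combinatorial complexity decreases, the process terminates in a genuine Jordan triangle that is still thick.

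The main obstacle is \textbf{Step 2/3}: making rigorous that "at least one piece stays thick." This is exactly Alexandrov's lemma / the gluing lemma for comparison triangles: given a geodesic quadrilateral or a triangle subdivided by a cevian, if every sub-triangle is thin then the comparison configurations can be glued along the common side, and convexity of the union (one checks the glued figure is still convex, or straightens it via Alexandrov's lemma) shows the original triangle is thin. So the contrapositive gives thickness of some piece. I would invoke this lemma explicitly. A secondary, more bookkeeping-type obstacle is ensuring the induction terminates: one argues that each reduction either strictly decreases the number of "singular" parameter values (overlaps, vertex-crossings, transversal crossings), which is finite because geodesics in the relevant configuration can only overlap or cross in a controlled way once we have passed to minimal sub-segments — alternatively one minimizes perimeter among thick sub-triangles obtained this way and shows a perimeter-minimizer must be embedded, since any self-intersection would produce a strictly shorter thick triangle by the gluing lemma. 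The latter formulation is cleanest: \emph{take a thick triangle of minimal perimeter among all those obtainable by the above cut-and-paste operations; it must be a Jordan triangle.}
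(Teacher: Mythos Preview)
Your core idea---pass to sub-triangles and use Alexandrov's lemma to preserve thickness---is the right one and matches the paper's. But you are working much harder than necessary, and your termination argument is not rigorous as written: you invoke a perimeter-minimizer among an ill-specified class of ``obtainable'' triangles without showing one exists, and the alternative count of ``singular parameter values'' is never shown to be finite.

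The paper avoids any iteration by a simple dichotomy you overlook. Either $X$ has two distinct geodesics between some pair of points, in which case suitable subarcs form a Jordan bigon---a degenerate triangle that is automatically thick---and you are done in one step. Or geodesics are unique; then two sides of a triangle can only meet along an initial segment from their common vertex (any other common point $z$ would yield two geodesics from $z$ to that vertex). So for a thick triangle $A_1A_2A_3$ one takes $A_i'$ to be the last common point of $A_iA_j$ and $A_iA_k$; the resulting triangle $A_1'A_2'A_3'$ is Jordan by construction, and a single application of Alexandrov's lemma shows that if it were thin then so would be $A_1A_2A_3$.

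In particular, your Step 3 (``transversal self-intersections'') simply does not occur once you assume uniqueness of geodesics, and in the non-unique case the bigon finishes the proof immediately---there is no need to split into pieces and chase thickness through a gluing argument. If you want to keep your inductive scheme, you must at minimum justify finiteness of the process; but the dichotomy makes that unnecessary.
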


\begin{proof}
If there are two different geodesics between a pair of points, then we find parts of these geodesics that build a Jordan curve.  This Jordan curve is a  geodesic bigon, a degenerate case of a triangle, which is automatically thick.

Otherwise geodesics  are uniquely determined by their endpoints.  Given a thick triangle with vertices  $A_1,A_2,A_3$, we find  a uniquely determined Jordan triangle with vertices  $A_1',A_2 ',A_3'$  in the union of the sides, by taking $A_i '$ to be the last common point of the sides $A_iA_j$ and $A_i A_k$.  If the triangle $A_1'A_2'A_3'$ is thin, then so is the triangle $A_1A_2A_3$ by Alexandrov's lemma,
cf. \cite[Lemma 3.5]{ballmann}. Thus we have found  a thick Jordan triangle $A_1'A_2'A_3'$ in $X$.
\end{proof}

\subsection{Majorization theorem}
Let $X$ be a ${\rm CAT}(0 )$ space. Due to the majorization  theorem of Reshetnyak, \cite{Resh-majorization},   any closed curve $\gamma:[0,l]\to X$ parametrized by arclength is \emph{majorized} by a  closed convex set
$\bar \Omega \subset \R^2$ in the following sense, cf. \cite{AKP} and \cite{ballmann}. There exists a simple closed parametrization by arclength  $\eta :[0,l]\to \R^2$ of the boundary  $\partial \Omega$ and
a  $1$-Lipschitz map $M:\bar \Omega \to X$ such that $M\circ \eta =\gamma$.
Then, for any biLipschitz parametrization $F:\bar D\to \bar \Omega$, the area of the  Lipschitz disc $M\circ F$ is bounded by
 $\Area (M\circ F)\leq \mathcal H^2 (\Omega)$.  The  isoperimetric inequality in $\R^2$  yields
   $ \Area (M\circ F) \leq \frac 1 {4\pi} l ^2$.
 Now it is easy to deduce:

 \begin{lem} \label{lem:onlyif}
 Let $X$ be a ${\rm CAT}(0)$ space. Then any Lipschitz curve $\gamma :S^1\to X$ of length $l$  is the boundary of a Lipschitz map
 $u:\bar D\to X$ with $\Area (u) \leq \frac 1 {4\pi} l ^2$.
 \end{lem}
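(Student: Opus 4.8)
The plan is to exploit Reshetnyak's majorization theorem, which has just been recalled, together with the two elementary facts mentioned right after it. So first I would take an arbitrary Lipschitz curve $\gamma\colon S^1\to X$ of length $l$ in the ${\rm CAT}(0)$ space $X$. After reparametrizing proportionally to arclength, I may regard $\gamma$ as a closed curve $\gamma\colon[0,l]\to X$ parametrized by arclength (the reparametrization is biLipschitz when $\gamma$ has positive speed, and the degenerate case where $\gamma$ is constant is trivial since we may take $u$ constant). By the majorization theorem there is a closed convex set $\bar\Omega\subset\R^2$, an arclength parametrization $\eta\colon[0,l]\to\R^2$ of $\partial\Omega$, and a $1$-Lipschitz map $M\colon\bar\Omega\to X$ with $M\circ\eta=\gamma$.

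The second step is to pass from $\bar\Omega$ to the disc. Since $\bar\Omega$ is a compact convex set with non-empty interior (if $\Omega$ has empty interior then $\bar\Omega$ is a segment, $\ell(\eta)$ would be traversed back and forth, and again $\gamma$ bounds a degenerate, hence zero-area, Lipschitz disc), there is a biLipschitz homeomorphism $F\colon\bar D\to\bar\Omega$; concretely one can use the radial map associated to the support function, or simply invoke that any compact convex body in the plane is biLipschitz to the disc. Moreover $F$ can be chosen so that $F|_{S^1}$ is a homeomorphism onto $\partial\Omega$, and after composing with an orientation-preserving homeomorphism of $S^1$ we may assume $M\circ F$ restricted to $S^1$ is a weakly monotone parametrization of $\Gamma=\gamma(S^1)$ tracing it out; in any case the precise boundary parametrization does not matter for the area bound, only that $u:=M\circ F$ is a Lipschitz disc whose boundary curve is (a reparametrization of) $\gamma$.

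The third step is the area estimate, which is exactly the computation displayed before the lemma: since $M$ is $1$-Lipschitz, for any Borel decomposition of $\bar D$ as in Subsection~\ref{subsecarealip} the images under $u=M\circ F$ have Hausdorff measure no larger than the Hausdorff measure of the corresponding images of $F$ in $\bar\Omega$; since $F$ is a biLipschitz homeomorphism onto $\bar\Omega$ this gives $\Area(u)\le\mathcal H^2(\Omega)$. The planar isoperimetric inequality applied to the convex domain $\Omega$, whose boundary has length $\ell(\partial\Omega)=\ell(\eta)=l$, yields $\mathcal H^2(\Omega)\le\frac1{4\pi}l^2$, and combining the two inequalities finishes the proof.

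I do not expect any serious obstacle here: the lemma is essentially a bookkeeping assembly of the majorization theorem, the invariance of parametrized area under biLipschitz reparametrization (Subsection~\ref{subsecarealip}), and the classical isoperimetric inequality in $\R^2$. The only points that require a word of care are the degenerate cases ($\gamma$ constant, or $\Omega$ with empty interior), handled as above by taking a degenerate disc of zero area, and the choice of the biLipschitz chart $F\colon\bar D\to\bar\Omega$, which exists because $\bar\Omega$ is a planar convex body with interior. One should also note that ``boundary of a Lipschitz map $u\colon\bar D\to X$'' is understood up to reparametrization of $S^1$, so matching $u|_{S^1}$ with the given $\gamma$ costs nothing.
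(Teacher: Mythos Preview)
Your approach is essentially the paper's: pass to the arclength parametrization $\gamma_0$, apply Reshetnyak's majorization, pull back via a biLipschitz chart $F\colon\bar D\to\bar\Omega$, and invoke the planar isoperimetric inequality. The one place where you are too quick is the closing remark that matching $u|_{S^1}$ with the \emph{given} $\gamma$ ``costs nothing''. The lemma asks for a Lipschitz $u$ with $u|_{S^1}=\gamma$, and a Lipschitz curve $\gamma$ need not have speed bounded away from zero (not just on a null set: it may be constant on an arc), so the monotone map $S^1\to S^1$ taking $\gamma$ to $\gamma_0$ is Lipschitz but generally not biLipschitz; you cannot simply absorb it into $F$. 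The paper deals with this by gluing to the filling $u_0$ of $\gamma_0$ a Lipschitz annulus of zero area that linearly interpolates between the two parametrizations, obtaining $u$ with $u|_{S^1}=\gamma$ and $\Area(u)=\Area(u_0)$. This is a standard trick (cf.\ \cite[Lemma~3.6]{LW-asymptotic}), but it is not quite nothing, and it is exactly the content of the paper's short proof beyond the paragraph preceding the lemma.
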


\begin{proof}
Let $\gamma _0:S^1\to X$ be a parametrization of $\gamma$ proportional to arclength.  The existence of a Lipschitz map $u_0:\bar D\to X$ extending $\gamma_0$ with the right bound on the area follows from the paragraph preceding the lemma. We attach to $u_0$ a Lipschitz annulus of zero area connecting $\gamma_0$ and $\gamma$ by a linear reparametrization,  cf. \cite[Lemma 3.6]{LW-asymptotic}.
The arising Lipschitz disc $u$ has the same area as $u_0$ and provides the required filling of $\gamma$.
\end{proof}

\subsection{Curvature bounds via majorization}
The majorization theorem is  closely related to the following  observation.

\begin{lem} \label{curvmaj}
Let $X$ be a complete geodesic metric space. The space $X$ is ${\rm CAT}(0 )$ if and only if for any Jordan triangle $\Gamma \subset X$  there exists a ${\rm CAT}(0)$ space $Z$ and a $1$-Lipschitz map $F:Z\to X$ which sends some closed rectifiable  curve $\Gamma ' \subset Z $ in
an  arclength preserving way onto $\Gamma$.
\end{lem}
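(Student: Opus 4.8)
The plan is to establish the two implications separately. For the easy direction, assume $X$ is ${\rm CAT}(0)$ and let $\Gamma\subset X$ be a Jordan triangle. Parametrize $\Gamma$ by arclength as a closed curve $\gamma:[0,l]\to X$. By Reshetnyak's majorization theorem (as recalled above), there is a closed convex set $\bar\Omega\subset\R^2$, an arclength parametrization $\eta$ of $\partial\Omega$, and a $1$-Lipschitz map $M:\bar\Omega\to X$ with $M\circ\eta=\gamma$. Taking $Z=\bar\Omega$ with its intrinsic (Euclidean) metric, $\Gamma'=\partial\Omega$, and $F=M$ gives exactly the required data: $\bar\Omega$ is ${\rm CAT}(0)$, $F$ is $1$-Lipschitz, and $F$ maps $\Gamma'$ onto $\Gamma$ preserving arclength. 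This handles the "only if" part.

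For the converse, assume the majorization-type condition holds and suppose, for contradiction, that $X$ is not ${\rm CAT}(0)$. By \lref{Jordanexist} there is a thick Jordan triangle $\Gamma\subset X$; let $A_1,A_2,A_3$ be its vertices and $a_i=d_X(A_j,A_k)$ the side lengths. By hypothesis we obtain a ${\rm CAT}(0)$ space $Z$, a $1$-Lipschitz map $F:Z\to X$, and a closed rectifiable curve $\Gamma'\subset Z$ mapped onto $\Gamma$ in an arclength-preserving manner. Since $F\circ\Gamma'=\Gamma$ preserves arclength, the preimages of the three vertices split $\Gamma'$ into three subarcs whose lengths are $a_1,a_2,a_3$; pick points $A_i'\in Z$ on $\Gamma'$ with $F(A_i')=A_i$ separating these arcs. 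In the ${\rm CAT}(0)$ space $Z$ one has $d_Z(A_j',A_k')\le$ (length of the connecting subarc of $\Gamma'$) $=a_i$, while $d_Z(A_j',A_k')\ge d_X(F(A_j'),F(A_k'))=d_X(A_j,A_k)=a_i$ because $F$ is $1$-Lipschitz. Hence $d_Z(A_i',A_j')=d_X(A_i,A_j)$ for all $i,j$: the triangle $A_1'A_2'A_3'$ in $Z$ has the same side lengths as $\Gamma$, so the same comparison triangle $\Gamma_0\subset\R^2$.

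Now compare distances of interior points. Since $Z$ is ${\rm CAT}(0)$, any (geodesic) triangle in $Z$ with vertices $A_i'$ is thin, so for points $p,q$ on its sides the corresponding comparison points in $\Gamma_0$ are no closer than $p,q$, i.e.\ $d_{\R^2}(\bar p,\bar q)\le d_Z(p,q)$. Applying $F$ once more, $d_Z(p,q)\ge d_X(F(p),F(q))$. But $F$ sends the subarcs of $\Gamma'$ isometrically onto the sides of $\Gamma$ (an arclength-preserving map of a rectifiable arc realizing the endpoint distance is a geodesic, and its image is the corresponding side geodesic of the Jordan triangle $\Gamma$, which is unique once we are in the non-bigon case; the bigon case is degenerate and handled as in \lref{Jordanexist}). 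Thus for points $P,Q$ on $\Gamma$ with comparison points $\bar P,\bar Q$ in $\Gamma_0$ we get $d_{\R^2}(\bar P,\bar Q)\le d_X(P,Q)$, which says precisely that $\Gamma$ is thin — contradicting thickness of $\Gamma$. Therefore $X$ is ${\rm CAT}(0)$.

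The main obstacle is the bookkeeping in the second implication: one must be careful that the three subarcs of $\Gamma'$ are genuinely geodesics in $Z$ (which follows from arclength preservation plus the $1$-Lipschitz lower bound on endpoint distances forcing equality), and that comparison points along the sides of $\Gamma$ correspond correctly to comparison points along the sides of the $Z$-triangle — this uses that $F$ restricted to each subarc is an isometry onto the corresponding side of $\Gamma$ and hence identifies comparison parametrizations. The possible non-uniqueness of geodesics in $X$ (the geodesic bigon case) must be dispatched first, exactly as in the proof of \lref{Jordanexist}, since there the thick Jordan ``triangle'' is already degenerate and the argument above applies verbatim with two arcs instead of three.
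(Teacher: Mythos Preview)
Your plan is essentially the paper's, but there is one genuine slip in the ``if'' direction: you have the ${\rm CAT}(0)$ comparison inequality reversed. Thinness of the triangle $\Gamma'$ in $Z$ means $d_Z(p,q)\le d_{\R^2}(\bar p,\bar q)$ (your verbal phrase ``comparison points are no closer'' is right, but that translates to $d_{\R^2}\ge d_Z$, not $d_{\R^2}\le d_Z$). With the correct direction the chain becomes
\[
d_X(F(p),F(q))\;\le\; d_Z(p,q)\;\le\; d_{\R^2}(\bar p,\bar q),
\]
which is exactly thinness of $\Gamma$. Your written conclusion $d_{\R^2}(\bar P,\bar Q)\le d_X(P,Q)$ is the wrong inequality and is not what ``thin'' means in the paper's convention.

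Two minor points. For the ``only if'' direction you work harder than necessary: the paper simply takes $Z=X$, $\Gamma'=\Gamma$, and $F=\Id$. And your concern about bigons and uniqueness of geodesics in $X$ is superfluous: once you have established that each subarc of $\Gamma'$ is a geodesic in $Z$ (from arclength preservation and the $1$-Lipschitz lower bound forcing equality of endpoint distances), and that $F$ maps it arclength-preservingly onto the corresponding side of $\Gamma$, the matching of comparison points is automatic and no case distinction is required.
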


\begin{proof}
If $X$ is ${\rm CAT}(0)$ then, for any triangle $\Gamma \subset X$,  we can  take $Z=X$ and $F=\Id :Z\to X$.
Now  assume that any Jordan triangle  in $X$ is majorized by  a ${\rm CAT}(0)$ space $Z$ as
in the formulation of the lemma.  In order to prove that $X$ is ${\rm CAT}(0)$, we only need to prove that any Jordan triangle $\Gamma$ is thin.  Consider a majorization $F:Z\to X$ of the triangle $\Gamma$.
Then the preimage in $\Gamma '$ of any geodesic contained in  $\Gamma$ is a geodesic in $Z$ of the same length, cf. \cite[p.88]{AKP}.  Hence $\Gamma  '$ is a geodesic triangle in $Z$ with the same side-lengths as $\Gamma$, thus  $\Gamma$ and $\Gamma '$ have the same comparison triangle $\Gamma _0 $ in $\R^2$.
Since $Z$ is ${\rm CAT}(0)$, the triangle  $\Gamma '$ is thin. Since $F:\Gamma ' \to \Gamma $ is $1$-Lipschitz
we deduce that $\Gamma$ is thin as well.
\end{proof}

\subsection{Local curvature bounds}
A metric space $X$  has non-positive curvature  if any point in $X$  has a ${\rm CAT}(0 )$ neighborhood.
  A complete geodesic metric space  $X$ of non-positive curvature is ${\rm CAT} (0)$ if and only if $X$ is simply connected,
by a version of the theorem of Cartan-Hadamard \cite[Section 6]{ballmann}.

\subsection{Reshetnyak's gluing theorem}\label{subsec:gluing}
Let $X^{\pm}$ be ${\rm CAT}(0 )$ spaces with closed convex subsets  $A^{\pm} \subset X^{\pm}$. If $G:A^+\to A^-$ is an isometry then the space $X$
arising from gluing   $X^+$ and $X^-$ along the isometry $G$ is ${\rm CAT}(0 )$, cf. \cite[Theorem 9.1.21]{BBI01}. Localizing the statement we see that a gluing of two spaces of non-positive curvature  along isometric locally convex subsets is again a space of non-positive curvature.

\section{Generalities on Sobolev maps} \label{secSob}
 We assume some knowledge of
Sobolev maps with values in  a metric space and refer to \cite{HKST15}, \cite{Res97}, \cite{KS93} \cite{LW}, \cite{LW-intrinsic} and references
therein for explanations.    Let $\Omega$ be a bounded domain in $\R^2$ and $X$ be a complete metric space.
 A map $u\in L^2 (\Omega,X)$ is contained in the (Newton-) Sobolev space $N^{1,2} (\Omega,X)$
if there exists a Borel  function $\rho \in L^2 (\Omega )$
such that for \emph{$2$-almost all Lipschitz curves} $\gamma: [a,b]=I \to \Omega$  the composition $u\circ \gamma$ is  continuous  and
\begin{equation} \label{eq-n1p}
 \ell_X(u\circ \gamma ) \leq \int _{\gamma} \rho :=\int _a ^b \rho (\gamma (t)) \cdot |\gamma '(t)| \, dt \,.
\end{equation}
We refer to \cite{HKST15} for a thorough discussion of the notion of $2$-almost all curves.  For the present paper it is sufficient to know that
for any biLipschitz embedding  $F:I\times I\to \Omega$ and almost all $t\in I$  inequality  \eqref{eq-n1p} holds true for the curve $\gamma _t(s)=F(t,s)$.
 There exists a  minimal function  $\rho=\rho _u$ satisfying the condition above, uniquely defined up to sets of measure $0$. It will be called  the \emph{generalized  gradient} of $u$.  The integral $\int _{\Omega} \rho ^2 _u (z) dz$ coincides  with
   the  Reshetnyak energy, see \cite{Res97}, \cite{LW}, which we denote by $E_+ ^2 (u)$.

 Let $u\in N^{1,2} (\Omega,X)$ be arbitrary. For almost all $z\in \Omega$ there exists
a seminorm
$\apmd u_z$ on $\R^2$ called the approximate metric differential, such that the following conditions hold true, \cite{Kar07}, \cite[Section 4]{LW}, \cite[Lemma 3.1]{LW-intrinsic}. The map $z\mapsto \apmd u_z$ into the space of seminorms has a Borel measurable representative.
For $2$-almost all curves $\gamma:I\to \Omega$ we have:
 \begin{equation} \label{eq:almostall}
\ell_X(u\circ \gamma )=\int _I \apmd u_{\gamma (t)}  (\gamma '(t)) dt.
\end{equation}
Moreover, for almost any $z\in \Omega$ we have $\rho _u (z) = \sup _{v\in S^1} \apmd u_z (v)$.

There is a countable, disjoint decomposition $\Omega =S\cup _{1\leq i<\infty} K_i$ into a set $S$ of measure zero   and compact subsets $K_i$ such that the restriction of $u$ to any $K_i$ is Lipschitz continuous. The (parametrized Hausdorff) area of the Sobolev map $u$ is defined to be $\Area (u):= \sum _{i=1} ^{\infty} \Area (u_i)$, where $u_i$ denotes the Lipschitz continuous restriction of $u$ to $K_i$.
This number $\Area (u)$ is finite, independent of the decomposition and generalizes the area of Lipschitz discs,
cf. \cite[Subsection 3.6]{LW-intrinsic}.

A map $u\in N^{1,2} (\Omega,X)$ is called conformal if at almost all $z\in \Omega$ the seminorm    $\apmd u_z$ is  a multiple $f(z)\cdot s_0$ of the standard Euclidean norm $s_0$ on $\R^2$. In  this case,
$f\in L^2 (\Omega)$ will be called the conformal factor of $u$. The conformal factor $f$ of a conformal map $u\in N^{1,2} (\Omega,X)$ coincides with the generalized gradient  $\rho _u$. In the conformal case, equation
\eqref{eq:almostall} therefore simplifies to
 \begin{equation} \label{eq:almostallc}
\ell_X(u\circ \gamma )=\int _{\gamma } f \, ,
\end{equation}
valid for $2$-almost all curves $\gamma $ in $\Omega$.
Moreover,  the restriction of $u$ to any  subdomain $O \subset \Omega$ satisfies
\begin{equation} \label{eq:confarea}
\Area (u|_{O})= \int _{O} f^2 \;.
\end{equation}

Any map $u \in N^{1,2}(D,X) $ has a well-defined trace $\trace (u) \in L^2 (S^1,X)$. If $u\in N^{1,2} (D,X)$ has a representative with a continuous extension to $\bar D$,
then $\trace (u)$ is the restriction of this extension to the boundary circle.

\section{Excluding  non-Euclidean  norms in tangent spaces}  \label{secexclude}
\subsection{Isoperimetric sets in normed planes}
Let $V$ be a $2$-dimensional normed space.  There exists a convex subset  $\mathbb{I}_V\subset V$
 with the largest possible area among all convex sets with the same  length of the boundary $\partial \mathbb{I}_V$.
 This subset is
unique  up to translations and dilations and is  called  the  isoperimetric set, \cite{Tho96}.
   The following reformulation of the Blaschke-Santalo inequality shows that
  the Euclidean isoperimetric inequality never holds in $V$ unless $V$  is Euclidean, cf. Remark \ref{rmk-ht}:
\begin{lem} \label{lem:normp}
In the notations above
\begin{equation} \label{eq:santalo}
 \mathcal H^2(\mathbb{I}_V) \geq \frac{1}{4\pi}  \ell _V^2(\partial \mathbb{I}_V),
\end{equation}
with equality if and only if $V$ is Euclidean.
\end{lem}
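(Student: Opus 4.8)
The plan is to deduce this from the classical Blaschke--Santal\'o inequality together with the characterization of the isoperimetric set in a normed plane. Recall first the structure of $\mathbb{I}_V$: if $\|\cdot\|$ is the norm on $V$ with unit ball $B$, then (after a choice of dual pairing) the isoperimetric set $\mathbb{I}_V$ is, up to translation and scaling, the polar dual $B^\circ$ rotated by a quarter turn; more precisely, its boundary is obtained by rotating $\partial B^\circ$ by $90^\circ$ (this is Busemann's classical description, see \cite{Tho96}). Equivalently, $\mathcal H^2(\mathbb{I}_V) = \mathcal H^2(B^\circ)$ and $\ell_V(\partial\mathbb{I}_V)$ can be computed as a mixed-area type quantity pairing $B$ with $B^\circ$. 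The key classical fact, which I would cite from \cite{Tho96}, is the identity
\begin{equation*}
\ell_V^2(\partial \mathbb{I}_V) = 4\,\mathcal H^2(B)\cdot \mathcal H^2(B^\circ),
\end{equation*}
valid for every $2$-dimensional normed space $V$ with unit ball $B$ and dual unit ball $B^\circ$ (here the Hausdorff/Lebesgue measures are taken with respect to the fixed auxiliary Euclidean structure used to define the polar). Granting this, the inequality \eqref{eq:santalo} is equivalent to
\begin{equation*}
\mathcal H^2(B^\circ) \;\geq\; \frac{1}{\pi}\,\mathcal H^2(B)\,\mathcal H^2(B^\circ),
\end{equation*}
that is, $\mathcal H^2(B)\,\mathcal H^2(B^\circ) \leq \pi$, which is exactly the Blaschke--Santal\'o inequality in dimension two, with equality precisely when $B$ is an ellipse, i.e. when $V$ is Euclidean.

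So the steps in order are: (i) recall from \cite{Tho96} the description of $\mathbb{I}_V$ via the rotated polar body and, in particular, the area formula $\mathcal H^2(\mathbb{I}_V) = \mathcal H^2(B^\circ)$ and the perimeter formula $\ell_V(\partial\mathbb{I}_V)^2 = 4\,\mathcal H^2(B)\,\mathcal H^2(B^\circ)$; (ii) substitute these into \eqref{eq:santalo} and observe that, after cancelling $\mathcal H^2(B^\circ)$, the claimed inequality becomes the Blaschke--Santal\'o inequality $\mathcal H^2(B)\cdot\mathcal H^2(B^\circ)\leq\pi$; (iii) invoke Blaschke--Santal\'o in the plane, together with its equality case (Santal\'o / Saint-Raymond), to conclude \eqref{eq:santalo} with equality exactly for ellipses, hence exactly when $V$ is Euclidean. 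One small point to check is that the auxiliary Euclidean structure used to define $B^\circ$ does not affect the statement: both sides of \eqref{eq:santalo} are intrinsic to $V$ (the left side is an area in $V$, the right a length in $V$), and a change of auxiliary structure is a linear map which multiplies $\mathcal H^2(B)$ by the Jacobian and $\mathcal H^2(B^\circ)$ by its inverse, leaving the product invariant; so the reduction is legitimate.

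I do not expect any serious obstacle here, since the lemma is essentially a dictionary translation of a well-known inequality, and the paper explicitly flags it as "a special case of the Blaschke--Santal\'o inequality" and attributes the underlying computation to Thompson \cite{Tho96}. The only mildly delicate ingredient is the equality characterization: one must use the sharp form of Blaschke--Santal\'o, whose two-dimensional equality case (only ellipses) is classical and can be cited, and then argue that $B$ being an ellipse is the same as $V$ being linearly isometric to the Euclidean plane. Thus the main "work" is bookkeeping: correctly recalling the perimeter-of-$\mathbb{I}_V$ formula in terms of the volume product, and verifying the invariance under the choice of reference inner product. With those in hand the proof is two lines.
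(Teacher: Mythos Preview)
Your overall strategy---reduce to the planar Blaschke--Santal\'o inequality via the description of the isoperimetrix as the rotated polar body $TB^\circ$---is exactly the paper's approach. The paper simply normalizes (via \cite[(4.10)]{Tho96}) so that $2\mathcal H^2(\mathbb I_V)=\ell_V(\partial\mathbb I_V)$, after which the claim becomes $\mathcal H^2(\mathbb I_V)\leq\pi$, and then cites \cite[(4.14)]{Tho96} to identify this as Blaschke--Santal\'o.

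However, your bookkeeping is off. First, the perimeter identity you quote is wrong: taking $\mathbb I_V=TB^\circ$ (with $\mathcal L^2$ the auxiliary Lebesgue measure), the correct formula is $\ell_V(\partial\mathbb I_V)=2\,\mathcal L^2(B^\circ)$, hence $\ell_V^2=4\,\mathcal L^2(B^\circ)^2$, not $4\,\mathcal L^2(B)\,\mathcal L^2(B^\circ)$. (Check $V=\ell^\infty$: $B=[-1,1]^2$, $B^\circ$ the $\ell^1$-diamond; your formula gives $32$ while the actual $\ell^\infty$-perimeter of the diamond is $4$.) Second, you conflate $\mathcal H^2_V$ with the auxiliary Lebesgue measure: in the lemma $\mathcal H^2(\mathbb I_V)$ means the Hausdorff measure of $V$, which satisfies $\mathcal H^2_V=\frac{\pi}{\mathcal L^2(B)}\,\mathcal L^2$, so $\mathcal H^2_V(\mathbb I_V)=\frac{\pi\,\mathcal L^2(B^\circ)}{\mathcal L^2(B)}$. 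With these corrected formulas the inequality \eqref{eq:santalo} becomes
\[
\frac{\pi\,\mathcal L^2(B^\circ)}{\mathcal L^2(B)}\ \geq\ \frac{1}{4\pi}\cdot 4\,\mathcal L^2(B^\circ)^2,
\]
i.e.\ $\mathcal L^2(B)\,\mathcal L^2(B^\circ)\leq\pi^2$, the correct planar Blaschke--Santal\'o inequality (your ``$\leq\pi$'' is also a slip). Finally, your ``cancel $\mathcal H^2(B^\circ)$ to obtain $\mathcal H^2(B)\mathcal H^2(B^\circ)\leq\pi$'' is internally inconsistent: after cancelling, $B^\circ$ cannot reappear. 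None of this is fatal---the route is right and the errors are arithmetic---but as written the computation does not close.
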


\begin{proof}
 After rescaling (cf. \cite[(4.10)]{Tho96}) we may assume $2 \mathcal H^2(\mathbb{I}_V) =  \ell _V(\partial \mathbb{I}_V)$.
 Then  \eqref{eq:santalo} is equivalent to $\mathcal H^2(\mathbb{I}_V)  \leq \pi$ with equality if and only if $V$ is Euclidean.
 However, due to \cite[(4.14)]{Tho96},  this is exactly the statement of the $2$-dimensional Blaschke-Santalo inequality
 \cite[Theorem 2.3.3]{Tho96}.
\end{proof}

\subsection{Formulation of the claim}
 A complete metric space $X$ has  property (ET), if for any map $u\in N^{1,2} (D,X)$ almost all approximate metric differentials
$\apmd u_z$ are Euclidean norms or degenerate seminorms,   \cite[Section11]{LW}. Examples of spaces with property (ET) are spaces with one-sided curvature bounds and sub-Riemannian manifolds. We refer to \cite{LW} for a thorough discussion of this property.
The aim of this section is to prove  the following:
\begin{thm} \label{ET}
Let $X$ be a proper metric space   with Euclidean isoperimetric inequality for curves.
Then $X$  has  property (ET).
\end{thm}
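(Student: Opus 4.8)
\textbf{Proof strategy for Theorem \ref{ET}.}

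The plan is to argue by contradiction via a blow-up. Suppose $X$ fails property (ET). Then there is a Sobolev map $u\in N^{1,2}(D,X)$ such that on a set of positive measure the approximate metric differential $\apmd u_z$ is a non-degenerate seminorm which is not a Euclidean norm. By a Lebesgue-point argument (applied to the Borel representative $z\mapsto\apmd u_z$, to the generalized gradient $\rho_u$, and to the area density via \eqref{eq:confarea}-type formulas), we may choose a point $z_0$ of approximate differentiability at which $\apmd u_{z_0}=\|\cdot\|$ is a fixed non-Euclidean norm on $\R^2$ and at which the infinitesimal behaviour of $u$ is genuinely governed by $\|\cdot\|$. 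Rescaling the domain around $z_0$ and the target by the reciprocal factor, the maps $u$ restricted to shrinking discs converge (in an appropriate $L^2$/energy sense, as discussed in \cite{LW}) to the linear map $L\colon(\R^2,|\cdot|)\to(\R^2,\|\cdot\|)$, i.e. the identity of $\R^2$ equipped with the norm $\|\cdot\|$. The point of the blow-up is that the Euclidean isoperimetric inequality for curves in $X$, being scale-invariant (lengths scale like $\lambda$, areas like $\lambda^2$, so the constant $\frac{1}{4\pi}$ is preserved), passes to the limit and forces a Euclidean isoperimetric inequality for Lipschitz discs in the normed plane $V=(\R^2,\|\cdot\|)$ — \emph{provided} we know that the parametrized Hausdorff area is well behaved under this limit.

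That last proviso is where the quasi-convexity of the Hausdorff area from \cite{BI} enters, and I expect it to be the technical heart of the argument. Concretely: given any Lipschitz curve $\eta\colon S^1\to V$, one wants a Lipschitz disc in $V$ filling $\eta$ with $\Area\le\frac{1}{4\pi}\ell_V^2(\eta)$. One transplants $\eta$ (via the blown-up maps, after a small homotopy correcting the boundary, as in the annulus trick of \lref{lem:onlyif}) to a nearly length-preserving curve in $X$; the isoperimetric inequality in $X$ produces a filling disc of controlled area; and one pushes this filling back down. The role of lower semicontinuity / quasi-convexity of the Hausdorff area functional is to guarantee that area does not jump up in the limit, so that the filling one obtains in $V$ really has area $\le\frac{1}{4\pi}\ell_V^2(\eta)+\varepsilon$. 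A parallel subtlety is that the Hausdorff area of Lipschitz discs in a normed plane is exactly the Busemann area, so the classical planar isoperimetric problem one arrives at is the one solved in \cite{Tho96}: among all normed planes only the Euclidean plane admits the Euclidean isoperimetric inequality with constant $\frac{1}{4\pi}$.

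Once the Euclidean isoperimetric inequality for curves is established in $V$, one takes $\eta=\partial\mathbb{I}_V$, the boundary of the isoperimetric set, and fills it by a Lipschitz disc of parametrized Hausdorff area $\le\frac{1}{4\pi}\ell_V^2(\partial\mathbb{I}_V)$. Since the parametrized area of a surjection onto a planar convex body is at least the Hausdorff measure of the body (the multiplicity function $N(x)$ being $\ge 1$ there), the filling disc has $\Area\ge\mathcal H^2(\mathbb{I}_V)$, whence $\mathcal H^2(\mathbb{I}_V)\le\frac{1}{4\pi}\ell_V^2(\partial\mathbb{I}_V)$. Comparing with \lref{lem:normp}, which asserts the reverse inequality with equality only in the Euclidean case, we conclude that $V$ is Euclidean — contradicting the choice of $z_0$. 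Hence almost every $\apmd u_z$ is Euclidean or degenerate, i.e. $X$ has property (ET). The steps I would write out with care are: (i) the measure-theoretic selection of a good blow-up point and the precise mode of convergence of the rescaled maps; (ii) the boundary-correction homotopy so that a nearby closed curve in $X$ can be filled and the filling transported back without losing the area bound — this is the step relying essentially on \cite{BI}; (iii) the elementary lower bound $\Area\ge\mathcal H^2$ for discs onto planar convex bodies and the final comparison with \lref{lem:normp}.
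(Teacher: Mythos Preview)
Your strategy is correct in spirit and uses the same three ingredients as the paper: a blow-up, the Blaschke--Santalo inequality (\lref{lem:normp}), and the quasi-convexity of the Hausdorff area from \cite{BI}. The paper, however, packages the argument differently. Rather than blowing up a single Sobolev map at a Lebesgue point, it blows up the \emph{space} $X$ via ultralimits and quotes three black-box statements: \lref{ET-limit} (non-Euclidean approximate differentials force a non-Euclidean normed plane inside some blow-up $B$), \lref{lem:blow-up} (the quadratic Sobolev--Dehn bound $\delta^{Sob}_X(r)\le\frac{r^2}{4\pi}$ is stable under ultralimits of proper spaces), and \pref{ET-stable} (a space containing a non-Euclidean normed plane cannot satisfy $\delta^{Sob}\le\frac{r^2}{4\pi}$, by \cite{BI} and \lref{lem:normp}). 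This modular route avoids having to track the convergence of specific fillings; the price is the reliance on properness in \lref{lem:blow-up}. The paper's own remark after the proof in fact notes that a ``more direct but slightly more technical'' argument along the lines of \cite[Theorem 5.1]{Wengerhyperbolic} exists and even covers non-proper targets --- that is essentially the route you are sketching.

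One point in your sketch deserves care. In step (ii) you speak of ``pushing the filling back down'' so that ``the filling one obtains in $V$'' has small area. But there is no map $X\to V$, and limits of the rescaled fillings naturally live in the blow-up $B\supset V$, not in $V$ itself. The elementary surjectivity bound $\Area\ge\mathcal H^2(\mathbb I_V)$ you invoke in (iii) only applies to discs with image in $V$; for discs in the ambient $B$ one genuinely needs the quasi-convexity theorem of \cite{BI}, which says that the flat disc $\mathbb I_V\subset V$ is area-minimizing among \emph{all} Sobolev fillings of $\partial\mathbb I_V$ in \emph{any} ambient space. So the role of \cite{BI} is not lower semicontinuity of area under your limit, but rather the lower bound $\Area(v)\ge\mathcal H^2(\mathbb I_V)$ for a filling $v$ that need not land in $V$. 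With that correction your outline goes through.
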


\subsection{Sobolev-Dehn function} For the limiting arguments used in the proof of \tref{ET} it is better to use a variant of the Dehn  function
with Sobolev instead of Lipschitz discs, due to better stability properties.  For a complete metric space $X$ we let the \emph{Sobolev-Dehn function of $X$}  be the minimal function $\delta _X ^{Sob } :(0,\infty)\to [0,\infty]$  for which the following holds true. For any Lipschitz curve $\gamma :S^1\to X$ of length at most  $r$ and any $\epsilon >0$  there exists a Sobolev map $u\in N^{1,2} (D,X)$ with $\trace (u)=\gamma $ and $\Area (u)\leq \delta _X ^{Sob} (r) +\epsilon$.

Since any Lipschitz disc is contained in $N^{1,2} (D,X)$ the Sobolev-Dehn function $\delta _X^{Sob}$ is bounded from above by the (Lipschitz-) Dehn function  $\delta _X$ of the space $X$.
If the space $X$ is Lipschitz $1$-connected, for instance a Banach or a
${\rm CAT}(0)$ space, then $\delta _X = \delta _X ^{Sob}$,  \cite[Propostion 3.1]{LW-asymptotic}.
For any space $X$ which satisfies the Euclidean isoperimetric inequality for curves, we have
$\delta _X ^{Sob} (r) \leq \frac 1 {4\pi} r^2$.


\subsection{Limiting arguments}
Property (ET) can be thought of as an infinitesimal property of the space,
informally expressed by the condition that tangent spaces do not contain non-Euclidean normed planes. This idea can be made precise using blow-ups of metric spaces as a special case of  ultralimits  of the rescaled original space. We refer to \cite[Section 11]{LW}  for details and just recall  the following fact:

\begin{lem} \label{ET-limit}
Let $X$ be a complete metric space and $\omega$ a non-principal ultrafilter on $\N$. Assume that for all $x\in X$ and all sequences $t_i$ of positive real numbers converging to $0$ the following holds true:  any normed plane $V$ contained in the blow-up $B=\lim_{\omega} (\frac 1 {t_n} X,x)$  is Euclidean.
Then $X$ has  property (ET).
\end{lem}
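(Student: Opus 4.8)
The goal is to deduce property (ET) for $X$ from the hypothesis that every normed plane inside every blow-up of $X$ is Euclidean. Property (ET) is an assertion about Sobolev maps $u\in N^{1,2}(D,X)$: for almost every $z\in D$, the approximate metric differential $\apmd u_z$ is either a Euclidean norm or a degenerate seminorm. So the strategy is to argue by contradiction at a fixed point of approximate metric differentiability: suppose the set of $z$ where $\apmd u_z$ is a genuinely non-Euclidean (two-dimensional, non-Euclidean) norm has positive measure. I would pick a point $z_0$ in this set which is, simultaneously, a point of approximate metric differentiability of $u$, a Lebesgue density point of the ``bad'' set, and a point where the metric differential genuinely describes the infinitesimal behaviour of $u$ — i.e. where the rescaled maps $u_r(v):=\frac 1r\bigl(u(z_0+rv)\bigr)$ converge, in a suitable sense, to the linear map $v\mapsto \apmd u_{z_0}(v)$ acting into the normed plane $V:=(\R^2,\apmd u_{z_0})$. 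Such points have full measure among points of approximate metric differentiability, by the very definition of $\apmd$.

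\textbf{The blow-up picture.} Fix a non-principal ultrafilter $\omega$ on $\N$, a sequence $t_n\to 0$, and form the blow-up $B=\lim_\omega(\frac1{t_n}X,x_0)$ where $x_0=u(z_0)$. The point is that the rescaled maps $u_{t_n}:D_{1/t_n}\to \frac1{t_n}X$ (defined on larger and larger discs) have uniformly bounded Reshetnyak energy on each fixed ball, by the approximate differentiability at $z_0$; hence, after passing to the ultralimit, they converge to a Sobolev (indeed Lipschitz, after the blow-up, since $\apmd u_{z_0}$ is a genuine norm of bounded operator norm) map $u_\omega:\R^2\to B$. Concretely, $u_\omega$ is the affine map $v\mapsto\iota\bigl(\apmd u_{z_0}(v)\cdot(\,\cdot\,)\bigr)$ realising the normed plane $V=(\R^2,\apmd u_{z_0})$ isometrically inside $B$: its image $V\hookrightarrow B$ is an isometrically embedded normed plane. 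This is where the construction of $\apmd$ is used — it is precisely designed so that $u$ ``looks infinitesimally affine'' into the seminormed plane it defines, and an ultralimit turns ``infinitesimally'' into ``exactly''. The image is a normed, not merely seminormed, plane exactly because we assumed $\apmd u_{z_0}$ is a two-dimensional non-degenerate norm.

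\textbf{Applying the hypothesis and concluding.} By assumption, every normed plane contained in $B$ is Euclidean; in particular the isometrically embedded copy of $V=(\R^2,\apmd u_{z_0})$ is Euclidean, i.e.\ $\apmd u_{z_0}$ is a Euclidean norm — contradicting the choice of $z_0$ in the ``bad'' set. Therefore the bad set has measure zero: almost all $\apmd u_z$ are Euclidean norms or degenerate seminorms, which is property (ET). Since $u\in N^{1,2}(D,X)$ was arbitrary, $X$ has (ET). The \emph{main obstacle} is the blow-up/ultralimit convergence argument: one must verify carefully that the rescalings $u_{t_n}$ have the uniform energy and equicontinuity estimates needed for the ultralimit $u_\omega$ to exist as a (Lipschitz) map into $B$, and that this limit map really is the affine realisation of $\apmd u_{z_0}$ — so that its image is genuinely an isometrically embedded normed plane inside $B$, not some degenerate or isometrically distorted object. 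This is exactly the content of the blow-up analysis in \cite[Section~11]{LW}, which is why the statement is quoted here as a ``recalled fact'' rather than reproved; in a full write-up one would cite that section for the convergence and the identification of the limit, and the argument above is then just the short deduction from it.
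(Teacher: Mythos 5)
Your proposal is correct and takes essentially the same route as the paper: the paper does not prove this lemma at all but recalls it from \cite[Section 11]{LW}, and your blow-up-at-a-density-point argument (non-degenerate non-Euclidean $\apmd u_{z_0}$ produces an isometrically embedded normed plane in the blow-up, contradicting the hypothesis) is precisely the argument carried out there, with the key convergence step deferred to that reference just as the paper defers the whole statement. One minor caveat: what actually yields the ultralimit map is not a uniform energy or equicontinuity bound (Sobolev maps have neither) but the pointwise approximate metric differentiability estimate valid on a set of density one at $z_0$; since you explicitly defer exactly this verification to \cite[Section 11]{LW}, your deduction is sound as written.
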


The only property of blow-ups needed for the proof of \tref{ET} is the
following stability of quadratic  isoperimetric inequalities from \cite[Theorem 1.8]{LW-asymptotic}.   Here it is crucial to work with
Sobolev-Dehn functions and properness  is used in an essential way.

\begin{lem} \label{lem:blow-up}
Let $X$ be a proper geodesic metric space with   $\delta _X ^{Sob} (r) \leq \frac {r^2}  {4\pi} $. Then for any blow-up $B$ of $X$ as in \lref{ET-limit}, we have $\delta ^{Sob} _B (r)\leq \frac {r^2} {4\pi}$ for all $r\geq 0$.
\end{lem}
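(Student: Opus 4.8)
The plan is to deduce the statement from the stability of quadratic isoperimetric inequalities under ultralimits, \cite[Theorem 1.8]{LW-asymptotic}, once we check that its hypotheses are preserved under the two operations involved in forming a blow-up: rescaling the metric and passing to an ultralimit.

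First I would record how the relevant quantities transform under a homothety of the target. For $\lambda>0$ let $\lambda X$ denote the set $X$ equipped with the distance $\lambda\, d_X$. Rescaling changes neither the set of curves nor the set of Sobolev maps; it multiplies lengths and generalized gradients by $\lambda$ and, since the parametrized Hausdorff area is built from $\mathcal H^2$ and $\mathcal H^2$ scales by the factor $\lambda^2$, it multiplies areas by $\lambda^2$, while leaving traces unchanged. Consequently $\delta^{Sob}_{\lambda X}(r)=\lambda^2\,\delta^{Sob}_{X}(r/\lambda)$ for all $r>0$, so the hypothesis $\delta^{Sob}_X(s)\le \frac{s^2}{4\pi}$ gives $\delta^{Sob}_{\lambda X}(r)\le \lambda^2\cdot\frac{(r/\lambda)^2}{4\pi}=\frac{r^2}{4\pi}$. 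In other words, $\delta^{Sob}\le \frac{r^2}{4\pi}$ is precisely the scale-invariant quadratic isoperimetric inequality --- unsurprisingly, since $\frac1{4\pi}$ is the sharp Euclidean constant.

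Next, a blow-up $B=\lim_\omega\bigl(\frac 1 {t_n}X,x\bigr)$ as in \lref{ET-limit} is by definition the ultralimit of the pointed spaces $X_n:=\bigl(\frac 1 {t_n}X,x\bigr)$. Each $X_n$ is again proper, because bounded subsets of $X_n$ are bounded subsets of $X$ and hence have compact closure, and again geodesic, because homotheties carry geodesics to geodesics. By the previous paragraph, $\delta^{Sob}_{X_n}(r)\le \frac{r^2}{4\pi}$ for all $r\ge 0$ and all $n$. Now \cite[Theorem 1.8]{LW-asymptotic} --- stability of the quadratic isoperimetric inequality with a uniform constant under ultralimits of proper geodesic spaces --- applies to the sequence $(X_n)$ and yields $\delta^{Sob}_B(r)\le \frac{r^2}{4\pi}$ for all $r\ge 0$, as claimed.

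For completeness, the substance of \cite[Theorem 1.8]{LW-asymptotic}, and the step I expect to be the main obstacle, is roughly the following. Given a Lipschitz loop $\gamma\colon S^1\to B$ of length $\le r$ and $\epsilon>0$, one represents $\gamma$ by uniformly Lipschitz loops $\gamma_n\colon S^1\to X_n$ with $\ell_{X_n}(\gamma_n)\to_{\omega}\ell_B(\gamma)\le r$, and for $\omega$-almost every $n$ one takes $u_n\in N^{1,2}(D,X_n)$ to be (almost) a solution of the Plateau problem for $\gamma_n$, so that $\trace(u_n)=\gamma_n$, $\Area(u_n)\le \frac{\ell_{X_n}(\gamma_n)^2}{4\pi}+\epsilon$, and --- crucially --- the quadratic isoperimetric bound forces $u_n$ to have a uniform H\"older modulus on $\bar D$ and uniformly bounded energy. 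Properness of the $X_n$ then allows one to pass to an ultralimit $u\colon\bar D\to B$ which is still H\"older, has trace $\gamma$, lies in $N^{1,2}(D,B)$ with $E_+^2(u)\le\lim_\omega E_+^2(u_n)$, and satisfies $\Area(u)\le\lim_\omega\Area(u_n)\le\frac{r^2}{4\pi}+\epsilon$. The delicate inputs are exactly the two regularity and compactness facts just invoked: the equicontinuity estimate for (almost) solutions of the Plateau problem coming from the quadratic isoperimetric inequality, which makes the limit a genuine continuous Sobolev disc rather than an arbitrary $L^2$ map, and the lower semicontinuity of the parametrized Hausdorff area along the sequence --- and it is here that properness is used essentially, keeping all images in a fixed compact set so that the area and energy bounds survive the passage to the ultralimit.
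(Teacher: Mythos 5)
Your proposal is correct and matches the paper's approach: the paper does not give an independent argument either, but obtains the lemma directly from the stability theorem \cite[Theorem 1.8]{LW-asymptotic}, exactly as you do after the (routine but worth recording) observation that the bound $\delta^{Sob}\le \frac{r^2}{4\pi}$ is scale-invariant and that the rescaled spaces $\frac{1}{t_n}X$ remain proper and geodesic. Your closing sketch of the ingredients behind the cited theorem (equicontinuity of almost-minimal fillings, lower semicontinuity of area, properness to control the ultralimit) is consistent with how that result is proved, but is not needed here.
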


\subsection{Quasi-convexity of the Hausdorff area} Using  \lref{lem:normp}  and \cite{BI} we readily obtain:
\begin{prop} \label{ET-stable}
Assume that a complete metric space $B$ contains a non-Euclidean normed
 plane $V$.  Then the Sobolev-Dehn function
 of $B$  satisfies $\delta ^{Sob} _B(r)> \frac {r^2} {4\pi}$ for all $r >0$.
\end{prop}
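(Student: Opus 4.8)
\textbf{Proof proposal for Proposition \ref{ET-stable}.}

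The plan is to reduce the statement to \lref{lem:normp} by exhibiting, for each $r>0$, a Lipschitz curve in $B$ of length $r$ which cannot be filled by a Sobolev disc of area $\leq \frac{r^2}{4\pi}$. Since $B$ contains a normed plane $V$, and $V$ is non-Euclidean, \lref{lem:normp} gives a strict inequality $\mathcal H^2(\mathbb{I}_V) > \frac{1}{4\pi}\ell_V^2(\partial \mathbb{I}_V)$ for the isoperimetric set. After rescaling we may take the boundary curve $\eta:S^1\to \partial \mathbb{I}_V \subset V \subset B$ of length $r$, and we have the strict gap $\mathcal H^2(\mathbb{I}_V) = \frac{r^2}{4\pi} + 3\varepsilon_0$ for some $\varepsilon_0 > 0$ (more precisely, some fixed positive constant depending on $V$ and $r$). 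The claim will follow once I show that \emph{any} $u\in N^{1,2}(D,B)$ with $\trace(u)=\eta$ satisfies $\Area(u)\geq \mathcal H^2(\mathbb{I}_V)$, because then $\delta^{Sob}_B(r)\geq \mathcal H^2(\mathbb{I}_V) > \frac{r^2}{4\pi}$; the strict inequality is what we want, and it survives the ``$+\epsilon$'' in the definition of the Sobolev-Dehn function by choosing $\epsilon < \varepsilon_0$.

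The heart of the matter is thus the lower area bound: a Sobolev disc in $B$ bounding $\partial\mathbb{I}_V$ has parametrized Hausdorff area at least the area of the isoperimetric region it encloses inside $V$. This is precisely where quasi-convexity of the Hausdorff area enters. By the quasi-convexity result of Burago--Ivanov \cite{BI}, the Hausdorff (Busemann) area functional on Lipschitz (hence, via the decomposition into Lipschitz pieces, on Sobolev) maps into $V$ is quasi-convex, which in this two-dimensional normed-plane setting means it is minimized, among all fillings of a given rectifiable closed curve, by a ``linear'' filling — in particular the convex region bounded by the curve in $V$ is an area-minimizer for its own boundary. Since $V$ sits isometrically inside $B$ and the Hausdorff $2$-measure is an intrinsic notion that does not decrease under the $1$-Lipschitz nearest-point-type projections one would use to push a filling in $B$ back into $V$ — here one should invoke that $V$, being a normed plane inside $B$, is isometrically embedded, and use a retraction or a comparison of the area functionals — any filling $u$ in $B$ has area at least that of the corresponding filling in $V$, which by quasi-convexity is at least $\mathcal H^2(\mathbb{I}_V)$. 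Combining, $\Area(u)\geq \mathcal H^2(\mathbb{I}_V)$ for all admissible $u$.

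The main obstacle I anticipate is the last comparison step: making rigorous that a Sobolev filling in the ambient blow-up $B$ cannot have smaller Hausdorff area than the planar filling inside $V$. One has to be careful because $B$ need not admit a $1$-Lipschitz retraction onto $V$, and the filling $u$ may wander far outside $V$. The clean way around this is to argue infinitesimally: for almost every $z\in D$ the approximate metric differential $\apmd u_z$ is a seminorm on $\R^2$, and the Hausdorff-area integrand is the Jacobian of this seminorm; the Burago--Ivanov quasi-convexity inequality is exactly an inequality between this Jacobian integrand and the corresponding Euclidean/normed comparison, so it can be applied pointwise and then integrated, yielding $\Area(u)\geq \Area(\pi\circ u)$ where $\pi$ is a suitable $1$-Lipschitz map onto $V$ defined on the relevant tangent directions — or, more robustly, by directly invoking \cite[Theorem/Corollary]{BI} in the form: the Busemann area does not admit fillings of area below the ``affine'' value. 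One then only needs that the normed plane $V\subset B$ is genuinely non-Euclidean so that \lref{lem:normp} gives a \emph{strict} inequality, which propagates to $\delta^{Sob}_B(r) > \frac{r^2}{4\pi}$ for every $r>0$ by the rescaling invariance of the whole picture.
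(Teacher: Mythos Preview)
Your proposal is correct and follows the same route as the paper: take the boundary of a rescaled isoperimetric set $\mathbb I_V$ as the test curve, use \lref{lem:normp} for the strict inequality $\mathcal H^2(\mathbb I_V)>\frac{r^2}{4\pi}$, and invoke the quasi-convexity of the Hausdorff area from \cite{BI} to conclude that every Sobolev filling in $B$ has area at least $\mathcal H^2(\mathbb I_V)$. The paper dispatches the technical point you flag (why quasi-convexity in \cite{BI}, stated for normed targets, controls fillings in the ambient $B$) simply by pointing to \cite[Section 2.4]{LW} and \cite[Section 10.2]{LW-intrinsic}; your retraction idea is indeed not the right mechanism, but your second suggestion --- that the inequality is ultimately one about the area integrand on seminorms --- is on target, and the cited sections make this precise via an isometric embedding of $B$ into a Banach space.
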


\begin{proof}
Let $\mathbb I_V \subset V$ be an isoperimetric set of $V$ whose boundary  $\partial\mathbb I_V$ has length $r$.
The quasi-convexity of the Hausdorff area proved in \cite{BI} together with \lref{lem:normp} implies that
 $$\delta _B^{Sob} (r) \geq  \mathcal H^2 (\mathbb I_V) >  \frac {r^2} {4\pi},$$ see also \cite[Section 2.4]{LW} and \cite[Section 10.2]{LW-intrinsic}. This finishes the proof.
  \end{proof}

  Combining \lref{ET-limit}, \lref{lem:blow-up}  and \lref{ET-stable}, we finish the proof of \tref{ET}.

\begin{rem}
A more direct but slightly more technical proof of \tref{ET} can be provided along the lines of
\cite[Theorem 5.1]{Wengerhyperbolic},  also including the case of non-proper target spaces $X$.
\end{rem}

\section{Solutions of the Plateau problem} \label{secplateau}
\subsection{Solution of the  Plateau problem}
Let $X$ be a proper metric space  with the Euclidean isoperimetric inequality for curves. Due to  \tref{ET}, the space
  $X$ satisfies  property (ET).  Let $\Gamma$ be a Jordan curve in $X$ of  finite length. Consider  the non-empty set $\Lambda (\Gamma, X)$ of all maps
$v\in N^{1,2} (D,X)$ whose trace is a weakly monotone parametrization of $\Gamma$.
A \emph{solution of the Plateau problem} for the boundary curve $\Gamma$ is a conformal map
$u\in \Lambda (\Gamma, X)$ which has smallest area among all maps in $\Lambda (\Gamma, X)$. Equivalently, $u$ is a map with minimal Reshetnyak energy $E_+^2(u)$ among all maps in $\Lambda (\Gamma, X)$, \cite[Theorem 11.4]{LW}. Due to \cite[Corollary 11.5]{LW}, a solution of the Plateau problem exists for every Jordan curve $\Gamma$ of finite length in $X$.
Any such solution of the Plateau problem  has the following property, \cite[Theorem 1.4]{LW},  \cite[Proposition 1.8]{LW-intrinsic}:

\begin{thm} \label{partI.u}
Let $\Gamma$ be a Jordan curve of  finite length in $X$ and let $u$ be a solution of the Plateau problem for the curve $\Gamma$.
Then $u$ has a  representative, again denoted by $u$, which continuously extends to $\bar D$.    For any  Jordan curve $\eta \subset \bar D$ with  Jordan domain $J\subset D$
 $$ \Area (u|_J)\leq \frac 1 {4\pi} \ell ^2 _X(u \circ \eta).$$
\end{thm}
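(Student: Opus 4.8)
The plan is to derive \tref{partI.u} from the corresponding \emph{global} property of the solution of the Plateau problem together with the conformality of $u$. The key observation is that for a conformal map $u$, equation \eqref{eq:confarea} expresses the area of $u$ restricted to any subdomain $O$ as $\int_O f^2$, where $f$ is the conformal factor, and equation \eqref{eq:almostallc} expresses lengths of images of almost all curves as $\int_\gamma f$. So the statement becomes, essentially, an isoperimetric inequality for the (weighted) conformal metric. The natural strategy is to reduce the inequality for $(J,\eta)$ to the inequality for the full disc applied to a suitably modified problem.

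First I would treat the case when $\eta$ is the boundary circle $S^1$ itself and $J=D$. Here the inequality $\Area(u) \le \frac 1{4\pi}\ell^2_X(u\circ\eta)$ should follow because $u$ is \emph{area minimizing} in $\Lambda(\Gamma,X)$: the space $X$ satisfies the Euclidean isoperimetric inequality for curves, so the boundary curve $\Gamma = u\circ\eta$ (reparametrized by arclength) bounds \emph{some} Lipschitz, hence Sobolev, disc of area at most $\frac 1{4\pi}\ell^2_X(\Gamma)$; minimality of $\Area(u)$ then gives the bound. One has to be slightly careful that $u\circ\eta$ is a weakly monotone (indeed homeomorphic, by continuity up to the boundary and the Jordan property) parametrization of $\Gamma$, so its length equals $\ell_X(\Gamma)$, and to pass between arclength and the given parametrization; this is the kind of reparametrization already handled in \lref{lem:onlyif} and \cite[Lemma 3.6]{LW-asymptotic}.

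Next, for a general Jordan curve $\eta\subset\bar D$ with Jordan domain $J$, I would consider the restriction $u|_{\bar J}$. Up to a biLipschitz identification of $\bar J$ with $\bar D$ (which changes neither the area, by the last paragraph of \S\ref{subsecarealip}, nor conformality only up to a conformal change — and here one should instead use a \emph{conformal} homeomorphism $\bar J\to\bar D$, available since $J$ is a Jordan domain, so that conformality of the composition is preserved), the map $u|_{\bar J}$ is again a Sobolev map with a weakly monotone boundary trace parametrizing the Jordan curve $u(\eta)$. If I knew that $u|_{\bar J}$ is itself area-minimizing for its own boundary values, the case above would finish the proof. The crucial point, and what I expect to be the main obstacle, is exactly this \emph{locality of minimality}: a global area minimizer restricted to a subdomain need not a priori minimize among all competitors with the same boundary. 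The standard resolution is a cut-and-paste argument — given any competitor $w$ for $u(\eta)$ on $J$, glue $w$ on $\bar J$ with $u$ on $\bar D\setminus J$ along $\eta$ to obtain a Sobolev disc $\tilde u\in\Lambda(\Gamma,X)$ with $\Area(\tilde u) = \Area(w) + \Area(u|_{D\setminus J})$; minimality of $u$ forces $\Area(u|_J)\le\Area(w)$, so $u|_J$ is indeed a minimizer. One must check that the glued map lies in $N^{1,2}(D,X)$ (gluing Sobolev maps along a rectifiable Jordan curve with matching traces is admissible) and that its trace is still a weakly monotone parametrization of $\Gamma$ (which it is, since outside $J$ nothing changes). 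Then combining with the boundary case applied to $u|_J$ and its boundary curve $u\circ\eta$ yields
$$\Area(u|_J) \le \frac 1{4\pi}\,\ell^2_X(u\circ\eta),$$
which is the assertion. A final technical point deserving care is the continuous extension of $u$ to $\bar D$: this is cited from \cite[Theorem 1.4]{LW}, and it is what makes "$u\circ\eta$" and "the Jordan domain $J$" meaningful; I would invoke it at the outset rather than reproving it.
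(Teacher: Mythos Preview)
The paper does not prove \tref{partI.u}; it is quoted directly from \cite[Theorem~1.4]{LW} and \cite[Proposition~1.8]{LW-intrinsic}. Your cut-and-paste strategy is indeed the argument used in those references, and your treatment of the case $\eta=S^1$ is correct.

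For general $\eta$, however, your reduction has a gap: you assert that the trace of $u|_{\bar J}$ is ``a weakly monotone boundary trace parametrizing the Jordan curve $u(\eta)$'', but $u\circ\eta$ need not be a Jordan curve at all --- $u$ is merely continuous on $\bar D$, so $u(\eta)$ may self-intersect or collapse to a point. Hence $\Lambda(u(\eta),X)$ is undefined and there is no ``$S^1$ case'' to fall back on. What your cut-and-paste actually establishes is that $u|_J$ minimizes area among Sobolev maps on $J$ with \emph{the same trace} $u|_\eta$, which is a different statement. The fix is to skip the reduction entirely: assuming $\ell_X(u\circ\eta)<\infty$ (otherwise the inequality is vacuous), reparametrize $u\circ\eta$ by arclength to obtain a Lipschitz closed curve, apply the isoperimetric hypothesis on $X$ to it, attach the zero-area reparametrization annulus so that the trace matches $u|_\eta$, and feed this map directly into the cut-and-paste as the competitor $w$. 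This yields $\Area(u|_J)\le\Area(w)\le\frac{1}{4\pi}\ell_X^2(u\circ\eta)$ without ever invoking a Jordan property of the image. Your remarks on conformality and equations \eqref{eq:almostallc}, \eqref{eq:confarea} play no role here and can be dropped. Finally, the gluing along an arbitrary Jordan curve $\eta\subset\bar D$ (possibly non-rectifiable, possibly touching $S^1$) and the behaviour of Sobolev regularity under the Carath\'eodory map are not routine; these technicalities are exactly what \cite{LW} and \cite{LW-intrinsic} handle.
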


In fact, from [LW15,Theorem 1.4] one can conclude that $u$ is locally Lipschitz on $D$.

\subsection{Intrinsic minimal disc}
Let $X,\Gamma,u$ be as in \tref{partI.u}. In \cite{LW-intrinsic} it was shown that the intrinsic pseudo-metric $d_u$ on $\bar D$ described in Subsection \ref{associated} is well defined, finite-valued and continuous with respect to the Euclidean metric.   As in Subsection \ref{associated}, denote by $Z=Z_u$ the associated metric space. Then the following holds true,  see 
\cite[Theorem 1.1, Theorem 1.2, Theorem 1.4, Theorem 1.5]{LW-intrinsic}:

\begin{thm} \label{partI.Z} Let $\Gamma$ be a Jordan curve in $X$ of finite length and let $u:\bar D\to X$ be a continuous solution of the Plateau problem with boundary $\Gamma$. Let $Z=Z_u$ be the associated length metric  space,
$P:\bar D\to Z$ the canonical projection and $\bar u:Z\to X$ the unique map with $u=\bar u \circ P$. Then we have:
\begin{enumerate}
\item  $Z$ is a  geodesic space homeomorphic to $\bar D$ and $P$ is continuous. The preimage $P^{-1} (Z\setminus \partial Z)$  is  homeomorphic to $D$.
\item The map $\bar u:Z\to X$ is $1$-Lipschitz and sends $\partial Z$ in an arclength preserving way onto $\Gamma$.
\item  For any  curve $\gamma \subset \bar D$ we have $\ell _X(u\circ \gamma )= \ell _Z(P\circ \gamma)$.
\item   For any open $V\subset D$ we have  $\Area (u|_{V})=\mathcal H^2 _Z (P(V))$.
 \item  For any Jordan curve
$\eta \subset Z$ and the corresponding Jordan domain $O\subset Z$ we have
$\mathcal H^2 (O) \leq \frac 1 {4\pi}  \ell ^2 _Z(\eta )$.
\end{enumerate}
\end{thm}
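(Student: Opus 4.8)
The statement assembles the main structural results of \cite{LW-intrinsic}, and the plan is to reconstruct it from the regularity and isoperimetric estimate of \tref{partI.u} together with the conformality of the Plateau solution $u$ (which itself comes from property (ET), \tref{ET}). Write $f\in L^2(D)$ for the conformal factor of $u$. \emph{Continuity of the intrinsic metric, and item (1).} The crux is to show that the pseudo-distance $d_u$ on $\bar D$ is finite and induces the Euclidean topology. On compact subsets of $D$ this is immediate, since $u$ is locally Lipschitz there (a consequence of \tref{partI.u}). The essential point is continuity at boundary points, where $u$ need not be Lipschitz; here I would argue via the isoperimetric inequality: near a point $z_0\in S^1$ pick, using Fubini and \eqref{eq:confarea}, a circular arc whose $u$-image has length controlled by $\|f\|_{L^2}$ on a small half-disc, close this arc up by a correspondingly short sub-arc of $\Gamma$ (short because $\Gamma$ is rectifiable and $\trace(u)$ is weakly monotone), and apply \tref{partI.u} to the enclosed Jordan domain; since $u$ is conformal this bounds the $d_u$-oscillation of $P$ on the half-disc. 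Consequently $P\colon\bar D\to Z$ is a continuous surjection, $Z$ is compact, and a compact length space is geodesic. The fibres $P^{-1}(z)$ are compact and connected, so the induced decomposition of $\bar D$ is monotone; one checks that it is cell-like and carries $S^1$ onto the topological boundary, so a Moore-type theorem on monotone decompositions of the disc gives that $Z$ is homeomorphic to $\bar D$, that $P$ is a uniform limit of homeomorphisms, that $\partial Z=P(S^1)$, and that $P^{-1}(Z\setminus\partial Z)$ is homeomorphic to $D$.

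\emph{Items (2) and (3).} That $\bar u$ is $1$-Lipschitz is immediate from $d_X(u(z_1),u(z_2))\le\ell_X(u\circ\gamma)$ for every connecting curve $\gamma$. For (3) I would prove both inequalities: $\ell_X(u\circ\gamma)\le\ell_Z(P\circ\gamma)$ follows from $u=\bar u\circ P$ and the $1$-Lipschitz property of $\bar u$; conversely, for any partition one has $d_u(P\gamma(t_i),P\gamma(t_{i+1}))\le\ell_X(u\circ\gamma|_{[t_i,t_{i+1}]})$ directly from the definition of $d_u$, and summing and passing to the supremum yields $\ell_Z(P\circ\gamma)\le\ell_X(u\circ\gamma)$ (using continuity of $P$, so that $P\circ\gamma$ is a genuine curve). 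Applying (3) to $\gamma=S^1$, the closed curve $P|_{S^1}$ in $Z$ has length $\ell_X(\trace(u))=\ell_X(\Gamma)$ and is carried by the $1$-Lipschitz map $\bar u$ onto $\Gamma$; a $1$-Lipschitz surjection of a curve onto a curve of the same length is arclength preserving, which is item (2).

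\emph{Items (4) and (5).} By \eqref{eq:confarea}, $\Area(u|_V)=\int_V f^2$. I would next observe that $P$ is a Sobolev map on $D$ whose approximate metric differential equals $f\cdot s_0$ almost everywhere — by (3) the $P$-image of $2$-almost every curve has length $\int_\gamma f$, exactly as for $u$ in \eqref{eq:almostallc} — so the parametrized area of $P$ over $V$ is again $\int_V f^2$. It then remains to see that $P$ is injective off an $\mathcal H^2_Z$-null subset of $Z$; this uses the structure of the non-degenerate fibres from the topological step, together with the fact that $u$ is constant on each fibre, so that $f$ vanishes a.e.\ on any fibre of positive planar measure (and there are only countably many such). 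Then $\mathcal H^2_Z(P(V))=\Area(P|_V)=\int_V f^2=\Area(u|_V)$, which is (4). For (5), given a Jordan curve $\eta\subset Z$ with Jordan domain $O$, monotonicity of $P$ lets one lift $\eta$ to a Jordan curve $\widetilde\eta\subset\bar D$ with $P(\widetilde\eta)=\eta$ and $P$ carrying the closed Jordan domain of $\widetilde\eta$ onto $\overline O$; then $\ell_Z(\eta)=\ell_X(u\circ\widetilde\eta)$ by (3), $\mathcal H^2_Z(O)=\Area(u|_{J_{\widetilde\eta}})$ by (4) (after an inner approximation making the Jordan domain compactly contained in $D$ when $\widetilde\eta$ meets $S^1$), and \tref{partI.u} gives $\Area(u|_{J_{\widetilde\eta}})\le\tfrac1{4\pi}\ell^2_X(u\circ\widetilde\eta)=\tfrac1{4\pi}\ell^2_Z(\eta)$.

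\emph{Main obstacle.} The two substantial points are the continuity of $d_u$ up to $S^1$, where \tref{partI.u} and the full boundary regularity of the Plateau solution are indispensable, and the essential injectivity of $P$ needed for the area identity (4); the remaining items are soft once these are established. For the complete arguments I would refer to \cite{LW-intrinsic}.
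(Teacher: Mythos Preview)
The paper does not prove this theorem; it is stated with a direct citation to \cite[Theorems 1.1, 1.2, 1.4, 1.5]{LW-intrinsic} and no further argument is given in the paper itself. Your proposal goes beyond what the paper does by sketching the architecture of such a proof, and the outline you give---continuity of $d_u$ via local Lipschitz regularity in $D$ plus an isoperimetric/Courant--Lebesgue type estimate at the boundary, a Moore-type decomposition theorem for the topology, the soft length identity (3), the conformal-factor argument for (4), and lifting Jordan curves for (5)---is broadly consistent with the strategy of \cite{LW-intrinsic}, to which you also correctly defer for the full details. So there is no divergence to report: the paper's ``proof'' is the citation, and your sketch is a reasonable summary of what that citation contains.
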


The space $Z$ in \tref{partI.Z} will be called the intrinsic minimal disc associated with $u$.

\subsection{Reduction  to \tref{thmminimal}} \label{subsec:reduct}
 Now we can prove:
\begin{prop}
\tref{thmminimal} implies \tref{thmfirst}.
\end{prop}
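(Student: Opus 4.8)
The plan is to deduce \tref{thmfirst} from \tref{thmminimal} by combining the existence of solutions of the Plateau problem with \lref{curvmaj}. Since the statement of \tref{thmfirst} concerns the length space $X^i$, and $X^i$ is a proper, complete geodesic metric space (by the discussion in Subsection \ref{associated}) which again satisfies the Euclidean isoperimetric inequality for curves (because lengths and areas of Lipschitz discs are unchanged when passing from $X$ to $X^i$, the natural $1$-Lipschitz map $\bar u:X^i\to X$ being a bijection that preserves lengths of rectifiable curves), I may replace $X$ by $X^i$ and thus assume from the outset that $X$ is a proper geodesic space satisfying the Euclidean isoperimetric inequality for curves. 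The ``only if'' direction is already \lref{lem:onlyif}, so the real content is the ``if'' direction: assuming the isoperimetric inequality, show $X$ is ${\rm CAT}(0)$.

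The key steps, in order, are as follows. First, by \lref{curvmaj} it suffices to show that every Jordan triangle $\Gamma\subset X$ is majorized by a ${\rm CAT}(0)$ space, i.e.\ that there exists a ${\rm CAT}(0)$ space $Z$ and a $1$-Lipschitz map $F:Z\to X$ carrying some rectifiable closed curve $\Gamma'\subset Z$ in an arclength-preserving way onto $\Gamma$. Second, given such a Jordan triangle $\Gamma$ — which is in particular a rectifiable Jordan curve in $X$ — invoke \cite{LW} to obtain a solution $u:\bar D\to X$ of the Plateau problem for $\Gamma$; here we use that $X$, being proper with the Euclidean isoperimetric inequality for curves, has property (ET) by \tref{ET}, so the theory of \cite{LW} applies and $u$ extends continuously to $\bar D$ by \tref{partI.u}. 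Third, form the associated intrinsic minimal disc $Z=Z_u$ with projection $P:\bar D\to Z$ and factorization $u=\bar u\circ P$ as in \tref{partI.Z}. By \tref{thmminimal}, $Z$ is ${\rm CAT}(0)$. Fourth, set $F:=\bar u:Z\to X$, which is $1$-Lipschitz by \tref{partI.Z}(2), and let $\Gamma':=\partial Z$; by the same item, $\bar u$ maps $\partial Z$ onto $\Gamma$ in an arclength-preserving way. Since $\partial Z$ is a closed rectifiable curve in the ${\rm CAT}(0)$ space $Z$, the pair $(Z,F)$ is exactly the majorization required by \lref{curvmaj}. This shows every Jordan triangle in $X$ is majorized by a ${\rm CAT}(0)$ space, hence $X$ is ${\rm CAT}(0)$ by \lref{curvmaj}, completing the proof.

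The main obstacle is not any single step but making sure the reduction to $X^i$ is clean: one must check that $X^i$ inherits the Euclidean isoperimetric inequality for curves and that proving $X^i$ is ${\rm CAT}(0)$ is genuinely what \tref{thmfirst} asks. The first point is straightforward since any Lipschitz curve $\gamma:S^1\to X^i$ composes with the length-preserving bijection to a Lipschitz curve in $X$ of the same length, a Lipschitz filling in $X$ with the area bound lifts (via $P$, which sends rectifiable curves to curves of the same length) to a Sobolev — in fact Lipschitz — filling in $X^i$ with the same parametrized Hausdorff area, and areas are preserved since $\bar u:X^i\to X$ is a length-preserving bijection. One minor technical care is the distinction between Lipschitz and Sobolev fillings in this lifting argument; one resolves it either by noting that the filling produced by \tref{thmminimal}-based majorization can be taken Lipschitz after reparametrization as in \lref{lem:onlyif}, or simply by observing that the definition of ${\rm CAT}(0)$ only needs thin triangles, for which \lref{curvmaj} already packages everything we need and the Lipschitz-versus-Sobolev subtlety never enters. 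Once the reduction is in place, the argument is a direct concatenation of \tref{thmminimal}, \tref{partI.Z}, and \lref{curvmaj}.
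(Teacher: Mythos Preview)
Your core argument in the second paragraph---solve Plateau for the given Jordan triangle, invoke \tref{thmminimal} to get that the intrinsic minimal disc $Z$ is ${\rm CAT}(0)$, then feed the $1$-Lipschitz, boundary-arclength-preserving map $\bar u$ into \lref{curvmaj}---is exactly the paper's proof. The gap is in your reduction step. You assert that $X^i$ is proper, citing Subsection~\ref{associated}, but that subsection only establishes that $X^i$ is complete and geodesic; properness of $X$ does \emph{not} in general pass to $X^i$, since $d_{X^i}$-bounded sequences need not have $d_{X^i}$-convergent subsequences even when they $d_X$-converge. Without properness of $X^i$ you cannot apply \tref{thmminimal} to $X^i$, nor invoke the existence of Plateau solutions from \cite{LW} in $X^i$. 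Your argument that Lipschitz fillings lift from $X$ to $X^i$ is actually fine (a Lipschitz disc $v:\bar D\to X$ gives, via images of segments, short curves between nearby image points, so $P\circ v$ is Lipschitz into $X^i$), but this does not rescue the properness issue.

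The paper sidesteps this entirely: it solves the Plateau problem for $\Gamma$ \emph{in $X$} (where properness is given), obtains the ${\rm CAT}(0)$ disc $Z$ and the $1$-Lipschitz map $\bar u:Z\to X$, and then observes that because $Z$ is a geodesic space, the same map $\bar u$ viewed as $Z\to X^i$ is still $1$-Lipschitz (distances in $X^i$ are computed via lengths of curves, and $\bar u$ sends geodesics of $Z$ to curves in $X$ of the same length). The arclength-preserving property on $\partial Z$ is likewise unchanged. Thus the majorization required by \lref{curvmaj} is obtained for $\Gamma\subset X^i$ without ever needing $X^i$ itself to be proper or to satisfy the isoperimetric inequality. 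Your argument becomes correct once you make this single adjustment: keep the Plateau problem in $X$ and only pass to $X^i$ at the very end via the factorization of $\bar u$.
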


\begin{proof}
The "only if part" of \tref{thmfirst} has already been verified in \lref{lem:onlyif},  since the identity map $\Id :X^i\to X$ is $1$-Lipschitz and lengths of curves in $X$ and in $X^i$ coincide.   Let now $X$ be a proper metric space which satisfies the Euclidean isoperimetric inequality for curves.
 Assume in addition that any pair of points in $X$ is connected by a curve of finite length.    Consider the induced length space $X^i$. Since $X$ is proper,  the space $X^i$ is a complete geodesic metric space.  We are going to prove that $X^i$ is ${\rm CAT}(0 )$. We take an arbitrary  Jordan triangle $\Gamma \subset X^i$  and need to majorize it by some ${\rm CAT}(0)$ space
in the sense of \lref{curvmaj}.  Now $\Gamma$ has the same length when viewed as a curve in $X$.  We find a solution  $u$ of the Plateau problem for the curve $\Gamma \subset X$  and
  apply \tref{partI.u} and \tref{partI.Z} to  $\Gamma \subset X$. As in \tref{partI.Z}, we denote by $Z$ the intrinsic minimal disc associated with $u$. Thus, $Z$ is a compact geodesic metric space homeomorphic to $\bar D$  and there exists a $1$-Lipschitz  map $\bar u:Z\to X$ which maps the boundary $\partial Z$ in an arclength preserving way to $\Gamma$. Since $Z$ is a geodesic space, the map $\bar u$ considered as a map to $X^i$ is still $1$-Lipschitz and arclength preserving on
$\partial Z$. Assuming that \tref{thmminimal} holds true, the space $Z$ is ${\rm CAT}(0)$. Thus, \lref{curvmaj} implies that  $X^i$ is ${\rm CAT}(0)$ as well.
\end{proof}

\section{The conformal factor}  \label{secisop}
\subsection{An integral inequality} \label{subsecimpl}
Let $X$ be a complete metric space which satisfies the Euclidean isoperimetric inequality for curves. Let $\Gamma$ be a Jordan curve of finite length in $X$ and let $u:\bar D\to X$ be a solution of the
Plateau problem as in \tref{partI.u}.   Let  $f\in L^2(D)$ be a conformal factor of $u$.
Applying \tref{partI.u} to concentric circles and using \eqref{eq:confarea} and \eqref{eq:almostallc}
 we deduce:
\begin{lem}  \label{isop-f}
The conformal factor $f\in L^2 (D)$ satisfies the  inequality
\begin{equation}\label{eq:isop-f}
   \int_{B(z,r)}f^2 \leq    \frac 1 {4\pi} \cdot  \left(\int_{\partial B(z,r)}f\right) ^2 \, ,
 \end{equation}
  for any  $z\in D$ and almost any $0<r<1-|z|$.
\end{lem}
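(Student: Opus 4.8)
The plan is to apply Theorem~\ref{partI.u} not to the full disc but to the subdomain bounded by a circle $\partial B(z,r)$, and then translate the two sides of the resulting area-length inequality into integrals of the conformal factor via the formulas \eqref{eq:confarea} and \eqref{eq:almostallc} from Section~\ref{secSob}. Concretely, fix $z\in D$ and let $0<r<1-|z|$. The circle $\eta=\partial B(z,r)$ is a Jordan curve in $D$ with Jordan domain $J=B(z,r)\subset D$, so Theorem~\ref{partI.u} gives directly
\begin{equation*}
\Area(u|_{B(z,r)})\leq \frac{1}{4\pi}\,\ell^2_X(u\circ\eta).
\end{equation*}
By \eqref{eq:confarea} applied to the subdomain $O=B(z,r)$ we have $\Area(u|_{B(z,r)})=\int_{B(z,r)}f^2$, which is exactly the left-hand side of \eqref{eq:isop-f}. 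It remains to identify $\ell_X(u\circ\eta)$ with $\int_{\partial B(z,r)}f$ for almost every $r$.

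For the right-hand side the key point is \eqref{eq:almostallc}: for $2$-almost every curve $\gamma$ in $D$ one has $\ell_X(u\circ\gamma)=\int_\gamma f$. The family of concentric circles $\{\partial B(z,r)\}_{0<r<1-|z|}$, parametrized by the radial coordinate, is precisely of the form covered by the remark following \eqref{eq-n1p} (a biLipschitz image of $I\times I$, or rather of an annulus, by the map $(\theta,r)\mapsto z+r(\cos\theta,\sin\theta)$): for almost every $r$ the circle $\gamma_r=\partial B(z,r)$ satisfies $\ell_X(u\circ\gamma_r)=\int_{\gamma_r}f=\int_{\partial B(z,r)}f$. I should note here that the continuous representative of $u$ from Theorem~\ref{partI.u} is being used, so that $u\circ\gamma_r$ is an honest continuous closed curve and its $X$-length is well-defined and agrees with the Sobolev length for a.e.\ $r$; this is standard for the canonical continuous representative. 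Plugging this into the displayed inequality yields \eqref{eq:isop-f} for almost every $0<r<1-|z|$.

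The main (mild) obstacle is the measure-theoretic bookkeeping: Theorem~\ref{partI.u} is a statement for \emph{every} Jordan curve $\eta\subset\bar D$, but the identification of $\ell_X(u\circ\eta)$ with $\int_\eta f$ only holds for \emph{almost every} radius $r$, since $f$ is only an $L^2$ function and \eqref{eq:almostallc} holds only for $2$-almost all curves. One has to make sure the exceptional set of radii is genuinely null; this follows from Fubini applied in polar coordinates around $z$, exactly as in the ``almost all horizontal/vertical segments'' formulation recalled after \eqref{eq-n1p}. There is no real difficulty, only care: the conclusion is stated, correctly, only for almost every $r$, which is all that is needed for the subsequent log-subharmonicity argument. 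Once the two identifications are in place the inequality \eqref{eq:isop-f} is immediate, so the proof is genuinely short — it is essentially just Theorem~\ref{partI.u} specialized to balls, combined with the conformality of $u$.
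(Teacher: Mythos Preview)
Your proof is correct and follows essentially the same approach as the paper: apply \tref{partI.u} to concentric circles $\partial B(z,r)$, then identify the area side via \eqref{eq:confarea} and the length side via \eqref{eq:almostallc}, the latter holding only for almost every radius. The paper states this in a single sentence; your version simply makes the Fubini/polar-coordinates justification for the ``almost every $r$'' explicit.
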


\subsection{Log-subharmonic functions}
Recall that a function $f:U\to [-\infty , \infty ) $ defined on a domain $U\subset \R^2$
is called \emph{subharmonic}  if  $f$ is upper semi-continuous,  contained in $L^1 _{loc}$   and satisfies
$f(z) \leq \vint_{B(z,s)}f  $ for all $z\in U$ and all $s>0$ with
$B(z,s)  \subset U$. Here and below, we denote by $\vint_T g =\vint _T  g \, d\mu $  the integral mean value $\frac 1 {\mu (T)} \int_T g \, d\mu $
 of a function integrable with respect to a measure $\mu$.
A function $f\in L^{1} _{loc} (U)$ has a subharmonic representative if and only if
$\Delta f \geq 0$ in the  distributional sense.    This representative
is uniquely defined at each point by $f(z)= \lim _{s\to 0} \vint_{B(z,s)}f $.

A function $f:U\to [0,\infty)$ is called \emph{log-subharmonic} if
$\log (f)$ is a subharmonic function. Any log-subharmonic  function is locally bounded.
Log-subharmonic functions are intimately related to non-positive curvature.

 Inequality \eqref{eq:isop-f} turns out to imply log-subharmonicity (the other implication is true as well, cf. \cite{BR}, but will not be needed here).

\begin{prop} \label{prop:subharmonic-rep}
Any non-negative function $f\in L^2 (D) \setminus \{0 \}$ which satisfies \eqref{eq:isop-f}  has a
log-subharmonic representative.
\end{prop}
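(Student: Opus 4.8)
The plan is to show that the hypothesis \eqref{eq:isop-f} forces, at almost every point, the sub-mean-value inequality for $\log f$, after replacing $f$ by a suitable pointwise representative. First I would record the elementary consequences of \eqref{eq:isop-f}. Fix $z\in D$ and set $\phi(r)=\int_{\partial B(z,r)}f\,d\mathcal H^1$ for the circular integrals and $A(r)=\int_{B(z,r)}f^2$ for the area integrals, so that by the coarea formula $A'(r)$ relates to the $L^2$-norm of $f$ on the circle and, via the Cauchy--Schwarz inequality on the circle of circumference $2\pi r$, one has $\phi(r)^2\le 2\pi r\int_{\partial B(z,r)}f^2$. Combining this with \eqref{eq:isop-f}, which reads $A(r)\le\frac1{4\pi}\phi(r)^2$, and with $A'(r)=\int_{\partial B(z,r)}f^2$ for a.e.\ $r$, I obtain a differential inequality of the form $A(r)\le\frac{r}{2}A'(r)$ for almost every $r$; equivalently $\frac{d}{dr}\log\!\bigl(A(r)/r^2\bigr)\ge 0$, so $r\mapsto A(r)/r^2$ is monotone non-decreasing (after choosing the good representative by absolute continuity). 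This is the standard ``monotonicity formula for minimal surfaces'' packaged purely in terms of the conformal factor, and it is exactly the mechanism going back to \cite{BR}.

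Next I would upgrade this area monotonicity into a statement about the circular averages of $f$ itself, and then about $\log f$. From $A(r)\le \frac1{4\pi}\phi(r)^2$ and $A(r)\ge$ (by monotonicity) $r^2\cdot\lim_{s\to0}A(s)/s^2$, together with the limit relation $\lim_{s\to 0}\frac1{\pi s^2}\int_{B(z,s)}f^2 = f(z)^2$ valid at every Lebesgue point of $f^2$ — which is the point where one \emph{defines} the representative, namely $f(z):=\bigl(\lim_{s\to0}\vint_{B(z,s)}f^2\bigr)^{1/2}$ whenever this limit exists and is finite, which is a.e. — one reads off $\pi r^2 f(z)^2\le \frac1{4\pi}\phi(r)^2$, i.e.
\begin{equation}
 f(z)\ \le\ \frac{1}{2\pi r}\int_{\partial B(z,r)} f\,d\mathcal H^1\ =\ \vint_{\partial B(z,r)} f ,
\end{equation}
for a.e.\ small $r$. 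Integrating this over $r$ against $r\,dr$ turns the circular sub-mean-value inequality into the solid one: $f(z)\le \vint_{B(z,s)}f$ for a.e.\ $s$. So $f$ is already ``almost subharmonic'', but I actually need $\log f$ to be subharmonic, which is a strictly stronger statement (by Jensen it would give the above). To get it, I would apply the same argument after a dilation/normalization: the inequality \eqref{eq:isop-f} is scale invariant, and more importantly one can iterate the monotonicity to compare $f(z)$ with $\vint_{\partial B(z,r)}f$ for \emph{all} $r$ and then use the sharpness in Cauchy--Schwarz. The cleanest route, I expect, is to show directly that the function $g(r):=\log\bigl(\vint_{\partial B(z,r)} f\bigr)-\log\phi_0$, suitably interpreted, is convex in $\log r$ — this is the classical Hadamard three-circles phenomenon and is equivalent to $\log f$ being subharmonic — by differentiating the area identity once more and invoking equality analysis in the two applications of Cauchy--Schwarz (one on the circle, one between consecutive radii). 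Concretely: subharmonicity of $\log f$ is equivalent to $\Delta \log f\ge 0$ distributionally, and I would verify this by testing against non-negative smooth compactly supported functions, reducing by the monotonicity above to the one-variable convexity statement.

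The main obstacle, and the step I would spend the most care on, is the passage from ``$f$ satisfies the sub-mean-value property'' to ``$\log f$ satisfies it'': the area monotonicity naturally controls $\int f^2$ and $\int_{\partial B} f$ but not logarithmic averages, and one genuinely needs the equality case of the isoperimetric/Cauchy--Schwarz chain to be exploited, not just the inequality. A second, more technical obstacle is the measure-theoretic bookkeeping: \eqref{eq:isop-f} is only assumed for \emph{almost every} $r$ and for an $L^2$ function with no a priori regularity, so one must be careful in (i) choosing the representative $f(z)$ via the $L^2$-Lebesgue-point limit of $\vint_{B(z,s)}f^2$, (ii) justifying that $A(r)$ is locally absolutely continuous with $A'(r)=\int_{\partial B(z,r)}f^2$ for a.e.\ $r$ (coarea plus $f\in L^2$), and (iii) arguing that an a.e.-valid monotonicity of $A(r)/r^2$ still pins down the pointwise value via its limit at $r\to 0$. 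Once these points are settled, upper semicontinuity of the chosen representative of $\log f$ follows from the monotone limit $\log f(z)=\lim_{r\to0}\vint_{B(z,r)}\log f$ together with Fatou, and $\log f\in L^1_{\mathrm{loc}}$ follows since $f\in L^2\subset L^1_{\mathrm{loc}}$ and $\log^- f$ is controlled because $f$ is locally bounded below away from its zero set — here one also uses $f\not\equiv 0$ and the fact, implied by the sub-mean-value property, that the zero set of the good representative is either everything or has no interior, so $\log f>-\infty$ on a dense set; the standard removable-singularity fact for subharmonic functions then handles the zero set. This completes the identification of the log-subharmonic representative.
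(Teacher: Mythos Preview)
Your plan correctly recovers the monotonicity of $r\mapsto A(r)/r^2$ and the sub-mean-value inequality $f(z)\le\vint_{\partial B(z,r)}f$; this parallels the first step of the paper's proof (which obtains it slightly differently, via H\"older applied to \eqref{eq:classic}). However, the crucial step---upgrading subharmonicity of $f$ to subharmonicity of $\log f$---is not actually carried out. You identify it as the main obstacle, but the remedies you offer (convexity of $\log\vint_{\partial B(z,r)}f$ in $\log r$, ``equality analysis in the two applications of Cauchy--Schwarz'', testing $\Delta\log f\ge0$ distributionally) remain slogans: you do not derive the claimed convexity from the hypothesis, and the asserted equivalence between that convexity and log-subharmonicity is neither standard nor proved. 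This is a genuine gap, not missing bookkeeping, because $f(z)\le\vint_{\partial B(z,r)}f$ by itself does \emph{not} imply $\log f(z)\le\vint_{\partial B(z,r)}\log f$; one must extract genuinely second-order information from \eqref{eq:isop-f}, and your monotonicity argument has already spent the inequality on a first-order conclusion.

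The paper bypasses this obstacle by reduction to the smooth positive case. For smooth $f>0$, Beckenbach and Rad\'o \cite{BR} read off $f\Delta f-|\nabla f|^2\ge0$ (equivalently $\Delta\log f\ge0$) by comparing the $r^2$-coefficients in the Taylor expansions of the two sides of \eqref{eq:classic} at $r=0$; no three-circles argument is needed. The general $L^2$ case is then reduced in two moves: first, $f^\delta:=f+\delta$ still satisfies \eqref{eq:classic} (a direct combination of \eqref{eq:classic} and \eqref{eq:classic1}), so one may assume $f>0$; second, the standard mollifications $f_\epsilon$ still satisfy \eqref{eq:classic} by a lemma of \cite{EKMS}, so one may assume $f$ smooth. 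Classical convergence theorems for subharmonic functions then pass the conclusion back to $f$. This mollification step in particular replaces all of the Lebesgue-point and absolute-continuity bookkeeping you outline, and you should consider adopting it even if you wish to supply your own argument for the smooth case.
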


\begin{proof}
We can rewrite   \eqref{eq:isop-f} in terms of the integral averages as
\begin{equation} \label{eq:classic}
\vint_{B(z,r)}f^2 \leq      \left(\vint_{\partial B(z,r)}f\right) ^2  \; .
\end{equation}
For continuous positive functions $f$  satisfying \eqref{eq:classic}, the log-subharmoni\-city  is proved in \cite[Lemma on p.~665]{BR}.
The general case  reduces to the case of smooth positive functions as follows, cf. \cite{EKMS}.
Applying  Hoelder's inequality to \eqref{eq:classic} we infer
\begin{equation} \label{eq:classic1}
\vint_{B(z,r)} f  \leq \vint _{\partial B(z,r)}  f \; .
\end{equation}
Thus, $f$ has a  subharmonic representative, see \cite[Lemma 4.6, Remark 4.8]{EKMS}. In particular, $f$ is locally bounded.

A combination
of \eqref{eq:classic} and \eqref{eq:classic1} directly implies that for any $\delta >0$ the function $f^{\delta} (z):= f(z)+ \delta$
satisfies \eqref{eq:classic} as well.   If  $f^{\delta}$ has a log-subharmonic representative for all $\delta >0$ then
 (after changing to  the subharmonic representative) we obtain $f$ as a limit of locally uniformly bounded log-subharmonic functions.
Then, by the classical convergence theorems for subharmonic functions (cf. \cite[Section 3.7,  Exercise 3.15]{Armitage}), the function  $f$ has a log-subharmonic representative.
Therefore, it suffices to prove the proposition under the assumption that $f$ is  everywhere positive.

For this, we  take  any $\epsilon >0$ and  consider the usual mollified functions $f_{\epsilon}:B(0,1-\epsilon) \to [0, \infty )$
obtained by convolutions with a standard (Friedrichs) mollifier.  We use the observation of \cite[Lemma 4.5]{EKMS} (going back to the proof in  \cite{BR}) that the smooth function $f_{\epsilon}$ still satisfies \eqref{eq:classic} for all balls contained in  $B(0,1-\epsilon)$. Due to \cite{BR}, $f_{\epsilon}$ is  log-subharmonic. By the limiting argument as above  $f$ has a  log-subharmonic representative  as well.
This finishes the proof.
\end{proof}

\subsection{Conclusion}
Taking \pref{prop:subharmonic-rep} and \lref{isop-f}   together we have shown that
the conformal factor $f\in L^2 (D)$  of our minimal disc  $u$  has a log-subharmonic  representative $\bar f$.
 From now on we will replace  $f$ by $\bar f$ and assume that $f$ is log-subharmonic.

\section{Metric defined by a conformal factor} \label{secsurf}
We refer to \cite{Reshetnyak-GeomIV} and the references therein for
a detailed description of the theory, a special case of which is sketched here.
Let $U$ be a domain in  $\R^2$ and let $f:U\to [0,\infty)$ be a log-subharmonic  function.

 For a Lipschitz   curve  $\gamma :[a,b] \to U$ define  the $f$-length of $\gamma$ to be
 $$L_f(\gamma ):=\int _{\gamma} f =  \int_a^b f(\gamma (t))\, |\gamma ' (t)|\, dt .$$
 The $f$-length does not change if $\gamma$ is reparametrized.
Since
$f$ is locally bounded the $f$-length of  any Lipschitz curve is finite.

Define $d_f: U\times U\to [0,\infty)$ by
 \begin{equation} \label{def:df}
 d_f(z, z'):= \inf \{L_f (\gamma) \;|\;  \gamma  \;\text{ Lipschitz curve between} \; z\;  \text{and} \; z'  \}.
 \end{equation}

This function $d_f$ defines a metric on $U$ and
the  identity map $i:U \to (U,d_f)$ from the Euclidean subset $U$ to the new metric space is a homeomorphism,
 \cite[Theorem 7.1.1]{Reshetnyak-GeomIV}.
Denote by $Y$ the metric space $(U,d_f)$.

  The metric $d_f$ does not  change if in \eqref{def:df} the infimum is taken over all injective curves of bounded variation of turn instead over all Lipschitz curves, \cite[p.101]{Reshetnyak-GeomIV}.  If  injective curves  $\gamma _n$  of uniformly bounded variation of turn converge pointwise to the curve  $\gamma$ in $U$ then,  due to  \cite[Theorem 8.4.4]{Reshetnyak-GeomIV},
  \begin{equation} \label{stadler}
 \lim L_f (\gamma _n) = L_f (\gamma ) \,.
 \end{equation}
 Thus,  the distance $d_f  (z_1,z_2)$ can be  defined by the formula
\eqref{def:df}, where the infimum is taken over the set of all polygonal curves $\gamma \subset U$ between $z_1$ and $z_2$.

 For any  compactly contained subdomain $V\subset U$, the restriction $i:V\to Y$ is Lipschitz continuous, since $f$ is locally bounded.
  For any Lipschitz curve $\gamma$  in $U$ we have $\ell _Y(i\circ \gamma )= L_f(\gamma )$,
\cite{Resh-length}. We deduce from \eqref{eq:almostall} that $i:V\to Y$ is conformal with conformal factor $f$.  Therefore:
\begin{equation}  \label{eq:confarea+}
 \mathcal H^2 (i(V)) = \int _V f^2 \; .
\end{equation}

The main results of Reshetnyak's analytic theory of Alexandrov surfaces of (integral) bounded curvature \cite[Theorems 7.1 and 7.2]{Reshetnyak-GeomIV} take in our case  the following form:

\begin{thm} \label{thm:Resh}
The space $Y=(U,d_f)$ constructed above has non-positive curvature. Conversely, for any space $M$ of non-positive curvature  which is homeomorphic to a surface without boundary the following is true. For any point $x\in M$ there exists a neighborhood of $x$ isometric to some  $Y=(U,d_f)$, where $U$ is a domain in $\R^2$ and $f:U\to [0,\infty) $ is  log-subharmonic.
\end{thm}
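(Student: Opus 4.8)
The plan is to translate Reshetnyak's analytic results on surfaces of bounded integral curvature into the present setup. For the direct statement, that $Y=(U,d_f)$ has non-positive curvature, the key point is that a log-subharmonic conformal factor produces a metric whose curvature measure is non-positive. Concretely, write $f = e^{h}$ with $h$ subharmonic; then in the distributional sense the Gauss curvature measure of the metric $f^2(|dx|^2+|dy|^2)$ is $-\Delta h$, which is a non-positive measure since $h$ is subharmonic. By Reshetnyak's theory \cite[Theorem 7.1.1, Theorems 7.1 and 7.2]{Reshetnyak-GeomIV}, a metric of this conformal type with non-positive curvature measure is an Alexandrov surface of curvature $\leq 0$ in the comparison sense, i.e. it has non-positive curvature as defined in Section \ref{sec:curvature}. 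I would first record that $d_f$ is a genuine length metric inducing the Euclidean topology on $U$ (already cited from \cite[Theorem 7.1.1]{Reshetnyak-GeomIV}), and then invoke the curvature statement, reducing the verification of the comparison inequality on small triangles to Reshetnyak's theorem. One must be slightly careful that ``non-positive curvature'' here means the local ${\rm CAT}(0)$ condition on geodesic triangles; since $Y$ is locally geodesic (being a length space locally isometric to a domain with a continuous Riemannian-type conformal metric away from the singular set, and complete locally), this matches Reshetnyak's notion, and the equivalence is part of his development.

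For the converse, let $M$ be a space of non-positive curvature homeomorphic to a surface without boundary and fix $x\in M$. By definition $x$ has a ${\rm CAT}(0)$ neighborhood, which in particular is an Alexandrov surface of curvature $\leq 0$ in Reshetnyak's sense. Here I would appeal directly to \cite[Theorems 7.1 and 7.2]{Reshetnyak-GeomIV}: every point of such a surface has a neighborhood admitting isothermal (conformal) coordinates, i.e. an isometry onto $(U,d_g)$ where $U\subset\R^2$ is a domain and $g$ is a conformal metric whose density can be written as $e^{2h}$ for a difference of subharmonic functions, with the curvature measure equal to $-\Delta h$. Non-positivity of the curvature forces $-\Delta h\leq 0$, hence $h$ is subharmonic, hence $f:=e^{h}$ is log-subharmonic, and $(U,d_g)=(U,d_f)=Y$ in the notation of this section. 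Shrinking $U$ if necessary we may take it to be a disc, and we may assume $f$ is defined and log-subharmonic on all of this $U$.

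The main obstacle, and essentially the only non-formal point, is matching the normalization and the precise notion of area/curvature between the two frameworks: Reshetnyak's surfaces of bounded integral curvature are a priori described by a curvature \emph{measure}, and one must check that the sign condition ``curvature measure $\leq 0$'' is equivalent to the synthetic comparison condition ``non-positive curvature'' used in this paper, both for the forward and backward directions. This equivalence is exactly the content of \cite[Theorems 7.1 and 7.2]{Reshetnyak-GeomIV}, so the proof amounts to quoting those theorems and checking that the hypotheses translate: on the analytic side one needs $f$ log-subharmonic (giving a non-positive curvature measure with no positive part, hence in particular bounded integral curvature), and on the synthetic side one needs the local ${\rm CAT}(0)$ condition. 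I do not expect to need any new estimate; the write-up is a careful citation of Reshetnyak combined with the identification $f=e^{h}$, $h$ subharmonic $\iff$ $f$ log-subharmonic, and with the observation from this section that $d_f$ can be computed using polygonal curves, so that the metric $Y$ here coincides with the one in \cite{Reshetnyak-GeomIV}.
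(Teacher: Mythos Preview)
Your proposal is correct and follows essentially the same route as the paper: both reduce the statement to Reshetnyak's Theorems 7.1 and 7.2 in \cite{Reshetnyak-GeomIV}, with the translation between ``non-positive curvature in the comparison sense'' and ``non-positive curvature measure'' as the only point requiring care. The paper makes this bridge slightly more explicit by invoking the excess characterization (a triangle has non-positive excess iff it is thin) from \cite[p.~36]{Aleksandrov}, whereas you attribute the equivalence directly to Reshetnyak; but this is a matter of attribution rather than a gap in the argument.
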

\begin{proof}
We merely explain why  \tref{thm:Resh} is a special case of Reshetnyak's results, relying on the results presented in \cite{Reshetnyak-GeomIV}.

Recall that a locally compact length space $X$  has non-positive curvature if and only if any point $x\in X$ has a neighborhood $U$ such that
any triangle $\Gamma$ in $U$ has  a non-positive excess, \cite[p.36]{Aleksandrov}. (Here  the excess of a triangle is the sum of its angles minus $\pi$.) Thus  a length space $X$ homeomorphic to a surface without boundary has non-positive curvature if and only if it has
bounded (integral)  curvature in the sense of Aleksandrov and the (signed) curvature measure of $X$ is non-positive.

Now both claims of \tref{thm:Resh} are exactly Theorems 7.1 and  7.2 in \cite{Reshetnyak-GeomIV}.
\end{proof}

\section{Reduction to \tref{thm4}} \label{sec:mainred}

\subsection{Formulation} The aim of this section is  to prove:

\begin{prop} \label{prop:reform}
\tref{thm4} implies \tref{thmminimal}.
\end{prop}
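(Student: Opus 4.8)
The plan is to assemble the pieces already developed in the preceding sections. We are given $X$ proper with the Euclidean isoperimetric inequality for curves, a Jordan curve $\Gamma\subset X$ of finite length, and a solution $u:\bar D\to X$ of the Plateau problem with boundary $\Gamma$, and we must show that the associated intrinsic minimal disc $Z=Z_u$ is $\mathrm{CAT}(0)$. By \tref{partI.Z}, $Z$ is a geodesic space homeomorphic to $\bar D$, its boundary $\partial Z$ is sent isometrically (arclength preserving) onto $\Gamma$ by the $1$-Lipschitz map $\bar u$, and $P^{-1}(Z\setminus\partial Z)$ is homeomorphic to $D$; moreover condition (5) of \tref{partI.Z} gives exactly the geometric isoperimetric inequality $\mathcal H^2(O)\leq\frac1{4\pi}\ell_Z^2(\eta)$ for every Jordan curve $\eta\subset Z$ bounding a Jordan domain $O\subset Z$, which is precisely condition (3) of \tref{thm4}. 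So the only thing left to check, in order to invoke \tref{thm4}, is that the open part $Z\setminus\partial Z$, with the metric induced from $Z$, is \emph{locally} $\mathrm{CAT}(0)$.

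First I would identify the interior geometry of $Z$ with the Reshetnyak surface built from the conformal factor. By Section~\ref{secexclude}, $X$ has property (ET), so the Plateau solution $u$ is conformal with a conformal factor $f\in L^2(D)$; by Section~\ref{secisop} (\pref{prop:subharmonic-rep} together with \lref{isop-f}) we may take $f$ to be log-subharmonic. Let $Y=(D,d_f)$ be the length metric space of Section~\ref{secsurf}; by \tref{thm:Resh}, $Y$ has non-positive curvature, i.e.\ is locally $\mathrm{CAT}(0)$. The conformal identity \eqref{eq:almostallc} says $\ell_X(u\circ\gamma)=\int_\gamma f=L_f(\gamma)$ for $2$-almost all curves $\gamma$ in $D$, and hence, comparing the definition of $d_u$ (Subsection~\ref{associated}) with the definition of $d_f$, one gets a canonical $1$-Lipschitz map $I:Y\to Z$ (over the common underlying set $D$) which preserves the length of almost all curves and, by \eqref{eq:confarea+} and item (4) of \tref{partI.Z}, preserves $\mathcal H^2$ of open sets. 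The key local claim is that $I$ is a \emph{local isometry} from $Y$ onto $Z\setminus\partial Z$: on any compactly contained subdomain $V\Subset D$ the two length metrics agree. This is where the structural regularity of intrinsic minimal discs from \cite{LW-intrinsic} enters — interior points of $Z$ have neighborhoods on which the $d_u$-distance is realized by rectifiable curves staying in the interior, and for such curves $\ell_Z(P\circ\gamma)=\ell_X(u\circ\gamma)=L_f(\gamma)$, so no length is lost and $d_u=d_f$ locally. Consequently $Z\setminus\partial Z$ is locally isometric to $Y$ and therefore locally $\mathrm{CAT}(0)$.

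With that in hand the proof finishes quickly: $Z$ is a geodesic space homeomorphic to $\bar D$, $Z\setminus\partial Z$ is locally $\mathrm{CAT}(0)$, and condition (3) of \tref{thm4} holds by item (5) of \tref{partI.Z}; hence \tref{thm4} yields that $Z$ is $\mathrm{CAT}(0)$, which is the conclusion of \tref{thmminimal}. The main obstacle is precisely the interior local-isometry claim $d_u=d_f$ locally, equivalently that the interior geometry of the intrinsic minimal disc is genuinely the Reshetnyak conformal surface and not some quotient of it: the conformal factor controls lengths of \emph{almost all} curves, and one must rule out that distances in $Z$ are strictly smaller than $d_f$ because of exceptional curves or because shortest paths in $Z$ are forced to run along the boundary. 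Showing that interior shortest paths between nearby interior points stay in the interior (so that \eqref{eq:almostallc} applies to them) and that $f$ being locally bounded below away from its zero set prevents collapse is the technical heart; I would handle it by combining the continuity of $d_u$ with respect to the Euclidean metric (\cite{LW-intrinsic}) with the openness of $Z\setminus\partial Z$ and the lower semicontinuity of $f$-length \eqref{stadler}, reducing to polygonal curves in a small Euclidean ball where $d_u$ and $d_f$ are both comparable to the Euclidean metric.
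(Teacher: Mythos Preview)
Your overall architecture is right: verify that $Z$ is a geodesic disc, that condition (3) of \tref{thm4} holds (via \tref{partI.Z}(5)), and that the only missing hypothesis is local non-positive curvature of $Z\setminus\partial Z$, to be obtained from the Reshetnyak surface $Y_0=(D_0,d_f)$ via the comparison map $I:Y_0\to Z_0$. The gap is in your argument that $I$ is a local isometry.

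You write that for interior geodesics ``$\ell_Z(P\circ\gamma)=\ell_X(u\circ\gamma)=L_f(\gamma)$''. The first equality is \tref{partI.Z}(3), but the second is exactly what is in doubt: \eqref{eq:almostallc} gives $\ell_X(u\circ\gamma)=\int_\gamma f$ only for $2$-\emph{almost all} curves, and a specific $Z$-geodesic need not belong to this good family. Staying in the interior does not help; the exceptional set is defined by $2$-modulus, not by proximity to $\partial Z$. Your proposed remedy (``continuity of $d_u$, semicontinuity \eqref{stadler}, reduce to polygonal curves'') does prove $d_u\le d_f$ --- that is precisely how \lref{1-lip} is established --- but the inequality you need is the opposite one, $d_f\le d_u$, and lower semicontinuity of $\ell_X(u\circ\,\cdot\,)$ points the wrong way. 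Knowing that $I$ is $1$-Lipschitz and $\mathcal H^2$-preserving is not enough either, absent further regularity, to force local isometry.

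The paper closes this gap by a bootstrap that you do not mention: it \emph{assumes} $I$ is not a local isometry, picks a simple arc $\eta\subset Y_0$ with $\ell_{Y_0}(\eta)>\ell_Z(I\circ\eta)$, completes $\eta$ to a Jordan curve $T\subset Y_0$, and puts a new length metric on the closed Jordan domain $\bar J$ in which subarcs on $\eta$ are measured by their $Z$-length and everything else by its $Y_0$-length. The resulting space $S$ is again a geodesic disc with $S\setminus\partial S$ locally isometric to $J\subset Y_0$, and the isoperimetric condition (3) for $S$ is checked via \tref{partI.u} (not \tref{partI.Z}). One then applies \tref{thm4} to $S$ itself: $S$ is ${\rm CAT}(0)$, hence by the implication (1)$\Rightarrow$(2) the map $S\setminus\partial S\to J$ has a $1$-Lipschitz inverse, contradicting $\ell_S(Q\circ\eta)<\ell_{Y_0}(\eta)$. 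Thus \tref{thm4} is invoked \emph{twice} --- once for the auxiliary $S$ to establish the local isometry, and once for $Z$ to conclude. This cut-and-paste use of \tref{thm4} is the missing idea in your proposal.
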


Thus we assume that \tref{thm4} is true. Let $X$ be a proper metric space which
satisfies the Euclidean isoperimetric inequality for curves. Let
$\Gamma$ be a Jordan curve of finite length in $X$. Let $u:\bar D\to X$ be a solution of the Plateau problem in $X$ for the boundary curve $\Gamma$.
Denote by $Z$ the intrinsic minimal disc associated with $u$ as in \tref{partI.Z} and let $P:\bar D\to Z$ be the canonical surjective map.
Let $Z_0$ be the open disc $Z\setminus \partial Z$ and denote by $D_0$ the preimage $P^{-1}(Z_0)$. Then $D_0$ is homeomorphic to the open disc and, in particular, $D_0\subset D$.

Let $f$ denote the conformal factor of $u$, which is log-subharmonic due to Section \ref{secisop}.
Denote by $Y_0$ the open disc $D_0$ equipped with the length metric $d_f$ as introduced  in the previous section.  \tref{thm:Resh} implies that $Y_0$ has non-positive curvature.
Let $i:D_0\to Y_0$ denote the canonical homeomorphism (identity map).  Let $I:Y_0\to Z_0 \subset Z$
denote the composition $P\circ i ^{-1}$.
 We now easily conclude, using \tref{thm4}:

\begin{lem}
If $I:Y_0\to Z_0$ is a local isometry  then $Z$ is ${\rm CAT}(0)$.
\end{lem}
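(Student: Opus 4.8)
The goal is to show that if $I\colon Y_0\to Z_0$ is a local isometry, then the full disc $Z$ (including its boundary $\partial Z$) is ${\rm CAT}(0)$; we are allowed to invoke \tref{thm4}. The strategy is to verify hypothesis and condition (3) of \tref{thm4} for the space $Z$, and then quote that theorem. First, recall from \tref{partI.Z}(1) that $Z$ is a geodesic space homeomorphic to $\bar D$ with boundary circle $\partial Z$; so the standing hypotheses of \tref{thm4} are met provided we also check that $Z_0=Z\setminus\partial Z$ is locally ${\rm CAT}(0)$. This is where the assumption on $I$ enters: by \tref{thm:Resh} the space $Y_0=(D_0,d_f)$ has non-positive curvature, and a local isometry transports the ``every point has a ${\rm CAT}(0)$ neighborhood'' property from $Y_0$ onto its image. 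Since $I$ is surjective onto $Z_0$ (as $P$ is surjective onto $Z$ and $D_0=P^{-1}(Z_0)$), every point of $Z_0$ has a ${\rm CAT}(0)$ neighborhood, i.e.\ $Z_0$ has non-positive curvature in the sense of the excerpt.

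\textbf{Verifying the isoperimetric condition (3).} For any Jordan curve $\eta\subset Z$ with enclosed Jordan domain $J_\eta\subset Z$, we must show $\mathcal H^2(J_\eta)\le\frac{1}{4\pi}\,\ell_Z^2(\eta)$. But this is exactly statement (5) of \tref{partI.Z}: for any Jordan curve $\eta\subset Z$ and the corresponding Jordan domain $O\subset Z$ one has $\mathcal H^2(O)\le\frac{1}{4\pi}\,\ell_Z^2(\eta)$. Thus condition (3) of \tref{thm4} holds for $Z$ with no extra work.

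\textbf{Conclusion.} Having checked that $Z$ is a geodesic space homeomorphic to $\bar D$, that $Z\setminus\partial Z$ is locally ${\rm CAT}(0)$, and that $Z$ satisfies the geometric Euclidean isoperimetric inequality (3), \tref{thm4} (the implication (3)$\Rightarrow$(1)) yields that $Z$ is ${\rm CAT}(0)$, as desired.

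\textbf{Main obstacle.} There is essentially no serious obstacle here: the lemma is precisely the point where \tref{thm4} and the structural results \tref{partI.Z} are combined, and all the real work sits inside those cited results (existence and regularity of the Plateau solution, the isoperimetric bound (5), and the hard implication (3)$\Rightarrow$(1) of \tref{thm4}, whose proof occupies Part II). The only mild point of care is the passage from ``$Y_0$ has non-positive curvature'' to ``$Z_0$ has non-positive curvature'': one must note that a local isometry (not merely a $1$-Lipschitz map) carries local ${\rm CAT}(0)$ neighborhoods forward, and that $I$ is onto $Z_0$, so that genuinely every point of $Z_0$ is covered. Granting that, the statement follows immediately from \tref{thm4}.
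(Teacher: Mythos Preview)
Your proposal is correct and follows essentially the same route as the paper: use that $Y_0$ has non-positive curvature (\tref{thm:Resh}), transfer this to $Z_0$ via the surjective local isometry $I$, verify condition (3) of \tref{thm4} from \tref{partI.Z}(5), and conclude. The paper makes one step slightly more explicit than you do: to know that $I$ carries ${\rm CAT}(0)$ neighborhoods in $Y_0$ to \emph{neighborhoods} in $Z_0$, one observes that a local isometry is locally injective and then invokes invariance of domain to conclude $I$ is open; you flag this as a ``mild point of care'' but do not name the mechanism.
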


\begin{proof}
If $I$ is a local isometry then it is locally injective.  By the invariance of domains we  see that $I$ is an open map.
  Since $I$ is surjective and $Y_0$ non-positively curved, the space  $Z_0$ has non-positive curvature.
  Due to \tref{partI.Z}, the space $Z$ satisfies
the isoperimetric inequality for Jordan curves as required in
  \tref{thm4}, (3).  From \tref{thm4}  we deduce that $Z$ is ${\rm CAT}(0)$.
\end{proof}

Therefore, in order to prove \pref{prop:reform}, we only need to show that $I:Y_0\to Z_0$ is a local isometry.

\subsection{Properties of the map $I$}
We claim:
\begin{lem} \label{1-lip}
The map $I:Y_0\to Z_0$ is $1$-Lipschitz.
\end{lem}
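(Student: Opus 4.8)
The plan is to compare, for any two points $z_1,z_2\in D_0$, the distance $d_f(i(z_1),i(z_2))$ in $Y_0$ with the distance $d_u(P(z_1),P(z_2))=d_{Z_0}(I(i(z_1)),I(i(z_2)))$ in $Z_0$, and show that the first dominates the second. First I would recall that $d_f(i(z_1),i(z_2))$ is, by the discussion in Section \ref{secsurf} (in particular the remark following \eqref{stadler}), the infimum of the $f$-lengths $L_f(\gamma)=\int_\gamma f$ over all polygonal — hence Lipschitz — curves $\gamma\subset D_0$ joining $z_1$ to $z_2$. So it suffices to fix such a Lipschitz curve $\gamma$ and prove $d_{Z_0}(P(z_1),P(z_2))\le L_f(\gamma)$. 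Since $d_{Z_0}$ is the length metric $d_u$ restricted to $Z_0$ and $P$ does not increase $d_u$-distance from the Euclidean picture, it is enough to show $\ell_Z(P\circ\gamma)\le L_f(\gamma)$, and by \tref{partI.Z}(3) we have in fact $\ell_X(u\circ\gamma)=\ell_Z(P\circ\gamma)$, so the task reduces to the single inequality $\ell_X(u\circ\gamma)\le\int_\gamma f$.

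The subtlety is that the conformal-factor identity \eqref{eq:almostallc}, $\ell_X(u\circ\gamma)=\int_\gamma f$, is only known for $2$-almost every curve $\gamma$ in $D$, whereas here $\gamma$ is an arbitrary fixed Lipschitz (even polygonal) curve, and a single curve is negligible for the purposes of the modulus. The way around this is the local Lipschitz regularity of $u$ on $D$ recorded after \tref{partI.u}: since $u$ is locally Lipschitz on the open disc $D$ and $\gamma$ (being polygonal, hence compact) lies in a compact subset of $D_0\subset D$, the composition $u\circ\gamma$ is a genuine Lipschitz curve, and its length can be computed by the metric speed, $\ell_X(u\circ\gamma)=\int_a^b |(u\circ\gamma)'|(t)\,dt$, where $|(u\circ\gamma)'|(t)\le \apmd u_{\gamma(t)}(\gamma'(t))$ holds for a.e.\ $t$ by the chain rule for metric differentials of Lipschitz maps (here one uses that, by property (ET) established in \tref{ET} and the definition of the conformal factor, $u$ is conformal so that $\apmd u_z=f(z)\,s_0$ for a.e.\ $z$, and that this pointwise a.e.\ statement on $D$ transfers along the Lipschitz curve $\gamma$ because $\gamma$ is biLipschitz onto its image and the exceptional set has measure zero along almost every parallel translate — the standard Fubini/coarea argument). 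This yields $\ell_X(u\circ\gamma)\le\int_a^b f(\gamma(t))\,|\gamma'(t)|\,dt=L_f(\gamma)$.

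Putting these together: for every polygonal $\gamma\subset D_0$ joining $z_1$ to $z_2$ we get $d_{Z_0}(I(i(z_1)),I(i(z_2)))\le \ell_Z(P\circ\gamma)=\ell_X(u\circ\gamma)\le L_f(\gamma)$; taking the infimum over $\gamma$ gives $d_{Z_0}(I(i(z_1)),I(i(z_2)))\le d_f(i(z_1),i(z_2))$, i.e.\ $I$ is $1$-Lipschitz. I expect the main obstacle to be exactly the transfer step in the previous paragraph — upgrading the ``for $2$-a.e.\ curve'' statement \eqref{eq:almostallc} to the inequality $\ell_X(u\circ\gamma)\le\int_\gamma f$ for the single fixed curve $\gamma$ — which must be handled by invoking the local Lipschitz continuity of $u$ together with a measure-zero/Fubini argument for the approximate metric differential along $\gamma$, rather than by modulus estimates; everything else is bookkeeping with the identifications $d_{Z_0}=d_u|_{Z_0}$ and \tref{partI.Z}(3), and with the polygonal description of $d_f$ from Section \ref{secsurf}.
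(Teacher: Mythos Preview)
Your reduction is the same as the paper's: by the polygonal description of $d_f$ from Section~\ref{secsurf} it suffices to show $\ell_X(u\circ\gamma)\le L_f(\gamma)$ for each straight segment $\gamma\subset D_0$, and by \tref{partI.Z}(3) this is equivalent to $\ell_Z(P\circ\gamma)\le L_f(\gamma)$. The gap is in your proposed resolution of the ``a.e.\ curve'' difficulty. You invoke a chain rule $|(u\circ\gamma)'|(t)\le \apmd u_{\gamma(t)}(\gamma'(t))$ valid for a.e.\ $t$ along the \emph{fixed} segment $\gamma$, but no such pointwise chain rule is available here: the approximate metric differential, and with it the identity $\apmd u_z=f(z)\,s_0$, is defined only up to a $2$-dimensional null set, and that null set can contain the entire image of $\gamma$. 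Your own parenthetical already concedes this---Fubini gives the identity only along almost every parallel translate, not along $\gamma$ itself---and the sentence then simply stops. Local Lipschitz regularity of $u$ does make $u\circ\gamma$ a Lipschitz curve and hence furnishes the metric speed, but it does not tell you that this speed is bounded by $f(\gamma(t))$ at a.e.\ $t$; the pointwise Lipschitz constant of $u$ need not be dominated by the chosen representative of $f$ on the exceptional set.

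The paper closes the gap by a limiting step that you have not supplied. Having found (exactly via the Fubini observation you make) a sequence $s_n\to 0$ of parallel translates $\gamma_{s_n}$ for which $L_f(\gamma_{s_n})=\ell_X(u\circ\gamma_{s_n})$ holds, one uses two facts: first, $L_f(\gamma_{s_n})\to L_f(\gamma)$ by \eqref{stadler}, applicable because $f$ is log-subharmonic and straight segments have uniformly bounded variation of turn; second, $\liminf_n \ell_X(u\circ\gamma_{s_n})\ge \ell_X(u\circ\gamma)$ by lower semi-continuity of length under pointwise convergence, for which the mere continuity of $u$ on $\bar D$ suffices---the local Lipschitz property is not needed. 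Chaining these gives $L_f(\gamma)\ge\ell_X(u\circ\gamma)$. So the correct fix is not to force a chain rule on the single curve, but to pass from the a.e.-translate statement to $\gamma$ by this continuity/semi-continuity limit; once you add that step, your argument coincides with the paper's.
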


\begin{proof}
Since the metric in $Y_0$ can be defined using $f$-lengths of polygonal curves  we only need
to prove $L_f(\gamma )\geq  \ell_Z (P\circ \gamma)=\ell _X (u\circ \gamma   )$ for any  straight segment   $\gamma:[a,b]\to D$. Consider
the variation $\gamma _s$, for $-\epsilon <s< \epsilon $, of $\gamma _0=\gamma $ through segments parallel to $\gamma $.
For any $s$, we have $L_f(\gamma _s) =\int _{\gamma _s} f$. Moreover, by the definition of the conformal factor,
we have $\ell _X(u\circ \gamma _s)= \int _{\gamma _s} f$ for \emph{almost all} $s\in (-\epsilon, \epsilon)$.
Thus, we find a sequence $s_n \to 0$ such that $L_f(\gamma _{s_n})= \ell _X (u\circ \gamma _{s_n} )$.
The result follows from
$$  L_f(\gamma ) =\lim_{n\to \infty}  L_f(\gamma _{s_n})=\lim_{n\to \infty} \ell _X (u\circ \gamma _{s_n} ) \geq \ell _X (u\circ \gamma _{0} ) =\ell _X(u\circ \gamma ),$$
 where we have used  \eqref{stadler} for the first equality.
\end{proof}

 By construction, \eqref{eq:confarea}, \eqref{eq:confarea+} and \tref{partI.Z} (4),  we obtain:
\begin{lem}
The map $I$ preserves the Hausdorff measure $\mathcal H^2$. More precisely,
for any open subset $V\subset D_0$ we have
$$\mathcal H^2 (i(V))  =\int _V f^2 =\mathcal H^2 (P(V)) .$$
\end{lem}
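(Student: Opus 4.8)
The plan is to read off all three equalities from facts already established, the key bookkeeping observation being that $I = P\circ i^{-1}$, so $I\circ i = P|_{D_0}$ and hence $I(i(V)) = P(V)$ for every open $V\subset D_0$. Consequently it suffices to prove the displayed chain of equalities for open $V\subset D_0$, since then $\mathcal H^2(i(V)) = \mathcal H^2(I(i(V)))$ expresses exactly that $I$ preserves the Hausdorff measure of open subsets of $Y_0$.

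First I would establish the right-hand equality $\int_V f^2 = \mathcal H^2(P(V))$. Since $V\subset D_0\subset D$, formula \eqref{eq:confarea} gives $\Area(u|_V) = \int_V f^2$; this is stated there for a subdomain, and it extends to an arbitrary open $V$ by decomposing $V$ into its connected components and using that both the parametrized area and $A\mapsto\int_A f^2$ are countably additive. On the other hand, \tref{partI.Z}~(4) gives $\Area(u|_V) = \mathcal H^2_Z(P(V))$ for open $V\subset D$. Since $P(V)\subset Z_0$ and $Z_0$ carries the metric induced from $Z$, the Hausdorff measure $\mathcal H^2_Z$ restricted to subsets of $Z_0$ agrees with $\mathcal H^2_{Z_0}$; combining the two identities yields $\int_V f^2 = \mathcal H^2(P(V))$.

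Next I would treat the left-hand equality $\mathcal H^2(i(V)) = \int_V f^2$. For a subdomain $V$ compactly contained in $D_0$ this is precisely \eqref{eq:confarea+}, which rests on the conformality of $i\colon V\to Y_0$ with conformal factor $f$, valid because $f$, being log-subharmonic, is locally bounded on $D$. To pass to an arbitrary open $V\subset D_0$, I would exhaust $V$ by an increasing sequence of open sets $V_n$ compactly contained in $D_0$; since $i$ is a homeomorphism of $D_0$ onto $Y_0$, both $W\mapsto \mathcal H^2(i(W))$ and $W\mapsto\int_W f^2$ are continuous from below along $(V_n)$, so the equality persists in the limit. Putting the two equalities together gives $\mathcal H^2(i(V)) = \int_V f^2 = \mathcal H^2(P(V)) = \mathcal H^2(I(i(V)))$, as claimed. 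There is no real obstacle here: the only points requiring care are keeping track of the inclusions $V\subset D_0\subset D$, so that \eqref{eq:confarea} and \tref{partI.Z}~(4) apply, and the routine upgrade of the compactly-contained case of \eqref{eq:confarea+} to all open subsets by exhaustion.
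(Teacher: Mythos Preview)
Your argument is correct and follows exactly the route indicated by the paper, which simply cites \eqref{eq:confarea}, \eqref{eq:confarea+}, and \tref{partI.Z}(4) without further comment. You have spelled out the bookkeeping (in particular the extension of \eqref{eq:confarea+} from compactly contained subdomains to arbitrary open $V\subset D_0$ by exhaustion) that the paper leaves implicit.
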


\subsection{The conclusion} Now we can finish the proof of the main result of this section.
\begin{proof}[Proof of  \pref{prop:reform}]
By the definition of the metrics on $Y_0$ and $Z$
 it suffices to show that
for any simple  curve $\eta:[a,b] \to Y_0$ we have  $\ell _{Y_0}(\eta)=\ell _Z(I\circ \eta)$.
 Assume the contrary and consider  a curve $\eta$ with
$$\ell _{Y_0}(\eta)>\ell _Z(I\circ \eta).$$

In order to obtain a contradiction we will roughly proceed as follows. We will first complement a subcurve of $\eta$ to a Jordan curve and equip the closure of the corresponding Jordan domain with a new metric, to which \tref{thm4}  will be applied. Inside the domain, the new metric will come from that of $Y_0$, while the length of the boundary will come from that of its image in $Z$.

 To be more concrete,  we first replace the curve $\eta$  by a  subcurve and may assume that either  $I\circ \eta $ is a point or that no subarc of $\eta$ is mapped by $I$ to a point.  Further replacing $\eta$  by a slightly smaller subcurve, we may assume that $\eta$ is part of a Jordan curve $T$ such that the closure of $T\setminus \eta$ is a
rectifiable  arc $\eta'$ in $Y_0$. Let $J\subset Y_0$ denote the Jordan domain of $T$ whose closure is $\bar J=J\cup T$.


 We  call admissible any curve in $\bar J$ that is a  finite concatenation of simple curves
either completely contained in $\eta $ or intersecting $\eta$ at most  in  a finite set of points.  Define the new length $\mathcal L^+ (\gamma) $ of such an admissible  curve $\gamma$ to be the sum of the $Y_0$-lengths of arcs outside of $\eta$ and the lengths of the images in $Z$ of the subarcs contained in $\eta $.
 The   pseudo-distance  $d^+ :\bar J\times \bar J\to [0,\infty] $  associated with the length functional $\mathcal L^+$ is finite-valued and continuous with respect to the Euclidean topology on $\bar J$ since $\eta '$ and $I\circ \eta$ have finite length.
Denote the corresponding metric space  by $S$ and let $Q:\bar J\to S$ be the canonical projection.

Then $S$ is a compact length space, hence it is a geodesic space.
The map $Q$ is a local isometry outside  $T$ and a homeomorphism outside $\eta$ (here and below we consider $\bar J$ with the metric restricted  from $Y_0$, not with a length metric!).
If $I$ does not send $\eta$ to a point (hence is bijective by assumption) then $Q$ is bijective, hence a homeomorphism. If $I$ sends $\eta$ to a point then $Q$  collapses $\eta$ to a point in $S$. In both cases, $S$ is homeomorphic to
$\bar D$.   The restriction $I:\bar J\to Z$ factorizes through $Q$ and for any curve $k\subset \bar J$ we have $\ell _S (Q\circ k) \geq \ell _Z (I\circ k)$ by \lref{1-lip}.


 We claim that $S$ is ${\rm CAT}(0 )$.  Indeed, $S\setminus \partial S$ is locally isometric to $J$. Thus $S\setminus \partial S$ has non-positive curvature. Due to  \tref{thm4} we only need to verify  the isoperimetric inequality for Jordan curves $c\subset S$.
  For any Jordan curve $c$ in $S$ there is  a unique Jordan curve
$\hat c$ in $\bar J$ which is mapped by $Q$ to $c$.  Let $G\subset J$ denote the Jordan domain of $\hat c$. Then $Q(G)\subset S$ is the Jordan domain of $c$ and has the same Hausdorff measure as $G \subset Y_0$ since $Q$ is a local isometry outside   $T$.

We have:
$$\ell _S(c) =\ell  _S(Q\circ \hat c) \geq \ell _Z (I\circ \hat c)=\ell _X (u\circ i^{-1} \circ \hat c) \; .$$
 Moreover,
 $$\mathcal H^2 _{S} (Q(G)) = \mathcal H^2 _{Y_0} (G) =\mathcal H^2 _Z (I (G)) =\Area (u| _{i^{-1} (G)}) \;.$$
 Since $i^{-1} (G) \subset D$ is the Jordan domain of $i^{-1} (\hat c)$, the desired inequality
 $$\mathcal H^2  _S (Q(G)) \leq \frac 1 {4\pi}  \cdot \ell ^2 (c)$$
 follows from  \tref{partI.u}.  Therefore, an application of \tref{thm4}  finishes the proof of the claim that $S$ is ${\rm CAT}(0 )$.

 Another application of \tref{thm4} implies that the metric on $S\setminus \partial S$ is a length metric. Therefore, the length preserving map
 $Q^{-1} :S\setminus \partial S\to J$ is $1$-Lipschitz.
 Thus, it extends to a $1$-Lipschitz map $\hat Q ^{-1}:S\to \bar J$. By continuity, the composition $\hat Q^{-1} \circ Q$ must be the
identity on $\bar J$.

By assumption, $\ell _S(Q\circ \eta) <\ell _{Y_0} (\eta)$.
We obtain a contradiction to  $\eta =\hat Q^{-1}\circ Q\circ \eta$, since the $1$-Lipschitz map $\hat Q^{-1}$ cannot increase lengths.
\end{proof}

Thus Theorems \ref{thmfirst} and \ref{thmminimal} are reduced to \tref{thm4}.

\vspace{3cm}

\centerline{\bf{Part II. Geometry of strange metric discs.}}

\vspace{0.5cm}

\section{Plan of the second part of the paper} \label{Intro2}
 The second part of the paper  is devoted to the proof of \tref{thm4}.
Before we start with the actual proof of the theorem, we recollect
in Section \ref{sec:basicgeom}  some basic observations about the local geometry of non-positively curved surfaces.
In Section \ref{seccomplete} we prove that the  completion of a non-positively curved open disc  is ${\rm CAT} (0)$ and    homeomorphic to a closed disc whenever compact.
 All but the main implication (3) to (1) of \tref{thm4} turn out to be relatively simple. The proofs of these simple implications are provided in  Section \ref{sec:boundary}.

 In Section \ref{sec:simple} we embark on the proof of the implication (3) to (1) of \tref{thm4}, thus assuming the isoperimetric inequality and trying to prove that the disc $Z$  is ${\rm CAT} (0)$.  We use subdomains of $Z$ in order to reduce the situation to the case where the "problematic" part of $\partial Z$ consists of a geodesic $c\subset \partial Z \subset Z$.
  We consider the complement $Y_0 =Z\setminus \partial Z$ equipped with  the induced length metric and the completion $Y$ of $Y_0$, which is ${\rm CAT}(0)$ and compact. The space $Y$ comes along with a canonical $1$-Lipschitz map $I:Y\to Z$ which is a local isometry in $Y_0$.
This reduces the question to the situation described in the introduction (the example of Koch's snowflake):  the disc $Z$ arises from
a ${\rm CAT}(0 )$ disc $Y$ by possibly shortening or collapsing a part of the boundary curve $\eta \subset Y$.

If $I$ is  an isometry then we are done. Otherwise, some part $\eta$ of the boundary curve $\partial Y$ is sent by $I$  to a curve in $\partial Z$ of smaller length.  In order to obtain a contradiction to the isoperimetric inequality, it  suffices to extend small parts of $\eta$ to Jordan curves $T$  in $Y$ which bound almost optimal isoperimetric regions  in $Y$.  The map $I$ then shortens the length of $T$ but leaves the area of
the enclosed Jordan domain unchanged, thus $I (T) \subset Z$  violates (3) of \tref{thm4}.

If $\eta$ is rectifiable we approximate $\eta$ by geodesics and use the approximation of $Y$ by flat cones in order to reduce the problem to the  situation where $\eta$ is a line in $\R^2$. In that case one can take  as suitable Jordan curves $T$ large parts of sufficiently  large circles   complemented by a  short  chord contained in  $\eta$.  This is carried out in Section \ref{sec:rect}.
The more difficult  case of a non-rectifiable curve $\eta$ is carried out in Section \ref{sec:final}. Here the non-rectifiability of $\eta$ provides parts which are contracted by an arbitrary large amount. This allows sufficient flexibility in the choice of critical Jordan curves in $Y$.

\section{Geometry of non-positively curved  surfaces} \label{sec:basicgeom}
\subsection{Basic geometric features}
Let $M$ be a metric space  of non-positive curvature  homeomorphic to $D$.   We refer to \cite{Reshetnyak-GeomIV} and \cite{BB} for a deep analysis of such and related  more general spaces.
 Let $x\in M$ be arbitrary.
We find a small open metric ball  $U=B(x,\epsilon) $ around $x$ whose closure  is a  compact ${\rm CAT}(0 )$ space. Any geodesic starting at $x$ can be extended
 to a geodesic in $\bar U$  of  length $2\epsilon$,  \cite[1.B.7]{BB}.  The space of directions $\Sigma _x$ is a circle of some length $\alpha \geq 2\pi$. By definition, the tangent cone $T_xM$ at the point $x$ is the Euclidean cone over $\Sigma _x$.

\subsection{Hinges} Let $M, x\in U=B(x,\epsilon) \subset  M$ be as above and let $\gamma _{1,2}$ be geodesics of length $\geq \epsilon$ starting in $x$ and having only the point $x$ in common. 
Then $\gamma _{1,2}$ intersects the boundary circle of $B(x,\epsilon)$  at exactly one point.
Denote by $\Gamma$ the union of (the images of)  $\gamma _1$
and $\gamma _2$ inside $B(x,\epsilon )$. Then $\Gamma$ divides $B(x,\epsilon)$ into two closed subsets $H_{\pm}$ homeomorphic to closed half-planes  intersecting in their common boundary  $\Gamma$.
We  call  $H_{\pm}$,  equipped with the induced length metric,  the \emph{hinges} (of size $\epsilon$) defined by $\gamma _{1,2}$.

We claim that both hinges have non-positive curvature.  If $\gamma_1$ and $\gamma _2$ concatenate to a  geodesic then $H_{\pm}$ are  convex in $B(x,\epsilon)$ and the claim follows. Otherwise, we extend $\gamma _1$ by a geodesic $\gamma _1 ^+$ of length $\epsilon$ starting in $x$ to a geodesic $\gamma$ of length $2\epsilon$. Then $\gamma$ divides $B(x,\epsilon)$ into two convex subsets  $A_{\pm}$  with common boundary $\gamma$.
  Without loss of generality, we may assume that  $H_+$ is contained in $A_+$.
  Then $\gamma \cup \gamma _2$ divide $B(x,\epsilon)$ into $3$ convex hinges $H_+$, $A_-$ and
  a third hinge $H'$ (between $\gamma _2$ and $\gamma _1^+$).  The hinge
  $H_+$ has non-positive curvature  by convexity and $H_-$ is the result of gluing  $H'$ and
  $A_-$ along the geodesic $\gamma _1^+$, hence it has non-positive curvature
   by Reshetnyak's gluing theorem. This finishes the proof of the claim.

\begin{lem} \label{lem:Jordomloc}
 Let $M$ be a space of non-positive  curvature homeomorphic to the open disc. Let
 $\Gamma$ be a Jordan polygon in $M$. Then the closed Jordan domain
$\bar J$  of $\Gamma$ with its intrinsic metric is ${\rm CAT}(0)$.
\end{lem}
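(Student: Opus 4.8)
The plan is to show that $\bar J$, endowed with its intrinsic metric $d_{\bar J}$, is a compact geodesic space which is simply connected and of non-positive curvature, and then to invoke the Cartan--Hadamard theorem. First I would settle the topology and the length-space structure. Via a homeomorphism $M\cong D$ and the Jordan curve theorem, $J$ is an open topological disc and $\bar J=J\cup\Gamma$ is a compact subset of $M$ homeomorphic, by the Schoenflies theorem, to $\bar D$; in particular $\bar J$ is simply connected. Since $M$ has non-positive curvature it is locally rectifiably connected, hence the connected open set $J$ is rectifiably connected, and every point of $\Gamma$ is joined to $J$ by a short geodesic arc of $M$ contained in $\bar J$ (as will be clear from the local pictures below). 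Thus $d_{\bar J}$ is finite-valued, and $\bar J$ is a compact, locally compact length space, therefore a geodesic space.

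Next I would check that $\bar J$ has non-positive curvature, i.e.\ that every point of $\bar J$ has a ${\rm CAT}(0)$ neighborhood for $d_{\bar J}$. For $x\in J$ this is immediate, since a sufficiently small metric ball of $M$ about $x$ lies in $J$, is ${\rm CAT}(0)$, and carries the metric $d_{\bar J}$. For a point $x$ lying in the relative interior of a side of $\Gamma$, I would pick $\epsilon>0$ so small that $\bar B_M(x,\epsilon)$ is a compact ${\rm CAT}(0)$ ball meeting $\Gamma$ only in a geodesic subsegment of that side; this segment divides $\bar B_M(x,\epsilon)$ into two convex half-balls, exactly one of which is $\bar B_M(x,\epsilon)\cap\bar J$, and a convex subset of a ${\rm CAT}(0)$ space is ${\rm CAT}(0)$. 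Finally, for a vertex $x$ of $\Gamma$ with incident sides $\gamma_1,\gamma_2$, I would take $\epsilon$ smaller than the length of every side and than the distance from $x$ to the non-incident sides; then $\Gamma\cap B_M(x,\epsilon)=(\gamma_1\cup\gamma_2)\cap B_M(x,\epsilon)$, the geodesics $\gamma_{1,2}$ meet only at $x$, and $B_M(x,\epsilon)\cap\bar J$ is exactly one of the two hinges $H_\pm$ of size $\epsilon$ defined by $\gamma_1,\gamma_2$. By the discussion preceding the lemma the hinges have non-positive curvature, so $x$ has a ${\rm CAT}(0)$ neighborhood inside $H_\pm$, and hence in $\bar J$. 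In each case one must also observe that $d_{\bar J}$ agrees, on a small enough ball around $x$, with the intrinsic metric of the exhibited local model (half-ball, resp.\ hinge), which is the usual localization of the length-metric construction and rests on the fact that short paths in $\bar J$ between points near $x$ may be taken to stay inside $\bar B_M(x,\epsilon)$.

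Once $\bar J$ is known to be a complete (being compact) simply connected geodesic space of non-positive curvature, the Cartan--Hadamard theorem \cite[Section 6]{ballmann} recalled in the excerpt forces $\bar J$ to be ${\rm CAT}(0)$, which is the assertion. The only genuinely delicate part of the argument is the local analysis along $\Gamma$ in the second step: recognizing the neighborhood of a vertex of $\Gamma$ in $\bar J$ as one of the hinges $H_\pm$ and the neighborhood of an interior point of a side as a convex half-ball, and checking that $d_{\bar J}$ is locally the intrinsic metric of these models. Everything else is soft --- plane topology via Schoenflies, together with the standard facts about non-positively curved spaces (convexity, Reshetnyak's gluing, Cartan--Hadamard) already collected above.
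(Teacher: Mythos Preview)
Your proof is correct and follows essentially the same route as the paper: compactness and simple connectedness of $\bar J$, local non-positive curvature via the hinge analysis on $\Gamma$, and then Cartan--Hadamard. The only difference is expository---you split the boundary case into side-interior (convex half-ball) and vertex (hinge), whereas the paper treats every boundary point uniformly as a hinge (the side-interior case being the degenerate hinge where $\gamma_1,\gamma_2$ concatenate to a geodesic, hence convex).
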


\begin{proof}
The space $\bar J$ is compact and simply connected. At any point $x\in \bar J$ a small ball around $x$ is either open in $M$ or isometric to a hinge described above.  Therefore,
$\bar J$ is non-positively curved. The lemma follows from the theorem of
Cartan-Hadamard.
\end{proof}

\subsection{Approximation by flat cones}
For the next result we will use \tref{thm:Resh} from above
and a theorem about the approximation of Alexandrov surfaces by their tangent cones.

\begin{lem} \label{almostcone+}
Let $X$ be a space of non-positive curvature   which is homeomorphic to a surface (possibly with boundary). Let
$x\in X$ be a point. Let $\gamma_{1,2}$  be geodesics starting at $x$ and enclosing a positive angle. Then there exists a closed
interval $T\subset \R$, a ball $O$ around the vertex  $o$ of the Euclidean cone $CT$ over $T$ and a biLipschitz map $E:O\to E(O) \subset X$ with the following properties.
\begin{enumerate}
\item The map $E$ sends the vertex $o$ of $CT$ to $x$.
\item  $E$ sends  initial parts of the boundary rays of the flat hinge $CT$ isometrically
onto the initial parts of $\gamma _{1,2}$.
\item  We have $\lim _{v,w \to o} \frac  {d(E(v),E(w))}  {d(v,w)}\to 1$.
\end{enumerate}
\end{lem}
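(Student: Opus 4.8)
The plan is to exploit \tref{thm:Resh} in order to linearize the local picture at $x$. Since $X$ is non-positively curved and homeomorphic to a surface, by \tref{thm:Resh} there is a neighborhood of $x$ isometric to some $Y=(U,d_f)$ with $U\subset\R^2$ a domain and $f:U\to[0,\infty)$ log-subharmonic; after translating we may assume $x$ corresponds to $0\in U$ and (rescaling the coordinates) that the Euclidean ball $B(0,2)\subset U$. Thus it suffices to prove the lemma for a space of the form $Y=(U,d_f)$ at the point $0$, where we have the homeomorphism $i:U\to Y$ from Section~\ref{secsurf} at our disposal, and where $f$ is log-subharmonic, hence locally bounded and bounded below away from $0$ near a point where it does not vanish. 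Two geodesics $\gamma_{1,2}$ emanating from $x$ with positive angle correspond to two curves in $U$ issuing from $0$; the \emph{angle} between them is, by definition, the angle in the tangent cone $T_xY=T_0Y$.

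Next I would identify the tangent cone. The value $f(0)=\lim_{s\to0}\vint_{B(0,s)}f$ is finite; I claim $T_0Y$ is the flat cone over a circle of length $\frac{\alpha}{?}$—more precisely, if $f(0)>0$ then $T_0Y$ is Euclidean $\R^2$ scaled by $f(0)$, while in general $T_0Y$ is a flat cone $CT$ over an interval $T$ of length $2\pi-\Delta f(\{0\})$ (the atom of the curvature measure at $0$), and the blow-ups $\bigl(\tfrac1t Y,x\bigr)$ converge to $CT$ with the scaled metric. This is standard in Reshetnyak's theory; the precise reference is the tangent-cone theorem for surfaces of bounded curvature (see \cite{Reshetnyak-GeomIV}, the chapter on the structure of the metric near a point, in particular the statement that the metric is asymptotically that of its tangent cone with an error $o(1)$ on the ratio of distances). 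Then the interval $T$ in the statement of the lemma is this $T$, and $o$ is its cone point. The two geodesics $\gamma_{1,2}$ determine two points on $\Sigma_x$, i.e.\ two points in $T$ at angular distance equal to the given positive angle, hence two boundary rays of a flat hinge inside $CT$; I take $CT$ to denote that hinge (sub-cone) between these two rays.

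Now I would build $E$. Choose arclength parametrizations of the initial segments of $\gamma_1,\gamma_2$; these two segments, together with the requirement that $E$ restrict to the identification of $T_0Y$-directions, determine $E$ on the boundary rays of the hinge $O=B(o,r)\cap CT$ by declaring $E$ to send the ray at angle $0$ isometrically onto the initial part of $\gamma_1$ and the ray at the top angle of the hinge isometrically onto the initial part of $\gamma_2$; this gives (1) and (2). To extend $E$ into the interior, I would use geodesic polar-type coordinates on the hinge: for the flat hinge $CT$, a point is $(\rho,\theta)$ with $\rho\in[0,r]$ and $\theta$ between $0$ and the hinge angle; send it to the point at distance $\rho$ along the $Y$-geodesic from $x$ in the direction interpolating between the direction of $\gamma_1$ and the direction of $\gamma_2$ (such geodesics exist and depend continuously on the direction inside a small $\mathrm{CAT}(0)$ ball $B(x,\epsilon)$, by the hinge discussion preceding \lref{lem:Jordomloc}, and they fill out exactly the hinge $H$ between $\gamma_1$ and $\gamma_2$). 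This map is a homeomorphism of $O$ onto the hinge $H\subset B(x,\epsilon)$ for $r$ small, and it is biLipschitz because on the $Y=(U,d_f)$ model both metrics are comparable to the Euclidean one with constants $\sup f,\ \inf f$ on a small ball (using that $f$ is locally bounded above and, near $0$ where we may arrange $f(0)>0$ after noting that if $f(0)=0$ the angle would be $\ge$ a full turn and the picture still reduces to a larger cone—but in all cases $f$ is bounded between positive constants on an annulus $\{r_1\le|z|\le r_2\}$, giving biLipschitz control on the punctured ball and continuity at $0$ handles the vertex).

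The main obstacle is the asymptotic statement (3), $\lim_{v,w\to o}\frac{d(E(v),E(w))}{d(v,w)}=1$. The rough biLipschitz bound above is not enough; one needs the sharp asymptotics that the metric $d_f$ near $0$ agrees with the flat cone metric up to a $(1+o(1))$ factor. I would get this directly from the defining formula $d_f(z,z')=\inf_\gamma\int_\gamma f$: for $z,z'$ in a small ball, the $f$-length of any curve is $f(0)\cdot(1+o(1))$ times its Euclidean length uniformly, \emph{provided} $f$ is continuous at $0$ with $f(0)>0$; continuity of the subharmonic (indeed log-subharmonic) representative of $f$ at a point where $f(0)>0$ is automatic, and the cone comes in only through the possible atom of $\Delta f$ at $0$, which is handled by first passing to the tangent cone via Reshetnyak's convergence theorem and then comparing. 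Concretely, I would invoke the surface-tangent-cone theorem of \cite{Reshetnyak-GeomIV} in the form: the identity chart gives $d_Y(z,z')=d_{CT}(\Phi(z),\Phi(z'))(1+o(1))$ as $z,z'\to 0$ for a suitable homeomorphism $\Phi$ matching directions; composing $\Phi^{-1}$ with the polar map above yields $E$ with property (3). So the real content, and the step I expect to cite rather than reprove, is precisely this asymptotic-cone comparison for non-positively curved surfaces; the rest is assembling the map and checking the two boundary rays go to the right places.
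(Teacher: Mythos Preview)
Your approach for interior points is essentially the same as the paper's: both ultimately rest on the tangent-cone approximation theorem for Alexandrov surfaces (the paper cites it as Burago's theorem, \cite{Burago}, stated as Theorem~9.10 in \cite{Reshetnyak-GeomIV}), which directly hands you a biLipschitz map $E':O\to X$ from a ball in $T_xX$ satisfying (1) and (3). The paper then obtains (2) by composing $E'$ with a biLipschitz self-map of $T_xX$ that fixes the origin, has identity differential there (so (3) survives), and sends the two rays to $(E')^{-1}(\gamma_{1,2})$. Your polar-coordinate (exponential) construction is a reasonable alternative for producing (2), but you should note that verifying (3) for the exponential map is not automatic from the rough biLipschitz bound via $\sup f/\inf f$; you still need the asymptotic theorem, and your discussion of the case $f(0)=0$ (where $\inf f$ near $0$ fails) is not quite right and should simply be absorbed into the citation. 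The paper's adjust-after-Burago route sidesteps this entirely.

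The genuine gap is the boundary case. Your reduction to $(U,d_f)$ via \tref{thm:Resh} only applies when $x$ lies in the interior, since that theorem is stated for surfaces \emph{without} boundary. When $x\in\partial X$ (which is exactly the situation that matters later in the paper, e.g.\ in \lref{close} and \lref{large}), you cannot invoke \tref{thm:Resh} at $x$. The paper handles this by a separate reduction: it cuts out a Jordan domain $V$ bounded by parts of $\gamma_1$, $\gamma_2$, and a transversal arc, observes $V$ is locally convex, then embeds $V$ as a hinge in a surface $M$ \emph{without} boundary by gluing a large flat hinge along $\gamma_1$ and doubling along the resulting geodesic boundary (two applications of Reshetnyak's gluing theorem). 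The interior case then applies to $M$, and the resulting $E$ lands in $V\subset X$ by construction. You need to add this step or an equivalent one.
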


Thus, the biLipschitz constant  of the restriction  of $E$ to small balls around $o$ goes to $1$ with the radius of the balls tending to $0$.
Clearly,  the length of the interval  $T$, hence the total angle of the Euclidean hinge $CT$,
is not less than the angle between $\gamma _1$ and $\gamma _2$.

\begin{proof} [Proof of  \lref{almostcone+}] We first  assume   that $x$ is not on the boundary $\partial X$.
The existence of a biLipschitz map $E':O\to M$ from a ball $O$ around the origin $0\in T_xM$ such that
(1) and (3) of \lref{almostcone+} hold true is the content of a theorem of Y. Burago, \cite{Burago},  stated in \cite[Theorem 9.10]{Reshetnyak-GeomIV}. Note that this theorem is applicable due to \tref{thm:Resh} above.
Composing $E'$ with a self-isometry of $T_xM$ we can assume that the initial part of a given ray $\eta$ starting at the origin $0$ is sent by $E'$ to a curve $E'\circ \eta $ whose starting direction coincides with the starting direction of $\eta$.  Consider the rays $\eta_{1,2}$ in $T_xM$ whose starting directions are $\gamma'_{1,2} (0) \in \Sigma _x\subset T_xM$. We now find a biLipschitz map of  $T_xM$ to itself, which fixes $0$, has at  $0$ the identity as its differential, and which  sends the initial part of $\eta _i$ to  $(E')^{-1} (\gamma _i)$. Composition of $E'$ with this biLipschitz map provides the required map $E$ upon restriction to the smaller hinge in $O$ between the rays $\eta_i$.

 Let us now assume that $x$ is contained in the boundary $\partial X$. We choose any simple arc $\gamma$
connecting a point on $\gamma _1$ with a point on $\gamma _2$ and such that $\gamma$ and the corresponding parts of $\gamma _1$ and $\gamma _2$ constitute together a Jordan curve $T$. Consider the union   $V$ of the corresponding Jordan domain and the curve  $T \setminus \gamma$. Since $\gamma _{1,2}$ are geodesics and $x\in \partial X$, the set $V$   is locally convex in $X$ by topological reasons.   Thus, in order to find a biLipschitz embedding required in \lref{almostcone+}   we may replace $X$ by $V$ and therefore assume
that $\partial X$ is  the union of the geodesics $\gamma _{1,2}$. But (a small ball around $x$ in) such a space is isometric to a hinge in some  manifold $M$ \emph{without boundary} which still has non-positive curvature, as we see by  applying  Reshetnyak's gluing theorem twice.  (First we glue to $V$ along $\gamma_1$ a  hinge     of large angle from $\R^2$. In the so arising space the boundary is a geodesic and we may double the arising space to obtain the required manifold $M$).  By construction $V$ is the smaller hinge of the two hinges determined by $\gamma _{1,2} \subset M$.  Thus, the map $E$ constructed above (for the space $M$) has its image in $V$.
This finishes the proof of \lref{almostcone+}.
\end{proof}

Using the notations of \lref{almostcone+}, we will call {\it hinge}  between two geodesics $\gamma _{1,2}$ the intersection of $E(O)$ with a small metric ball $B (x,r)$.   This provides an extension of the definition of a hinge to the
case of points at the boundary.

\section{Completions of $2$-dimensional  open discs} \label{seccomplete}
Before embarking on the proof of \tref{thm4} we will study the interiors of discs appearing in that theorem and their completions.
The following  basic result generalizes \cite{bishop}.

\begin{prop} \label{bisgen}
Let $Y_0$ be a length space homeomorphic to the open disc.  If $Y_0$ is non-positively curved then the completion $Y$ of $Y_0$ is ${\rm CAT}(0)$.
\end{prop}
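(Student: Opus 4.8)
The strategy is to build the ${\rm CAT}(0)$ structure on $Y$ locally and then globalize via Cartan--Hadamard. The main point is that non-positive curvature is a local condition, and a length space is ${\rm CAT}(0)$ once it is geodesic, simply connected, and locally ${\rm CAT}(0)$. So the plan splits into three tasks: (i) show $Y$ is geodesic (this uses that $Y$ is complete and, by a properness/local-compactness argument, that bounded sets are precompact), (ii) show $Y$ is simply connected and in fact homeomorphic to a space that carries the topology we expect near the added boundary points, and (iii) the crucial local step: every point of $Y$, including the newly added boundary points, has a ${\rm CAT}(0)$ neighborhood.

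\textbf{First steps.} First I would observe that $Y_0$ being non-positively curved and homeomorphic to $D$ implies, by \tref{thm:Resh}, that locally $Y_0$ looks like $(U,d_f)$ for a log-subharmonic $f$; in particular small closed metric balls in $Y_0$ have compact closure \emph{in $Y_0$} and are ${\rm CAT}(0)$. Passing to the completion $Y$, I would show $Y$ is proper: a closed bounded subset of $Y$ is the completion of a bounded subset of $Y_0$, and using the local cone structure one controls that only finitely many "directions to infinity" can escape, so Arzel\`a--Ascoli applies and $Y$ is geodesic. Next, interior points $x\in Y_0$ already have ${\rm CAT}(0)$ neighborhoods by hypothesis, so the entire issue is concentrated at $p\in Y\setminus Y_0$. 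For such a $p$ I would take a small metric ball $B=B_Y(p,r)$ and its intersection $B\cap Y_0$; using the hinge constructions and the approximation by flat cones from Section~\ref{sec:basicgeom} (Lemmas~\ref{lem:Jordomloc} and~\ref{almostcone+}), one shows $B\cap Y_0$ is a non-positively curved surface whose completion is a topological half-disc or disc, and that the angle structure around $p$ makes $B$ itself non-positively curved. Concretely, cut $Y_0$ near $p$ along geodesics approaching $p$ into hinges; each hinge has non-positive curvature, and gluing them along geodesics keeps non-positive curvature by Reshetnyak's gluing theorem, so $B$ is non-positively curved as a space.

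\textbf{The main obstacle.} The hard part is exactly the local analysis at the boundary points $p\in Y\setminus Y_0$: one must show that adjoining $p$ does not create a "thick" vertex, i.e. that the total angle around $p$ inside $Y$ is $\geq 2\pi$ (equivalently, that a small punctured neighborhood of $p$ is bounded by a geodesic or, after completion, that $p$ sees a space of directions of length $\geq 2\pi$). There is a genuine subtlety here because $Y_0$ could a priori spiral infinitely often near $p$, or the boundary structure could be wild; one controls this using that $Y$ is complete and that the curvature measure of the Alexandrov surface $Y_0$ is non-positive and locally finite, which forbids infinite winding and forces the completion to attach $p$ with a well-defined (possibly degenerate) angle. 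Once the local ${\rm CAT}(0)$ property is established at every point of $Y$, the space $Y$ is complete, geodesic, locally ${\rm CAT}(0)$, and simply connected (being the completion of the simply connected $Y_0$, with the added set not disconnecting anything), so the generalized Cartan--Hadamard theorem quoted in Section~\ref{sec:curvature} yields that $Y$ is globally ${\rm CAT}(0)$, completing the proof.

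\textbf{Summary of the argument.} In outline: (1) $Y_0$ is locally modeled on log-subharmonic metrics, hence locally compact with ${\rm CAT}(0)$ balls; (2) the completion $Y$ is proper and geodesic; (3) at interior points $Y$ is locally ${\rm CAT}(0)$ by hypothesis, and at the adjoined points $p$ one uses hinge decompositions plus Reshetnyak gluing and the control on the (non-positive, finite) curvature measure to show a small ball around $p$ is non-positively curved; (4) $Y$ is simply connected; (5) apply the Cartan--Hadamard theorem for non-positively curved complete geodesic spaces to conclude $Y$ is ${\rm CAT}(0)$.
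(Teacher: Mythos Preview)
Your outline has a genuine gap at the point you yourself flag as ``the hard part'': establishing local ${\rm CAT}(0)$ at a boundary point $p\in Y\setminus Y_0$. The tools you invoke---hinge decompositions, \lref{almostcone+}, Reshetnyak gluing along geodesics---all presuppose geodesics emanating from $p$ and some a priori control on the local structure near $p$. But none of this is available: you do not yet know that $Y$ is geodesic, you do not know there are any geodesics through $p$, and you have no description of a neighborhood of $p$ in $Y$. The appeal to a ``non-positive, locally finite curvature measure forbidding infinite winding'' is a hope, not an argument; the curvature measure lives on $Y_0$ and says nothing directly about how the completion attaches $p$. Similarly, your properness claim (``only finitely many directions to infinity can escape'') is unsupported, and without properness you cannot pass from complete length space to geodesic, so the Cartan--Hadamard machinery does not start.

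The paper's proof sidesteps all of this. Instead of analyzing $Y$ locally at its worst points, it exhausts $Y_0$ by closed Jordan domains $Y_n$ bounded by Jordan polygons; each $Y_n$ is ${\rm CAT}(0)$ by \lref{lem:Jordomloc}, which only uses interior structure. Then $Y$ embeds isometrically into an ultralimit $Y'$ of the $Y_n$, which is automatically ${\rm CAT}(0)$. Since $Y$ is a complete length space it sits as a closed convex subset of $Y'$, and is therefore ${\rm CAT}(0)$. No local analysis at $\partial Y$, no properness, no Cartan--Hadamard, no angle computations---the global ${\rm CAT}(0)$ property is inherited wholesale from the ambient ultralimit. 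This is not a minor repackaging of your plan; it is a genuinely different and much shorter route that avoids exactly the obstacle your proposal cannot get past.
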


\begin{proof}
Choose Jordan curves  $\Gamma _n'$  with increasing Jordan domain whose union is $Y_0$.  Approximate $\Gamma _n'$
by Jordan polygons $\Gamma _n$.    Let $J_n$ be the corresponding  Jordan domains and denote by $Y_n$ the closure
of these Jordan domains, equipped with their intrinsic metric.

By \lref{lem:Jordomloc}, every $Y_n$ is ${\rm CAT}(0)$.
The completion $Y$ isometrically (and canonically) embeds into the ${\rm CAT}(0 )$ space
$Y'$, obtained as  an ultralimit of the $Y_n$ (choosing the same fixed point lying in $Y_1\subset Y_n$  as the base point of the spaces $Y_n$).  Hence $Y$ is  isometric to a subset of the
 ${\rm CAT}(0 )$ space $Y'$. Due to completeness,   $Y$ is a closed subset of $Y'$.
Since  $Y_0$ is a length  space, its completion $Y$ is a length space as well. Therefore, $Y$  must be convex in $Y'$.  Thus  $Y$ is  ${\rm CAT}(0 )$.
\end{proof}

 The reader should consult    \cite[p.1270]{anguel}  for references  on homology manifolds used in  the next lemma.

\begin{lem} \label{lem:topology}
Let $Y_0$ be a non-positively curved length metric space homeomorphic to $D$.  If the completion $Y$ of $Y_0$ is compact
 then $Y$  is homeomorphic to $\bar D$.
\end{lem}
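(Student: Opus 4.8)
The plan is to combine \pref{bisgen}, which already gives that $Y$ is ${\rm CAT}(0)$, with a local analysis at boundary points of $Y$ to identify its topological type. First I would fix the set-up: by \pref{bisgen} the completion $Y$ is ${\rm CAT}(0)$, hence contractible and in particular simply connected; by hypothesis $Y$ is compact, so $Y$ is a compact, contractible, locally compact metric space containing $Y_0\cong D$ as a dense open subset. The boundary $\partial Y:=Y\setminus Y_0$ is then a compact subset, and the whole problem is to show $Y$ is a $2$-manifold with boundary $\partial Y$ and then invoke the classification of surfaces (a simply connected compact surface with boundary is $\bar D$). The interior points are unproblematic: near a point of $Y_0$ the space is homeomorphic to $D$ by assumption, so $Y$ is locally Euclidean there.

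The heart of the argument is the local structure at a point $p\in\partial Y$. Since $Y$ is ${\rm CAT}(0)$ and compact, small closed balls $\bar B(p,\epsilon)$ are ${\rm CAT}(0)$, hence contractible, and the space of directions $\Sigma_p Y$ is a ${\rm CAT}(1)$ space; because $Y$ is ``$2$-dimensional'' in the sense that it is the completion of a non-positively curved surface, I would argue that $\Sigma_p Y$ is a $1$-dimensional ${\rm CAT}(1)$ space, in fact a metric graph, and the tangent cone $T_pY=C(\Sigma_pY)$ is ${\rm CAT}(0)$. The key point is that $Y_0$ is a $2$-manifold accumulating at $p$, so a punctured neighborhood of $p$ in $Y$ is an open annulus or an open disc minus a point; combined with ${\rm CAT}(0)$-ness and the cone structure this forces $\Sigma_pY$ to be a single arc (an interval), so that $T_pY$ is a flat sector, i.e.\ a half-plane-like cone of some angle $\theta(p)\in(0,\infty)$. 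Hence a neighborhood of $p$ in $Y$ is homeomorphic to a half-disc, i.e.\ $\R^2_{\geq 0}$. To make the passage from the tangent cone to an actual neighborhood rigorous I would invoke \lref{almostcone+} (Burago's approximation of non-positively curved surfaces by their tangent cones), applied at the boundary point $p$: it provides a biLipschitz chart from a ball in the flat sector $CT_p$ onto a neighborhood of $p$ in $Y$, which in particular is a homeomorphism onto its image, giving the desired half-disc chart.

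With interior points modeled on $\R^2$ and boundary points modeled on $\R^2_{\geq 0}$, the space $Y$ is a topological $2$-manifold with boundary; its boundary manifold is exactly $\partial Y$, which is therefore a compact $1$-manifold, i.e.\ a finite disjoint union of circles. Since $Y$ is compact, connected, simply connected (being ${\rm CAT}(0)$), and its interior $Y_0$ is connected, $\partial Y$ has a single component, a circle. A compact surface with connected boundary a circle and trivial fundamental group is homeomorphic to $\bar D$ by the classification of compact surfaces with boundary (cap off the boundary circle with a disc to get a closed simply connected surface, necessarily $S^2$, and remove an open disc). This finishes the proof.

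I expect the main obstacle to be the local identification at boundary points: showing that near each $p\in\partial Y$ the space really is a half-disc rather than something more degenerate (a ``book'' of several sheets, or a point where infinitely many prime ends pile up). The delicate input is that $Y_0$ is genuinely a surface clustering at $p$, so that the link $\Sigma_pY$, a $1$-dimensional ${\rm CAT}(1)$ space of total angle $<\infty$ (finiteness of the angle is itself something to justify from compactness plus local finiteness of curvature, via \tref{thm:Resh}), must be a single arc; here one uses that a neighborhood of $p$ in $Y$ deformation retracts to $p$ (${\rm CAT}(0)$) while $Y_0$ near $p$ is a punctured surface neighborhood, which pins down the homotopy type of the link. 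Once the link is an arc, \lref{almostcone+} does the rest, turning the infinitesimal half-plane picture into an honest half-disc chart.
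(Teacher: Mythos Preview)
Your overall strategy---show $Y$ is a $2$-manifold with boundary by analyzing the link $\Sigma_pY$ at each $p\in\partial Y$ and then invoke classification---is natural, but there is a genuine circularity in the key step. You want to apply \lref{almostcone+} at the boundary point $p\in Y$ to pass from the tangent cone to an honest half-disc chart. However, the hypothesis of \lref{almostcone+} is that the ambient space is already homeomorphic to a surface (possibly with boundary); its proof reduces the boundary case to Burago's interior theorem by a gluing argument that presupposes the surface structure. So invoking it for $Y$ at $p\in\partial Y$ assumes exactly what you are trying to prove. Without that lemma, the passage from ``$T_pY$ is a flat sector'' to ``a neighborhood of $p$ is a half-disc'' is not automatic in ${\rm CAT}(0)$ geometry: tangent cones encode only infinitesimal information, and there is no general principle that recovers the local topological type from them.

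There is a second gap you partly acknowledge: showing $\Sigma_pY$ is a single arc. Your argument for this (``a punctured neighborhood of $p$ in $Y$ is an open annulus or a punctured disc'') already presupposes that $Y$ is locally a surface near $p$; a priori $\partial Y$ could be topologically wild and $Y_0$ could accumulate on $p$ in a complicated way. Likewise, finiteness of the cone angle at $p$ does not follow from \tref{thm:Resh}, which is a statement about interior points of a surface, not about boundary points of a completion.

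The paper circumvents all of this by a purely algebraic-topological route: it shows $\dim Y=2$ via Kleiner's geometric dimension and an ultralimit argument, then proves directly that the local homology $H_\ast(Y,Y\setminus\{z\})$ vanishes for $z\in\partial Y$ (the only nontrivial part is $H_1$, handled by showing $B(x,r)\cap Y_0$ is path-connected since $Y_0$ is a length space). Thus $Y$ is a homology $2$-manifold with boundary, and classical results of Raymond and Wilder then give that low-dimensional homology manifolds are genuine manifolds. This avoids any need to understand the metric link $\Sigma_pY$ or to produce biLipschitz charts.
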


\begin{proof}
By construction, $Y_0$ is dense in $Y$. Since $Y_0$ is locally complete, $Y_0$ is open in $Y$.
 The space $Y$ is a separable ${\rm CAT}(0)$ space, hence it is contractible and locally contractible. The topological dimension of $Y$ coincides with its geometric dimension, see \cite{Kleiner}.   But $Y$  embeds isometrically into an ultralimit of $2$-dimensional  ${\rm CAT}(0)$-spaces, therefore the geometric dimension of $Y$ is
at most $2$, see \cite{Kleiner},    \cite[Lemma 11.1]{Lytb}.  Thus $Y$ has topological dimension $2$.

 Set $\partial Y := Y\setminus Y_0$.  We claim that for any  $z\in \partial Y$ the local homology with integer coefficients $H_{\ast} (Y,Y\setminus \{z \})$ vanishes. In order to see this,
 note that $Y\setminus \{z  \}$ is connected  since $Y_0$ is connected.
 By dimensional reasons, the contractibility of $Y$, and  the long exact sequence of the pair $(Y,Y \setminus \{z\})$,  we only need to prove that
  $H_1 (Y\setminus \{z\} ) =0$. Since $Y$ is locally contractible and $Y_0$ is contractible it is sufficient to prove that any
  closed curve $\gamma :S^1\to Y\setminus \{z \}$ can be approximated  by closed curves with images in $Y_0$.  Covering $\gamma$ by small metric balls $B(x,r)$, we observe  that it is sufficient to prove that $B(x,r)\cap Y_0$ is connected for any $x\in Y$ and $r>0$. But this  follows from the fact that $Y_0$ is a length space and $Y$ is the completion of $Y_0$.
   This finishes the proof of the claim.

 Thus $Y$ is a homology $2$-manifold with boundary $\partial Y$.   Therefore,  $ \partial Y$
  is a homology $1$-manifold and, due to
\cite{Raymond},  the doubling  $Y^+$ of $Y$ along the boundary $\partial Y$ is a homology $2$-manifold without boundary.
Due to \cite[IX.5.9 and IX.5.10]{Wilder}, for $n=1,2$, any homology
$n$-manifold without boundary is a manifold without boundary.  Thus  $Y^+$ is  $2$-manifold  and $\partial Y$ is a closed $1$-submanifold.
We deduce that $Y$ is a manifold with boundary. Since $Y_0$ is homeomorphic to $D$, the space $Y$
must be homeomorphic to $\bar D$.
\end{proof}

\section{Simple implications} \label{sec:boundary}
We now embark on the proof of \tref{thm4}. Thus, from now on, let $Z$  be  a geodesic metric space homeomorphic to $\bar D$ and such that
the space $Z\setminus \partial Z$ is non-positively curved.

\subsection{(1) implies (3)} \label{1to3}
Suppose  the space $Z$ is ${\rm CAT}(0 )$ and consider a Jordan curve $\eta\subset Z$ of finite length $\ell$.  Then Reshetnyak's majorization theorem (\lref{lem:onlyif}) provides a Lipschitz disc $u:\bar D\to Z$ filling $\eta$ of area  at most $\frac  1 {4\pi}\cdot \ell ^2$.
For topological reasons, the image of $u$ must contain the Jordan domain $J$ of $\eta$. Therefore,
$\mathcal H^2 (J)\leq \Area (u)  \leq \frac  1 {4\pi}\cdot \ell ^2.$

\subsection{(2) implies (1)} \label{2to1}
Note that $Z$ is the completion of  $Z\setminus \partial Z$. Thus, if $Z\setminus \partial Z$ is a length space then $Z$ is ${\rm CAT}(0)$ by \pref{bisgen}.

\subsection{(1) implies (2)} \label{1to2}
Thus, we assume that $Z$ is ${\rm CAT}(0)$
and claim that $Z\setminus \partial Z$ is a length space.
The proof of this implication (probably  well-known to experts)  is slightly more involved.

Consider arbitrary points $y_+,y_-\in Z\setminus \partial Z$. Let $\gamma: [a,b] \to Z$ be a geodesic between $y_+$ and $y_-$.
Fix a positive $\epsilon >0$. We need to find a curve $\gamma _{\epsilon}$  in $Z\setminus \partial Z$ which connects $y_+$ and $y_-$ and has length at most $(1+2\epsilon) \cdot \ell (\gamma) $.

For any  $y=\gamma (t)  $ we  claim the existence of an open  ball $W^t$ around $y$ with the following property. For any $z_1,z_2 \in W^t \cap \gamma  $ there exists a curve $\eta \subset W^t$ connecting $z_1$ and $z_2$, with $\eta \cap \partial Z \subset  \{z_1,z_2 \}$ and  such that
$$(1+\epsilon) \cdot d_Z(z_1,z_2)\geq \ell (\eta).$$
Indeed,   for $y \notin \partial Z$, in particular for $y=y_{\pm}$,  the claim is evident. For any $y\in \partial Z$  we apply
\lref{almostcone+} to  both parts of $\gamma$ emanating from $y$ and deduce the claim from the corresponding result in the flat hinge $CT$, where the claim is clear as well.

We can cover $\gamma $ by a finite number of open balls $U_i=W^{t_i}$.
 By choosing an appropriate subsequence and rearrangement, we may assume
   that any two consecutive balls in the sequence $U_i$ intersect.   Choose arbitrary  points $y_i$ on $\gamma$ in the intersection of $U_i$ and $U_{i+1}$, with the only requirement that $y_i \notin \partial Z$ whenever $U_i\cap U_{i+1} \cap \gamma$ is not completely contained in $\partial Z$.
    We connect
   $y_i$ and $y_{i+1}$  by a  curve $\eta_{i+1} \subset U_{i+1}$ provided by the definition of $W^{t_i}$. We may assume   the first $y_0$ and the last $y_m$ to be the  ends  $y_{\pm}$ of $\gamma$.  Denote  by $\eta $ the concatenation of all $\eta _i$. Then $\eta$ is a curve in $Z$ between $y_+$ and $y_-$  which has length at most $(1+\epsilon) \cdot \ell (\gamma) $. Moreover, $\eta$ intersects $\partial Z$ at most at finitely many points $y'_j=\eta (s_j)$.

For any such $y'_j$,  a neighborhood of $y'_j$ in $\partial Z$ is contained in $\gamma$.
We again apply \lref{almostcone+} to  both parts of $\gamma$ emanating from $y'_j$ and note that in this case the image of the map $E$ must be \emph{open} in $Z$ by the invariance of domains. Using the corresponding property in the flat cone
$CT$, we find an arbitrarily short curve $k_j $ with the following property. The curve $k_j$ connects two points  $\eta (a_j),\eta (b_j)$ with some $a_j<s_j<b_j$ and  does not intersect $\partial Z$. Replacing $\eta |_{[a_j,b_j]}$ by  $k_j$ we obtain the desired short connection between $y_+$ and $y_-$.

\section{Some simplifications} \label{sec:simple}
The rest of the paper is devoted to the proof that (3) implies (1) in \tref{thm4}. Thus,  from now on we assume that $Z$  is a geodesic metric space homeomorphic to $\bar D$, such that $Z\setminus \partial Z$ is non-positively curved and such that $Z$  satisfies the isoperimetric inequality as stated in \tref{thm4}, (3).  We need to prove that $Z$ is ${\rm CAT}(0)$.

\subsection{Subdomains}
Let $T$ be  a Jordan curve  of finite length in $Z$ with Jordan domain $J$. Consider the closure $\bar J =J\cup T\subset Z$ with the induced length metric.    Since $T$ has finite length, the topologies induced by the length metric and by the induced metric coincide. Thus $\bar J$ is a compact length metric space homeomorphic to $\bar D$. The compactness implies that $\bar J$ is geodesic. The identity map $F:\bar J\to Z$ preserves the lengths of all curves and the $\mathcal H^2$-area of all domains.  Moreover, the restriction of $F$ to $J=\bar J \setminus T$  is a local isometry. Therefore, the assumptions of \tref{thm4}, (3)  are valid for the space $\bar J$ as well.
As a consequence, we may reduce the problematic part of $Z$ to a single geodesic:

 \begin{lem}  \label{lem:simp}
 We may assume in addition that
 \begin{enumerate}
 \item  $\mathcal H^2 (Z)$ is finite.
 \item   There is a geodesic $c:[a_0,b_0]\to \partial Z \subset Z$  and some $a_0<a<b<b_0$,
such that  $Z\setminus c ([a,b])$ has non-positive curvature.
 \end{enumerate}
 \end{lem}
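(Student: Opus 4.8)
\textbf{Plan of proof of \lref{lem:simp}.}

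The idea is to replace $Z$ by a suitable closed Jordan subdomain $\bar J$ so as to localize the failure of non-positive curvature to an arbitrarily small portion of $\partial Z$, which can then be straightened into a single geodesic. First I would observe that it suffices to prove the implication (3)$\Rightarrow$(1) under the two extra assumptions: indeed, once we know that every such $\bar J$ is ${\rm CAT}(0)$, we recover that $Z$ itself is ${\rm CAT}(0)$ by exhausting $Z\setminus\partial Z$ by Jordan polygons $\Gamma_n$ as in \pref{bisgen} and \lref{lem:Jordomloc}, noting that the closed Jordan domains $\bar J_n$ of these polygons all lie inside the non-positively curved open disc $Z\setminus\partial Z$, hence are automatically ${\rm CAT}(0)$; passing to the limit gives that $Z=$ completion of $Z\setminus\partial Z$ is ${\rm CAT}(0)$. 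So the reduction is to construct, for the given $Z$, appropriate subdomains to which we may apply the (by then established) simplified form of the theorem.

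Next I would justify the transfer of hypothesis (3) from $Z$ to any $\bar J$, which is the content of the paragraph preceding the lemma: if $T\subset Z$ is a Jordan curve of finite length with Jordan domain $J$, then $\bar J=J\cup T$ with the induced length metric is compact (since $T$ is rectifiable the length and subspace topologies agree), hence geodesic, and $F\colon\bar J\to Z$ preserves lengths of curves and $\mathcal H^2$ of domains and is a local isometry on $J$. Consequently every Jordan curve $\eta\subset\bar J$ has the same length in $\bar J$ as in $Z$, bounds a Jordan domain of the same area, and the isoperimetric inequality (3) holds in $\bar J$; also $J=\bar J\setminus T$ is non-positively curved, being locally isometric to the open set $Z\setminus\partial Z$ — but note $T$ need not lie in $\partial Z$, so after this move the "problematic boundary" of $\bar J$ is all of $T$, which is why a further refinement is needed. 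To get (1): I would choose $T$ so that $T\cap(Z\setminus\partial Z)$ is a finite union of geodesics of $Z$. Concretely, pick a point $p\in\partial Z$ and a short geodesic arc $c=c|_{[a_0,b_0]}$ in $\partial Z$ containing in its interior the part of $\partial Z$ that we wish to keep; then run two geodesics in $Z$ from $c(a_0)$ and $c(b_0)$ into the interior and close them up by a polygonal arc inside $Z\setminus\partial Z$; the resulting Jordan curve $T$ bounds a domain $J$ so that $\partial\bar J$ decomposes into the geodesic sub-arc $c([a,b])$ of $\partial Z$ (with $a_0<a<b<b_0$) plus a remainder lying in $Z\setminus\partial Z$, which by \lref{lem:Jordomloc}/local-convexity arguments contributes no curvature concentration. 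Thus $\bar J\setminus c([a,b])$ is non-positively curved. Finally $\mathcal H^2(\bar J)\le\mathcal H^2(J_\eta)\le\frac1{4\pi}\ell^2_Z(T)<\infty$ by (3) applied to $T$ itself, giving finiteness of the area; so $\bar J$ satisfies both (1')-style conditions of \lref{lem:simp}. Running the exhaustion $\bar J_n\nearrow Z$ of such subdomains and combining with \pref{bisgen} then recovers $Z$.

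The main obstacle I expect is the construction of the auxiliary geodesics and polygonal arc realizing $T$ so that (a) $T$ is a genuine Jordan curve of finite length, (b) the "kept" part of $\partial Z$ really is a geodesic segment of $Z$ and not merely of $\bar J$, and (c) the newly introduced portion of $\partial\bar J$ inside $Z\setminus\partial Z$ carries non-positive curvature in $\bar J$ — this last point uses that geodesic arcs in a non-positively curved surface, together with the hinge analysis of Section \ref{sec:basicgeom}, produce locally convex boundary, so no positive curvature is created. A secondary technical point is that the passage back from all $\bar J$ to $Z$ must be organized so that the exhausting Jordan curves can simultaneously be taken geodesic/polygonal in $Z\setminus\partial Z$ and approach $\partial Z$ in a controlled way; this is exactly the kind of approximation already used in \pref{bisgen}, so it should go through, but it is where one has to be a little careful that "we may assume in addition" is legitimate, i.e. that proving (3)$\Rightarrow$(1) for all spaces satisfying the extra conditions formally implies it for the original $Z$.
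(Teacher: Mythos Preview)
Your reduction strategy has a genuine gap. The exhaustion argument in your first paragraph does not recover $Z$: if the Jordan polygons $\Gamma_n$ lie in $Z\setminus\partial Z$, then the limit of the $\bar J_n$ (with their intrinsic metrics) is the completion $Y$ of $Z\setminus\partial Z$ \emph{with its own length metric}, not $Z$. That $Y$ is ${\rm CAT}(0)$ is exactly \pref{bisgen} and needs none of the extra hypotheses; the entire difficulty of (3)$\Rightarrow$(1) is that $Y$ and $Z$ may differ. Writing ``$Z=$ completion of $Z\setminus\partial Z$'' silently assumes condition (2), which is equivalent to what you are trying to prove.

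Your alternative construction, taking $\bar J_n$ that meet $\partial Z$ along a ``short geodesic arc $c\subset\partial Z$'', presupposes that such geodesic arcs exist; nothing in the hypotheses guarantees that any sub-arc of $\partial Z$ is a geodesic of $Z$. Even granting the construction, an exhaustion by ${\rm CAT}(0)$ subdomains with intrinsic metrics does not obviously yield that $Z$ is ${\rm CAT}(0)$, since the intrinsic distances in $\bar J_n$ need not converge to distances in $Z$.

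The paper avoids all of this by arguing contrapositively. Assume $Z$ is not ${\rm CAT}(0)$; then by the (already proved) implication (2)$\Rightarrow$(1), the interior $Z\setminus\partial Z$ is not a length space, so there exist interior points $x,y$ that cannot be joined by interior curves of length close to $d_Z(x,y)$. The $Z$-geodesic $c$ from $x$ to $y$ is therefore forced to meet $\partial Z$; close it up with a simple piecewise-geodesic arc $\hat c\subset Z\setminus\partial Z$ to get a Jordan curve $T=c\cup\hat c$ bounding $Z^-$. Now $c$ is automatically a geodesic in the intrinsic metric of $Z^-$ and lies in $\partial Z^-$, giving condition (2); the isoperimetric inequality applied to $T$ gives finite area; and $Z^-$ is still not ${\rm CAT}(0)$ because the same pair $x,y$ still witnesses the failure of (2) in $Z^-$ (using (1)$\Rightarrow$(2)). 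The geodesic $c$ is produced by the failure of (2), not picked from $\partial Z$ --- that is the idea your argument is missing.
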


\begin{proof}
Assume  some $Z$ satisfies the assumption of \tref{thm4},(3) but is not ${\rm CAT}(0)$. We are going to find a Jordan domain $Z^-$ in $Z$ which satisfies both assumptions of the lemma,  but which is not ${\rm CAT}(0)$ either.


 Due to Subsection \ref{2to1},  $Z \setminus \partial Z$ cannot be  a  length space. Thus we find points  $x,y \in  Z \setminus \partial Z$ and
 $\epsilon >0$  such that
  for any curve $\gamma \subset Z\setminus \partial Z$ connecting $x$ and $y$ we have $\ell (\gamma ) > d_Z(x,y) +\epsilon$.

  Consider a geodesic $c:[a_0,b_0]\to Z$ in $Z$ between $x$ and $y$.
  Connect  further $x$ and $y$ by some simple piece-wise geodesic  curve  $\hat c$ in $Z \setminus \partial Z$  disjoint from $c$ outside the endpoints.  Consider the arising Jordan curve $T =c\cup \hat c$  and
 the corresponding  closed Jordan domain $Z^-$ with its induced length metric. Since $T$ has finite length we have  $\mathcal H^2 (Z^-)<\infty$ by the isoperimetric inequality.
     Outside the intersection of $T$ with $\partial Z$, the space $Z^-$ has non-positive curvature  by    \lref{lem:Jordomloc}.
  Thus $Z^-$  satisfies (1) and (2) required in the lemma, where $a<b$ can be chosen arbitrary in $(a_0,b_0)$, sufficiently close to $a_0$ and $b_0$,
  respectively.

  Assume that $Z^-$ is  ${\rm CAT}(0 )$. Then $Z^-\setminus T$, with the metric induced from $Z^-$, is a length space due to
  Subsection \ref{1to2}.
     Therefore, we find a curve $\gamma $ in $Z^-$ connecting $x$ and $y$, such that $\gamma$ does not intersect $T\cap \partial Z$ and such that the length of
  $\gamma$ is arbitrary close to  $\ell (c)= d_Z(x,y)$. This contradicts our assumption on $x$ and $y$, shows that $Z^-$ cannot be ${\rm CAT}(0 )$,
  and finishes the proof of the lemma.
\end{proof}

\subsection{Simple setting} We may and will assume from now on that $Z$ satisfies the assumption of \lref{lem:simp}. Thus
 $Z$ has non-positive curvature  outside a geodesic $c$ contained in $\partial Z$.
We consider the complement $Z\setminus \partial Z$  and call the associated length space $Y_0$.  The identity map $I:Y_0\to Z\setminus \partial Z$ is a $1$-Lipschitz homeomorphism.  The length space  $Y_0$ has non-positive curvature, thus,
by \pref{bisgen}, the completion $Y$ of $Y_0$ is ${\rm CAT}(0)$. Moreover,  $I:Y_0\to Z$ extends to a $1$-Lipschitz map $I:Y\to Z$.
Under these assumptions we obtain the following  uniform area estimate for balls in $Y_0$ near $c$.

\begin{lem}  \label{lem:areabound}
For any
$y\in Y_0$ and all $r$ such that  $\overline{I(B_{Y_0}(y,r))}\cap \partial Z \subset c$,
  the area of the $r$-ball in $Y_0$ around $y$ satisfies:
$$\mathcal H^2 (B_{Y_0} (y,r))\geq  \frac {\pi} 4 r ^2 \; .$$
\end{lem}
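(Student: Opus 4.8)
The statement asserts a lower area bound $\mathcal H^2(B_{Y_0}(y,r))\geq \frac\pi4 r^2$ for balls in the non-positively curved disc $Y_0$, valid as long as the image ball in $Z$ only touches $\partial Z$ along the geodesic $c$. The natural strategy is to play the isoperimetric inequality of \tref{thm4},(3) against the $\mathrm{CAT}(0)$ geometry of $Y$. The point is that the map $I\colon Y\to Z$ is $1$-Lipschitz and preserves $\mathcal H^2$ on $Y_0$ (it is a local isometry there), so areas computed in $Y_0$ equal areas of the images in $Z$, while $I$ can only shorten curves. Thus the plan is: produce inside $Y_0$, near $y$, a Jordan curve $\eta$ enclosing a Jordan domain $J_\eta$ with $\mathcal H^2(J_\eta)$ close to $\mathcal H^2(B_{Y_0}(y,r))$ and with $\ell_{Y_0}(\eta)$ not much larger than $2\pi r$ — the $\mathrm{CAT}(0)$ nature of $Y$ (hence the Euclidean isoperimetric inequality in the other direction, via comparison) forces balls in $Y$ to have a good area-to-perimeter ratio — and then push forward by $I$ to a Jordan curve $I(\eta)\subset Z$ with $\ell_Z(I(\eta))\leq\ell_{Y_0}(\eta)$ and enclosing $\mathcal H^2(I(J_\eta))=\mathcal H^2(J_\eta)$; applying \tref{thm4},(3) to $I(\eta)$ in $Z$ would, run the wrong way, give an \emph{upper} bound — so instead one must use the \emph{reverse} inequality available in $\mathrm{CAT}(0)$ spaces. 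Concretely: in the $\mathrm{CAT}(0)$ space $Y$, the metric ball $B_{Y}(y,r)$ satisfies $\mathcal H^2(B_Y(y,r))\geq \pi r^2$ when $y$ is an interior point far from $\partial Y$ (equality in $\mathbb R^2$, and $\geq$ by the cone angle being $\geq 2\pi$), but near the geodesic boundary $c$ one loses at most a half-disc, giving the constant $\frac\pi4$ in the worst case.

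More precisely, here is how I would carry it out. First, reduce to the case where $\overline{B_{Y_0}(y,r)}$ is compact and $I$ restricted to it is injective off the part of its image meeting $c$; this is the content of the hypothesis $\overline{I(B_{Y_0}(y,r))}\cap\partial Z\subset c$ together with the fact that $I$ is a homeomorphism on $Y_0$. Second, observe that $B_{Y_0}(y,r)$, being an open subset of the $\mathrm{CAT}(0)$ space $Y$ (via the isometric embedding $Y_0\hookrightarrow Y$), and being a metric ball, has a well-understood local structure: $Y$ is a non-positively curved surface, so near $y$ the tangent cone is a Euclidean cone of total angle $\alpha\geq 2\pi$ (if $y$ is interior to $Y$) or a flat sector of angle $\geq\pi$ (if $y$ sits on the geodesic boundary $\partial Y$ — recall $\partial Y$ maps into $c$, a geodesic, so the boundary angle is $\geq\pi$). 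Third, apply the coarea/comparison estimate: in a $\mathrm{CAT}(0)$ surface, the function $r\mapsto \mathcal H^2(B(y,r))$ dominates the corresponding function for the model flat cone, which is $\frac\alpha2 r^2\geq\pi r^2$ in the interior case and $\geq\frac\pi2 r^2$ in the boundary-sector case — but since here we only control $r$ up to where the ball first leaves the "good" region, and the boundary geodesic $c$ can cut the ball roughly in half, the guaranteed constant is $\frac\pi4$. This can be made rigorous by a first-variation / monotonicity argument for the area of balls in surfaces of non-positive curvature (equivalently, using \tref{thm:Resh} to write $Y$ locally as $(U,d_f)$ with $f$ log-subharmonic and estimating $\int_{B_{d_f}} f^2$ from below by the sub-mean-value property of $\log f$, exactly the estimate underlying \eqref{eq:isop-f} read in reverse).

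Alternatively — and this is probably the cleaner route given what the paper has set up — I would avoid differential-geometric monotonicity and argue purely by majorization. Since $Y$ is $\mathrm{CAT}(0)$, Reshetnyak's majorization theorem (as in \lref{lem:onlyif}) lets us compare; but more to the point, \lref{almostcone+} says that near any point $y\in Y$ (interior or boundary) there is a biLipschitz map $E$ from a flat cone $CT$ (with $CT$ a half-plane-sized sector when $y\in\partial Y$, since $c$ is a geodesic so $T$ has length $\geq\pi$) onto a neighborhood of $y$, with biLipschitz constant $\to1$ as the radius $\to0$. Hence for $r$ small, $\mathcal H^2(B_{Y_0}(y,r))\geq(1-\varepsilon)\,\mathcal H^2(B_{CT}(o,(1-\varepsilon)r))\geq(1-\varepsilon)^3\cdot\frac{|T|}{2}r^2\geq(1-\varepsilon)^3\cdot\frac\pi2\,r^2$, which is already stronger than claimed for small $r$; the constant $\frac\pi4$ (rather than $\frac\pi2$) is the slack needed to make the estimate hold for \emph{all} admissible $r$, not just small ones, and that uniformity for large $r$ is where one genuinely needs the global $\mathrm{CAT}(0)$ property of $Y$ (convexity of the distance function, hence comparison-area monotonicity of balls) rather than just the infinitesimal cone picture.

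\textbf{Main obstacle.} The delicate point is the passage from the infinitesimal estimate (valid for small $r$ via \lref{almostcone+}) to a bound uniform in $r$ over the whole admissible range. For small $r$ one has a clean flat-cone model; for larger $r$ the ball $B_{Y_0}(y,r)$ need no longer be biLipschitz to a cone, and the geodesic $c$ may wrap around or re-enter the ball. The resolution is that $Y$ is globally $\mathrm{CAT}(0)$, so the area of metric balls in $Y$ grows at least as fast as in Euclidean comparison (a standard consequence of convexity of balls and of the $1$-Lipschitz nearest-point projection), and the subset $B_{Y_0}(y,r)$ loses area only where $I$ of its image meets $\partial Z$ — and by hypothesis that intersection is confined to the single geodesic $c$, whose "shadow" removes at most (essentially) a half-disc. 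Quantifying "at most a half-disc" rigorously — i.e. showing the geodesic boundary can eat at most half the comparison area, leaving the factor $\frac12\cdot\frac\pi2=\frac\pi4$ — is the crux, and I expect it to follow from the fact that $Y$ is a surface with geodesic boundary $\partial Y$ (mapping into $c$), so that $Y$ embeds convexly in its double $\widehat Y$, which is a boundaryless $\mathrm{CAT}(0)$ surface; in $\widehat Y$ one gets $\mathcal H^2(B_{\widehat Y}(y,r))\geq\pi r^2$, and $Y$ is exactly half of a symmetric neighborhood, giving $\mathcal H^2(B_{Y_0}(y,r))=\frac12\mathcal H^2(B_{\widehat Y}(y,r)\cap(\text{good region}))\geq\frac\pi4 r^2$.
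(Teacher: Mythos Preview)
Your approach is fundamentally different from the paper's, and it contains a genuine gap.

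The central problem is an unjustified geometric assumption, which also makes the argument circular. You repeatedly rely on $Y$ being a compact disc whose boundary $\partial Y$ (or at least the arc $\eta=I^{-1}(c)$) is a \emph{geodesic} in $Y$, so that you can double $Y$ along $\partial Y$ to obtain a boundaryless $\mathrm{CAT}(0)$ surface $\widehat Y$ and invoke a Euclidean lower area bound there. None of this is available at this point. The compactness of $Y$ and the fact that it is homeomorphic to $\bar D$ are established only in \pref{prop:simplification}, whose proof \emph{uses} \lref{lem:areabound}. More seriously, $\eta\subset\partial Y$ is not known to be a geodesic in $Y$: the geodesic is $c\subset Z$, and $I$ maps $\eta$ onto $c$, but $I$ is only $1$-Lipschitz. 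The entire thrust of Sections~\ref{sec:rect}--\ref{sec:final} is to show that $I$ does not strictly shorten $\eta$; a priori $\eta$ may even be non-rectifiable. Without local convexity of $\partial Y$ in $Y$, the double $\widehat Y$ need not be $\mathrm{CAT}(0)$ (Reshetnyak's gluing theorem does not apply), and the comparison-area monotonicity you invoke collapses. Your infinitesimal argument via \lref{almostcone+} is fine for \emph{small} $r$, but --- as you yourself identify --- the passage to all admissible $r$ is the crux, and your proposed resolution of that crux is precisely the doubling step that fails.

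The paper's proof uses an entirely different mechanism: it \emph{does} exploit the isoperimetric inequality (3) in $Z$ --- the route you considered and then dismissed as ``running the wrong way'' --- by coupling it with the coarea inequality to produce a differential inequality. With $v(t)=\mathcal H^2(B_{Y_0}(y,t))$ and $w(t)=\mathcal H^1(\partial B_{Y_0}(y,t))$, coarea gives $\int_0^t w\leq v(t)$. For almost every $t$ the level set contains either a Jordan curve or an arc whose endpoints land on $c$; in the second case one closes the arc by the chord along the geodesic $c$, which at most doubles the length. The isoperimetric inequality in $Z$ applied to this Jordan curve yields $v(t)\leq \frac{1}{4\pi}(2w(t))^2=\frac{1}{\pi}w(t)^2$, hence $w(t)\geq\sqrt{\pi v(t)}$. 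Integrating then forces $v(r)\geq\frac{\pi}{4}r^2$. The constant $\frac{\pi}{4}$ thus comes from the chord-doubling along the geodesic $c$ in $Z$, not from ``losing a half-disc'' to a geodesic boundary of $Y$.
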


\begin{proof}
 We  proceed similarly to  \cite[Section 9.2]{LW-intrinsic}, following the standard arguments  leading to the   boundary regularity
 of minimal surfaces under a chord-arc condition on the boundary.

Let $y\in Y_0$ and  $r>0$ be as in the statement  of the lemma.
Since $\partial Z$ is not completely contained in $c$, the assumption on $r$ implies that there exists some "remote" point  $x\in Y_0$ with the following property: $d(x,y)>r$ and   for the connected component $U$ of $x$ in $Y_0\setminus B(y,r)$ the closure of $I(U)$ in $Z$ contains a point in $\partial Z\setminus c$.

 We now argue by contradiction and assume that $\mathcal H^2 (B_{Y_0} (y,r))< \frac {\pi} 4 r ^2 $.
  For any  $t<r$, we consider the ball $B_{Y_0}(y,t)$ around $y$ in  $Y_0$, denote by $v(t)$ its area and by   $L_t$ its boundary in $Y_0$.  Note that $L_t$ separates $x$ and $y$ in $Y_0$. Set $w(t)=\mathcal H^1 (L_t)$.
 By the co-area inequality  we have  $\int _0 ^t w(s) \; ds \leq  v(t)$ for all $t<r$. (See \cite[Lemma 2.3]{LW-intrinsic} and note that
 $Y_0$ is  countably $2$-rectifiable,   no non-Euclidean planes appear as tangent spaces of $Y_0$ and the distance function to any point is $1$-Lipschitz).
  The contradiction to $v(r) < \frac {\pi} 4 r^2$ follows  by integration, once we have verified for almost all  $t<r$  the inequality
\begin{equation} \label{b8}
v(t)\leq \frac 1 {\pi} w^2(t)  \;  .
 \end{equation}

 We claim that for all $t$ with finite $w(t)=\mathcal H^1(L_t)$, the set $L_t$ contains  a closed subset $L'_t$ still separating
 $x$ and $y$ in $Y_0$, and such that $L_t'$ is either a Jordan  curve   or homeomorphic to an open interval.  Indeed, consider the topological sphere $S^2=Z/\partial Z$ obtained from $Z$ by collapsing $\partial Z$ to a point, equipped with the quotient metric. Let  $K$ be the closure of the image of $I(L_t)$ in $S^2$. Then $K$ separates the (images of the) points $x$ and $y$ in $S^2$. Since $w(t)<\infty$ it follows that $K$ contains a Jordan curve
 $K_0$ which separates $x$ and $y$, cf. \cite[Corollary 7.5]{LW-intrinsic}.  The preimage $L' _t$ of $K_0$ in $Y_0$ is either a Jordan curve or an open interval, it is contained in $L_t$ and  separates $x$ and $y$ in $Y_0$.

Assume that $L'_t$ is a Jordan curve. By the choice of $x$, the Jordan domain of $L'_t$ contains $y$ and therefore the whole ball  $B_{Y_0}(y,t)$. Therefore,
 $v(t)  \leq \frac 1 {4\pi} \ell ^2 _Z(I(L'_t))  \leq \frac 1 {4\pi} w^2 (t)$, hence \eqref{b8}.

 Assume now that $L'_t$ is an open interval, which has finite length by assumption.  Then the closure of $I(L_t')$ consists
 of $I(L_t')$ and one or two points on $\partial Z$. By construction and assumption, these points are contained in
   $\overline{I(B_{Y_0}(y, r))}\cap \partial Z\subset c$. We consider the Jordan curve $T_t \subset Z$  given  by $I(L_t ')$ and    the part of $c$ between the endpoints of $I(L_t')$. Again, by the choice of $x$, the Jordan domain of $T_t$ contains $I(y)$ and therefore the image $I(B_{Y_0}(y,t))$.
On the other hand, the length of $T_t$ is at most $2w(t)$: since $c$ is a geodesic in $Z$, the curve $I(L'_t)$ has at least half of the length
of $T_t$.
 Thus, the isoperimetric inequality gives us $v(t) \leq \frac 1 {4\pi} (2w(t))^2 =\frac 1 {\pi} w^2(t)$, finishing the proof of \eqref{b8} and of the lemma.
\end{proof}

Under the assumptions of \lref{lem:simp} we  conclude:

\begin{prop} \label{prop:simplification}
The space $Y$ is  compact and homeomorphic to $\bar D$.
The restriction $I:\partial Y \to \partial Z$ is weakly monotone,  in particular,
the preimage $\eta =I^{-1} (c)$ is an arc.
The restriction $I:Y\setminus \eta \to Z\setminus c$ is a bijective local isometry.
The map $I:Y\to Z$ is an isometry if and only if $I$ maps $\eta$ to $c$ in an arclength preserving way.
\end{prop}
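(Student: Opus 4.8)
\smallskip
\noindent The plan is to derive all four assertions from three ingredients already at hand: the uniform lower area bound of \lref{lem:areabound}; the topology of completions of non-positively curved discs, namely \pref{bisgen} and \lref{lem:topology}; and the soft properties of $I$ recorded above, i.e.\ that $I$ is $1$-Lipschitz, that $I|_{Y_0}\colon Y_0\to Z\setminus\partial Z$ is a length-preserving homeomorphism, and that $I$ preserves $\mathcal{H}^2$ on $Y_0$. The heart of the proof, and the only place where the isoperimetric hypothesis of \tref{thm4} is genuinely used, is the compactness of $Y$, which I would establish by a packing argument. First one shows there are $c_0,r_0>0$ with $\mathcal{H}^2\big(B_{Y_0}(p,r)\big)\ge c_0 r^2$ for all $p\in Y_0$ and $0<r<r_0$: if $\overline{I(B_{Y_0}(p,r))}\cap\partial Z\subset c$ this is exactly \lref{lem:areabound}, and otherwise $\overline{I(B_{Y_0}(p,r))}$ meets $\partial Z\setminus c$, which lies in the locally ${\rm CAT}(0)$ locus $Z\setminus c([a,b])$, where a standard lower volume estimate for metric balls applies, uniformly on the relevant compact part of the boundary by properness of $Z$. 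Granting this, any $\varepsilon$-separated set $S\subset Y_0$ with $\varepsilon<r_0$ yields pairwise disjoint balls $B_{Y_0}(p,\varepsilon/2)$ of area $\ge c_0(\varepsilon/2)^2$ each, so $\#S\le\mathcal{H}^2(Y_0)/(c_0(\varepsilon/2)^2)<\infty$, using $\mathcal{H}^2(Y_0)=\mathcal{H}^2(Z)<\infty$ from \lref{lem:simp}(1). Thus $Y_0$ is totally bounded, $Y$ is compact, and by \lref{lem:topology}, applied to the non-positively curved length space $Y_0\cong D$ with compact completion $Y$, the space $Y$ is homeomorphic to $\bar D$; put $\partial Y:=Y\setminus Y_0$.

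\smallskip
\noindent Next I would prove the weak monotonicity of $I|_{\partial Y}$ and identify $\eta$. Since $Y$ is now compact, $I(Y)$ is closed and contains the dense set $Z\setminus\partial Z$, so $I$ is onto. If $y=[y_n]\in\partial Y$ then $(I(y_n))$ is Cauchy in $Z$; its limit cannot lie in $Z\setminus\partial Z$, since there $d_Z$ agrees with its own interior length metric on small balls, which would force $(y_n)$ to converge in $Y_0$. Hence $I^{-1}(\partial Z)=\partial Y$, so $I^{-1}(z)$ is a single point for $z\in Z\setminus\partial Z$. For $z\in\partial Z$ I claim $I^{-1}(z)$ is connected: otherwise pick $p_1,p_2$ in distinct components of $\partial Y\setminus I^{-1}(z)$, where a short argument (using that $I$ is non-constant on each such component) lets one arrange $I(p_1)\ne I(p_2)$; a chord $\tau$ of $Y$ from $p_1$ to $p_2$ has $\tau\setminus\{p_1,p_2\}\subset Y_0$, so $I(\tau)$ is a simple crosscut of $Z$ avoiding $z$, and it splits $Z$ into two Jordan domains $Z^{\pm}$ while $\tau$ splits $Y$ accordingly, $I$ carrying the interior of each half homeomorphically onto $Z^{\pm}\cap(Z\setminus\partial Z)$; then $z\in\overline{Z^+}\cap\overline{Z^-}$, impossible for $z\in\partial Z$. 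Therefore $I$ is monotone, so $I|_{\partial Y}\colon\partial Y\to\partial Z$ is a weakly monotone parametrization. Since a geodesic is injective, $c$ is a proper simple sub-arc of $\partial Z$, whence $\eta=I^{-1}(c)=(I|_{\partial Y})^{-1}(c)$ is a proper closed sub-arc of $\partial Y$, non-degenerate because $I$ maps it onto the non-degenerate arc $c$.

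\smallskip
\noindent Finally, for the local isometry and the isometry criterion: on $Y_0$, the map $I$ is a local isometry because every point of $Z\setminus\partial Z$ has a small ball on which $d_Z$ coincides with its interior length metric. At $p\in\partial Y\setminus\eta$ the point $I(p)$ lies in $\partial Z\setminus c$, and there the interior length metric of $Z\setminus\partial Z$ still agrees with $d_Z$ near $I(p)$ — realised either by $Z$-geodesics passing through the boundary or, up to arbitrarily small error, by interior curves — so $Y$ near $p$ is isometric via $I$ to $Z$ near $I(p)$; in particular $I|_{\partial Y\setminus\eta}$ is locally injective, hence, being a monotone surjection of the open arc $\partial Y\setminus\eta$ onto $\partial Z\setminus c$, a homeomorphism, and $I\colon Y\setminus\eta\to Z\setminus c$ is a bijective local isometry. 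An isometry clearly preserves arclength everywhere, in particular on $\eta$; conversely, if $I$ is arclength-preserving on $\eta$ then, being in addition a local isometry off $\eta$ and a continuous bijection of the compact space $Y$ onto $Z$, it preserves the length of every curve in $Y$, and since $Y$ and $Z$ are length spaces this gives $d_Z(Iy_1,Iy_2)=\inf\ell_Z(I\gamma)=\inf\ell_Y(\gamma)=d_Y(y_1,y_2)$, i.e.\ $I$ is an isometry. The one genuinely hard step is the compactness of $Y$; the rest is plane topology together with soft properties of non-positively curved surfaces and of the completion construction.
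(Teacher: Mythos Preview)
Your overall plan is sound and most of the argument matches the paper's. The one place where you genuinely diverge---and where a gap opens---is in the compactness step, specifically your Case~B. You claim that if $\overline{I(B_{Y_0}(p,r))}$ meets $\partial Z\setminus c$ then a ``standard lower volume estimate'' yields $\mathcal{H}^2(B_{Y_0}(p,r))\ge c_0 r^2$. But meeting $\partial Z\setminus c$ only tells you that \emph{some} point of the image lies near the locally ${\rm CAT}(0)$ locus; it does not place the center $p$, nor any definite sub-ball of $B_{Y_0}(p,r)$, inside a region where you control the geometry. A connected open set can reach $\partial Z\setminus c$ while having arbitrarily small area, so the inference as written fails, and there is no obvious uniform $r_0$ making both cases work simultaneously. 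The paper handles compactness differently and more cleanly: assume a $2\epsilon$-separated sequence $(y_n)$ in $Y_0$, pass (using compactness of $Z$) to a subsequence with $I(y_n)\to z\in Z$, and split on $z$. If $z$ lies in the ${\rm CAT}(0)$ locus $Z\setminus c([a,b])$, then by Subsection~\ref{1to2} a small ball $B$ around $z$ has the property that $B\cap(Z\setminus\partial Z)$ carries its own length metric; hence $d_Y(y_n,y_m)=d_Z(I(y_n),I(y_m))\to 0$ for large $n,m$, contradicting separation directly---no area bound is needed in this case. Only when $z\in c([a,b])$ does one invoke \lref{lem:areabound}, and then a single $r<\epsilon$ (chosen so that $B_Z(z,3r)\cap\partial Z\subset c$) works for all large $n$, giving infinitely many disjoint balls of area $\ge\frac{\pi}{4}r^2$ and contradicting $\mathcal{H}^2(Z)<\infty$.

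A smaller issue: in your weak-monotonicity argument you assert that a chord $\tau$ of $Y$ between two boundary points has $\tau\setminus\{p_1,p_2\}\subset Y_0$. This is not automatic---a geodesic in a ${\rm CAT}(0)$ disc may well run along the boundary. You can repair this by using instead a simple arc through $Y_0$ (approximate $p_1,p_2$ by interior points and connect within the open disc $Y_0$), but the paper sidesteps the issue entirely: since an open subset $U\subset\bar D$ is contractible iff $U\cap D$ is, and $I|_{Y_0}$ is a homeomorphism onto $Z\setminus\partial Z$, preimages of contractible open sets are contractible; hence every point-preimage is cell-like, and a cell-like subset of the circle $\partial Y$ is a point or an arc. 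Your final isometry criterion is correct in spirit; the paper's formulation is slightly sharper: arclength preservation on $\eta$ makes $I$ bijective, and then for every geodesic $\gamma\subset Z$ the set $I^{-1}(\gamma)$ has the same $\mathcal{H}^1$-measure as $\gamma$, so $I^{-1}$ is $1$-Lipschitz and $I$ is an isometry.
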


\begin{proof}
Assume that $Y$ is not compact. Then we find some $\epsilon >0$ and an infinite sequence of points $y_n \in Y_0$ with pairwise distance at least $2\epsilon$.
After choosing a subsequence we may assume  that $I(y_n)$ converges to a point $z \in Z$. If the point $z$ has a ${\rm CAT}(0 )$ neighborhood  in $Z$,
then for a small ball $B$ around $z$
 the metric of $B\cap (Z\setminus \partial Z)$ is a length metric, due to Subsection \ref{1to2}. From this we get $d_Y(y_n,y_m)=d_Z(I(y_n),I(y_m))$ for all $n,m$ large enough.  Therefore, the points $y_n$ cannot be $2\epsilon$-separated.

 Therefore,  we may assume that $z$ does not have a ${\rm CAT}(0 )$ neighborhood. Hence $z\in c([a,b]) \subset c([a_0,b_0])$.
    Thus,  we  find some small $r<\epsilon $ such that $B(z,3r) \cap \partial Z \subset c$.  This allows us to
 apply \lref{lem:areabound}  to $y_n$  and deduce, that  for all sufficiently large $n$ the area of the ball
 $B_{Y_0}(y_n,r)$ in $Y_0$ is at least   $\frac {\pi} 4 r^2$. But all these balls are disjoint. This contradicts the finiteness of the total area of $Y_0$ and finishes the proof that $Y$ is compact.
 From  \lref{lem:topology} we deduce that $Y$ is homeomorphic to $\bar D$.

Since $I(Y_0)$ is dense in $Z$ and $Y$ compact, we obtain   $I (Y)=Z$.  Since $I:Y_0\to Z\setminus \partial Z$ is a homeomorphism, we infer that $I^{-1} (\partial Z) =\partial Y$.
 An open subset $U$ of $\bar D$ is contractible if and only if $U\cap D$ is contractible.  Since $I:Y \setminus \partial Y\to Z \setminus \partial Z$ is a homeomorphism,
 we deduce that preimages of open contractible sets are contractible. Therefore, the preimage of any point $z\in Z$ is a cell-like set  (cf. \cite[p.97]{HNV04} or \cite[Section 7]{LW-intrinsic}). For  $z\in \partial Z$, the preimage is a cell-like subset of the circle $\partial Y$, hence either a point or an arc.  Therefore, the restriction $I:\partial Y \to \partial Z$ is weakly monotone.

In particular, the preimage $\eta$ of the arc $c\subset \partial Z$ is an arc in $\partial Y$.  For any point $z\in Z\setminus c$ there is a small ball $O$ around $z$
such that the metric on $O\setminus \partial Z$ is a length metric by Subsection \ref{1to2}. Thus,  $I:I^{-1} (O) \to O$ is an isometry.

Assume now  that  $I$  is arclength preserving on $\eta$. Then
$I$ is bijective and hence a homeomorphism.
  Moreover, for any geodesic $\gamma$ in $Z$, the map  $I:I^{-1} (\gamma ) \to \gamma$ preserves
the $\mathcal H^1$-measure. Thus, $I^{-1} (\gamma )$ has the same length as $\gamma$ and the map $I^{-1}:Z\to Y$ is $1$-Lipschitz.
Therefore, $I$ is an isometry.
 \end{proof}

In the sequel we will construct Jordan curves $T\subset Y$ whose intersection with $\eta$ is an arc $\eta_0$.  In this situation the image $I\circ T$ is a Jordan curve in $Z$ whose Jordan domain is the (locally isometric) image of the Jordan domain of $T$. The complementary part $k=T\setminus \eta_0$ is mapped by $I$ in an arclength preserving way.  Since $I$ is $1$-Lipschitz and the image of $\eta _0$ is a geodesic, we get
$$\ell _Z(I\circ T) = \ell _Y(k) +\ell _Z( I\circ \eta _0) \leq 2 \ell _Y(k) \; .$$
Moreover, the number $\ell _Y (T) -\ell _Z(I\circ T)= \ell _Y (\eta _0) -\ell _Z(I\circ \eta _0)$ measures the deviation of  $I$  from being an isometry.

\section{Rectifiable parts}  \label{sec:rect}
\subsection{Formulation}
We continue to work under the standing assumptions of \lref{lem:simp} and use the notations of  \pref{prop:simplification}.
The aim of this section is to prove:
\begin{prop} \label{prop:rect}
The map $I:Y\to Z$ preserves the length of any rectifiable subcurve of $\eta$.
\end{prop}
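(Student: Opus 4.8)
The proof goes by contradiction, and the picture to keep in mind is the ``Koch snowflake'' example from the introduction: we must locate in $Z$ a Jordan curve violating \tref{thm4}\,(3). Since $I\colon Y\to Z$ is $1$-Lipschitz one always has $\ell_Z(I\circ\tau)\le\ell_Y(\tau)$ for every curve $\tau$ in $Y$, so suppose that for some rectifiable subcurve $\sigma$ of $\eta$ this is a strict inequality. Parametrising $\sigma$ by $Y$-arclength on $[0,L]$, the function $g(s):=\ell_Z(I\circ\sigma|_{[0,s]})$ is nondecreasing, $1$-Lipschitz, $g(0)=0$ and $g(L)<L$; hence $\int_0^L g'=g(L)<L$, so the set $\{g'<\lambda\}$ has positive measure for any fixed $\lambda\in\bigl(g(L)/L,\,1\bigr)$. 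I would fix such a $\lambda$ and choose $s_0$ to be a Lebesgue density point of $\{g'<\lambda\}$ at which, in addition, $\sigma$ admits a tangent line in $Y$ (this is where the rectifiability of $\sigma$ enters; both conditions hold for a.e.\ $s_0$). Then for every sufficiently small $r>0$,
\[
  \ell_Z\bigl(I\circ\sigma|_{[s_0-r,\,s_0+r]}\bigr)\ \le\ 2\lambda r .
\]

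Next I would use \lref{almostcone+} at the boundary point $x_0=\sigma(s_0)\in\partial Y$, applied to the two half-tangent geodesics of $\sigma$ issuing from $x_0$: this yields a biLipschitz homeomorphism $E$ of a neighbourhood of the apex of a flat convex cone onto a neighbourhood of $x_0$ in $Y$, with biLipschitz constant tending to $1$ as the neighbourhoods shrink, sending the two boundary rays to initial pieces of those geodesics. Since $\sigma$ deviates from geodesics only to higher order near $x_0$, the rescaled picture converges to the following flat model: a half-plane $H=\{y\ge 0\}\subset\R^2$ with $\eta$ corresponding to the boundary line $\{y=0\}$ and $Y_0$ to the open half-plane, with $I$ acting as an isometry on $H\setminus\partial H$ (indeed $I$ is a local isometry on $Y_0$ by \pref{prop:simplification}) and contracting subsegments of $\partial H$ centred at the image of $s_0$ by a factor at most $\lambda$. (That the model may be taken to be a half-plane rather than a more general flat sector uses that a rectifiable boundary arc of a non-positively curved disc has inner angle $\pi$ at a.e.\ of its points; without this refinement one works in a flat sector and places the circle constructed below away from the apex, and the same estimates apply.)

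In the model $H$ I would take, for $0<h<R$, the Jordan curve $T$ consisting of the arc of $\partial B\bigl((0,R-h),R\bigr)$ lying in $\{y\ge 0\}$ together with the chord it cuts off on $\eta$; let $J$ be its Jordan domain and let $\psi\in(0,\pi)$ be the half-angle of the circular segment lying below $\eta$, so that the half-chord equals $R\sin\psi$. Then
\[
  \ell_Y(T)=2\pi R-2R(\psi-\sin\psi),\qquad
  \mathcal H^2(J)=\pi R^2-R^2(\psi-\sin\psi\cos\psi).
\]
The circular arc of $T$ lies in $Y_0$, where $I$ preserves lengths, whereas the chord is a subsegment of $\eta$ centred at $x_0$, hence contracted by a factor at most $\lambda$; therefore
\[
  \ell_Z(I\circ T)\ \le\ 2\pi R-2R(\psi-\lambda\sin\psi),
\]
while $I$ preserves $\mathcal H^2(J)$ and maps $T$ to a Jordan curve with Jordan domain $I(J)$. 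Using $\psi-\lambda\sin\psi=(1-\lambda)\psi+O(\psi^3)$ and $\psi-\sin\psi\cos\psi=O(\psi^3)$ and expanding for small $\psi$, one gets in the flat model
\[
  \mathcal H^2(J)-\tfrac1{4\pi}\,\ell^2_Z(I\circ T)\ \ge\ 2R^2(1-\lambda)\,\psi+O(R^2\psi^2),
\]
which is positive once $\psi$ is a fixed small constant depending only on $\lambda<1$. Transferring this configuration back to $Y$ through $E$ perturbs $\mathcal H^2(J)$ and $\ell^2_Z(I\circ T)$ only by a factor tending to $1$, hence by $o(R^2)$, which for $\psi$ fixed and $R$ small is dominated by $2R^2(1-\lambda)\psi$. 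Thus $I\circ T$ is a Jordan curve in $Z$ whose Jordan domain violates the isoperimetric inequality \tref{thm4}\,(3), a contradiction; this proves \pref{prop:rect}.

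The main obstacle is the passage to the flat model: one must make precise, through \lref{almostcone+}, that at a.e.\ point of the rectifiable curve $\sigma\subset\partial Y$ the space $Y$ is, up to biLipschitz maps close to isometries, a flat half-plane with $\eta$ along its boundary, which relies on the a.e.\ near-straightness of $\sigma$ and on the boundary inner angle being $\pi$ a.e.\ along a rectifiable part. Some degenerate cases also require care, in particular when $I$ collapses the chord to a point (so that $I\circ T$ is still a loop obtained from a simple arc) or fails to be injective on it (pass to a subarc on which it is monotone, or on which it is constant), and one must keep the error terms below the fixed gap provided by $1-\lambda>0$. The genuinely harder non-rectifiable case, where $\sigma$ is contracted by an arbitrarily large amount, is treated in the next section; there the unbounded contraction supplies the flexibility that near-straightness provides here.
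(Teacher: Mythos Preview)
Your proof is correct and follows essentially the same approach as the paper: both locate a density point of the contraction set at which $\sigma$ is differentiable, use \lref{almostcone+} to model a neighbourhood by a flat hinge of angle $\ge\pi-\delta$, construct there a circle-with-chord Jordan curve that is nearly isoperimetrically sharp, and then observe that $I$ shortens the chord part enough to violate \tref{thm4}\,(3). The only difference is organisational: the paper packages the flat-model calculation and the transfer separately as \lref{lem:Eucl}, \lref{almostflatgeod} and \pref{equality2}, and in the proof of \pref{equality2} it explicitly swaps the geodesic chord for the corresponding arc of $\eta$ via a short connecting geodesic $c_s$ of length $O(\delta s)$, whereas you absorb this swap into the phrase ``$\sigma$ deviates from geodesics only to higher order'' and the $o(R^2)$ error budget --- which is fine, and is exactly the step you flag as the main obstacle.
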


\subsection{Euclidean domains of almost isoperimetric equality}
We begin with a short Euclidean computation. For any sufficiently small $r>0$, let $T=T_r$ be a Jordan curve in $\R^2$
which consists of an arc of length $2\pi -2r$ on $S^1$ and a chord of length $ 2 \sin r$.
Then $\ell (T) < 2\pi$ and the Jordan domain $J$ of $T$ has area
$$\mathcal H^2(J)=\pi- r  +\frac {\sin (2r)} 2 > \pi - r^3 \; .$$
Therefore, we deduce:
$$\ell (T) -\sqrt {4\pi   \mathcal H^2(J)} < 2r^3\; .$$
We note that the curve $T$ is contained in a hinge of angle $\pi-  r $ enclosed between the chord and the tangent
to $S^1$ at one of the endpoints of the chord.  Rescaling  the curve $T$ suitably, we obtain:

\begin{lem} \label{lem:Eucl}
For any $\epsilon >0$ there exist some $L,\delta >0$ with the following property.
Let $\gamma :[0,\infty ) \to \R^2$ be one of two rays bounding a hinge $H$ of angle $\geq \pi- \delta$.
For any $s>0$ one can find a Jordan curve $T_s\subset H$ with Jordan domain $J_s$ such that the following holds true.
\begin{enumerate}
\item The curve $T_s$ contains  the initial part of $\gamma$ of length $s$.
\item $\ell (T_s) \leq L\cdot s$.
\item $\ell (T_s) -\sqrt {4\pi   \mathcal H^2(J_s)}  < \epsilon \cdot s$.
 \end{enumerate}

\end{lem}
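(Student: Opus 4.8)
The plan is to make the preliminary Euclidean computation just preceding the lemma quantitative and then rescale. First I would fix the model curve: for small $r>0$ let $T=T_r\subset\R^2$ be the Jordan curve consisting of the arc of $S^1$ of length $2\pi-2r$ together with the chord joining its two endpoints, which has length $2\sin r$. The computation already recorded in the text gives $\mathcal H^2(J)=\pi-r+\tfrac12\sin(2r)>\pi-r^3$ for the Jordan domain $J=J_r$, hence $\ell(T_r)=(2\pi-2r)+2\sin r<2\pi$ and
\begin{equation*}
\ell(T_r)-\sqrt{4\pi\,\mathcal H^2(J_r)}<2\pi-\sqrt{4\pi(\pi-r^3)}=2\pi\Bigl(1-\sqrt{1-r^3/\pi}\Bigr)<2r^3
\end{equation*}
for all sufficiently small $r$. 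The key geometric observation, also noted in the text, is that $T_r$ lies in the hinge bounded by the chord and the tangent line to $S^1$ at one endpoint of the chord; the angle of this hinge is $\pi-r$ (the chord makes angle $r/2$ with the tangent... more precisely the inscribed-angle computation gives exactly $\pi-r$). So $T_r$ is a Jordan curve inside a hinge of angle $\pi-r$, one of whose boundary rays carries the chord, a segment of length $2\sin r$.

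Next I would set up the constants. Given $\epsilon>0$, choose $r>0$ small enough that $2r^3<\epsilon\cdot(2\sin r)$, i.e. $r^2<\epsilon\sin r$ — this holds for all sufficiently small $r$ since $\sin r\sim r$. Put $\delta:=r$ and $L:=\ell(T_r)/(2\sin r)<\pi/\sin r$. Now given a hinge $H$ of angle $\ge\pi-\delta=\pi-r$ with boundary ray $\gamma$, and given $s>0$: the hinge of angle exactly $\pi-r$ embeds isometrically (as a convex subset) into $H$ with $\gamma$ mapped onto one of its boundary rays, so it suffices to produce $T_s$ in the model hinge $CT$ of angle $\pi-r$. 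Inside that flat hinge, scale the model picture by the factor $\lambda:=s/(2\sin r)$ so that the chord, now of length $s$, lies along the initial part of $\gamma$; let $T_s:=\lambda\cdot T_r$ and $J_s:=\lambda\cdot J_r$ be the images under this dilation, placed so that the chord is the initial segment of $\gamma$ of length $s$. Scaling multiplies lengths by $\lambda$ and areas by $\lambda^2$, so (1) holds by construction, (2) holds with $\ell(T_s)=\lambda\,\ell(T_r)=L\cdot s$, and (3) becomes
\begin{equation*}
\ell(T_s)-\sqrt{4\pi\,\mathcal H^2(J_s)}=\lambda\bigl(\ell(T_r)-\sqrt{4\pi\,\mathcal H^2(J_r)}\bigr)<\lambda\cdot 2r^3=\frac{2r^3}{2\sin r}\cdot s<\epsilon\cdot s
\end{equation*}
by the choice of $r$.

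I do not expect a genuine obstacle here; the only points requiring a little care are the exact angle of the hinge containing $T_r$ (a short inscribed-angle argument), the observation that a hinge of angle $\pi-r$ sits convexly inside any hinge of angle $\ge\pi-r$ sharing the ray $\gamma$ so that it is enough to argue in the model, and the elementary inequality $r^2<\epsilon\sin r$ for small $r$ which lets one absorb the cubic error into $\epsilon\cdot s$. All of these are routine, so the mild "hard part" is merely bookkeeping the scaling factor $\lambda=s/(2\sin r)$ consistently across the three conclusions.
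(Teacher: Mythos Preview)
Your proposal is correct and follows essentially the same approach as the paper: the paper's proof is precisely the Euclidean computation recorded just before the lemma (the circular arc plus chord $T_r$, with deficit $<2r^3$, sitting in a hinge of angle $\pi-r$) together with the sentence ``Rescaling the curve $T$ suitably, we obtain [the lemma]''. You have simply made the rescaling by $\lambda=s/(2\sin r)$ and the choice of constants $\delta=r$, $L=\ell(T_r)/(2\sin r)$ explicit, which is exactly what is meant. (One harmless slip: $2r^3<\epsilon\cdot 2\sin r$ simplifies to $r^3<\epsilon\sin r$, not $r^2<\epsilon\sin r$; either inequality holds for small $r$, so nothing is affected.)
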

\subsection{Curved domains of almost isoperimetric equality}
We   apply the approximation of hinges by flat cones, provided by  \lref{almostcone+}, and directly deduce from \lref{lem:Eucl}:

\begin{lem}  \label{almostflatgeod}
For any $\epsilon >0$ there exist some $L,\delta >0$ with the following property.
Let  $M$ be a metric space of non-positive curvature    homeomorphic to a surface with boundary. Let  $H$ be a hinge in $M$
of   angle  $ \geq \pi -\delta$ and  let $\gamma $ be one of its bounding geodesics.
 Then for all sufficiently small $s>0$
 there exists
a Jordan curve $T_s \subset H  \subset M$ with Jordan domain $J_s$ such that the conclusions (1)-(3) of \lref{lem:Eucl} hold true.
\end{lem}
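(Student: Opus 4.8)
The plan is to transport the Euclidean construction of \lref{lem:Eucl} into $M$ through the almost-isometric chart furnished by \lref{almostcone+}, and to check that the nearly sharp isoperimetric estimate survives the small metric distortion of that chart. Given $\epsilon>0$, first apply \lref{lem:Eucl} with the parameter $\epsilon_0:=\epsilon/8$ in place of $\epsilon$; this produces constants $L_0,\delta_0>0$ depending only on $\epsilon$, and we set $\delta:=\delta_0$ and $L:=2L_0$. Let now $M$, $H$, $\gamma$ be as in the statement, with vertex $x$ and angle at least $\pi-\delta$. By \lref{almostcone+} applied to the two bounding geodesics of $H$, there is a flat cone $CT$ over a closed interval, of total angle at least $\pi-\delta=\pi-\delta_0$, a ball $O$ around the vertex $o$ of $CT$, and a bi-Lipschitz map $E\colon O\to E(O)\subset M$ with $E(o)=x$ which sends the two bounding rays of $CT$ isometrically onto initial parts of the two bounding geodesics of $H$; call $\eta$ the bounding ray mapped onto an initial part of $\gamma$, and let $CT_0\subset CT$ be a convex flat sub-sector of angle $\pi-\delta_0$ having $\eta$ as one of its bounding rays. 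Recall also from \lref{almostcone+} that the bi-Lipschitz constant $\lambda(\rho)$ of $E$ restricted to $\overline{B_{CT}(o,\rho)}$ tends to $1$ as $\rho\to 0$.

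Apply \lref{lem:Eucl} inside the planar convex sector $CT_0$ to its bounding ray $\eta$: for every $\sigma>0$ we obtain a Jordan curve $T'_\sigma\subset CT_0$ with Jordan domain $J'_\sigma$, containing the initial part of $\eta$ of length $\sigma$, with $\ell(T'_\sigma)\leq L_0\sigma$ and $\ell(T'_\sigma)-\sqrt{4\pi\,\mathcal{H}^2(J'_\sigma)}<\epsilon_0\sigma$. Since $o\in T'_\sigma$ and the closed Jordan domain of a planar Jordan curve has diameter at most the length of the curve, $T'_\sigma\cup J'_\sigma\subset\overline{B_{CT}(o,\ell(T'_\sigma))}\subset\overline{B_{CT}(o,L_0\sigma)}$, which lies in $O$ once $\sigma$ is small enough. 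For such $\sigma$ put $s:=\sigma$ and define $T_s:=E(T'_s)$ and $J_s:=E(J'_s)$. As $E$ is a homeomorphism onto its image, $T_s$ is a Jordan curve in $M$; it is contained in $\overline{B(x,\lambda(L_0s)\,L_0s)}$, hence inside $H$ for $s$ small, and $J_s$ is its Jordan domain. Conclusion (1) of \lref{lem:Eucl} holds because $E$ carries the length-$s$ initial part of $\eta$, which lies in $T'_s$, isometrically onto the length-$s$ initial part of $\gamma$.

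The remaining verification of (2) and (3) is where the \emph{only} real point lies: a bi-Lipschitz map with constant $\lambda$ distorts lengths by a factor $\lambda$ but areas by a factor $\lambda^2$, so one has to check that the near-equality $\ell-\sqrt{4\pi\,\mathcal{H}^2}<\epsilon_0 s$ is not destroyed. Put $\lambda_s:=\lambda(L_0s)$, so $\lambda_s\to 1$ as $s\to 0$. Then $\ell(T_s)\leq\lambda_s\,\ell(T'_s)\leq\lambda_s L_0 s\leq 2L_0 s=Ls$ as soon as $\lambda_s\leq 2$, which is (2). Since $E^{-1}$ is $\lambda_s$-Lipschitz on $E(\overline{B_{CT}(o,L_0s)})$, we have $\mathcal{H}^2(J_s)\geq\lambda_s^{-2}\mathcal{H}^2(J'_s)$, hence $\sqrt{4\pi\,\mathcal{H}^2(J_s)}\geq\lambda_s^{-1}\sqrt{4\pi\,\mathcal{H}^2(J'_s)}$; together with the classical planar isoperimetric inequality $\sqrt{4\pi\,\mathcal{H}^2(J'_s)}\leq\ell(T'_s)\leq L_0 s$ in the sector $CT_0$, this yields
$$\ell(T_s)-\sqrt{4\pi\,\mathcal{H}^2(J_s)}\ \leq\ \lambda_s\,\ell(T'_s)-\lambda_s^{-1}\sqrt{4\pi\,\mathcal{H}^2(J'_s)}\,,$$
and the right-hand side equals
$$\lambda_s\bigl(\ell(T'_s)-\sqrt{4\pi\,\mathcal{H}^2(J'_s)}\bigr)+(\lambda_s-\lambda_s^{-1})\sqrt{4\pi\,\mathcal{H}^2(J'_s)}\ \leq\ \lambda_s\,\epsilon_0 s+(\lambda_s-\lambda_s^{-1})\,L_0 s\,.$$
For $s$ small enough that $\lambda_s\leq 2$ and $(\lambda_s-\lambda_s^{-1})L_0\leq\epsilon/4$, this last bound is at most $2\epsilon_0 s+\tfrac{\epsilon}{4}s=\tfrac{\epsilon}{2}s<\epsilon s$, which is (3). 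The thresholds imposed on $s$ depend on $M$, $H$ and $E$, whereas $L$ and $\delta$ depend only on $\epsilon$, as required; everything besides the estimate just made is a routine pull-back through the chart $E$.
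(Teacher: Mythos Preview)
Your proof is correct and follows exactly the route the paper indicates: transport the Euclidean curves of \lref{lem:Eucl} through the almost-isometric chart of \lref{almostcone+} and verify that the near-optimal isoperimetric deficit survives the $\lambda_s$-distortion. The paper's own proof is a single sentence stating precisely this, so you have simply supplied the details it omits; in particular, your splitting $\lambda_s\ell(T'_s)-\lambda_s^{-1}\sqrt{4\pi\mathcal H^2(J'_s)}$ into a main term and an error controlled by $(\lambda_s-\lambda_s^{-1})L_0 s$ is the natural way to make the deduction rigorous.
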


 The bound $s_0>0$, such that the conclusion of \lref{almostflatgeod} holds true for all $0<s<s_0$,
depends on $\epsilon$, the space $M$, and the hinge $H$.

\subsection{Differentials} \label{subsecdiffer}
In order to approach general rectifiable curves we will use a Rademacher-type theorem.
Let  $\gamma :[p,q]\to X$ be a rectifiable curve parametrized by arclength in a ${\rm CAT}(0)$ space $X$.
 We say that $\gamma$ is \emph{differentiable}  at the point $t\in (p,q)$ if the in- and outgoing directions of $\gamma$ are "almost defined by almost opposite geodesic directions". More precisely,  we require the following conditions to hold true for the curves  $\gamma ^{\pm} (s):= \gamma (t\pm s)$.  The angle between $\gamma ^{\pm}$ is well-defined (cf. \cite[3.6.26]{BBI01}) and equal to $\pi$, and, moreover, there are geodesics $\eta _n ^{\pm}$ starting at $\gamma (t)$, such that the angles between $\eta ^{\pm} _n$ and $\gamma ^{\pm}$ are well-defined and converge to $0$, as $n$ converges to $\infty$.

 In other words,  the angle between $\eta _n^+$ and $\eta _n^-$ converges to $\pi$ and for any $\delta >0$  and all sufficiently large $n$, there exists some $s_0>0$ such that for all $0<s<s_0$ we have $d(\gamma (t\pm s), \eta ^{\pm} _n  (s)) < \delta s$.
 The metric differentiability theorem  implies (\cite[Theorem 1.6]{Diff}):

\begin{lem}  \label{differ}
Let $X$ be a  ${\rm CAT} (0 )$ space and let $\gamma :[p,q]\to X$ be a rectifiable curve parametrized by arclength.
 Then for almost all $t\in [p,q]$ the curve $\gamma$ is differentiable at $t$.
\end{lem}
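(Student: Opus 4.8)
The plan is to reduce \lref{differ} to the metric differentiability theorem for rectifiable curves, \cite[Theorem 1.6]{Diff}, which supplies the only piece of real substance; everything else is soft ${\rm CAT}(0)$ geometry. Since $\gamma$ is parametrized by arclength it is $1$-Lipschitz, so that theorem applies and yields, at almost every $t\in(p,q)$, that $\gamma$ is metrically differentiable with metric derivative equal to $1$ and that the two one-sided curves $\gamma^{\pm}(s)=\gamma(t\pm s)$ admit well-defined starting directions $\xi^{\pm}\in\Sigma_{\gamma(t)}$; it is precisely in the a.e.\ existence of these directions that the hypothesis ``$X$ is ${\rm CAT}(0)$'' is essential (metric differentiability at a single point would not by itself pin down a direction). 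I would fix such a $t$, write $x=\gamma(t)$, and then verify the two clauses of the definition of differentiability by hand.

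For the first clause, I would note that metric differentiability at $t$ with metric derivative $1$ gives $d\big(x,\gamma^{\pm}(s)\big)=s+o(s)$ and $d\big(\gamma^{+}(s),\gamma^{-}(s)\big)=2s+o(s)$ as $s\to 0$; substituting these side lengths into the Euclidean law of cosines for the comparison triangles shows that the comparison angles at $x$ spanned by $\gamma^{+}(s)$ and $\gamma^{-}(s)$ converge to $\pi$. Since $\gamma^{\pm}$ have the well-defined directions $\xi^{\pm}$, the angle between $\gamma^{+}$ and $\gamma^{-}$ is well-defined in the sense of \cite[3.6.26]{BBI01}, equals the angle $\angle(\xi^{+},\xi^{-})$ measured in $\Sigma_{x}$, and hence equals $\pi$. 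For the second clause, I would use that in a ${\rm CAT}(0)$ space $\Sigma_{x}$ is by definition the metric completion of the set of germs of geodesics issuing from $x$, so geodesic directions are dense in $\Sigma_{x}$; choosing for each $n$ a geodesic $\eta_n^{\pm}$ starting at $x$ whose direction lies within distance $1/n$ of $\xi^{\pm}$, the angle between $\eta_n^{\pm}$ and $\gamma^{\pm}$ is again well-defined (both curves have a direction at $x$) and equals the $\Sigma_{x}$-distance from the direction of $\eta_n^{\pm}$ to $\xi^{\pm}$, hence is at most $1/n$ and tends to $0$. Taken together these are exactly the conditions defining differentiability of $\gamma$ at $t$, and they hold for almost every $t$.

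The step I expect to be the real obstacle, and the only one I would import wholesale, is the a.e.\ existence of the one-sided directions $\xi^{\pm}$, i.e.\ the fact that the geodesics $[x,\gamma^{\pm}(s)]$ converge in direction as $s\to 0$; this is a Rademacher-type differentiation statement and is the content of \cite[Theorem 1.6]{Diff}. That it genuinely uses the ${\rm CAT}(0)$ structure, rather than following from bare metric differentiability, is visible from the failed attempt to use the geodesics $[x,\gamma^{\pm}(\tau)]$ themselves as surrogates for $\eta_n^{\pm}$: while $\gamma^{\pm}|_{(0,\tau]}$ stays within $o(\tau)$ of $[x,\gamma^{\pm}(\tau)]$, this estimate degrades to a useless $O(s)$ bound once $s\ll\tau$, so some honest stabilization of the direction is indispensable. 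The remaining ingredients — monotonicity and convergence of comparison angles to the Alexandrov angle in ${\rm CAT}(0)$ spaces, the identification of the angle between a geodesic and a curve that has a direction with the angle between the two directions, and the density of geodesic germs in the space of directions — are all standard, so beyond quoting \cite[Theorem 1.6]{Diff} I anticipate no difficulty.
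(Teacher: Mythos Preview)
Your proposal is correct and takes essentially the same approach as the paper: both derive the lemma directly from \cite[Theorem 1.6]{Diff}. The paper in fact gives no argument beyond that citation, while you spell out how the two clauses of the definition of differentiability follow from the a.e.\ existence of directions supplied by that theorem together with standard ${\rm CAT}(0)$ facts (density of geodesic germs in $\Sigma_x$, identification of angles with $\Sigma_x$-distances).
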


Now we are able to deduce  that many small parts of any rectifiable curve can be complemented  to Jordan curves
almost violating the Euclidean isoperimetric inequality. In order to avoid minor difficulties we restrict ourselves
to  boundary curves, the only case we will  need.

\begin{prop} \label{equality2}
For any $\epsilon >0$ there exists some $L>0$ with the following property.
Let $M$ be a metric space of non-positive curvature   homeomorphic to a surface with boundary and let
 $\gamma  :[p,q] \to \partial M$ be a part of the boundary of $M$  parametrized by arclength.
 Then for a set $S$ of full measure in $[p,q]$ and any $t\in S$ there exists some $r_0=r_0(t)>0$ such that the  following holds true. For any $s<r_0$  there exists a Jordan curve $T_s$ in $M$ with Jordan domain $J_s$
 such that
 \begin{enumerate}
 \item The intersection of $T_s$ with $\gamma$ is an arc which contains $\gamma |_{[t,t+s]}$.
 \item $\ell (T_s)\leq L\cdot s$.
\item $\ell (T_s) -\sqrt {4\pi   \mathcal H^2(J_s)}  < \epsilon \cdot s$.
\end{enumerate}
\end{prop}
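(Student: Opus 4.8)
The strategy is to combine the Rademacher-type differentiability theorem (\lref{differ}) with the flat-cone approximation result (\lref{almostflatgeod}). Recall that $M$ has non-positive curvature and is homeomorphic to a surface with boundary, and $\gamma\colon[p,q]\to\partial M$ is arclength-parametrized. Since a small ball around an interior point of $\partial M$ is isometric to a hinge in a non-positively curved surface without boundary (by Reshetnyak gluing, exactly as in the boundary case of \lref{almostcone+}), we may locally pass to a ${\rm CAT}(0)$ space and apply \lref{differ} to $\gamma$. Thus there is a full-measure set $S\subset[p,q]$ such that at every $t\in S$ the curve $\gamma$ is differentiable in the sense of Subsection~\ref{subsecdiffer}: the in- and outgoing subcurves $\gamma^{\pm}$ enclose angle $\pi$, and there are geodesics $\eta_n^{\pm}$ emanating from $\gamma(t)$ whose angles with $\gamma^{\pm}$ tend to $0$.

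Fix $\epsilon>0$ and let $L,\delta>0$ be the constants furnished by \lref{almostflatgeod} for the parameter $\epsilon/2$ (say). Fix $t\in S$. Choose $n$ large enough that the angle between $\eta_n^+$ and $\eta_n^-$ exceeds $\pi-\delta$; these two geodesics bound a hinge $H$ in $M$ of angle $\geq\pi-\delta$. Apply \lref{almostflatgeod} to $H$ with bounding geodesic $\eta_n^+$: for all sufficiently small $\sigma>0$ we obtain a Jordan curve $T'_\sigma\subset H$ with Jordan domain $J'_\sigma$ containing the initial segment of $\eta_n^+$ of length $\sigma$, with $\ell(T'_\sigma)\leq L\sigma$ and $\ell(T'_\sigma)-\sqrt{4\pi\,\mathcal H^2(J'_\sigma)}<\tfrac\epsilon2\,\sigma$. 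The point is now to replace the geodesic segment $\eta_n^+|_{[0,\sigma]}$ inside $T'_\sigma$ by the actual boundary arc $\gamma|_{[t,t+s]}$. Since $\gamma^+$ and $\eta_n^+$ have angle close to $0$ at $\gamma(t)$, for $s$ small the point $\gamma(t+s)$ is within $\delta' s$ of $\eta_n^+$; one chooses $\sigma$ comparable to $s$ so that $\gamma(t+s)$ lies on the part of $\eta_n^+$ enclosed by $T'_\sigma$, cuts $T'_\sigma$ at $\gamma(t)$ and at the foot of $\gamma(t+s)$ on $\eta_n^+$, and replaces the resulting geodesic subarc of $\eta_n^+$ by $\gamma|_{[t,t+s]}$ followed by a short geodesic connector. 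This produces $T_s$; the analogous surgery on the $\eta_n^-$-side (again using differentiability of $\gamma^-$) guarantees that $T_s\cap\gamma$ is a genuine arc containing $\gamma|_{[t,t+s]}$, giving (1).

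For (2), $\ell(\gamma|_{[t,t+s]})=s$ since $\gamma$ is arclength-parametrized, the discarded geodesic subarc has length $\leq(1+o(1))s$, the connectors have length $o(s)$, and $\ell(T'_\sigma)\leq L\sigma\leq L's$; so $\ell(T_s)\leq L''\cdot s$ after adjusting the constant, which we rename $L$. For (3), the area of the new Jordan domain $J_s$ differs from $\mathcal H^2(J'_\sigma)$ only by the thin region swept between $\gamma|_{[t,t+s]}$ and $\eta_n^+|_{[0,\sigma]}$, whose area is $o(s)$ (it is pinched between two curves of length $O(s)$ at angular distance $o(1)$), and the length change is likewise $o(s)$; hence $\ell(T_s)-\sqrt{4\pi\,\mathcal H^2(J_s)}<\tfrac\epsilon2 s+o(s)<\epsilon s$ once $r_0=r_0(t)$ is chosen small enough. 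Setting $S$ to be the full-measure differentiability set intersected with the interior $(p,q)$ completes the argument.

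The main obstacle is the surgery step: one must keep quantitative control of how much length and area are gained or lost when the geodesic segment is swapped for the actual boundary arc, and verify that the replacement curve stays embedded and inside $M$ so that it is genuinely a Jordan curve with a well-defined Jordan domain. The differentiability hypothesis is exactly what makes the swept-out region have both length and area of order $o(s)$, so that the almost-equality estimate (3) survives; the bookkeeping is routine but must be done carefully, choosing $n$ first, then $r_0(t)$ depending on $n$, $M$, and the hinge, in accordance with the dependence of the threshold in \lref{almostflatgeod}.
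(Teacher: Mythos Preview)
Your approach is essentially the paper's: take $S$ to be the differentiability set from \lref{differ}, pick approximating geodesics at angle close to $\pi$, apply \lref{almostflatgeod} to the resulting hinge, then swap the geodesic leg for the boundary arc via a short connector. Two points are worth tightening.

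First, your area bookkeeping is both weaker and harder than necessary. You claim the swept strip has area $o(s)$, but $o(s)$ is not enough: since $\mathcal H^2(J'_\sigma)$ is of order $s^2$, an area defect of order $s^{3/2}$, say, would change $\sqrt{4\pi\,\mathcal H^2}$ by order $s^{1/2}$ and destroy estimate (3). What is actually true (and what your parenthetical reasoning gives) is that the strip has area $O(\delta s^2)=o(s^2)$, which suffices. The paper sidesteps this entirely by observing that $J_s\supset J'_s$: since $\gamma\subset\partial M$ and $J'_s$ lies in the interior of $M$, replacing the interior geodesic $\gamma^+|_{[0,2s]}$ by the detour $c_s\cup\gamma|_{[t,t+\hat s]}$ can only enlarge the Jordan domain. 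Then one only has to control the length increase, which is $\leq 10\delta s$.

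Second, the paper avoids your two-sided surgery. It takes the $\gamma^+$-segment of length $2s$ and connects its far endpoint $\gamma^+(2s)$ by a shortest geodesic $c_s$ to the nearest boundary point $\gamma(t+\hat s)$; one checks $\hat s>s$ once $\delta<\tfrac12$, so $\gamma|_{[t,t+s]}$ is already contained in the replacement arc. No modification on the $\gamma^-$ side is needed, and the embeddedness of $T_s$ follows because $c_s$, being a geodesic from a point on $\gamma^+$, cannot re-enter $T'_s$ except along $\gamma^+$ itself.
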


\begin{proof} Choose some $L=L(\frac \epsilon 4)$ provided by \lref{almostflatgeod}.
Let $S \subset (p,q)$ be the set of points in which $\gamma$ is differentiable.  For any $t\in S$, and any $\delta >0$ we
find geodesics $\gamma ^{\pm}$ starting in $\gamma  (t)$ at an angle not less than $ \pi -\delta$, such that
$ d(\gamma (t \pm s), \gamma ^{\pm }  (s))  < \delta \cdot s $  for all sufficiently small $s$.

Let $H$ denote a small hinge in $M$ enclosed by the geodesics $\gamma ^{\pm}$.
If $\delta$  and $s$ are small enough, we apply \lref{almostflatgeod} and  find a Jordan curve $T'_ s \subset H\subset M$ with Jordan domain $J'_s$ and the following properties.
The curve $T'_s$ has length at most $2Ls$ and contains the initial part of $\gamma^+$
of length $2s$. Moreover,    $$\ell (T'_s) -\sqrt {4\pi   \mathcal H^2(J'_s)}  < \frac \epsilon 2 \cdot s \; .$$
Now we connect the point $\gamma ^+ (2s)$ with   a nearest point  $\gamma (t+\hat s)$ on  $\gamma$  by a  geodesic $c_s$.
By the choice  of  $\gamma ^+$,  the length of $c_s$ is at most
$2\delta s$.  Moreover, $|\hat s-2s|=d(\gamma ^+ (\hat s),\gamma ^+ (2s)) \leq 2\delta s +\delta \hat s$ by the triangle inequality. Therefore, for $\delta <\frac 1 2$, we deduce
$\hat s -2s < 4\delta \hat s \leq 8\delta s$.

 Since $\gamma^+$ is a geodesic,  $c_s$ does not intersect $T' _s$ outside of $\gamma ^+$. Throwing away the common part of
$c_s$ and $\gamma ^+$ we may assume that $c_s$ intersects $T'_s$ only at the initial
point of $c_s$.

 Let now the  curve $T_s$ arise from $T'_s$ by replacing the initial $\gamma^+$-part of $T'_s$ by $c_s$ and the corresponding arc
 of $\gamma$ between $\gamma (t)$ and $\gamma(t+\hat{s})$.  By construction, the Jordan domain $J_s$ of $T_s$ contains the Jordan domain $J'_s$ of $T'_s$, hence $ \mathcal H^2(J'_s) \leq \mathcal H^2(J_s)$.  Moreover, the length of $\ell (T_s)$ is at most  $\ell (T' _s) + 2\delta s  +8\delta s$.

 Once $\delta$ and $s$ have been chosen small enough, we see that the curve $T_s$ satisfies all requirements of the lemma.
\end{proof}

\subsection{Length preservation}
Now we can easily provide:
\begin{proof} [Proof of \pref{prop:rect}]
Let  $\gamma: [p,q] \to \partial Y$  be  a rectifiable subcurve contained in $\eta$.  We may assume $\gamma$ to be parametrized by arclength.
 Since the map $I$ is $1$-Lipschitz we have
$\ell _Z (I\circ \gamma ) \leq \ell _Y(\gamma)$. If the inequality is strict  then we find
some $\epsilon >0$    and a set $Q$ of positive measure in $[p,q]$ such that the following holds true.  For any $r\in Q$, there is some $\delta =\delta (r)>0$ such that, for all $h<\delta$, one has  $$\ell _Z( I\circ \gamma |  _{[r,r+h]})  \leq h\cdot (1- 2\epsilon ).$$
  Applying  \pref{equality2} we find some $r\in Q$ and,
   for all sufficiently small $h>0$,  we find a Jordan curve $T_h$ as in \pref{equality2} containing $\gamma _{[r,r+h]}$. Let $J_h$ denote the Jordan domain of $T_h$.
  Then $I(J_h)$ is the Jordan domain of the Jordan curve $I(T_h)$.  Since on $Y_0$ the map $I$ is a local isometry, we have
  $\mathcal H^2 (J_h)=\mathcal H^2(I(J_h))$.     By assumption and the $1$-Lipschitz property of $I$, we see that
  $\ell _Z(I\circ T_h) \leq \ell _Y (T_h) -2\epsilon \cdot h$.

  We deduce  $\ell _Z(I \circ T_h) -\sqrt {4\pi   \mathcal H^2(I(J_h))}  < - \epsilon \cdot h$, which contradicts the isoperimetric assumption of \tref{thm4}, (3). This finishes the proof.
\end{proof}

\section{Final steps} \label{sec:final}
\subsection{Formulation}
We continue  to use the notations from \pref{prop:simplification}.
The rest of the section is devoted to the proof of
\begin{prop} \label{prop:finale}
The map $I:Y\to Z$ is an isometry.
\end{prop}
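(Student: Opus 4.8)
The plan is to reduce \pref{prop:finale} to the single assertion that the arc $\eta$ is rectifiable, and then to rule out non-rectifiability by producing a Jordan curve in $Z$ that violates condition (3) of \tref{thm4}. First I would note that, by \pref{prop:simplification}, the map $I$ is an isometry precisely when it is arclength preserving on $\eta$. Since any subcurve of a rectifiable curve is again rectifiable, \pref{prop:rect} shows that being arclength preserving on $\eta$ is in turn equivalent to $\eta$ being rectifiable: if $\ell_Y(\eta)<\infty$ then \pref{prop:rect} gives $\ell_Z(I\circ\sigma)=\ell_Y(\sigma)$ for every subarc $\sigma\subset\eta$, and conversely, if $I$ is arclength preserving on $\eta$ then $\ell_Y(\eta)=\ell_Z(I\circ\eta)=\ell_Z(c)<\infty$. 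Thus it suffices to show that $\eta$ is rectifiable, and I would argue by contradiction, assuming $\ell_Y(\eta)=\infty$.

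The next step is to localise the non-rectifiability. The set of points of $\eta$ admitting a rectifiable neighbourhood is open in $\eta$, so by compactness there is a point $p\in\eta$ such that every subarc of $\eta$ containing $p$ in its interior has infinite length; by continuity of the parametrisation there are therefore subarcs of $\eta$ around $p$ of arbitrarily small diameter but infinite $Y$-length. Since $I$ maps $\eta$ monotonically onto the geodesic $c$, the $Z$-length of such a subarc equals the $Z$-distance of its endpoints, so these subarcs are contracted by $I$ by an arbitrarily large factor. Following the scheme of \pref{prop:rect} and Section~\ref{sec:rect}, I would then try to complement suitable subarcs $\eta_0\subset\eta$ near $p$ to Jordan curves $T\subset Y$ whose complementary ``free'' part $k=T\setminus\eta_0$ lies in the interior $Y_0$ -- where $I$ is a local isometry, hence preserves lengths of curves and $\mathcal H^2$-measure of domains -- and which enclose a Jordan domain $J_T$ with $\mathcal H^2(J_T)$ bounded below by a fixed positive constant, while $\ell_Z(I\circ T)=\ell_Y(k)+\ell_Z(I\circ\eta_0)$ stays arbitrarily small by virtue of the extreme contraction of $\eta_0$ (note that $\ell_Z(I\circ\eta_0)\le\ell_Y(k)$ once $k$ joins the endpoints of $\eta_0$, because $c$ is a geodesic and $I$ is $1$-Lipschitz). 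Since $\mathcal H^2(I(J_T))=\mathcal H^2(J_T)$, the image $I(T)$ together with its Jordan domain would then violate condition (3) of \tref{thm4}, the desired contradiction.

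The hardest part will be the lower bound on $\mathcal H^2(J_T)$. The natural tool is \lref{lem:areabound}: if $J_T$ contains a metric ball $B_{Y_0}(y,\rho)$ of $Y_0$ whose $I$-image meets $\partial Z$ only inside $c$, then $\mathcal H^2(J_T)\ge\frac{\pi}{4}\rho^2$, and, as in the proof of \lref{lem:areabound}, the free part $k$ may be chosen along a metric sphere of $Y_0$ around $y$, so that $\ell_Y(k)\le\mathcal H^1(\partial B_{Y_0}(y,t))$. The difficulty is that this naive bookkeeping only reproduces the equality case of the Euclidean isoperimetric inequality -- the ``flat half-disc'' model -- so one must extract a genuinely strict inequality, and this is exactly where the unbounded contraction of $\eta$ near $p$ has to be spent: it is the ``flexibility in the choice of critical Jordan curves'' referred to in Section~\ref{Intro2}, and it lets us afford the loss of a definite constant in the comparison between enclosed area and boundary length. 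Making this precise is a delicate two-dimensional argument, combining the co-area inequality, the behaviour of metric spheres in the non-positively curved surface $Y_0$, the precise way in which $\eta$ cuts off balls near $c$, and the control over rectifiable subarcs already provided by \pref{prop:rect}; an alternative, more conceptual route is a rescaling/compactness argument showing that the failure of such an estimate would force the non-rectifiability of $\eta$ to survive in a tangent limit, in contradiction with \pref{prop:rect} and the quadratic isoperimetric bound on $Z$. This occupies the remainder of Section~\ref{sec:final}.
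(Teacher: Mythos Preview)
Your reduction to the rectifiability of $\eta$ is correct and matches the paper's first step. The localisation to a point of non-rectifiability is also in the right spirit, though the paper does this more quantitatively: rather than just finding a point $p$ with no rectifiable neighbourhood, it finds (\lref{curverect}), for any prescribed $\lambda>0$, a point $y=\eta(t)$ and a radius $\epsilon$ such that $d_Y(\eta(t),\eta(s))\geq \lambda\cdot d_Z(c(t),c(s))$ for all $|s-t|<2\epsilon$. This uniform $\lambda$-contraction is what gets ``spent'' later, and the freedom to take $\lambda$ arbitrarily large is essential.

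Where your proposal has a genuine gap is in the last paragraph. You correctly diagnose the problem --- the bound $\mathcal H^2(B_{Y_0}(y,r))\geq \frac{\pi}{4}r^2$ from \lref{lem:areabound} is exactly the half-disc bound and gives no contradiction --- but you do not say how to beat it, and neither of your two suggested routes is close to what works. The paper's first key insight is that the $\lambda$-contraction at $y$ \emph{improves the area bound itself}: rerunning the co-area argument of \lref{lem:areabound}, the $\eta$-part of the Jordan curve $T_t$ now contributes at most $\frac{2}{\lambda}t$ to $\ell_Z(I\circ T_t)$ instead of being comparable to $w_1(t)$, and one obtains $\mathcal H^2(B_{Y_0}(y,r))\geq (\pi-\alpha_0)r^2$ for $\lambda$ large (\lref{Euclideanarea}). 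This is the crucial upgrade from half-disc to almost-full-disc.

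Even with this improved area bound, one cannot finish by the simple scheme you outline (take a metric sphere as $k$), because one has no a priori control on the length of metric spheres in $Y$. The paper instead runs a dichotomy on the angle between two geodesics $\gamma,\gamma_1$ from $y$ to points $\eta(\pm\epsilon)$. If the angle is $\geq\pi$, one repeats the construction of \lref{almostflatgeod} to get a near-optimal isoperimetric domain in the hinge and derives a contradiction exactly as in \pref{prop:rect} (\lref{small}). If the angle is $<\pi$, one builds a short curve $k$ around $B_Y(y,r)$ by concatenating a circular arc in the hinge (of length $\lesssim\pi r$) with short ``drops'' from $\gamma,\gamma_1$ back to $\eta$; the existence of such short drops is a separate lemma (\lref{close}) proved by another co-area/isoperimetric argument. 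The resulting $k$ has length $\leq(\pi+3)r$, which contradicts the almost-full-disc area bound via \cref{cor:longlength}. None of this structure --- the improved area bound, the angle dichotomy, the ``$\eta$ is close to a geodesic'' lemma --- appears in your outline, and without it the argument does not close.
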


Assume the contrary. Due to  \pref{prop:simplification} and \pref{prop:rect}, the  arc $\eta$ is not rectifiable. We fix a parametrization of $\eta$  as a simple curve $\eta:[p,q]\to Y$.

  For the convenience of the reader, we first outline  the main steps of the proof. In \lref{curverect} we will deduce from the non-rectifiability of $\eta$ the existence of points on $\eta$ at which the "differential" of $I$ is arbitrary small. We will then fix such a point  $y$ and search for a contradiction to the isoperimetric inequality in a small neighborhood of this point. In \lref{Euclideanarea}, we show that the area of small balls around $y$ is almost Euclidean and obtain  in \cref{cor:longlength} a bound on the length of any curve surrounding such a ball.  In \lref{close} we connect $y$ by a geodesic $\gamma$ with a nearby point on $\eta$ and prove that
$\gamma$ and $\eta$ are sufficiently close to each other, more precisely, they include (in a rather weak sense)  an angle whose tangent is at most $\frac 1 2$.  In the final subsection, we take two geodesics connecting $y$ with nearby points on $\eta$, lying on different sides of $y$.  If the angle enclosed between these geodesics is at least $\pi$ then we obtain a contradiction in the same way  as at the end of  the proof of \pref{prop:rect}  above. If the angle is smaller than $\pi$ then  we
 consider two points  on these  geodesics  with small distance $r$ from $y$. We connect these points by a "circular arc" inside the hinge (using \lref{almostcone+})  and we further connect these points  to some points on $\eta$ using  \lref{close}. The arising curve is relatively short but nevertheless  surrounds a sufficiently large ball around $y$, leading  to a contradiction with \cref{cor:longlength}.

\subsection{Bounding diameter by endpoints on $\eta$}
We claim:
\begin{lem} \label{nodive}
For any $p\leq t<r\leq q$ the diameter of $\eta |_{[t,r]}$ is at most
$ 10 \cdot d(\eta (t), \eta (r))$.
\end{lem}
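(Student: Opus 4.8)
The plan is to trap a subarc of $\eta$ together with a geodesic chord in $Y$ and to play a lower area bound against an upper one. Fix $p\le t<r\le q$, write $a=\eta(t)$, $b=\eta(r)$ and $\xi:=d_Y(a,b)>0$, and let $\gamma_0\colon[0,\xi]\to Y$ be a geodesic from $a$ to $b$ in the ${\rm CAT}(0)$ space $Y$. After a routine approximation (pushing the endpoints slightly into $Y_0$ and perturbing, or passing to a Jordan subcurve as in the proof of \lref{lem:areabound}) I may assume that $\gamma_0$ meets $\partial Y$ only at $a$ and $b$, so that $C:=\gamma_0\cup\eta|_{[t,r]}$ is a Jordan curve in $Y$. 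Let $\bar J=J\cup C$ be the closure of the complementary region of $C$ that is disjoint from $\partial Y\setminus\eta|_{[t,r]}$; then $\bar J\cap\partial Y=\eta|_{[t,r]}\subset\eta$ and $J\subset Y_0$.

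The crucial step is an \emph{upper} bound $\mathcal H^2(J)\le\xi^2/\pi$, and this is where the non-rectifiability of $\eta$ works in our favour: although $\eta|_{[t,r]}$ has infinite length in $Y$, its image $I(\eta|_{[t,r]})$ is a subarc of the geodesic $c\subset\partial Z$ between $I(a)$ and $I(b)$, and hence has length $d_Z(I(a),I(b))\le d_Y(a,b)=\xi$; moreover $\ell_Z(I(\gamma_0))\le\ell_Y(\gamma_0)=\xi$ since $I$ is $1$-Lipschitz. Since $I$ is injective and a local isometry off $\eta$, the set $I(J)$ is open in $Z$ with $\mathcal H^2(I(J))=\mathcal H^2(J)$, and the usual comparison of the image of the interior with that of the boundary shows that $I(J)$ is the Jordan domain of the Jordan curve $\sigma:=I(\gamma_0)\cup I(\eta|_{[t,r]})$, which has length at most $2\xi$. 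Hence \tref{thm4}\,(3), applied in $Z$, gives
\[
\mathcal H^2(J)=\mathcal H^2(I(J))\le\frac{1}{4\pi}(2\xi)^2=\frac{\xi^2}{\pi}.
\]

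For the matching \emph{lower} bound, fix $x=\eta(s)$ with $t\le s\le r$ and put $h:=d_Y(x,\gamma_0)$; if $h=0$ then $x\in\gamma_0$ and $d_Y(x,a)\le\xi$, so assume $h>0$. The ball $B_Y(x,h)$ is disjoint from $\gamma_0$, hence contained in $\bar J$ (one cannot leave $\bar J$ without meeting $\gamma_0$ or crossing $\partial Y$), so $B_Y(x,h)\cap\partial Y\subset\eta$. Choosing $x''\in Y_0$ with $d_Y(x,x'')<h/4$, the closure of $I(B_{Y_0}(x'',h/4))$ meets $\partial Z$ only inside $c$, so \lref{lem:areabound} applies and yields
\[
\mathcal H^2(J)\ge\mathcal H^2\bigl(B_{Y_0}(x'',h/4)\bigr)\ge\frac{\pi}{4}\Bigl(\frac{h}{4}\Bigr)^2=\frac{\pi h^2}{64}.
\]
Comparing the two estimates gives $d_Y(\eta(s),\gamma_0)=h\le\frac{8}{\pi}\,\xi$ for every $s\in[t,r]$, and therefore, choosing for $s_1,s_2\in[t,r]$ nearest points $p_i\in\gamma_0$ of $\eta(s_i)$,
\[
d_Y(\eta(s_1),\eta(s_2))\le d_Y(\eta(s_1),p_1)+\ell_Y(\gamma_0)+d_Y(p_2,\eta(s_2))\le\Bigl(\tfrac{16}{\pi}+1\Bigr)\xi<10\,\xi,
\]
which is the claim.

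I expect the main obstacle to lie in the upper bound, more precisely in the topological bookkeeping that makes $\sigma=I(\gamma_0)\cup I(\eta|_{[t,r]})$ into a genuine Jordan curve with Jordan domain exactly $I(J)$ — one has to handle the possibilities that $I(\gamma_0)$ backtracks or meets $c$ in interior points — together with the companion verification on the lower-bound side that $B_Y(x,h)$ cannot escape $\bar J$ and that its image touches $\partial Z$ only along $c$, so that \lref{lem:areabound} is genuinely applicable. The area estimates themselves are then immediate.
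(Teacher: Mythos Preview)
Your proof is correct and follows essentially the same approach as the paper's: close off $\eta|_{[t,r]}$ by a short chord to form a Jordan curve, bound the enclosed area from above via the isoperimetric inequality in $Z$ (using that $I$ sends the $\eta$-part onto a subarc of the geodesic $c$ of length $\le\xi$) and from below via \lref{lem:areabound}. The paper argues by contradiction and uses a simple curve $k$ of length $<2\xi$ with interior in $Y_0$ rather than the geodesic itself---this sidesteps the Jordan-curve bookkeeping you correctly flag as the delicate point---while your direct estimate yields the slightly sharper constant $1+16/\pi$, but the argument is otherwise the same.
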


\begin{proof}
Assume the contrary and set $l= d(\eta (t), \eta (r))$.  Since $Y_0=Y\setminus \partial Y$ is a length space,
 we find a simple curve $k$  in $Y$ of length  less than $2l$
 connecting $\eta (t)$ and $\eta (r)$ and building a Jordan curve $T$ together with $\eta|_{[t,r]}$.  The  Jordan domain   $ J$ of $T$  has area at most $\frac 4 {\pi} l^2$, since the length of the image of $T$ in $Z$  is at most $2\cdot \ell _Y (k) = 4l$.

 Since $\bar J$ (which contains  $\eta |_{[t,r]}$) has diameter larger than  $10l$,
 we find a point $x \in \bar J$ with distance at least $4l$ from the curve $k$.  Thus, the ball $B_Y( x, 4l)$ is contained
 in $\bar J$.
 From
 \lref{lem:areabound} we see that $\mathcal H^2 (J)\geq \frac \pi 4 (2l)^2 = \pi l ^2 $.  This  contradicts
  $\mathcal H^2 (J) \leq  \frac 4 {\pi} l^2$ and finishes the proof.
\end{proof}

\subsection{A consequence of non-rectifiability}
Recall that $I\circ \eta$ is a weakly monotone parametrization of the  geodesic $c\subset Z$.  From now on, we will consider the curve $c$ with this weakly monotone parametrization $c=I\circ \eta:[p,q]\to Z$, despite the fact that in the rest of the paper all geodesics are parametrized by arclength.  We are going to find points at which the "differential" of $I$ is arbitrary small, by using \lref{nodive} and the fact that $\eta$ is non-rectifiable.

\begin{lem} \label{curverect}
For any $\lambda >0$ there exists some $t \in [p,q]$ and $\epsilon >0$ such that
for all $s\in [p,q]$ with $|s-t| <2\epsilon$ we have $$ d(\eta (t), \eta  (s))  \geq  \lambda \cdot d(c(t),c (s))   .$$
\end{lem}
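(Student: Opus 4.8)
The plan is to prove the contrapositive. Fix $\lambda>0$ and assume that no pair $(t,\epsilon)$ as in the statement exists; equivalently,
\begin{equation*}
(\star)\qquad \forall\, t\in[p,q]\ \ \forall\,\epsilon>0\ \ \exists\, s\ne t:\quad |s-t|<2\epsilon\ \text{ and }\ d(\eta(t),\eta(s))<\lambda\, d(c(t),c(s)).
\end{equation*}
I will deduce from $(\star)$ that $\eta$ is rectifiable, contradicting the non-rectifiability of $\eta$, which is in force under the standing assumptions of this section (it follows from \pref{prop:simplification} and \pref{prop:rect} together with the assumption that $I$ is not an isometry).

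First I would record the relevant measure. Since $c=I\circ\eta$ is a weakly monotone parametrization of a geodesic segment in $Z$, lengths add along $c$, so $\mu([t,t']):=d(c(t),c(t'))$ extends to a finite Borel measure $\mu$ on $[p,q]$ of total mass $\ell_Z(c)$, and it is non-atomic because $c$ is continuous. Moreover $\mu$ assigns positive mass to every non-degenerate subinterval: if $\mu([a,b])=0$, then $c$ is constant on $[a,b]$, and applying $(\star)$ at an interior point of $[a,b]$ with $\epsilon$ small produces two distinct parameters with the same $\eta$-image, contradicting that $\eta$ is a simple curve. Feeding the witnesses supplied by $(\star)$ into \lref{nodive} then upgrades the pointwise information to an interval statement: for every $t\in[p,q]$ and every $\epsilon>0$ there is a closed subinterval $J\subseteq[p,q]$ having $t$ as an endpoint, with $\mathcal H^1(J)<2\epsilon$ and $\diam_\eta(J)\le 10\lambda\,\mu(J)$; call such $J$ \emph{good}. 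Thus every point of $[p,q]$ has arbitrarily short (one-sided) good intervals abutting it.

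The main step is to turn this into the \emph{uniform} estimate
\begin{equation*}
\diam_\eta([a,b])\ \le\ C\lambda\,\mu([a,b])\qquad\text{for every subinterval } [a,b]\subseteq[p,q],
\end{equation*}
for an absolute constant $C$, which I would establish by a Gronwall/``first point'' continuity argument: both $g(t):=\diam_\eta([a,t])$ and $t\mapsto\mu([a,t])$ are continuous and non-decreasing (the latter strictly increasing, by the previous paragraph); one considers the last parameter $T$ up to which the estimate holds, applies $(\star)$ at $T$ to obtain a good interval $J$ abutting $T$, and uses the estimate on $[a,T]$ together with subadditivity of $\diam_\eta$ to cross past $T$ (if $J$ lies to the right of $T$) or to strengthen the estimate at $T$ to a strict one, hence to a neighbourhood of $T$ (if $J$ lies to the left of $T$, where strictness comes from $\mu([T-\delta,T])>0$), contradicting maximality of $T$ in both cases; the endpoint situation $T=a$ would be disposed of using \lref{nodive} and a good interval abutting $a$ on the right, available when $a=p$ and otherwise after restricting $a,b$ to a dense set and appealing to the continuity of $\eta$ and $c$. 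Granting the uniform estimate, for any partition $p=a_0<\dots<a_n=q$ one gets
\begin{equation*}
\sum_{i=1}^n d(\eta(a_{i-1}),\eta(a_i))\ \le\ \sum_{i=1}^n \diam_\eta([a_{i-1},a_i])\ \le\ C\lambda\sum_{i=1}^n\mu([a_{i-1},a_i])\ =\ C\lambda\,\ell_Z(c)<\infty,
\end{equation*}
so $\ell(\eta)\le C\lambda\,\ell_Z(c)<\infty$, the desired contradiction, and the lemma follows.

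The step I expect to be the real obstacle is precisely this propagation of the local estimate to all subintervals. Its delicacy is twofold: $(\star)$ furnishes a witness on only \emph{one} of the two sides of a given point, with no control over which, and a good interval bounds only its \emph{total} $\eta$-diameter, not the diameters of its initial subintervals; reconciling both features with the first-point scheme — making the ``wrong side'' case and the behaviour at the ends of the interval go through, and ruling out concentration of $\eta$-length on the exceptional (Lebesgue-null) set that a covering argument would leave behind — is where the argument has to be run carefully. Essentially this is a one-dimensional covering/Vitali phenomenon dressed up as a continuity argument; the remaining ingredients are soft.
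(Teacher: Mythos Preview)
Your overall strategy --- argue by contradiction and show rectifiability of $\eta$ --- is exactly right, and your setup (the measure $\mu$, the positive-mass observation via $(\star)$, the good intervals through \lref{nodive}) is sound. But the first-point argument you sketch for the uniform diameter estimate has a real gap precisely where you fear it: in the case where the good interval $J=[T,s]$ lies to the right of $T$. You obtain $g(s)\le g(T)+\diam_\eta(J)\le C\lambda h(T)+10\lambda\,\mu(J)\le C\lambda h(s)$ at the far endpoint $s$, but to contradict the maximality of $T$ you need the inequality on an entire right-neighbourhood of $T$. For $t\in(T,s)$ you only have $g(t)\le C\lambda h(T)+\diam_\eta([T,s])$, and if the $\eta$-diameter of $[T,s]$ is concentrated near $T$ while its $\mu$-mass is concentrated near $s$, the required bound $g(t)\le C\lambda h(t)$ can fail. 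Your left-side case does work (it gives a strict inequality at $T$, hence a neighbourhood), but you cannot force the witness to lie on the left, so the scheme stalls. You correctly flag this as the obstacle; the proposal does not resolve it.

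The paper bypasses the uniform estimate entirely with a direct covering argument. The key observation you are missing is that the one-sidedness of $(\star)$ can be cured by continuity: if $(\star)$ at $t$ produces a witness $t^+$, then since $d(\eta(t^+),\eta(\cdot))$ and $d(c(t^+),c(\cdot))$ are continuous and the strict inequality holds at $t$, one can pick $t^-$ on the \emph{other} side of $t$, arbitrarily close to $t$, with $d(\eta(t^+),\eta(t^-))<\lambda\,d(c(t^+),c(t^-))$. This yields a good interval $[t^-,t^+]$ containing $t$ in its interior (at the endpoints $p,q$ one simply sets $t^-=t$). Now cover $[p,q]$ by such intervals, extract a finite subcover of multiplicity at most $2$, and apply \lref{nodive} on each piece of the $2\epsilon$-fine partition formed by all their endpoints. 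Summing gives variation at most $40\lambda\,\ell(c)$ along that partition; since $\epsilon>0$ is arbitrary and variations along partitions with mesh tending to zero recover the length, $\eta$ is rectifiable. This is both simpler than your route and sidesteps the propagation difficulty altogether.
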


\begin{proof}
We assume the contrary and  take
some $\lambda >0$ for which the claim is wrong.  We are going to prove that $\eta $ is rectifiable, in contradiction to our assumptions.

Consider an arbitrary  $\epsilon  >0$.
  For any $t\in [p,q]$, we find some $t^{+} \neq t$ with $|t^+ -t| < 2\epsilon $ and  $d(\eta (t), \eta (t^+)) < \lambda \cdot d(c(t), c(t^+))$. If $t$ is  one of the endpoints $p$ or $q$, we set $t^-=t$. If not then, by continuity of $\eta$ and $c$, we find  $t^-$ arbitrarily close to $t$ on the other side of $t$ from $t^+$ such that $d(\eta (t^+), \eta (t^-)) < \lambda   \cdot d(c(t^+),c(t^-))$.

Denote by $I_{t}$ the  closed interval between $t^-$ and $t^+$, which by our choice has length smaller than $2\epsilon$.  Changing  the order if needed we may assume $t ^- < t^+$ for any $t$.   We find a finite covering of $[p,q]$ by some of these   intervals  $I_{t_1},...,I_{t_k}$, such that the intersection number of the covering is at most $2$.
We reorder the intervals and have $t_i^- \leq  t _{i+1} ^-  \leq  t_i ^+$, for all $i$.    Thus  all  endpoints of all the intervals $I_{t_i}$ define  a $2\epsilon$-fine subdivision   $p=s_1<s_1\leq \dots \leq s_{2k }=q$  of $[p,q]$.
Each of the intervals $[s_i,s_{i+1}$] is contained in exactly one or two of the intervals  $[t_j^-,t_j^+]$.
Due to  \lref{nodive}, we have $d(\eta (s_{i}), \eta (s_{i+1})) \leq 10\cdot d(\eta  (t_j^-),\eta (t_j^+)) $ in this case.
Summing up we deduce

$$\sum  _{i=1} ^{2k}  \, d(\eta (s_i), \eta (s_{i+1})) \leq  20 \cdot \left( \sum_{j=1} ^k\, d(\eta (t_j^+),\eta (t_j^-)) \right)  $$
$$<  20 \lambda  \cdot \left( \sum_{j=1} ^k d(c (t_j ^-) ,c(t_j ^+ )) \right)\leq  40 \lambda  \cdot \ell (c) \, .$$
 Since $\epsilon$ was arbitrary, we see that $40 \lambda \ell (c)$ provides an upper bound for the length of $\eta$, in contradiction to the non-rectifiability of $\eta$.
\end{proof}

\subsection{Setting}
We choose some large $\lambda >0$, to be determined later.
  We find $t\in [p,q] $ and some $\epsilon >0$ provided  by  \lref{curverect}.
  Since $Z$ is non-positively curved in neighborhoods of the boundary points  $c(a_0) =I(\eta (p))$ and $c(b_0)=I(\eta (q))$ (cf. \lref{lem:simp}), the map $I$ is a local
isometry in neighborhoods of $\eta(p)$ and $\eta (q)$. Hence, $t\in (p,q)$.
  In order to simplify notations we may and will assume $t=0$ and  $[-\epsilon, \epsilon] \subset (p,q)$. Set $y=\eta (0)$ and note that  $I(y)$ is not an endpoint of the geodesic $c$ in $Z$.
   We choose some $r_0>0$ such that $B (y,2r_0) \cap \partial Y \subset \eta |_{[-\epsilon, \epsilon]}$.

We can now  show that balls around $y$ have almost Euclidean area.  We emphasize, that the point $y$ and the radius $r_0$ depend on the choice of the constant $\lambda$.

\begin{lem}  \label{Euclideanarea}
For any $\alpha_0>0$ the following holds true.
If $\lambda$ has been chosen large enough then for  any $r<r_0$ the area
of $O(y,r)=B (y,r) \cap Y_0$  can be estimated by
$$ \mathcal H ^2 (O(y,r) )\geq (\pi -\alpha _0) r^2 \;.$$
\end{lem}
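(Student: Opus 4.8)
The plan is to exploit the key property of the point $y = \eta(0)$ coming from \lref{curverect}: near $y$, the map $I$ does not shrink distances between boundary points by more than a factor $\lambda$. Combined with \lref{lem:areabound} (applied to $Y_0$ and $Z$), this will force the boundary curve $\eta$ to be "negligibly short" compared to interior distances near $y$, so that the ball $O(y,r)$ looks essentially like a half-disc-free interior ball. First I would fix $\alpha_0 > 0$ and choose $\lambda$ large (to be determined at the end in terms of $\alpha_0$). Then, given $r < r_0$, I would consider the boundary of $O(y,r)$ in $Y_0$, i.e. the set $L_r = \partial_{Y_0} B(y,r)$, and its image under $I$. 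As in the proof of \lref{lem:areabound}, $L_r$ contains a closed subset $L_r'$ that separates $y$ from a "remote" point $x \in Y_0$ whose $I$-image has closure meeting $\partial Z \setminus c$, and $L_r'$ is either a Jordan curve in $Y_0$ or an open arc with one or two endpoints on $\eta|_{[-\epsilon,\epsilon]}$ (using $B(y,2r_0)\cap\partial Y \subset \eta|_{[-\epsilon,\epsilon]}$ and that $c$ is a geodesic).

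The heart of the argument is the case where $L_r'$ is an open arc; the Jordan-curve case is easier and gives $\mathcal H^2(O(y,r)) \ge \pi r^2$ directly via \tref{thm4}(3) applied in $Z$ together with the local isometry property of $I$ on $Y_0$. In the arc case, let $z_1 = \eta(s_1)$, $z_2 = \eta(s_2)$ be its endpoints on $\eta$. By the co-area inequality (\cite[Lemma 2.3]{LW-intrinsic}) and an integration in $r$, it suffices to establish, for almost every $r < r_0$, the refined estimate
\begin{equation} \label{eq:refined-coarea}
\mathcal H^2(O(y,r)) \ \le\ \frac{1}{\pi - \alpha_0}\,\bigl(\mathcal H^1(L_r')\bigr)^2 ,
\end{equation}
and then the bound $\mathcal H^2(O(y,r)) \ge (\pi - \alpha_0) r^2$ follows exactly as in \lref{lem:areabound}. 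To prove \eqref{eq:refined-coarea}: the Jordan curve $T_r \subset Z$ formed by $I(L_r')$ together with the sub-arc of $c$ between $c(s_1)$ and $c(s_2)$ encloses a domain containing $I(O(y,r))$, so $\mathcal H^2(O(y,r)) = \mathcal H^2(I(O(y,r))) \le \frac1{4\pi}\ell_Z^2(T_r)$ by \tref{thm4}(3). Now $\ell_Z(T_r) = \ell_Z(I\circ L_r') + \ell_Z(c|_{[s_1,s_2]})$. Since $I$ is $1$-Lipschitz, $\ell_Z(I\circ L_r') \le \mathcal H^1(L_r')$. For the boundary part, by \lref{curverect} the endpoints satisfy $d_Y(\eta(s_1),\eta(s_2)) \ge \lambda\, d_Z(c(s_1),c(s_2))$ (here $s_1,s_2 \in [-\epsilon,\epsilon]$, using $r_0$ small); since $c$ is a geodesic, $\ell_Z(c|_{[s_1,s_2]}) = d_Z(c(s_1),c(s_2)) \le \frac1\lambda d_Y(\eta(s_1),\eta(s_2))$. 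But $\eta(s_1),\eta(s_2)$ are connected through $Y_0$ by two halves of $L_r'$ of total length $\mathcal H^1(L_r')$ (or, more carefully, their $Y_0$-distance is at most $\mathcal H^1(L_r')$ since $L_r'$ separates them from $x$ and one can route along $L_r'$), hence $\ell_Z(c|_{[s_1,s_2]}) \le \frac1\lambda \mathcal H^1(L_r')$. Therefore $\ell_Z(T_r) \le \bigl(1 + \tfrac1\lambda\bigr)\mathcal H^1(L_r')$, which yields
\[
\mathcal H^2(O(y,r)) \ \le\ \frac{1}{4\pi}\Bigl(1+\tfrac1\lambda\Bigr)^2 \bigl(\mathcal H^1(L_r')\bigr)^2 .
\]
Choosing $\lambda$ large enough that $\frac{\pi}{(1+1/\lambda)^2} \ge \pi - \alpha_0$ gives \eqref{eq:refined-coarea}, and the lemma follows.

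The main obstacle I anticipate is the careful handling of the arc $L_r'$: justifying that it genuinely separates $y$ from the remote point $x$ (so that the enclosed Jordan domain of $T_r$ in $Z$ really contains $I(O(y,r))$), that its endpoints lie on the controlled sub-arc $\eta|_{[-\epsilon,\epsilon]}$ (this needs $r_0$ chosen relative to $\epsilon$ and the "diameter vs.\ endpoint" control of \lref{nodive}, to ensure the relevant boundary parameters stay in $[-\epsilon,\epsilon]$), and that the $Y_0$-distance between its endpoints is bounded by $\mathcal H^1(L_r')$. These are the same technical points that appear in \lref{lem:areabound}, so they can be imported with minor modifications; the genuinely new ingredient is simply feeding the contraction estimate of \lref{curverect} into the length bound for the boundary portion of $T_r$, which is where the freedom to take $\lambda$ large gets used to make the isoperimetric constant approach the sharp Euclidean value $\frac1{4\pi}$.
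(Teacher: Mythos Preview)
Your overall strategy matches the paper's, but there is a genuine gap in the key step. When bounding the $c$-part of $T_r$ you invoke \lref{curverect} as if it said $d_Y(\eta(s_1),\eta(s_2)) \ge \lambda\, d_Z(c(s_1),c(s_2))$ for arbitrary $s_1,s_2\in[-\epsilon,\epsilon]$. It does not: the lemma only controls distances \emph{from the fixed point} $y=\eta(0)$, i.e.\ $d_Y(y,\eta(s)) \ge \lambda\, d_Z(I(y),c(s))$. Your subsequent move --- bounding $d_Y(\eta(s_1),\eta(s_2))$ above by $\mathcal H^1(L_r')$ --- therefore does not yield the claimed $\ell_Z(c|_{[s_1,s_2]}) \le \tfrac1\lambda\,\mathcal H^1(L_r')$; without a lower bound on $\mathcal H^1(L_r')$ in terms of $r$, there is no way to make $\lambda$ absorb the boundary contribution.

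The fix (and this is what the paper does) is a short bootstrap. Applying \lref{curverect} correctly at the two endpoints $e^\pm=\eta(s_i)$, which lie on the level set and hence satisfy $d_Y(y,e^\pm)=r$, one gets $d_Z(I(y),c(s_i))\le r/\lambda$; since the endpoints lie on opposite sides of $y$, the $c$-arc has length at most $2r/\lambda$. To compare this with $w_1(r):=\mathcal H^1(L_r')$ you first feed the weak bound $v(r)\ge \tfrac{\pi}{4}r^2$ from \lref{lem:areabound} into the coarse inequality $v(r)\le \tfrac{1}{\pi}w_1(r)^2$ (already available from the proof of \lref{lem:areabound}) to obtain $w_1(r)\ge \tfrac{\pi}{2}r$. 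Then $\ell_Z(I\circ T_r)\le w_1(r)+\tfrac{2r}{\lambda}\le (1+\tfrac{4}{\pi\lambda})\,w_1(r)$, and your integration argument goes through with $(1+\tfrac1\lambda)$ replaced by $(1+\tfrac{4}{\pi\lambda})$.
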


\begin{proof}
Approximating  $y$  by points in $Y_0$   we  obtain from \lref{lem:areabound}   the inequality $\mathcal H ^2 (O (y,r)) \geq \frac \pi 4 r^2 $ for all $r<r_0$.
In order to improve the bound, we argue as in  the proof of \lref{lem:areabound}.
We consider the distance function $f:Y_0\to \R$ defined by $f(z)=d(y,z)$.  Then
 $O(y,t)$ is the sublevel set $f^{-1} ((0,t))$.  Denote  by $L_t \subset Y_0$ the  level set $f^{-1} (t)$ and by $w(t)$ its length
$\mathcal H^1 (L_t)$.   Set $v(t)= \mathcal H^2 (O(y,t))$.
By the co-area inequality,  $w\in L^1 ([0,r_0])$ and    $\int _0 ^t w(s) \, ds \leq v(t)$ for almost all $t \in (0,r_0)$.
As in the proof of \lref{lem:areabound}, for almost all $t\in (0,r_0)$, the  set  $L_t$ contains an arc $L'_t$ which  connects two points on $\eta$ and separates $O(y,t)$ from some fixed point $x\in Y_0$ at large distance from $y$.

Denote by $w_1(t)\leq w(t)$ the length $\mathcal H^1 (L'_t)$.  The statement of the lemma follows by integration, once we have verified  for almost all $t\in (0,r_0)$ the inequality
\begin{equation} \label{eq:lambda}
v(t) \leq \frac 1 {4\pi} w_1 ^2(t) \cdot (1+g_{\lambda})\; ,
\end{equation}
where the constant $g_{\lambda}$ goes to $0$ as $\lambda $ goes to $\infty$.

We already know $v(t)\geq \frac \pi 4 t^2$, and we have seen in the proof of \lref{lem:areabound} that $\frac 1 {\pi} w_1^2 (t) \geq v(t)$.
Therefore, $w_1(t) \geq \frac {\pi} 2 t$.

 The endpoints $e^{\pm}$ of $L'_t$ must be contained in $\eta |_{[-\epsilon, \epsilon]}$ and lie on different sides of $y$. Moreover, by definition, $d(e^{\pm},y) =t$.  As in \lref{lem:areabound}, we consider the Jordan curve  $T_t$  built by $L'_t$ and the part of $\eta$ between $e^{\pm}$.
  From the choice of $\lambda$ and $ \epsilon$ we deduce that the length of  $I\circ (T_t\cap \eta)$ is at most $\frac 2 {\lambda} t$.
 Therefore,
$$\ell _Z (I\circ T_t) \leq  w_1 (t)+ \frac 2 {\lambda} t \leq (1+ \frac 4 {\pi \lambda} ) \cdot w_1 (t) \; .$$
Now, as in the proof of \lref{lem:areabound}, the isoperimetric inequality in $Z$ provides \eqref{eq:lambda} with $(1+g_{\lambda})=(1+\frac 4 {\pi \lambda })^2$.
This finishes the proof.
\end{proof}

As a consequence we get:
\begin{cor} \label{cor:longlength}
If $\lambda$ is large enough then for any $r<\frac 1 3 r_0$  the following holds true.
Any curve $k$ in  $Y\setminus B _Y (y,r)$ which connects  two points  in $\eta ([-\epsilon, \epsilon ])$  on different sides of $y$ satisfies
the inequality $\ell _Y (k) > (\pi +3) r$.
\end{cor}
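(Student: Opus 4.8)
The plan is to argue by contradiction and to play off the two facts established above: that balls around $y$ have almost Euclidean area (\lref{Euclideanarea}), and that near $y$ the map $I$ contracts $\eta$ by the huge factor $\lambda$ (the property defining $y$ in \lref{curverect}). So suppose that for some $r<\frac13 r_0$ there is a curve $k\subset Y\setminus B_Y(y,r)$ joining two points $e^{\pm}=\eta(s^{\pm})$ of $\eta([-\epsilon,\epsilon])$ on opposite sides of $y$ (so $s^-<0<s^+$), with $L:=\ell_Y(k)\le(\pi+3)r$. Passing to a subcurve I would first arrange that $k$ is a simple arc meeting $\eta$ only in $e^{\pm}$; the general case reduces to this one by replacing the closed set $k\cup\eta|_{[s^-,s^+]}$ by the boundary of its complementary component that contains the points of $Y_0$ near $y$, an operation which only decreases all lengths entering the estimate. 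Form the Jordan curve $T=k\cup\eta|_{[s^-,s^+]}$ with Jordan domain $J\subset Y$. Since $k$ avoids $B_Y(y,r)$ and, for $r<r_0$, the set $\partial Y\cap B_Y(y,r)$ is contained in $\eta|_{[s^-,s^+]}$, the curve $T$ coincides with $\partial Y$ in a neighbourhood of $y$; as $O(y,r)=B_Y(y,r)\cap Y_0$ is connected and disjoint from $T$, it must lie in $J$.

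Next I would record the two key estimates. For the area: since $I$ is an injective local isometry on $Y_0\supset J$, the image $I(J)$ is the Jordan domain of the Jordan curve $I\circ T$ in $Z$, and $\mathcal H^2(I(J))=\mathcal H^2(J)\ge\mathcal H^2(O(y,r))$, which by \lref{Euclideanarea} is at least $(\pi-\alpha_0)r^2$ once $\lambda$ has been fixed large enough, with $\alpha_0>0$ at our disposal. For the perimeter: $\ell_Z(I\circ T)=\ell_Z(I\circ k)+\ell_Z(I\circ\eta|_{[s^-,s^+]})$, where $\ell_Z(I\circ k)\le L$ because $I$ is $1$-Lipschitz, while $I\circ\eta|_{[s^-,s^+]}$ is a weakly monotone parametrization of a subarc of the geodesic $c$, so $\ell_Z(I\circ\eta|_{[s^-,s^+]})=d_Z(c(s^-),c(s^+))$. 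Since $s^{\pm}\in[-\epsilon,\epsilon]\subset(-2\epsilon,2\epsilon)$, the property defining $y$ gives $d_Z(c(0),c(s^{\pm}))\le\frac1\lambda d_Y(y,e^{\pm})$, and \lref{nodive} together with $d_Y(e^-,e^+)\le L$ bounds $d_Y(y,e^{\pm})$ by $\diam(\eta|_{[s^-,s^+]})\le 10L$. Hence $\ell_Z(I\circ\eta|_{[s^-,s^+]})\le 20L/\lambda$ and $\ell_Z(I\circ T)\le L(1+20/\lambda)$.

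Feeding both estimates into the isoperimetric inequality of \tref{thm4}(3) for the Jordan curve $I\circ T$ then yields $(\pi-\alpha_0)r^2\le\frac1{4\pi}L^2(1+20/\lambda)^2$, that is, $L\ge\dfrac{2\sqrt{\pi(\pi-\alpha_0)}}{1+20/\lambda}\,r$. Because $2\sqrt{\pi\cdot\pi}=2\pi>\pi+3$, I can first choose $\alpha_0>0$ small and then $\lambda$ large enough — legitimately, since $y$, $r_0$ and $\epsilon$ are only fixed afterwards — so that the right-hand side exceeds $(\pi+3)r$, contradicting $L\le(\pi+3)r$.

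The step I expect to be the real obstacle, and the reason the constant $\pi+3$ in the statement is essentially sharp for this argument, is getting the perimeter of $I\circ T$ down to $(1+o(1))L$ rather than $2L$. The crude bound $\ell_Z(I\circ\eta|_{[s^-,s^+]})\le\ell_Z(I\circ k)\le L$, which uses nothing about $y$, would only give $L\ge\sqrt{\pi(\pi-\alpha_0)}\,r$, i.e. roughly $\pi r$, far short of $(\pi+3)r$. One therefore genuinely has to exploit that $y$ was chosen via \lref{curverect} so that $I$ shrinks the relevant part of $\eta$ by the factor $\lambda\to\infty$, whereupon the elementary inequality $2\pi>\pi+3$ closes the argument. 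By comparison, the topological bookkeeping (reducing to a simple $k$, and verifying $O(y,r)\subset J$) is a routine consequence of the structure theory assembled in the previous sections.
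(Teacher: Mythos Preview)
Your proof is correct and follows essentially the same strategy as the paper: assume $\ell_Y(k)\le(\pi+3)r$, build the Jordan curve $T=k\cup\eta|_{[s^-,s^+]}$, invoke \lref{Euclideanarea} to bound $\mathcal H^2(J)$ from below, bound $\ell_Z(I\circ T)$ from above using the $\lambda$-contraction from \lref{curverect}, and derive a contradiction to the isoperimetric inequality via $2\pi>\pi+3$.

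There is one pleasant difference worth noting. To bound the length of $I\circ(T\cap\eta)$ by $O(r/\lambda)$ one needs $d_Y(y,e^{\pm})=O(r)$. The paper obtains this by a case distinction: if $k$ misses $B_Y(y,3r)$ entirely, one replaces $r$ by $3r$ in the area bound and the crude estimate $\ell_Z(I\circ T)\le 2\ell_Y(k)$ already suffices; if $k$ meets $B_Y(y,3r)$, the triangle inequality along $k$ gives $d_Y(y,e^{\pm})<10r$. You bypass this split by invoking \lref{nodive} to get $d_Y(y,e^{\pm})\le\diam(\eta|_{[s^-,s^+]})\le 10\,d_Y(e^-,e^+)\le 10L$ directly, yielding $\ell_Z(I\circ T)\le L(1+20/\lambda)$ in one stroke. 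This is a genuine simplification: it uses an already-available lemma in place of an ad hoc case analysis, and the numerics close in exactly the same way. The only (routine) point to be careful about in both approaches is the topological reduction to a simple arc $k$ whose endpoints remain in $\eta([-\epsilon,\epsilon])$; your description of it is adequate.
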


\begin{proof}
 Assume the contrary.  Fix a sufficiently  small  $\alpha _0>0$, choose $\lambda >0$ such that \lref{Euclideanarea} holds true.
 Consider an arbitary $r<\frac  1 3 {r_0}$ and a curve $k$  violating the conclusion of the corollary.
Then we find a simple subcurve of $k$ which still connects two points $e^{\pm}$ in  $\eta ([-\epsilon,\epsilon])$ on different sides of $y$ and does not intersect $\eta$ between $e^{\pm}$.  We replace $k$ by this subcurve and consider  the Jordan curve $T$ built by $k$ and the part of $\eta$ between $e^{\pm}$.   The Jordan domain of $T$  contains  $O(y,r) =B(y,r) \cap Y_0$.  Therefore,
\begin{equation} \label{eq:nn}
\frac 1 {4\pi}  \ell ^2  _Z(I\circ T) \geq (\pi-\alpha _0 ) r^2 \; ,
\end{equation}
 due to \lref{Euclideanarea} and the isoperimetric inequality in $Z$.

  We have $\ell _Y(k) \geq \frac 1 2 \ell _Z(I\circ T)$. If $k$ does not intersect the $3r$-ball around $y$ in $Y$, then we may replace the term  $r^2$ by $(3r)^2$ on the right hand side of \eqref{eq:nn}.  In this case we   arrive at  a contradiction, once $\alpha _0$ is small enough
  ($\alpha _0 = \frac 5 9 \pi$ is sufficient here).

  If $k$ intersects the $3r$-ball around $y$ in $Y$, then $e^{\pm}$
 have distance at most $10 r$ to $y$, since $k$ has length at most $(\pi+3)r$.  Therefore, $I\circ (T\cap \eta)$ has length at most $\frac {20} {\lambda } r$. Hence,
 $$\frac 1 {4\pi}  \left(\ell _Y (k) +\frac {20} {\lambda} r\right)^2 \geq (\pi-\alpha_0) r^2 \; .$$
  If $\lambda$ has been chosen sufficiently large and $\alpha _0$ sufficiently small, this contradicts    the assumption  $\ell _Y (k)  \leq (\pi +3) r$.
\end{proof}

 \subsection{The boundary is close to a geodesic}
Consider the geodesic $\gamma  :[0,t_0] \to Y$ starting at $y$ and ending at $\eta (\epsilon )$. Let $P: Y\to \gamma$ be the nearest point projection, which is well-defined and $1$-Lipschitz since $Y$ is ${\rm CAT}(0)$.
  For any point $x\in Y$, denote by $\beta _x$ the shortest geodesic from $x$ to $P(x)$.  Then $P(\beta _x)= P(x)$.
In particular, for $x,w\in Y$, the
geodesics $\beta _x$ and $\beta_w$ are either disjoint or are sent by
 $P$ to the same point, their common endpoint.  Since any geodesic $\beta _x$ encloses an angle of at least $\frac \pi 2$ with (the initial part of)
 $\gamma$ at $P(x)$, we infer $d(y,\beta _x) =d(y,P(x))$. In other words,
 any point on $\beta _x$ has at least the same distance from $y=\gamma (0)$ as $P(x)$.
For topological reasons we have:

\begin{lem} \label{lem:weakly}
The composition $P\circ \eta :[0,\epsilon] \to \gamma$ is a weakly monotone parametrization of $\gamma$.
\end{lem}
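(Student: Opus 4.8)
The plan is to reduce everything to proving that $g:=P\circ\eta\colon[0,\epsilon]\to\gamma$ is \emph{monotone}: a continuous monotone surjection of an arc onto an arc that carries endpoints to endpoints is automatically a uniform limit of homeomorphisms, i.e. a weakly monotone parametrization. Continuity of $g$ is immediate, since $P$ is $1$‑Lipschitz and $\eta$ is continuous; and $g(0)=\gamma(0)=y$, $g(\epsilon)=\gamma(t_0)=\eta(\epsilon)$ because $P$ is the identity on $\gamma$ while $\eta(0)=y$ and $\eta(\epsilon)$ are the endpoints of $\gamma$. Surjectivity will then follow from the intermediate value theorem once monotonicity is known, so the whole content is monotonicity, which I would prove by contradiction.

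Suppose $g$ is not monotone. Ordering $\gamma$ from $\gamma(0)$ to $\gamma(t_0)$, there are parameters $u_1<u_2$ with $g(u_2)$ strictly before $g(u_1)$ on $\gamma$; since $g(0)=\gamma(0)$ is the first and $g(\epsilon)=\gamma(t_0)$ the last point of $\gamma$, necessarily $0<u_1<u_2<\epsilon$. Pick $s\in(0,t_0)$ with $\gamma(s)$ strictly between $g(u_2)$ and $g(u_1)$, and set $F_s:=P^{-1}(\gamma(s))$, a closed subset of $Y$. The geometric input is the identity $P(\beta_x)=P(x)$ recorded above: for every $u$ the geodesic $\beta_{\eta(u)}$ is a connected set joining $\eta(u)$ to $g(u)$ and lying entirely in the fibre $P^{-1}(g(u))$, hence disjoint from $F_s$ whenever $g(u)\neq\gamma(s)$. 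The connected sets $\gamma([0,s))$ and $\gamma((s,t_0])$ lie in components $Y^-\ni\gamma(0)$ and $Y^+\ni\gamma(t_0)$ of $Y\setminus F_s$, and $Y^-\neq Y^+$ because any path in $Y$ from $\gamma(0)$ to $\gamma(t_0)$ projects under $P$ to a path in $\gamma$ through $\gamma(s)$ and therefore meets $F_s$. Consequently $\beta_{\eta(u_1)}\subset Y^+$ and $\beta_{\eta(u_2)}\subset Y^-$, so $\eta(u_1)\in Y^+$ and $\eta(u_2)\in Y^-$; likewise $\eta(0)=\gamma(0)\in Y^-$ and $\eta(\epsilon)=\gamma(t_0)\in Y^+$.

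The contradiction is then planar. The four points $\eta(0),\eta(u_1),\eta(u_2),\eta(\epsilon)$ are distinct and occur in this cyclic order on the circle $\partial Y$ (recall $\eta$ is a simple arc and $Y$ is homeomorphic to $\bar D$ by \pref{prop:simplification}). Since $Y^-$ and $Y^+$ are disjoint, connected, open subsets of the manifold $Y$, one may choose an arc in $Y^-$ from $\eta(0)$ to $\eta(u_2)$ and an arc in $Y^+$ from $\eta(u_1)$ to $\eta(\epsilon)$; these two arcs are disjoint, yet the pairs $\{\eta(0),\eta(u_2)\}$ and $\{\eta(u_1),\eta(\epsilon)\}$ interlace on $\partial Y\cong S^1$, so by the Jordan curve theorem two such arcs in the disc $Y$ must intersect. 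This contradiction shows $g$ is monotone, completing the proof. I expect the only delicate point to be this last linking step — the standard planar fact that a disc contains no two disjoint arcs joining interlaced pairs of boundary points; everything else is routine bookkeeping with the nearest‑point projection and with the facts established just before the statement.
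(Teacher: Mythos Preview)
Your argument is correct. The paper itself gives no proof of this lemma at all: the statement is simply preceded by the phrase ``For topological reasons we have:'' and nothing more. You have filled in precisely those topological reasons, and the route you take---reducing to monotonicity of $g=P\circ\eta$, separating the disc by the fibre $F_s=P^{-1}(\gamma(s))$, and then invoking the standard planar fact that two arcs in $\bar D$ joining interlaced pairs of boundary points must meet---is the natural one. The key geometric input, that each $\beta_{\eta(u)}$ lies entirely in a single fibre of $P$ (so that $\eta(u)$ and $g(u)$ always lie in the same component of $Y\setminus F_s$), is exactly the property $P(\beta_x)=P(x)$ recorded in the paragraph before the lemma.

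Your self-assessment of the delicate point is accurate. The interlacing step is the standard ``no disjoint crossing chords in a disc'' fact; one clean justification is to first pass to a simple subarc $\alpha\subset Y^-$ from $\eta(0)$ to $\eta(u_2)$, observe that $\alpha$ together with each of the two boundary arcs of $\partial Y$ between its endpoints cuts $Y$ into two closed regions meeting along $\alpha$, and note that $\eta(u_1)$ and $\eta(\epsilon)$ lie on different boundary arcs (hence in different regions), forcing any path in $Y$ between them to meet $\alpha$. Since the path you chose lies in $Y^+$, disjoint from $Y^-\supset\alpha$, you have your contradiction.
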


Denote by $Q$ the union of all geodesics $\beta _x$, where $x$ runs over all points on $\eta |_{[0,\epsilon]}$.
By definition, $Q$ contains $\eta |_{[0, \epsilon ]}$ and $\gamma$.
Denote by $Q_0$ the intersection $Q\cap Y_0$ and consider the $1$-Lipschitz continuous function  $f:Q_0\to [0,t_0]$ which sends $x\in Q_0$ to the $\gamma$-parameter of $P(x)$, thus
$f(x)=d(P(x),y)$.      By definition, $f^{-1} (t)$ is exactly the (intersection with $Y_0$ of the) union of all geodesics $\beta _w$ which start on $\eta  |_{[0,\epsilon]}$ and end in $\gamma (t)$.

  For $t\in [0,t_0]$  we let
  $h(t)\geq 0$  be the infimum of lengths of all geodesics $\beta _x$ which start at  some point $x \in \eta  |_{[0,\epsilon]}$ and end at $\gamma (t)=P(x)$.   By definition, $h(t)$ equals the minimum of the distance function
   to the geodesic $\gamma$ on the compact set $P^{-1} (\gamma (t)) \cap \eta |_{[0,\epsilon]}$. By continuity, $P(\eta |_{[0,\epsilon]}) =\gamma$, thus  $h$ is well-defined.
   By compactness, for any $t\in [0,t_0]$ we find a geodesic $\beta ^t =\beta _x$ of length $h(t)$ which starts
   on $\eta |_{[0,\epsilon]}$ and  ends at $\gamma (t)$.
   By minimality, the geodesic $\beta ^t$  intersects $\eta$ only at the starting point.  Again by compactness,
   the function $h(t)$ is lower semi-continuous.

  We set $g(t)=\mathcal H^1 (f^{-1} (t))$ for $t\in [0,t_0]$.
   By construction, we have $h(t)\leq g(t)$ for all $t$.
 By the co-area inequality, $g$ is integrable and  for any $0\leq t<t' \leq t_0$  we have
 \begin{equation} \label{eq:rechteck}
 \mathcal H^2 (f^{-1} ((t,t')) )\geq  \int _t ^{t'} g(s) \; ds \, .
 \end{equation}

With these notations and preparations at hand we can now show that $\gamma$ and $\eta$ do not diverge  too
fast from each other.

\begin{lem} \label{close}
For all $t \in [0,t_0]$ there is some  $t\leq t' \leq  2t$ with  $h(t')  \leq \frac {t'} 2$.
Thus, there exists a geodesic $\beta ^{t'}$ of length at most $\frac {t'} 2$ starting at $\gamma (t')$ orthogonally to $\gamma$ and ending
on $  \eta |_{[0,\epsilon]}$.
\end{lem}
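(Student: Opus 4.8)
The plan is to argue by contradiction. Since $\gamma(t_0)=\eta(\epsilon)$ lies on $\gamma$ it is its own nearest point, so $h(t_0)=0$; hence the assertion is trivial for $t\geq t_0/2$ (take $t'=t_0$), and we may suppose that $t<t_0/2$ and $h(s)>\tfrac s2$ for every $s\in[t,2t]$. The whole argument then revolves around the primitive $\Phi(s):=\int_0^s h(u)\,du$ (finite since $h\leq g\in L^1$), which I want to squeeze between a co-area lower bound and an isoperimetric upper bound.

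For the lower bound, the co-area inequality applied to $R_s:=f^{-1}((0,s))$ gives $\mathcal H^2(R_s)\geq\int_0^s g\geq\Phi(s)$, using $g\geq h$. For the upper bound I would observe that for a.e.\ $s$ the set $R_s$ is the Jordan domain bounded by $\gamma|_{[0,s]}$, by the perpendicular fibre geodesic $\beta^s$ to $\gamma$ at $\gamma(s)$ (of length $h(s)$), and by a subarc $\eta|_{[0,\sigma_s]}$ of $\eta$ — here one uses that distinct fibre geodesics $\beta_x$ are disjoint unless they share their $\gamma$-endpoint, together with the weak monotonicity of $P\circ\eta$ (\lref{lem:weakly}), and, if needed, the device of \lref{lem:areabound} of passing to $Z/\partial Z$ to extract a genuine Jordan curve. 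Since $I$ preserves $\mathcal H^2$ on $Y_0$ and $I(R_s)$ is the Jordan domain of $I(\partial R_s)$, \tref{thm4}(3) yields $\Phi(s)\leq\mathcal H^2(R_s)\leq\tfrac1{4\pi}\ell_Z^2(I\circ\partial R_s)$. On the right, $I$ is $1$-Lipschitz so $\gamma|_{[0,s]}$ and $\beta^s$ contribute at most $s+h(s)$, while $I$ sends $\eta|_{[0,\sigma_s]}$ onto a subsegment of the geodesic $c$, so its $Z$-length is $d_Z(I(y),I(\eta(\sigma_s)))\leq\tfrac1\lambda d_Y(y,\eta(\sigma_s))\leq\tfrac1\lambda(s+h(s))$ by \lref{curverect} (applicable since $y=\eta(0)$ is the special point and $\sigma_s\leq\epsilon$) and the triangle inequality along $\gamma$ and $\beta^s$. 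This gives the \emph{differential inequality}
\begin{equation} \label{eq:close-diff}
\Phi(s)\ \leq\ \Bigl(1+\tfrac1\lambda\Bigr)^{2}\,\frac{\bigl(s+h(s)\bigr)^{2}}{4\pi}\qquad(0<s\leq t_0),
\end{equation}
equivalently $h(s)=\Phi'(s)\geq c_\lambda\sqrt{\Phi(s)}-s$ for a.e.\ $s$, with $c_\lambda:=2\sqrt\pi\,(1+\tfrac1\lambda)^{-1}$.

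Now I would push \eqref{eq:close-diff} forward from $s=2t$. The hypothesis gives $\Phi(2t)\geq\int_t^{2t}\tfrac s2\,ds=\tfrac34 t^2$, and since $\Phi$ is non-decreasing, ODE comparison with $\psi'=c_\lambda\sqrt\psi-s$, $\psi(2t)=\tfrac34 t^2$, gives $\Phi(s)\geq\psi(s)$ on $[2t,t_0]$. Rescaling $s=t\tau$, $\psi=t^2\varphi$ turns this into the scale-free equation $\varphi'=c_\lambda\sqrt\varphi-\tau$, $\varphi(2)=\tfrac34$; passing to $u=\varphi/\tau^2$ one gets $u'=\tfrac1\tau(c_\lambda\sqrt u-1-2u)$, whose right-hand bracket is positive precisely for $u$ between the two self-similar values $\alpha_\pm^2$, the roots of $2\alpha^2-c_\lambda\alpha+1=0$. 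For $\lambda$ large, $c_\lambda$ is close to $2\sqrt\pi$, so $\alpha_-^2\approx 0.12$, $c_\lambda\alpha_-\approx 1.25>1$, and $u(2)=\tfrac{3}{16}=0.1875$ lies strictly between $\alpha_-^2$ and $\alpha_+^2$; hence $u$ stays above $\alpha_-^2$, i.e.\ $\Phi(s)\geq\psi(s)>\alpha_-^2 s^2$ for all $s\in[2t,t_0]$. Feeding this back into \eqref{eq:close-diff} gives $s+h(s)>c_\lambda\alpha_- s$, hence $h(s)>(c_\lambda\alpha_--1)s>0$ on $[2t,t_0]$; in particular $h(t_0)>0$, contradicting $h(t_0)=0$. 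This proves the displayed conclusion, and the last sentence of the lemma is a reformulation: $h(t')$ is realised by a geodesic $\beta^{t'}=\beta_x$ with $P(x)=\gamma(t')$, and a nearest-point geodesic to the convex set $\gamma$ meets $\gamma$ orthogonally.

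The two steps I expect to be delicate are: (i) identifying $R_s$ with the stated Jordan domain and producing an honest bounding Jordan curve in $Z$ — the places where $\gamma$ or $\beta^s$ touch $\partial Z$, where the fibres degenerate, and the non-injectivity of $I$ on $\partial Y$ all have to be absorbed by the $Z/\partial Z$ argument of \lref{lem:areabound}; and (ii) the ODE accounting, where the constants are genuinely tight — one must check that the initial value $\tfrac34$ really lies above the separatrix $\alpha_-^2\tau^2$ and that $c_\lambda\alpha_->1$ for $\lambda$ large, which is ultimately what pins down the factor $\tfrac12$ in the statement.
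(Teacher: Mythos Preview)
Your approach is genuinely different from the paper's, and the ODE comparison is carried out correctly, but the construction of the Jordan curve bounding $R_s$ has a real gap that your suggested fix does not address.

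The paper does \emph{not} work with the whole family $R_s=f^{-1}((0,s))$. Instead it locates two specific parameters $t_1<t_2$ with $h(t_1)\le t_1/2$, $h(t_2)\le t_2/2$, $t_2>2t_1$, and $h(\tau)>\tau/2$ on $(t_1,t_2)$; the last condition guarantees that $\gamma|_{(t_1,t_2)}$ never touches $\eta$, so the single quadrilateral $\beta^{t_1}\cup\gamma|_{[t_1,t_2]}\cup\beta^{t_2}\cup\eta|_{[\sigma_1,\sigma_2]}$ is an honest Jordan curve. One then compares $\mathcal H^2(J)\ge\int_{t_1}^{t_2}\tfrac{s}{2}\,ds=\tfrac14(t_2^2-t_1^2)$ with $\tfrac{1}{4\pi}\ell_Z^2(I\circ T)$ and reaches a numerical contradiction directly from $t_1/t_2<\tfrac12$ and $\pi>3$. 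No integration or comparison principle is needed.

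Your curve $\gamma|_{[0,s]}\cup\beta^s\cup\eta|_{[0,\sigma_s]}$ fails to be simple exactly when $h(\tau_0)=0$ for some $0<\tau_0<s$, i.e.\ when $\gamma$ meets $\eta$ at an interior point; nothing rules this out, and the device of passing to $Z/\partial Z$ from \lref{lem:areabound} is irrelevant here since the self-intersection occurs along $\gamma$, not along $\partial Z$. The correct repair is to decompose $(0,s)\cap\{h>0\}$ into its countably many component intervals $(a_{i-1},a_i)$; on each, $h>0$ forces $\gamma|_{(a_{i-1},a_i)}\cap\eta=\emptyset$, so the corresponding two- or three-sided curve is Jordan, and since $\sum_i\ell_i^2\le(\sum_i\ell_i)^2$ the sum of the individual isoperimetric bounds still yields your inequality $\Phi(s)\le(1+\tfrac1\lambda)^2(s+h(s))^2/(4\pi)$. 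With this fix your ODE argument goes through (and in fact needs a smaller $\lambda$ than the paper's cruder estimate). A minor imprecision: $R_s$ is only \emph{contained in}, not equal to, the Jordan domain; but containment is all you use.

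In short: the paper trades your global differential inequality for one well-chosen Jordan quadrilateral, avoiding the simplicity issue entirely and arriving at the contradiction in a single line; your route is longer and needs the decomposition above, but is sound once that is supplied.
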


\begin{proof}
Let $\mathcal T$ be the set of all $t\in [0,t_0]$ for which the claim is true.
By definition $h(t_0)=0$, therefore $[\frac {t_0 } 2, t_0] \subset \mathcal T$.
By the semi-continuity of $h$, $\mathcal T$ is closed.
Assume that $\mathcal T\neq [0,t_0]$ and let $t_3\in (0, \frac {t_0} 2]$ be the
 smallest number such $[t_3,t_0] \subset \mathcal T$.

  Consider $t_2:=2 t_3$. From  the minimality  of $t_3$
  we infer that  $h(t_2)\leq \frac 1 2 t_2$  and that for any $t\in [t_3,t_2)$ the inequality
  $h(t)> \frac 1 2 t$ holds  true.   Due to the semi-continuity of $h$  and since $h(0) = 0$,
   there exists a largest $t_1\in [ 0,t_3)$ with $h(t_1) \leq \frac 1 2 t_1$.
   Summarizing, we have
   $$h(t_1)\leq \frac 1 2 t_1; \; h(t_2) \leq \frac 1 2 t_2; \; t_2> 2 t_1 \; \text{and} \;  h(t) >\frac 1 2 t  \;  \text{ for}\;   t\in (t_1,t_2) \; .$$
    We are going to derive a contradiction to the isoperimetric inequality.  Consider the geodesic $\beta ^{t_1}$ and $\beta ^{t_2}$. By construction, these geodesics do not intersect $\eta$ outside their endpoints.
    Moreover, $\gamma |_{(t_1,t_2)}$ does not intersect $\eta$,  since otherwise $h$ were equal to $0$ at the intersection point.  Thus $\beta ^{t_1}, \beta ^{t_2}, \gamma |_{(t_1,t_2)}$ and the part of $\eta$ between the endpoints of $\beta ^{t_1}$ and $\beta ^{t_2}$ constitute a Jordan curve $T$.  Due to \lref{lem:weakly},  the preimage $f^{-1} ((t_1,t_2)) $ is contained in the Jordan domain $J$ of $T$.      Since
    $g(t)\geq h(t) > \frac 1 2  t$ for all $t\in(t_1, t_2)$, we deduce from \eqref{eq:rechteck} that
    \begin{equation}
    \mathcal H^2 (J) > \int _{t_1} ^{t_2} \frac 1 2 s \; ds =\frac 1 4  (t_2 ^2 -t_1 ^2) \, .
    \end{equation}

 We now estimate the length of $I\circ T$ in $Z$  as follows.
  By assumption, the lengths of  $\beta ^{t_1}, \beta ^{t_2}, \gamma |_{(t_1,t_2)}$ sum up to at most
  $\frac 1 2 (t_1 +t_2) + (t_2-t_1)$.  Moreover, the distance of the starting point of $ \beta ^{t_2}$
   on $\eta$ from $y$ is at most $t_2+ \frac 1 2 t_2$.  Thus, the $\eta$-part of $I\circ T$ is mapped
   to a part of the geodesic $c\subset Z$ which has length at most $\frac 1 \lambda  \cdot \frac 3 2 t_2$.

   We  set $q=\frac {t_1} {t_2} <\frac 1 2$. The  isoperimetric inequality in $Z$ gives us
   $ \mathcal H^2 (J)  \leq \frac 1 {4\pi}  \ell ^2 _Z(I \circ\ T)$.
   Inserting the above estimates we infer:
   \begin{equation} \label{eq:lastin}
   \frac 1 4  (1 -q^2)  \leq  \frac 1 {4\pi}  \left(\frac 3 2 -\frac 1 2 q + \frac 3 {2 \lambda  } \right)^2 \, .
   \end{equation}
   The left hand side is at least $\frac 3 {16}$ since $q < \frac 1 2$.  The right hand side is at most $\frac 9 {16 \pi} (1+\frac 1{ \lambda})^2$. Since $ \pi > 3$ we obtain a contradiction if $\lambda $ is large enough.
   This finishes the proof of \lref{close}.
 \end{proof}

\subsection{Final conclusions}
We  look at the other side of $y$ and connect $y$ with $\eta (-\epsilon)$ by a geodesic $\gamma _1$. We  apply the same
considerations to $\gamma _1$ which we applied to $\gamma$ above.  We deduce that for all sufficiently small $t$ there is some $t \leq t'\leq 2t$ and  a
geodesic $\alpha ^{t'}$ from $\gamma _1(t')$ to a point on $\eta |_{[-\epsilon,0]}$ such that  $\ell (\alpha ^{t'}) \leq \frac {t'} 2 \leq t$
and $d(\alpha ^{t'} ,y) =t'\geq t$.

The contradiction is now achieved in two steps.
\begin{lem} \label{large}
The angle between $\gamma$ and $\gamma _1$ must be at least $\pi$.
\end{lem}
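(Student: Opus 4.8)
The plan is to prove the contrapositive: assuming that the angle $\theta$ between $\gamma$ and $\gamma_1$ at $y$ satisfies $\theta<\pi$, I will exhibit a curve violating \cref{cor:longlength}. If $\theta=0$ then $\gamma$ and $\gamma_1$ coincide on an initial segment, and the construction below simplifies — the ``circular arc'' is replaced by a subsegment of $\gamma$ and one even gets $\ell_Y(k)\le\frac{5}{2}r'$ — so assume $0<\theta<\pi$.

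Since $y\in\partial Y$, the geodesics $\gamma$ and $\gamma_1$ cut out a unique hinge $H$ at $y$ meeting $\partial Y$ only in $y$ (the ``middle'' hinge, disjoint from $\eta$ near $y$), and because $\theta<\pi$ the angle of $H$ equals $\theta$. Applying \lref{almostcone+} to $\gamma,\gamma_1$, with the auxiliary arc in its proof chosen so that the approximated hinge is $H$, we get a flat sector $CT$ of angle $\theta$ with vertex $o$, a ball $O$ around $o$, and a biLipschitz map $E\colon O\to Y$ with $E(o)=y$ which sends the two boundary rays of $CT$ isometrically onto initial segments of $\gamma$ and of $\gamma_1$ and whose biLipschitz constant tends to $1$ at $o$. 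Fix a small $\sigma>0$ and shrink $O$ so that $E$ is $(1+\sigma)$-biLipschitz on $O$. Then pick $t>0$ small enough for all the applications below — in particular so that $2t$ lies below the radius of $O$, below the smallness thresholds of \lref{close} and of its analogue for $\gamma_1$ proved above, and so that $2t<\frac{1}{3}r_0$.

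Applying \lref{close} to $\gamma$ yields some $r_1\in[t,2t]$ and a geodesic $\beta^{r_1}$ of length $\le r_1/2$ from $\gamma(r_1)$ to a point $\eta(s^+)$ with $d(y,\beta^{r_1})=r_1$; the analogue for $\gamma_1$ yields $r_2\in[t,2t]$ and a geodesic $\alpha^{r_2}$ of length $\le r_2/2$ from $\gamma_1(r_2)$ to a point $\eta(s^-)$ with $d(y,\alpha^{r_2})=r_2$. (Here $s^+>0$ and $s^-<0$, since $\beta^{r_1},\alpha^{r_2}$ have length strictly smaller than $r_1=d(y,\gamma(r_1))$, resp.\ $r_2$.) Assume WLOG $r_1\le r_2$; then $r_2\le 2r_1$. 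Inside $CT$, concatenate the circular arc of radius $r_1$ around $o$ from the radius-$r_1$ point of the first ray to the radius-$r_1$ point of the second ray (length $\theta r_1$) with the radial segment along the second ray from radius $r_1$ to radius $r_2$ (length $r_2-r_1$); its $E$-image is a curve in $Y$ from $\gamma(r_1)$ to $\gamma_1(r_2)$ of length at most $(1+\sigma)(\theta r_1+r_2-r_1)$, all of whose points lie at distance $\ge(1+\sigma)^{-1}r_1=:r'$ from $y$. Let $k$ be the concatenation of $\beta^{r_1}$, this curve, and $\alpha^{r_2}$: a curve in $Y\setminus B_Y(y,r')$ joining $\eta(s^+)$ and $\eta(s^-)$, two points of $\eta([-\epsilon,\epsilon])$ on different sides of $y$. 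Since $r_2/2\le r_1$ and $r_2-r_1\le r_1$,
\[
\ell_Y(k)\ \le\ \frac{r_1}{2}+(1+\sigma)(\theta r_1+r_1)+\frac{r_2}{2}\ \le\ r_1\Bigl(\frac{3}{2}+(1+\sigma)(\theta+1)\Bigr),
\]
whereas \cref{cor:longlength} (with $\lambda$ as chosen and $r'<\frac{1}{3}r_0$) forces $\ell_Y(k)>(\pi+3)r'=(\pi+3)(1+\sigma)^{-1}r_1$. As $\frac{3}{2}+(\theta+1)=\theta+\frac{5}{2}<\pi+3$, choosing $\sigma$ small enough makes these bounds incompatible; this contradiction shows $\theta\ge\pi$.

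I expect the genuine difficulty to be exactly the constant bookkeeping in the last step: the two ``spokes'' $\beta^{r_1},\alpha^{r_2}$, the circular arc, and the short radial correction must be routed so that $\ell_Y(k)$ stays below $(\pi+3)$ times the radius $r'$ of a ball that $k$ encloses. The slack making $\theta+\frac{5}{2}<\pi+3$ work is gained precisely by using \lref{close} to land the spokes at scales $r_1,r_2\in[t,2t]$, so that, apart from the short radial piece reconciling $r_1$ and $r_2$, no detour back along $\gamma$ or $\gamma_1$ toward $y$ is needed; a secondary point to verify is the identification, in the second paragraph, of the sector angle of $H$ with $\theta$, which is where the hypothesis $\theta<\pi$ enters.
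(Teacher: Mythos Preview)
Your proof is correct and follows essentially the same route as the paper's: concatenate the spokes $\beta^{r_1},\alpha^{r_2}$ from \lref{close} with a circular arc in the flat cone $CT$ approximating the hinge between $\gamma$ and $\gamma_1$, then contradict \cref{cor:longlength}; the paper's bookkeeping differs only in that it first fixes a single $t$ with $\ell(\beta^t)\le t/2$, builds the arc at radius $(1+\delta)t$ connecting $\gamma(t)$ to $\gamma_1(t)$, and then runs along $\gamma_1|_{[t,t']}$ to reach the $\alpha$-spoke. One small point: in the degenerate case $\theta=0$ you assert that $\gamma$ and $\gamma_1$ coincide on an initial segment, but angle zero in a ${\rm CAT}(0)$ space only gives $d(\gamma(s),\gamma_1(s))=o(s)$, not coincidence; the paper covers this (and all sufficiently small angles) by joining $\gamma(2t)$ to $\gamma_1(2t)$ with a geodesic of length $o(t)$, which stays outside $B(y,t)$ by the triangle inequality.
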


 \begin{proof}
 Assume the contrary. We first claim  that for all sufficiently small $t$ there exists a curve $k_t$ between $\gamma (t)$ and $\gamma _1(t)$ such that
 any point on $k_t$ has distance at least $t$ from $y$ and such that $\ell (k_t) \leq (\pi +\frac 1 2) \cdot t$.

 Indeed, if the angle between $\gamma$ and $\gamma _1$ is  not $0$ we apply  \lref{almostcone+} to the hinge between $\gamma $ and $\gamma _1$.
 Thus, for  any fixed $\delta >0$ and all sufficiently small $t$,
 we find a curve $k'_t$ of length at most $(1+\delta) \pi   t$ which connects points $\gamma ((1+\delta ) \cdot t)$ and $\gamma _1  ((1+\delta) \cdot t)$ as the image of the corresponding circular arc in the flat hinge under the almost isometric map $E$ provided by  \lref{almostcone+}. Moreover, the distance of any point on $k'_t$ to $y$ is at most $t$. In order to obtain the required curve $k_t $, we just need to
 connect the endpoints of $k'_t$ with $\gamma (t)$ and $\gamma _1(t)$ along $\gamma$ and $\gamma _1$, respectively.  On the other hand, if the angle between $\gamma $ and $\gamma _1$ is $0$ (or just sufficiently small), we can obtain the required curve $k_t$ for all sufficiently small $t$ as follows:  connect  $\gamma(2t)$ with $\gamma_ 1(2t)$ by a geodesic and then connect  $\gamma (t)$ with $\gamma (2t)$ and $\gamma _1(2t)$ with $\gamma _1 (t)$   along $\gamma$ and $\gamma _1$, respectively.

 Now, we consider a sufficiently small $t$ such that the curve $\beta ^t$ has length at most $\frac t 2$. Such $t$ exists by  \lref{close}. Moreover,
 we apply \lref{close} to the curve $\gamma _1$ instead of $\gamma$ and find some $t\leq t'\leq 2t$ and  a
geodesic $\alpha ^{t'}$ with the properties  provided by \lref{close} and discussed prior to the present lemma.

Let the curve $k$ be the  concatenation of $\beta ^t, k_t, \gamma_1|_{[t,t']}$ and $\alpha ^{t'}$.   By construction, the curve $k$ lies completely outside the ball $B(y,t)$, it connects two points on $\eta$ which lie on different sides of $y$ and the length of $k$ is at most
$$\ell (k) \leq \frac t 2 +(\pi  + \frac 1 2)t + t + t\leq ( \pi +3) t \;.$$

This contradicts \cref{cor:longlength} and  finishes the proof.
 \end{proof}

The final lemma is proven similarly to the final step in the rectifiable case, \pref{prop:rect}:

\begin{lem} \label{small}
The angle  between $\gamma$ and $\gamma _1$ is strictly smaller than  $\pi$.
\end{lem}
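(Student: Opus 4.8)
The aim is to exclude the case that the angle between $\gamma$ and $\gamma_1$ at $y$ is $\geq\pi$; combined with \lref{large} this yields a contradiction and hence proves \pref{prop:finale}. So I would assume $\angle_y(\gamma,\gamma_1)\geq\pi$. Since Alexandrov angles in a ${\rm CAT}(0)$ space never exceed $\pi$, this forces $\angle_y(\gamma,\gamma_1)=\pi$, so $\gamma_1^{-1}\ast\gamma$ is a local geodesic at $y$, hence a geodesic in $Y$. Consequently the hinge $H$ between $\gamma$ and $\gamma_1$ lying on the side of $Y$ disjoint from $\eta$ is convex in a small ball about $y$ and has angle exactly $\pi$ at $y$. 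I fix a small $\alpha_0>0$ and enlarge $\lambda$, if necessary, so that \lref{Euclideanarea} applies to $y$.

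Now I would run the following construction for every sufficiently small $t>0$. By \lref{close} applied to $\gamma$ there is some $a\in[t,2t]$ and a geodesic $\beta^{a}$ from $\gamma(a)$, meeting $\gamma$ orthogonally, of length $\leq a/2$, ending at a point $\eta(u_+)\in\eta|_{[0,\epsilon]}$; all points of $\beta^{a}$ lie at distance $\geq a$ from $y$, so $u_+\neq0$ and $d(y,\eta(u_+))\leq\tfrac32 a$. The analogue of \lref{close} for $\gamma_1$ provides $b\in[t,2t]$ and a geodesic $\alpha^{b}$ from $\gamma_1(b)$ of length $\leq b/2$ ending at $\eta(u_-)\in\eta|_{[-\epsilon,0]}$ with $u_-\neq0$ and $d(y,\eta(u_-))\leq\tfrac32 b$. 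Applying \lref{almostcone+} to $\gamma,\gamma_1$ at $y$ — choosing the connecting arc inside $H$, so that the resulting flat cone is the half-plane $CT$ of cone angle $\pi$ — I obtain an almost isometric biLipschitz map $E$ from a ball about the vertex $o$ of $CT$ into $Y$ sending the two boundary rays of $CT$ isometrically onto initial parts of $\gamma$ and $\gamma_1$. In $CT$, I let $K'$ be the half-circle over the segment joining the point at distance $a$ on the $\gamma$-ray to the point at distance $b$ on the $\gamma_1$-ray — the boundary of a flat half-disc of radius $r:=\tfrac{a+b}{2}$ — and set $K:=E(K')$. Then $K$ lies in the interior of $Y$ at distance $\geq(1-o(1))\min(a,b)>0$ from $y$ and $\ell_Y(K)\leq(1+o(1))\,\pi r$, where $o(1)\to0$ as $t\to0$. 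Let $T$ be the concatenation of $K$, $\beta^{a}$, the sub-arc of $\eta$ running from $\eta(u_-)$ through $y$ to $\eta(u_+)$, and $\alpha^{b}$; for $t$ small this is an embedded Jordan curve meeting $y$ exactly once, with Jordan domain $J_T\subset Y_0$, and $I\circ T$ is a Jordan curve in $Z$ with Jordan domain $I(J_T)$.

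Two estimates then finish the argument. First, $J_T$ contains the image under $E$ of the flat half-disc bounded by $K'$ and its diameter, so $\mathcal H^2(J_T)\geq(1-o(1))\tfrac{\pi}{2}r^{2}$; moreover $\mathcal H^2(I(J_T))=\mathcal H^2(J_T)$ because $I$ is a local isometry on $Y_0\supseteq J_T$. Second, $I$ is $1$-Lipschitz, maps $\eta$ onto the geodesic $c$ weakly monotonically, and $d(y,\eta(u_\pm))\leq\tfrac32\max(a,b)$; hence, by the defining property of $y$ from \lref{curverect} (valid since $|u_\pm|<2\epsilon$ for $t$ small), $\ell_Z(I\circ\eta|_{[u_-,u_+]})\leq\tfrac{3(a+b)}{2\lambda}$, and therefore
\[
\ell_Z(I\circ T)\ \leq\ \ell_Y(K)+\ell_Y(\beta^{a})+\ell_Y(\alpha^{b})+\tfrac{3(a+b)}{2\lambda}\ \leq\ r\Big((1+o(1))\pi+1+\tfrac{3}{\lambda}\Big),
\]
where I used $\ell_Y(\beta^{a})+\ell_Y(\alpha^{b})\leq\tfrac a2+\tfrac b2=r$. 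Inserting both estimates into the isoperimetric inequality in condition (3) of \tref{thm4}, applied to the Jordan curve $I\circ T$, gives
\[
2\pi^{2}\big(1-o(1)\big)\ \leq\ \Big((1+o(1))\pi+1+\tfrac{3}{\lambda}\Big)^{2}.
\]
Since $2\pi^{2}>(\pi+1)^{2}$ (equivalently $\pi>\sqrt{2}+1$), this is impossible once $\alpha_0$ is small, $\lambda$ is large and $t$ is small. This contradiction shows $\angle_y(\gamma,\gamma_1)<\pi$.

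The step I expect to require the most care is the topological one: verifying that the concatenated curve $T$ is genuinely embedded and that its Jordan domain $J_T$ contains $E$ of the flat half-disc. Both are arranged by confining the whole configuration to a small neighbourhood of $y$ (legitimate since $a,b\leq2t$) in which $\partial Y=\eta|_{[-\epsilon,\epsilon]}$, in which $\beta^{a}$ and $\alpha^{b}$ stay at distance $\geq a$ resp.\ $\geq b$ from $y$ and meet $\eta$ only at their endpoints, and in which $E$ maps the flat hinge into the side $H$ of $Y$ disjoint from $\eta$; then $K$, $\beta^{a}$, $\alpha^{b}$ and the $\eta$-arc assemble into a simple loop encircling $y$ whose outer arc is $K$ and whose inner boundary runs along $\eta$ through $y$, while $E(\text{half-disc})$ — bounded by $K$ and the geodesic $\gamma|_{[0,a]}\cup\gamma_1|_{[0,b]}$ — lies between $K$ and $\eta$, hence inside $J_T$. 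Apart from this, the argument is a direct parallel to the final step of \pref{prop:rect}: the new point is that here the ``half-disc'' replacing an optimal isoperimetric region is far from optimal, and this is compensated precisely by the fact supplied by \lref{close} that the two geodesics $\beta^{a},\alpha^{b}$ closing it off each have length at most half the radius.
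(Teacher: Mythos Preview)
Your approach differs from the paper's and contains a genuine gap.

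\textbf{The gap.} You deduce from ``Alexandrov angles never exceed $\pi$'' that the hinge $H$ between $\gamma$ and $\gamma_1$ has angle \emph{exactly} $\pi$, and then model it by a Euclidean half-plane. But these are two different quantities. The Alexandrov angle is $\min\big(d_{\Sigma_y}(\gamma',\gamma_1'),\,\pi\big)$, whereas the angle of the hinge $H$ --- the one that determines the flat cone $CT$ in \lref{almostcone+} --- is the intrinsic arc-length $d_{\Sigma_y}(\gamma',\gamma_1')$ in the space of directions. At the boundary point $y$ the link $\Sigma_y$ is an interval whose total length is (by \lref{Euclideanarea}) close to $2\pi$ or larger; nothing prevents $d_{\Sigma_y}(\gamma',\gamma_1')$ from being strictly greater than $\pi$. (Think of the vertex of a Euclidean sector of angle $3\pi/2$: the two boundary rays have Alexandrov angle $\pi$ while the hinge between them has angle $3\pi/2$.) In that case $CT$ is a sector of angle $\alpha>\pi$, not a half-plane; your ``half-circle over the segment from $\gamma(a)$ to $\gamma_1(b)$'' is no longer defined, because the geodesic between those two points runs through the vertex. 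If one replaces your half-circle by the metric circle of radius $r$ in the sector, the resulting bound reads $2\pi\alpha>(\alpha+1)^2$, which fails once $\alpha\gtrsim 4.03$. So the argument does not close for large hinge angle.

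\textbf{Comparison with the paper.} The paper sidesteps this issue by appealing to \lref{almostflatgeod}, which for \emph{any} hinge of angle $\geq\pi-\delta$ produces a Jordan curve $T_s$ that is nearly isoperimetrically optimal and contains the segment $\gamma|_{[0,s]}$. Only the side $\gamma$ is used; one then replaces $\gamma|_{[0,s]}$ by $\beta^s$ together with the $\eta$-arc back to $y$, exactly as in the end of \pref{prop:rect}. Since $\ell(T_s)-\sqrt{4\pi\,\mathcal H^2(J_s)}<\tfrac{1}{3}s$ while the replacement shortens the $Z$-length by at least $\tfrac{1}{2}s-\tfrac{3s}{2\lambda}$, one obtains a contradiction for $\lambda>9$, uniformly in the hinge angle. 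Your two-sided half-disc construction and the inequality $2\pi^2>(\pi+1)^2$ are a pleasant alternative when the hinge really is a half-plane, but the one-sided near-circle is what makes the argument go through in general.
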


\begin{proof}
We assume the contrary and apply \lref{almostflatgeod} to the hinge $H$ between $\gamma$ and $\gamma _1$.
Thus,  for all sufficiently small $s$, we find  a Jordan  curve $T_s$ in the hinge $H$ which contains the initial part of $\gamma$ of  length
$s$, and such that  for the Jordan domain $J_s$ of $T_s$  we have
$$\ell (T_s) -\sqrt {4\pi   \mathcal H^2(J_s)}  < \frac 1 3  s \;.$$

We now choose $s$ to be such that $\beta ^s$ has length at most $\frac s 2$ and replace $\gamma |_{[0,s]} \subset T_s$  by
the concatenation of $\beta ^s$ and the part of $\eta$ between the starting point of $\beta ^s$ and $y$.  The arising Jordan curve $T'_s$ contains
$J_s$ in its Jordan domain. The image Jordan curve $I\circ T'_s$ has length at most
$$\ell _Z  (I\circ T'_s) \leq \ell _Y(T_s) -s +\frac s 2 + \frac {3s} {2\lambda} = \ell _Y (T) -\frac 1 2 s  + \frac {3s} {2\lambda} \;.$$

If we have chosen $\lambda > 9$,  then the curve $I\circ T'_s$ does not satisfy the isoperimetric inequality in $Z$.
\end{proof}

The contradiction between \lref{small} and \lref{large} finishes the proof of \pref{prop:finale} and therefore the proof of \tref{thm4}.

\vspace{3cm}

\centerline{\bf{Appendix.}}

\vspace{0.2cm}

\section{Generalization to non-zero curvature bounds}
\vspace{0.2cm}

We sketch the proof of \tref{thmmain}, which generalizes \tref{thmfirst} to the case of non-zero curvature bounds. We refer to \cite{ballmann} and  \cite{AKP} for basics on ${\rm CAT }(\kappa )$ spaces  and recall that Reshetnyak's majorization theorem holds for all $\kappa$. Thus, any closed curve
 $\Gamma$ of length smaller than  $R_{\kappa}$ in any $CAT(\kappa )$ space is majorized by a convex subset in $M^2 _{\kappa}$. Now, the  proof
  of \lref{lem:onlyif} shows the "only if part" of \tref{thmmain}.

  Starting the proof of the "if part", assume that $X$ satisfies the conditions of \tref{thmmain}.  Due to  $\lim _{r\to 0} \frac {\delta _{\kappa} (r)} {r^2} =\frac 1 {4\pi}$, the arguments from Section \ref{secexclude} remain valid and prove that $X$ satisfies  property (ET).
Arguing as in Subsection \ref{subsec:reduct} we reduce the proof  of the "if part" to the following claim:  every intrinsic minimal disc  $Z$ in $X$ corresponding to a
solution of the Plateau problem $u\in \Lambda (\Gamma,X)$  is a  ${\rm CAT }(\kappa )$ space.  Here $\Gamma$ is any  Jordan curve in $X$ of length smaller than $R_{\kappa}$.

  The isoperimeric property of $X$ implies the same isoperimetric property  for all Jordan curves in $Z$. Thus, as in Section \ref{secisop},
  we deduce that the conformal factor $f$ of $u$ satisfies the integral  inequality (for all $z\in D$ and almost all $0<r<1-|z|$):
  \begin{equation}\label{eq:isop-fd}
   \int_{B(z,r)}f^2 \leq     \delta_{\kappa} \left(\int_{\partial B(z,r)}f\right) \;.
 \end{equation}

Arguing as in Sections \ref{secisop} and \ref{secsurf} and using \cite{Resh-isop} instead of \cite{BR} we obtain a metric of curvature $\leq \kappa$
on the disc $D$ which is defined on $D$ by the canonical semi-continuous representative of the conformal factor $f$.
  As in Section \ref{sec:mainred}
we reduce the proof to the following analogue of \tref{thm4}.
\begin{thm} \label{lastthm}
 Let $Z$ be a geodesic metric space homeomorphic to  $\bar D$. Assume that  for any Jordan curve
 $\Gamma$ in $Z$ the Jordan domain $J$ enclosed by $\Gamma$ satisfies $\mathcal H^2 (J) \leq \delta _{\kappa}  (\ell (\Gamma))$. Assume further that
 $Z\setminus \partial Z$ has curvature $\leq \kappa$. Then $Z$ is ${\rm CAT} (\kappa )$ and $Z\setminus \partial Z$ is a length space.
\end{thm}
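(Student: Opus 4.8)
The plan is to run, for general $\kappa$, exactly the argument that proves \tref{thm4} in Part II, tracking the finitely many places where the constant $\frac 1{4\pi}$ or the Euclidean plane is used and substituting the corresponding $\kappa$-object ($\delta_{\kappa}$, $M_{\kappa}^2$, ${\rm CAT}(\kappa)$). Concretely, one needs the $\kappa$-versions of the implications $(3)\Rightarrow(1)$ and $(1)\Rightarrow(2)$ of \tref{thm4}, now with "non-positively curved" replaced by "of curvature $\le\kappa$" and "$\frac 1{4\pi}$-isoperimetric" replaced by "$\delta_{\kappa}$-isoperimetric". The guiding observation is that every quantitative estimate in Part II is applied at arbitrarily small scales around a fixed point, where $M_{\kappa}^2$ is $(1+\epsilon)$-biLipschitz to $\R^2$ and, since $\lim_{r\to 0}\delta_{\kappa}(r)/r^2=\frac 1{4\pi}$, every comparison with a Euclidean isoperimetric inequality incurs only a lower-order error that can be absorbed exactly as in the original proof.

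First I would check that the local and $2$-dimensional material of \secref{sec:basicgeom} and \secref{seccomplete} transfers verbatim: the hinge analysis and \lref{lem:Jordomloc}, the approximation by flat cones \lref{almostcone+}, and the structure results \lref{lem:topology} and \pref{prop:simplification} only use that blow-ups of surfaces of curvature $\le\kappa$ are Euclidean cones over circles and that Reshetnyak's gluing theorem holds for all $\kappa$ --- both true. The completion statement \pref{bisgen} becomes: the completion of a length disc of curvature $\le\kappa$ is locally ${\rm CAT}(\kappa)$, hence ${\rm CAT}(\kappa)$ provided it is moreover compact, simply connected and of diameter $<\frac 12 R_{\kappa}$, by the Cartan--Hadamard theorem for upper curvature bounds. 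The characterization "$Z\setminus\partial Z$ is a length space" of Subsection \ref{1to2}, which supplies the $(1)\Rightarrow(2)$ conclusion, is entirely local and needs no change.

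Next I would redo the implication $(3)\Rightarrow(1)$ following \secref{sec:simple}--\secref{sec:final}. The reduction of \lref{lem:simp} to a single problematic geodesic $c\subset\partial Z$, the passage to $Y_0=Z\setminus\partial Z$ with its length metric, its completion $Y$ (now ${\rm CAT}(\kappa)$ and compact, homeomorphic to $\bar D$), and the $1$-Lipschitz map $I\colon Y\to Z$ with the aim of showing $I$ is an isometry, all go through unchanged. The area estimates \lref{lem:areabound}, \lref{Euclideanarea} and \cref{cor:longlength} remain valid because $\delta_{\kappa}(r)=\frac 1{4\pi}r^2(1+O(r^2))$; the Euclidean computation underlying \lref{lem:Eucl} is carried out in $M_{\kappa}^2$ with slightly different but still adequate constants; \lref{almostflatgeod}, \lref{differ}, \pref{equality2} and the rectifiable case \pref{prop:rect} follow formally; and the final chain \lref{close}, \lref{large}, \lref{small} produces the same contradiction, since the explicit numerical inequalities there (such as $\pi>3$) keep enough slack once the parameter $\lambda$ is large and the working scale small enough that the $\delta_{\kappa}$-errors are negligible. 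This yields $I$ an isometry, hence $Z$ is ${\rm CAT}(\kappa)$, and then $Z\setminus\partial Z$ is a length space by the $\kappa$-version of Subsection \ref{1to2}.

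The hard part will be bookkeeping rather than a new idea: one must (i) for $\kappa>0$, secure the diameter bound $<\frac 12 R_{\kappa}$ that lets local ${\rm CAT}(\kappa)$ upgrade to global ${\rm CAT}(\kappa)$ --- here the hypothesis that the relevant boundary has length $<R_{\kappa}$ (automatic in the intended application, where $\ell(\Gamma)<R_{\kappa}$) enters, via the area bound coming from the isoperimetric inequality together with the ball-area lower bound of the $\kappa$-analogue of \lref{lem:areabound}; and (ii) re-examine each explicit numerical inequality in \secref{sec:rect} and \secref{sec:final} to confirm that the $O(r^2)$ deviation of $\delta_{\kappa}$ from $\frac 1{4\pi}r^2$ does not destroy it. Both are expected to be only of a technical nature.
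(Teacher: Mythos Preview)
Your overall strategy matches the paper's: almost all of Part~II transfers because the quantitative estimates are used only at arbitrarily small scales, where $\delta_\kappa(r)=\tfrac{1}{4\pi}r^2(1+o(1))$. The paper confirms in particular that \lref{almostcone+}, Subsection~\ref{1to2}, and the arguments of Sections~\ref{sec:rect}--\ref{sec:final} go through after restricting to sufficiently small distances.

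There is, however, one genuine gap in your plan, exactly at the local-to-global step you flag as ``hard part~(i)''. For $\kappa>0$ there is no Cartan--Hadamard theorem that upgrades ``complete, simply connected, locally ${\rm CAT}(\kappa)$, diameter $<\tfrac12 R_\kappa$'' to ``${\rm CAT}(\kappa)$''. The obstruction to globalization when $\kappa>0$ is not the diameter but the possible presence of an isometrically embedded circle of length $<R_\kappa$, and a diameter bound does not exclude this. The paper deals with this by an extra lemma (\lref{verylastlem}) that uses the isoperimetric hypothesis itself, not a diameter bound: if a disc $Z$ satisfying the $\delta_\kappa$-isoperimetric inequality had curvature $\le\kappa$ but failed to be ${\rm CAT}(\kappa)$, it would contain an isometric circle $\Gamma$ of length $2i<R_\kappa$ (with $i$ the injectivity radius, cf.\ \cite[Section~6]{ballmann}); but then the enclosed Jordan domain would have $\mathcal H^2(J)\ge\tfrac{1}{2\pi}\ell^2(\Gamma)$ (a round hemisphere is a minimal filling of its boundary circle, \cite{Ivfinsler}), contradicting $\mathcal H^2(J)\le\delta_\kappa(\ell(\Gamma))$. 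This lemma, applied to closed Jordan domains of polygons in $Z\setminus\partial Z$ (which inherit the isoperimetric property and have curvature $\le\kappa$ by the hinge analysis), is what replaces \lref{lem:Jordomloc} and makes the ultralimit argument of \pref{bisgen} produce a ${\rm CAT}(\kappa)$ completion $Y$. So you should drop the attempt to secure a diameter bound and instead rule out short isometric circles directly via the isoperimetric inequality.
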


To prove \tref{lastthm}  we closely  follow the second part of this paper. The approximation by flat cones \lref{almostcone+} is valid without changes
for all $\kappa \neq 0$. As  in Subsection \ref{1to2},  the ${\rm CAT} (\kappa )$ property of $Z$ implies that $Z\setminus \partial Z$ is a length space.  In order to prove that $Z$ is ${\rm CAT} (\kappa )$, we need the following additional lemma which can be used instead  of the theorem of Cartan-Hadamard.
\begin{lem} \label{verylastlem}
If $Z$ has curvature $\leq \kappa$ then $Z$ is ${\rm CAT} (\kappa )$.
 \end{lem}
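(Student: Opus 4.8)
The plan is to treat separately the two cases $\kappa\le0$ and $\kappa>0$. If $\kappa\le0$ then $R_\kappa=\infty$ and the assertion is precisely the Cartan--Hadamard theorem: $Z$ is compact, hence complete, it is simply connected since it is homeomorphic to $\bar D$, and it has non-positive curvature by hypothesis, so $Z$ is ${\rm CAT}(\kappa)$ by \cite[Section 6]{ballmann}. All the work is thus in the case $\kappa>0$; write $D_\kappa:=\pi/\sqrt\kappa=\frac{1}{2}R_\kappa$ for the diameter of $M^2_\kappa$. Here a local curvature bound together with simple connectedness is no longer sufficient --- a round sphere of radius slightly larger than $1/\sqrt\kappa$ is a complete, simply connected, locally ${\rm CAT}(\kappa)$ space that is not ${\rm CAT}(\kappa)$ --- so the plan is to extract a bound on the size of $Z$ from the isoperimetric hypothesis and then apply the globalization theorem in the form: a complete, simply connected, locally ${\rm CAT}(\kappa)$ space of diameter $<D_\kappa$ is ${\rm CAT}(\kappa)$ (see \cite{AKP} or \cite{ballmann}). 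Since $Z$ is compact, simply connected and of curvature $\le\kappa$, it will then suffice to prove ${\rm diam}(Z)<D_\kappa$.

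To do this I would run the co-area argument of \lref{lem:areabound} globally. Recall that $\partial Z$ has length $<R_\kappa$ (in the situation of \tref{thmmain} it is the arclength image of a Jordan curve of length $<R_\kappa$), so the isoperimetric hypothesis of \tref{lastthm}, applied to the Jordan curve $\partial Z$, gives $\mathcal H^2(Z)\le\delta_\kappa(\ell(\partial Z))<\lim_{L\to R_\kappa}\delta_\kappa(L)=\frac{2\pi}{\kappa}$, the area of a hemisphere of $M^2_\kappa$. Fix $x\in Z$ and set $v(r)=\mathcal H^2(B_Z(x,r))$, $w(r)=\mathcal H^1(\partial B_Z(x,r))$ and $\rho_x=\sup_{z\in Z}d(x,z)$. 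For almost every $r<\rho_x$ the topological circle $\partial B_Z(x,r)$ separates $x$ from a point at distance $>r$, and exactly as in the proof of \lref{lem:areabound} one finds inside it a Jordan curve $T_r$ enclosing $B_Z(x,r)$ with $\ell(T_r)\le2w(r)$ (an arc of $\partial B_Z(x,r)$ with endpoints on the geodesic part of $\partial Z$, if any, is closed up along that geodesic). The isoperimetric hypothesis then yields $v(r)\le\delta_\kappa(2w(r))$, i.e.\ $w(r)\ge\frac{1}{2}\delta_\kappa^{-1}(v(r))$, and, combined with the co-area inequality $\int_0^rw(s)\,ds\le v(r)$ and integrated, this gives $r\le2\int_0^{v(r)}\frac{dv}{\delta_\kappa^{-1}(v)}\le2\int_0^{\mathcal H^2(Z)}\frac{dv}{\delta_\kappa^{-1}(v)}$. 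Using the explicit profile $\delta_\kappa^{-1}(v)=\sqrt{v(4\pi-\kappa v)}$ for $0<v<\frac{2\pi}{\kappa}$, the substitution $v=\frac{2\pi}{\kappa}(1-\cos\phi)$ gives
\[
 \int_0^{2\pi/\kappa}\frac{dv}{\delta_\kappa^{-1}(v)}=\frac{\pi}{2\sqrt\kappa}=\frac{1}{2}D_\kappa,
\]
and since $\mathcal H^2(Z)<\frac{2\pi}{\kappa}$ \emph{strictly} while the integrand is positive, $2\int_0^{\mathcal H^2(Z)}\frac{dv}{\delta_\kappa^{-1}(v)}<D_\kappa$. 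Hence $\rho_x<D_\kappa$ for every $x$, i.e.\ ${\rm diam}(Z)<D_\kappa$, and the globalization theorem finishes the proof.

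The step I expect to be the main obstacle is, as always in this circle of ideas, the behaviour near $\partial Z$: controlling the separating circles $\partial B_Z(x,r)$ where they meet the boundary, and ensuring that the Jordan curves $T_r$ to which the isoperimetric hypothesis is applied have length $<R_\kappa$, so that $\delta_\kappa$ is defined on them at all. This is exactly where the reduction to the ``simple setting'' of Part~II --- in which the troublesome part of $\partial Z$ is a single geodesic --- is used, keeping $\ell(T_r)\le2w(r)$ rather than introducing an uncontrolled boundary term; one may also need a short bootstrap to run the estimate first on the range of radii where $T_r$ is a priori short. It is worth noting that the constant $2$ above is sharp for this method: any larger constant would destroy the strict inequality that the identity $\int_0^{2\pi/\kappa}\frac{dv}{\delta_\kappa^{-1}(v)}=\frac{1}{2}D_\kappa$ just barely provides. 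An alternative, avoiding the diameter estimate altogether, would be to rule out short closed geodesics directly: a simple closed geodesic of length $<2D_\kappa$ bounds a Jordan domain of area $a$ whose interior curvature measure is at most $\kappa a\le\kappa\,\delta_\kappa(\ell)<2\pi$, contradicting the Gauss--Bonnet formula for Alexandrov surfaces of bounded curvature. Either way, the positive-curvature case is the only place where the isoperimetric hypothesis --- and not merely the local curvature bound --- enters the globalization step.
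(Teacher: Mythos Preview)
Your brief ``alternative'' at the end is precisely the paper's proof. The paper argues: if $Z$ is not ${\rm CAT}(\kappa)$ then, by the globalization theory in \cite[Section~6]{ballmann}, $Z$ contains an isometrically embedded circle $\Gamma$ of length $2i<R_\kappa$ (here $i$ is the injectivity radius). Its Jordan domain $J$ then violates the isoperimetric hypothesis. The paper offers two ways to see this: either a direct area estimate (your Gauss--Bonnet argument, giving $\mathcal H^2(J)\ge 2\pi/\kappa>\delta_\kappa(\ell(\Gamma))$), or Ivanov's filling minimality of the round hemisphere \cite{Ivfinsler}, which gives $\mathcal H^2(J)\ge \ell(\Gamma)^2/(2\pi)>\delta_\kappa(\ell(\Gamma))$. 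So your alternative is the paper's route, and it is much shorter than your primary plan.

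Your primary approach (co-area $\Rightarrow$ diameter $<D_\kappa$ $\Rightarrow$ globalization) is a genuine detour, and the obstacles you flag are real rather than cosmetic. The bound $\ell(T_r)\le 2w(r)$ relies on closing an arc of $\partial B(x,r)$ along a \emph{geodesic} piece of $\partial Z$; Lemma~\ref{verylastlem} is stated and used before the reduction to the ``simple setting'', in particular for closed Jordan domains of Jordan polygons, where $\partial Z$ is only \emph{piecewise} geodesic and the arc may straddle several sides. The range issue for $\delta_\kappa$ can be handled as you suspect (when $2w(r)\ge R_\kappa$ one uses $v(r)\le\mathcal H^2(Z)<2\pi/\kappa$ instead, which still yields $w(r)\ge\tfrac12\delta_\kappa^{-1}(v(r))$), but the boundary issue requires more than you have written. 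By contrast, the closed-geodesic argument sidesteps both difficulties at once: the circle automatically has length $<R_\kappa$, and no boundary curves enter at all.
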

Assuming that the lemma is wrong  we  obtain an isometric embedding into $Z$ of a circle $\Gamma$  of length
$2i<R_{\kappa}$, where $i$ is the injectivity radius of $Z$, cf. \cite[Section 6]{ballmann}.  Then one can either obtain a contradiction by
directly estimating the area of the Jordan domain $J$ of $\Gamma$ which contains a rather large metric ball, or apply  the fact that a round hemisphere is a minimal filling of a circle (cf. \cite{Ivfinsler})  to deduce that $\mathcal H^2(J) \geq \frac 1 {2\pi} \ell ^2(\Gamma )$ which contradicts the isoperimetric inequality.

\lref{verylastlem}  shows that the closed Jordan domain of any Jordan polygon in $Z\setminus \partial Z$ is ${\rm CAT}(\kappa )$ in its intrinsic metric.
Thus as in Section \ref{seccomplete}, we obtain that the completion $Y$ of the space $Y_0=Z\setminus \partial Z$, equipped with the induced length metric, must be a ${\rm CAT} (\kappa )$ space. From here the rest of the proof goes without changes, we only  need to restrict the attention   to sufficiently small distances, where $\delta _{\kappa} $ almost coincides with $\delta _0$.


\begin{thebibliography}{EKMS09}

\bibitem[AB04]{Alexander}
S.~Alexander and R.~Bishop.
\newblock Curvature bounds for warped products in metrics spaces.
\newblock {\em Geom. Funct. Anal.}, 14:1143--1181, 2004.

\bibitem[AG99]{anguel}
F.~Angel and C.~Guilbault.
\newblock {$\mathcal Z$}-compactifications of open manifolds.
\newblock {\em Topology}, 38:1265--1280, 1999.

\bibitem[AG01]{Armitage}
David~H. Armitage and Stephen~J. Gardiner.
\newblock {\em Classical potential theory}.
\newblock Springer Monographs in Mathematics. Springer-Verlag London, Ltd.,
  London, 2001.

\bibitem[AKP16]{AKP}
S.~Alexander, V.~Kapovitch, and A.~Petrunin.
\newblock Alexandrov geometry.
\newblock {\em https://www.math.psu.edu/petrunin/papers/alexandrov-geometry/},
  2016.

\bibitem[Ale57]{Aleksandrov}
A.~D. Alexandrow.
\newblock \"{U}ber eine {V}erallgemeinerung der {R}iemannschen {G}eometrie.
\newblock {\em Schr. Forschungsinst. Math.}, 1:33--84, 1957.

\bibitem[Bal04]{ballmann}
W.~Ballmann.
\newblock On the geometry of metric spaces.
\newblock {\em Preprint, lecture notes,
  http://people.mpim-bonn.mpg.de/hwbllmnn/archiv/sin40827.pdf}, 2004.

\bibitem[BB98]{BB}
Yu.~D. Burago and S.~V. Buyalo.
\newblock Metrics with upper-bounded curvature on {$2$}-polyhedra. {II}.
\newblock {\em Algebra i Analiz}, 10(4):62--112, 1998.

\bibitem[BBI01]{BBI01}
D.~Burago, Yu. Burago, and S.~Ivanov.
\newblock {\em A course in metric geometry}, volume~33 of {\em Graduate Studies
  in Mathematics}.
\newblock American Mathematical Society, Providence, RI, 2001.

\bibitem[BI12]{BI}
D.~Burago and S.~Ivanov.
\newblock Minimality of planes in normed spaces.
\newblock {\em Geom. Funct. Anal.}, 22:627--638, 2012.

\bibitem[Bis08]{bishop}
R.~Bishop.
\newblock The intrinsic geometry of a {Jordan} domain.
\newblock {\em Inter. Electron. J. Geom.}, 1:33--39, 2008.

\bibitem[BR33]{BR}
E.~Beckenbach and T.~Rado.
\newblock Subharmonic functions and surfaces of negative curvature.
\newblock {\em Trans. Amer. Math. Soc.}, 35(3):662--674, 1933.

\bibitem[Bur65]{Burago}
Yu.~D. Burago.
\newblock On proportional approximation of a metric.
\newblock {\em Trudy Mat. Inst. Steklov.}, 76:120--123, 1965.

\bibitem[EKMS09]{EKMS}
M.~Ekonen, J.~Kinnunen, N.~Marola, and C.~Sbordone.
\newblock On {Beckenbach-Rado} type integral inequalities.
\newblock {\em Ricerche mat.}, 58:43--61, 2009.

\bibitem[HKST15]{HKST15}
J.~Heinonen, P.~Koskela, N.~Shanmugalingam, and J.~Tyson.
\newblock {\em Sobolev spaces on metric measure spaces}, volume~27 of {\em New
  Mathematical Monographs}.
\newblock Cambridge University Press, Cambridge, 2015.

\bibitem[HNV04]{HNV04}
K.~P. Hart, J.~Nagata, and J.~E. Vaughan, editors.
\newblock {\em Encyclopedia of general topology}.
\newblock Elsevier Science Publishers, B.V., Amsterdam, 2004.

\bibitem[Iva11]{Ivfinsler}
S.~V. Ivanov.
\newblock Filling minimality of {F}inslerian 2-discs.
\newblock {\em Tr. Mat. Inst. Steklova}, 273(Sovremennye Problemy
  Matematiki):192--206, 2011.

\bibitem[Kar07]{Kar07}
M.~B. Karmanova.
\newblock Area and co-area formulas for mappings of the {S}obolev classes with
  values in a metric space.
\newblock {\em Sibirsk. Mat. Zh.}, 48(4):778--788, 2007.

\bibitem[Kir94]{Kir94}
B.~Kirchheim.
\newblock Rectifiable metric spaces: local structure and regularity of the
  {H}ausdorff measure.
\newblock {\em Proc. Amer. Math. Soc.}, 121(1):113--123, 1994.

\bibitem[Kle99]{Kleiner}
B.~Kleiner.
\newblock The local structure of spaces with curvature bounded above.
\newblock {\em Math. Z.}, 231:409--456, 1999.

\bibitem[KS93]{KS93}
N.~J. Korevaar and R.~M. Schoen.
\newblock Sobolev spaces and harmonic maps for metric space targets.
\newblock {\em Comm. Anal. Geom.}, 1(3-4):561--659, 1993.

\bibitem[LW16a]{LW}
A.~Lytchak and S.~Wenger.
\newblock Area minimizing discs in metric spaces.
\newblock {\em Arch. Ration. Mech. Anal., to appear}, 2016.

\bibitem[LW16b]{LW-parameter}
A.~Lytchak and S.~Wenger.
\newblock Canonical parametrization of {Ahlfors} regular discs.
\newblock {\em In preparation}, 2016.

\bibitem[LW16c]{LW-intrinsic}
A.~Lytchak and S.~Wenger.
\newblock Intrinsic structure of minimal discs in metric spaces.
\newblock {\em preprint arXiv:1602.06755}, 2016.

\bibitem[LWY16]{LW-asymptotic}
A.~Lytchak, S.~Wenger, and R.~Young.
\newblock Dehn functions and {H}\"older extensions in asymptotic cones.
\newblock {\em preprint arXiv:1608:00082}, 2016.

\bibitem[Lyt04]{Diff}
A.~Lytchak.
\newblock Differentiation in metric spaces.
\newblock {\em Algebra i Analiz}, 16:128--161, 2004.

\bibitem[Lyt05]{Lytb}
A.~Lytchak.
\newblock Rigidity of spherical buildings and joins.
\newblock {\em Geom. Funct. Anal.}, 15:720--752, 2005.

\bibitem[Mor48]{Mor48}
Ch. Morrey, Jr.
\newblock The problem of {P}lateau on a {R}iemannian manifold.
\newblock {\em Ann. of Math. (2)}, 49:807--851, 1948.

\bibitem[Pet99]{Petrunin-old}
A.~Petrunin.
\newblock Metric minimizing surfaces.
\newblock {\em Electron. Res. Announc. Amer. Math. Soc.}, 5:47--54
  (electronic), 1999.

\bibitem[Pet10]{Pet-intrinsic}
A.~Petrunin.
\newblock Intrinsic isometries in {E}uclidean space.
\newblock {\em Algebra i Analiz}, 22(5):140--153, 2010.

\bibitem[PS16]{Petrunin}
A.~Petrunin and S.~Stadler.
\newblock Metric minimizing surfaces revisted.
\newblock {\em Preprint}, 2016.

\bibitem[Ray60]{Raymond}
F.~Raymond.
\newblock Separation and union theorems for generalized manifolds with
  boundary.
\newblock {\em Michigan Math. J.}, 7:7--21, 1960.

\bibitem[Res61]{Resh-isop}
Yu.~G. Reshetnyak.
\newblock On the isoperimetric property of two-dimensional manifolds of
  curvature not greater than $k$.
\newblock {\em Vestn. Leningr. Univ.}, 16:58--76, 1961.

\bibitem[Res63]{Resh-length}
Yu.~G. Reshetnyak.
\newblock Arc length in a manifold of bounded curvature with an isothermal
  linear element.
\newblock {\em Sibirsk. Mat. \v Z.}, 4:212--226, 1963.

\bibitem[Res68]{Resh-majorization}
Yu.~G. Reshetnyak.
\newblock Non-expanding maps in a space of curvature no greater than $k$.
\newblock {\em Siberian Math. J.}, 9:918--927, 1968.

\bibitem[Res93]{Reshetnyak-GeomIV}
Yu.~G. Reshetnyak.
\newblock Two-dimensional manifolds of bounded curvature.
\newblock In {\em Geometry, {IV}}, volume~70 of {\em Encyclopaedia Math. Sci.},
  pages 3--163, 245--250. Springer, Berlin, 1993.

\bibitem[Res97]{Res97}
Yu.~G. Reshetnyak.
\newblock Sobolev classes of functions with values in a metric space.
\newblock {\em Sibirsk. Mat. Zh.}, 38(3):657--675, iii--iv, 1997.

\bibitem[Tho96]{Tho96}
A.~C. Thompson.
\newblock {\em Minkowski geometry}, volume~63 of {\em Encyclopedia of
  Mathematics and its Applications}.
\newblock Cambridge University Press, Cambridge, 1996.

\bibitem[Wen08]{Wengerhyperbolic}
S.~Wenger.
\newblock Gromov hyperbolic spaces and sharp isoperimetric constant.
\newblock {\em Invent. Math.}, 171:227--255, 2008.

\bibitem[Wil49]{Wilder}
R.~L. Wilder.
\newblock {\em Topology of {M}anifolds}.
\newblock American Mathematical Society Colloquium Publications, vol. 32.
  American Mathematical Society, New York, N. Y., 1949.

\end{thebibliography}
\end{document}